\theoremstyle{plain}% default
\newtheorem{thm}{Theorem}[section]
\newtheorem{lem}[thm]{Lemma}
\newtheorem{prop}[thm]{Proposition}
\newtheorem{cor}[thm]{Corollary}
\newtheorem{problem}{Problem}
\newtheorem{question}[problem]{Question}
\newtheorem{conj}[problem]{Conjecture}
\newtheorem{thmintro}{Theorem}
\theoremstyle{definition}
\newtheorem{defn}[thm]{Definition}
\newtheorem{rem}[thm]{Remark}
\newtheorem{rmk}[thm]{Remark}
\newtheorem{cst}[thm]{Construction}
\newcommand{\mc}{\mathcal}
\newcommand{\mf}{\mathfrak}
\newcommand{\mscr}{\mathscr}
\newcommand{\sq}{\subseteq}
\newcommand{\ra}{\rightarrow}
\newcommand{\eps}{\epsilon}
\newcommand{\s}{\sigma}
\newcommand{\R}{\mathbb{R}}
\newcommand{\Z}{\mathbb{Z}}
\renewcommand{\o}{\circ}
\newcommand{\wh}{\widehat}
\newcommand{\cH}{\check{H}}
\newcommand{\x}{\times}
\DeclareMathOperator{\supp}{supp}
\DeclareMathOperator{\diam}{diam}
\DeclareMathOperator{\fin}{fine}
\DeclareMathOperator{\disc}{disc}
\DeclareMathOperator{\cone}{cone}
\renewcommand{\hom}{\mathrm{Hom}}
\DeclareMathOperator{\rk}{rk}
\title{On the \v{C}ech cohomology of Morse boundaries}
\author{Elia Fioravanti}
    \address{Karlsruhe Institute of Technology, Karlsruhe, Germany}
	\email{elia.fioravanti@kit.edu}
	\thanks{Fioravanti thanks the Max Planck Institute for Mathematics in Bonn for their hospitality and financial support while this work was being completed. He was also partially supported by Emmy Noether grant 515507199 of the Deutsche Forschungsgemeinschaft (DFG)}
\author{Annette Karrer}
\address{McGill University, Montreal, Canada}
	\email{annette.karrer@mcgill.ca}
 \thanks{Karrer was partially supported by CIRGET and CRM and the Israel Science Foundation (grant no. 1562/19)}
\author{Alessandro Sisto}
	\address{Maxwell Institute and Department of Mathematics, Heriot-Watt University, Edinburgh, UK}
	\email{a.sisto@hw.ac.uk}
 \author{Stefanie Zbinden}
    \address{Maxwell Institute and Department of Mathematics, Heriot-Watt University, Edinburgh, UK}
	\email{sz2020@hw.ac.uk}
\begin{document}

\begin{abstract}
    We consider cusped hyperbolic $n$--manifolds, and compute \v{C}ech cohomology groups of the Morse boundaries of their fundamental groups. In particular, we show that the reduced \v{C}ech cohomology with real coefficients vanishes in dimension at most $n-3$ and does not vanish in dimension $n-2$. A similar result holds for relatively hyperbolic groups with virtually nilpotent peripherals and Bowditch boundary homeomorphic to a sphere; these include all non-uniform lattices in rank--$1$ simple Lie groups.
\end{abstract}

\maketitle

\tableofcontents

\addtocontents{toc}{\protect\setcounter{tocdepth}{1}}
\section*{Introduction}

The Morse boundary $\partial_* X$ of a metric space $X$, as introduced in \cite{CS:contracting,C:Morse}, is a topological space that encodes the ``hyperbolic-like'' directions of $X$. It extends to all metric spaces Gromov's classical concept of boundary for hyperbolic spaces \cite{Gromov-hyp}: when $X$ is hyperbolic, $\partial_* X$ is naturally identified with the Gromov boundary of $X$. The Morse boundary also enjoys another fundamental property of Gromov boundaries, namely quasi-isometric invariance: quasi-isometric metric spaces have homeomorphic Morse boundaries. As a consequence, the Morse boundary $\partial_*G$ of a finitely generated group $G$ is well-defined up homeomorphism. Quasi-isometry invariance of $\partial_*G$ was in fact used in \cite{CS:contracting} to distinguish certain right-angled Coxeter groups up to quasi-isometry.

In view of this, it is interesting to study topological features of Morse boundaries. Since $\partial_*X$ is not even metrisable when $X$ is non-hyperbolic \cite{CD:stable}, it was thought until rather recently that ``fully understanding'' the topology of Morse boundaries of non-hyperbolic spaces would be an impossible feat. In contrast to this prediction, we show that in some important cases one can compute a rather sophisticated and powerful topological invariant, namely \v{C}ech cohomology:

\begin{thmintro}
\label{thm:main_intro}
    Let $G$ be the fundamental group of a cusped hyperbolic $n$--manifold for $n\geq 3$. Then
\begin{enumerate}
    \item $\cH^i(\partial_*G,\R)=\{0\}$ for $1\leq i\leq n-3$; 
    \item $\cH^{n-2}(\partial_*G,\R)$ is infinite dimensional,
\end{enumerate}
where $\cH^*$ denotes reduced \v{C}ech cohomology.
\end{thmintro}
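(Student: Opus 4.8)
The plan is to reduce the computation to the geometry of the Bowditch boundary and then apply Alexander duality. Recall that $G$ is hyperbolic relative to its cusp subgroups, each virtually $\Z^{n-1}$, and that the Bowditch boundary $\partial(G,\mc P)$ is $G$--equivariantly homeomorphic to $S^{n-1}=\partial\mathbb{H}^n$, with the parabolic points a countable dense subset $\Pi\sq S^{n-1}$. Since $\partial_*G=\varinjlim_{\mf M}\partial_*^{\mf M}G$ over Morse gauges, fix a cofinal sequence $\mf M_1\leq\mf M_2\leq\cdots$ and set $Y_m:=\partial_*^{\mf M_m}G$, a compact metrisable space, with closed inclusions $Y_m\hookrightarrow Y_{m+1}$ and $\partial_*G=\varinjlim_m Y_m$. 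The first step --- and, I expect, the crux --- is to identify $Y_m$ inside $S^{n-1}$. A geodesic ray of a Cayley graph of $G$ is $\mf M$--Morse iff it has penetration at most $D(\mf M)$ into every peripheral coset (equivalently, into every horoball of the Groves--Manning cusped space), with $D(\mf M)$ depending only on $\mf M$; as the peripherals are virtually $\Z^{n-1}$ with $n\geq 3$, they are not hyperbolic, so \emph{every} ray converging to a parabolic point is non--Morse. Carried to the boundary, this identifies $Y_m$ with $S^{n-1}\setminus W_m$, where $W_m=\bigcup_{p\in\Pi}B^m_p$ and $B^m_p\sq S^{n-1}$ is the (coarsely round) open shadow from the basepoint of the horoball at $p$ truncated at depth $D(\mf M_m)$; one has $B^{m+1}_p\sq B^m_p$, $\bigcap_m B^m_p=\{p\}$, and --- using bounded coset penetration and the tree--like nesting of peripheral cosets, and after passing to a cofinal subsequence so that all shadows are small --- for each fixed $m$ the family $\{B^m_p\}_{p\in\Pi}$ can be taken \emph{laminar}: any two members are nested or disjoint.

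Granting this, each stratum is handled directly. As $Y_m$ is compact in $S^{n-1}$, Alexander duality gives natural isomorphisms $\cH^i(Y_m;\R)\cong\widetilde H_{n-2-i}(W_m;\R)$ (reduced). Laminarity makes the containment relation on $\{B^m_p\}$ a forest, and since all finite intersections of small round balls are contractible or empty, the nerve lemma identifies each connected component of $W_m$ with the order complex of a rooted tree, hence with a contractible space; so $W_m$ is homotopy equivalent to a disjoint union of points. Consequently $\widetilde H_k(W_m;\R)=0$ for $k\geq 1$, while $\widetilde H_0(W_m;\R)$ is infinite dimensional --- the last point amounts to $W_m$ having infinitely many components, equivalently to there being infinitely many $\sq$--maximal shadows, which one deduces from density of $\Pi$ together with the existence of enough Morse directions to separate far--apart parabolic points. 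Feeding this back: $\cH^i(Y_m;\R)=0$ for $0\leq i\leq n-3$ and $\cH^{n-2}(Y_m;\R)$ is infinite dimensional, so every stratum already satisfies the conclusion.

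It remains to pass to the direct limit. Since $\partial_*G$ is the direct limit along closed embeddings of the expanding sequence of compacta $(Y_m)$ --- a $k_\omega$--space --- the nerve of an open cover of $\partial_*G$ is the increasing union of the nerves of its restrictions to the $Y_m$, and (using that covers of $\partial_*G$ restrict cofinally to covers of each compact metrisable $Y_m$) one obtains a natural short exact sequence
\[
0\longrightarrow {\varprojlim}^1_m\,\cH^{i-1}(Y_m;\R)\longrightarrow\cH^i(\partial_*G;\R)\longrightarrow\varprojlim_m\cH^i(Y_m;\R)\longrightarrow 0.
\]
For $i\leq n-2$ the system $\{\cH^{i-1}(Y_m;\R)\}_m$ is identically zero (here using $\cH^0(Y_m;\R)=0$, i.e.\ $Y_m$ connected, which also follows from the displayed Alexander duality), so the ${\varprojlim}^1$ term vanishes and $\cH^i(\partial_*G;\R)\cong\varprojlim_m\cH^i(Y_m;\R)$. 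Hence $\cH^i(\partial_*G;\R)=0$ for $1\leq i\leq n-3$. For $i=n-2$: under Alexander duality the restriction maps $\cH^{n-2}(Y_{m+1};\R)\to\cH^{n-2}(Y_m;\R)$ correspond to $\widetilde H_0(W_{m+1};\R)\to\widetilde H_0(W_m;\R)$, which are surjective because each component of $W_m$ contains some $p\in\Pi$ and hence the nonempty shadow $B^{m+1}_p$. A surjective inverse system of infinite--dimensional vector spaces has infinite--dimensional limit (the limit surjects onto $\cH^{n-2}(Y_1;\R)$), so $\cH^{n-2}(\partial_*G;\R)$ is infinite dimensional, as claimed.

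The main obstacle is the identification of the strata carried out in the first paragraph --- that the non--Morse directions at a fixed gauge are precisely a union of horoball shadows, and above all that these shadows can be organised into a laminar family with infinitely many maximal members. This is exactly where the non--hyperbolicity of the peripherals is used, and where the combinatorics of relative hyperbolicity (bounded coset penetration, the Groves--Manning horoballs) must be confronted; everything downstream is comparatively formal. A secondary technical point is the validity of Milnor's sequence for the non--metrisable space $\partial_*G$, for which one needs the expanding sequence $(Y_m)$ to be tame enough, e.g.\ that open covers of $\partial_*G$ restrict cofinally to covers of each $Y_m$; this follows from compactness and metrisability of the strata. As a consistency check, reduced \v{C}ech cohomology concentrated and infinite dimensional in degree $n-2$ matches $H^{n-1}(G;\R G)$, which equals $\R^{(\infty)}$ by Poincar\'e--Lefschetz duality on the $n$--dimensional neutered space on which $G$ acts cocompactly, and suggests organising the argument around a Bestvina--Mess-type isomorphism $\cH^{*-1}(\partial_*G;\R)\cong H^{*}(G;\R G)$ --- though establishing such a formula for the non-compact Morse boundary seems to require the stratum analysis above in any case.
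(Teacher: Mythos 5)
Your approach (identify the strata of $\partial_*G$ inside $S^{n-1}$, compute their \v{C}ech cohomology by Alexander duality, then pass to the direct limit via a Milnor $\varprojlim^1$--sequence) is genuinely different from the paper's. The paper never uses Alexander duality: instead it develops a discrete-chain calculus for ``super-refinable'' spaces (Section~\ref{sec:refine_cech}), reduces vanishing of $\cH^i$ to a filling condition $(DA_i)$ on discrete cycles relative to covers of $\partial_*G$ itself (Corollary~\ref{cor:refinable-cohomology}), proves that condition by detouring Bowditch-boundary fillings around shadows (Sections~\ref{sect:Bowditch_filling}--\ref{sect:vanishing}), and handles non-vanishing via Lemma~\ref{lem:nonvanishing_criterion} together with Proposition~\ref{prop:representing_homology}, which produces cycles representing $H_k(\mc S - F,\Z)$ inside a single stratum. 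Your first paragraph (the stratum picture) does match Proposition~\ref{prop:Morse_Bow}; it is the downstream steps that diverge.

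There are two genuine gaps. The first is the laminarity claim. You assert that, after passing to a cofinal subsequence, the family of shadows $\{B^m_p\}_{p\in\Pi}$ at a fixed scale $m$ ``can be taken laminar: any two members are nested or disjoint.'' Rescaling all shadows by the same factor (which is what the cofinal-subsequence operation does) cannot change intersection patterns between a small and a large shadow: the paper's inequality~(\ref{eq:small_min}) only guarantees that intersecting balls have radii differing by a large factor $K^4$, \emph{not} that the smaller one is contained in the larger. A small shadow centred near the boundary of a large one will still straddle it after uniform rescaling, so laminarity is not automatic; and without it, the conclusion ``$W_m$ is homotopy equivalent to a disjoint union of points'' needs an actual nerve/homotopy argument, which is not supplied. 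The weaker property~(\ref{eq:small_min}) is what the paper uses, precisely because it suffices for the detouring arguments (Lemmas~\ref{lem:disjoint_detour}, \ref{lem:general_detour}) but does not yield the clean laminar picture you want.

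The second gap is the Milnor sequence for $\cH^*(\partial_*G)$. For a fixed cover $\mc O$ of $\partial_*G$ one does have $N(\mc O)=\bigcup_m N(\mc O|_{Y_m})$ and a Milnor sequence for the simplicial cohomology of this CW-complex. But passing to $\cH^i(\partial_*G)=\varinjlim_{\mc O}H^i(N(\mc O))$ requires exchanging the direct limit over covers of $\partial_*G$ with $\varprojlim_m$ and $\varprojlim^1_m$, and this exchange is not automatic: cofinality of cover-restriction tells you each $\cH^i(Y_m)$ can be computed from restricted covers, but it does not tell you that an element of $\varprojlim_m\cH^i(Y_m)$ is representable by a compatible family coming from a single cover of $\partial_*G$ (covers of a $k_\omega$-space have no infinite common refinements in general). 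This is precisely the difficulty the paper's super-refinability machinery and conditions $(DA_i)$ are built to circumvent, so treating it as ``a secondary technical point'' understates the issue. If you can establish the Milnor sequence in this context, your argument would actually be stronger than the paper's in the specific case of real coefficients and hyperbolic manifolds (it would compute $\cH^i(\partial_*G;\R)$, not just prove vanishing/non-vanishing); but as written both the laminarity and the limit-interchange steps are asserted rather than proved.
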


The theorem follows from Theorem~\ref{thm:general_intro} below, which is more general in two ways. First, it covers all relatively hyperbolic groups with virtually nilpotent peripherals and Bowditch boundary homeomorphic to a sphere; these include all non-uniform lattices in rank--$1$ simple Lie groups. Secondly, it allows for more general coefficients. 

We note that, for $i=0$, our result would be equivalent to $\partial_*G$ being connected, but one can even prove that $\partial_*G$ is path connected in the setting of Theorem~\ref{thm:main_intro}. This follows from results in \cite{Mackay-Sisto}, as we argue in Theorem \ref{thm:path_conn}. We will actually further develop various tools from that paper and use them extensively in our arguments.

Our hope is that the techniques we develop to prove the main theorem will be useful to study Morse boundaries of other groups and to distinguish quasi-isometry classes of groups via their Morse boundary; see below for some concrete candidates.

Theorem~\ref{thm:main_intro} also provides the first examples of relatively hyperbolic groups with high-dimensional Morse boundary not witnessed by an ``obvious'' high-dimensional hyperbolic subgroup. For instance, for $n\geq 4$, it is still an open problem whether the fundamental group of a (cusped) hyperbolic $n$--manifold always admits codimension--$1$ subgroups. In relation to this, we mention that Cordes previously showed the existence of high-dimensional spheres in Morse boundaries of mapping class groups \cite[Theorem~4.2]{C:Morse}, using quasi-convex copies of $\mathbb{H}^n$ in the thick part of Teichm\"uller space constructed by Leininger and Schleimer \cite{Leininger-Schleimer}. It is open whether any of these corresponds to a subgroup of the mapping class group, and some certainly do not.

Our main theorem is in a similar line of research as the papers \cite{charney2019complete, Zbi1, Zbi2}, where Morse boundaries of certain groups, including $3$--manifold groups, are fully described. A similar goal seems out of reach in our case (except when $n=3$, as shown in \cite{charney2019complete}), but we also think of our theorem as a proof of concept: even when the Morse boundary is not ``fully describable'', powerful topological invariants can sometimes still be computed.

As a final note, there is a different topology on Morse boundaries \cite{CM:topology} which has the advantage of being metrisable. However, we do not know whether analogues of our main theorem (or of the other results hinted at above) hold for that topology.

\subsection{Distinguishing groups up to quasi-isometry}

To provide further motivation for the study of \v{C}ech cohomology of Morse boundaries, we now construct a collection of groups that can conjecturally be distinguished up to quasi-isometry using this invariant, but where other usual quasi-isometry invariants do not seem to be helpful. We believe that the results and techniques in this paper go a long way towards proving the conjecture, and we point out below what is missing.

Fix for every integer $n\geq 1$ the fundamental group $G_n$ of an $(n+2)$--dimensional cusped hyperbolic manifold. Let $\bar n$ be a finite collection of integers at least 1. Following \cite[Section 6]{GS:smallcanc}, one can construct a certain finitely generated, infinite group $G(\bar n)$ with several properties, including that $G(\bar n)$ contains copies of all $G_n$ with $n\in\bar n$ as hyperbolically embedded subgroups, but $G(\bar n)$ is not (non-trivially) relatively hyperbolic. 

The following conjecture would allow one to distinguish the quasi-isometry classes of the various $G(\bar n)$:

\begin{conj}
\label{conj:smallcanc}
$\cH^i(\partial_* G(\bar n),\R)$ is non-trivial for $i\in \bar n$ and trivial for $i\notin \bar n$, with $1< i\leq \max(\bar n)$.
\end{conj}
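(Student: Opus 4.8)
The plan is to reduce the conjecture to Theorem~\ref{thm:main_intro} by first obtaining a structural description of $\partial_*G(\bar n)$ in terms of the Morse boundaries of the hyperbolically embedded subgroups $G_n$, and then running a \v{C}ech-cohomology bookkeeping argument of the same flavour as the one behind Theorem~\ref{thm:main_intro}.

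\textbf{Step 1 (structural description).} Since each $G_n$ with $n\in\bar n$ is hyperbolically embedded in $G(\bar n)$, it is a Morse subgroup, so that $\partial_*G_n$ — and likewise $\partial_*(gG_ng^{-1})$ for every conjugate — embeds as a subspace of $\partial_*G(\bar n)$. I would first establish a ``tree of spaces'' picture: using the explicit structure of $G(\bar n)$ from \cite[Section 6]{GS:smallcanc} together with the Morse-geodesic technology of this paper (extending \cite{Mackay-Sisto}), show that every Morse geodesic ray in $G(\bar n)$ either eventually stays within bounded distance of a single conjugate $gG_ng^{-1}$ — and so limits to a point of the corresponding copy of $\partial_*G_n$ — or is ``transient'', wandering through the small-cancellation geometry and limiting to a point of a totally disconnected set $C$; and, moreover, that the resulting pieces are glued together in a controlled way, with $C$ accumulating onto the boundary pieces but each $\partial_*(gG_ng^{-1})$ admitting a cofinal system of neighbourhoods that are \v{C}ech-cohomologically equivalent to it.

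\textbf{Step 2 (cohomology computation and conclusion).} Granting Step 1, I would compute $\cH^*(\partial_*G(\bar n),\R)$ by exhausting $\partial_*G(\bar n)$ by subspaces involving only finitely many of the boundary pieces, applying Mayer–Vietoris and continuity of \v{C}ech cohomology for the relevant direct/inverse systems — precisely the type of argument already carried out for Theorem~\ref{thm:main_intro}. Since $C$ is totally disconnected and ``radial'' for the tree structure it should contribute only in degree $0$, and $\partial_*G(\bar n)$ is path connected by the argument of Theorem~\ref{thm:path_conn}; so for $i\geq 2$ one expects $\cH^i(\partial_*G(\bar n),\R)$ to be a sum, over all conjugates of all $G_n$ with $n\in\bar n$, of copies of $\cH^i(\partial_*G_n,\R)$. (This is why the conjecture is stated only for $i>1$: degree $1$ involves the connectivity of the space and the $1$-dimensional combinatorics of the tree part and of the boundary pieces, and is genuinely more delicate.) Feeding in Theorem~\ref{thm:main_intro} — $\cH^i(\partial_*G_n,\R)=\{0\}$ for $1\leq i\leq n-1$ and $\cH^n(\partial_*G_n,\R)$ infinite dimensional — gives that $\cH^i(\partial_*G(\bar n),\R)$ is non-trivial for $i\in\bar n$, and that for $1<i\leq\max(\bar n)$ with $i\notin\bar n$ the only conjugates that can contribute to $\cH^i$ are those of $G_n$ with $n<i$.

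\textbf{Main obstacles.} Two ingredients are genuinely missing. The first — and I expect it to be the harder one — is Step 1: pinning down exactly which Morse directions occur in the infinitely presented group $G(\bar n)$ and how the corresponding boundary pieces fit together is not covered by existing results, and the ``tree of spaces'' picture must be verified in the Morse topology, which is \emph{not} a subspace topology, so the usual intuition for boundaries of trees of spaces does not directly apply. (For the non-triviality half of the conjecture alone one could hope to get away with less — e.g.\ showing that $\partial_*G_n$ is a \v{C}ech-cohomology retract of $\partial_*G(\bar n)$; the truly hard half is showing there is \emph{nothing else}.) The second missing ingredient is that, even granting Step 1, the conclusion for $i\notin\bar n$ needs $\cH^i(\partial_*G_n,\R)=\{0\}$ for \emph{all} $i>n$, i.e.\ a complete computation of $\cH^*(\partial_*G_n,\R)$ in every degree, whereas Theorem~\ref{thm:main_intro} determines it only up to degree $n$. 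Since $\partial_*G_n$ is, morally, the $(n+1)$--sphere with a dense set of points removed, one expects no reduced \v{C}ech cohomology above degree $n$, and the techniques developed here should establish this, but it is not literally a consequence of the results stated above.
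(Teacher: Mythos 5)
The statement you are addressing is Conjecture~\ref{conj:smallcanc}, which the paper explicitly does \emph{not} prove: immediately after stating it, the authors say that proving it would require understanding Morse boundaries of small cancellation free products, that they consider this of independent interest but beyond the scope of the paper, and that they leave it for future work. So there is no proof in the paper for your proposal to be measured against.

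Your outline is nonetheless a faithful expansion of the paper's own stated intuition — namely that $\partial_*G(\bar n)$ should contain copies of $\partial_*G_n$ contributing cohomology in degree $n$, and be otherwise $1$--dimensional — and you honestly and correctly identify the two genuine gaps. The first (establishing the ``tree of spaces'' description of $\partial_*G(\bar n)$ in the Morse topology and in particular controlling the transient directions) is exactly the missing ingredient the authors single out. The second (vanishing of $\cH^i(\partial_*G_n,\R)$ for \emph{all} $i>n$, not just in the range covered by Theorem~\ref{thm:main_intro}) is also a known gap: the paper poses precisely this as an open question just below the conjecture, and states the belief that it will follow from forthcoming work on covering dimension of Morse boundaries. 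So your proposal is not a proof and does not claim to be; it is an accurate map of what would have to be done, matching the authors' own assessment. One small caution worth adding to your Step 2: the claim that the \v{C}ech cohomology of the glued space is a direct sum over conjugates of boundary pieces is likely to be subtler than a straightforward Mayer–Vietoris over an exhaustion, both because the Morse topology on the direct limit is finer than the subspace topology coming from any ambient compactum (so excision/continuity for \v{C}ech cohomology needs care) and because the accumulation of the totally disconnected piece $C$ onto the boundary pieces can, in principle, create cohomology in low degrees in a way that is hard to rule out by a dimension count alone; the paper's machinery (conditions $(DA_n)$ via discrete chains, Corollary~\ref{cor:refinable-cohomology}) is designed exactly to control this, and a full proof would presumably have to redo that analysis in the small-cancellation setting rather than invoke a generic Mayer–Vietoris.
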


The intuition behind the conjecture is that $\partial_* G(\bar n)$ should contain copies of the $\partial_* G_n$ for $n\in\bar n$, and those should contribute cohomology in the appropriate degrees, but $\partial_* G(\bar n)$ should otherwise be $1$--dimensional. Proving the conjecture requires understanding Morse boundaries of small cancellation free products (as $\bar G(n)$ is of this form). We believe this task to be of independent interest, but also not within the scope of this paper, so we leave it for future work. In fact, more work is still required in the context of classical small cancellation groups (which have non-trivial Morse boundary by \cite{GS:smallcanc} and \cite{S:hypemb}, except in trivial cases).

We could not think of methods that would distinguish the quasi-isometry classes of the $G(\bar n)$ as $\bar n$ varies, other than (variations of) the conjecture. For example, the groups are not relatively hyperbolic, which rules out a lot of tools. Similarly, they are not CAT(0), hierarchically hyperbolic, etc, as they are not finitely presented. Also, their asymptotic dimension presumably only depends on $\max(\bar n)$, as should the ``stable asymptotic dimension'' introduced in \cite{CH:stable_asdim}.

\subsection{Questions}

There are various natural questions that arise at this point. Let us first consider those related to the same groups as in Theorem \ref{thm:main_intro}.

\begin{question}
For $G$ as in Theorem \ref{thm:main_intro}, does $\cH^i(\partial_*G,\R)$ vanish for all $i>n-2$?
\end{question}

We believe that the answer is yes, and we plan future work on covering dimension of Morse boundaries that should imply this. Related to this, we note that it is easy to show that $\cH^i(K,\Z)=\{0\}$ for all $i>n-2$ and all compact subsets $K\sq\partial_*G$. Indeed, any such $K$ is homeomorphic to a closed nowhere-dense subset of $S^{n-1}$, and the latter all have covering dimension at most $n-2$ (for instance, by \cite[Theorem~19]{Schultz}).

As mentioned above, Theorem \ref{thm:main_intro} holds for more general coefficients. However, certain natural coefficients are not covered, especially for the non-vanishing part, so in particular we can ask:

\begin{question}
For $G$ as in Theorem \ref{thm:main_intro}, is $\cH^{n-2}(\partial_*G,\Z)$ non-zero?
\end{question}

We believe that answering this question requires deeper understanding of discrete cycles (see discussion on this notion below), and hence working towards it could lead to interesting discoveries.

Moving on, it is natural to ask what the \v{C}ech cohomology is for Morse boundaries of other groups of interest. For example:

\begin{problem}
Compute \v{C}ech cohomology groups $\cH^{i}(\partial_*G,\R)$ for $G$ a mapping class group of a finite-type surface.
\end{problem}

As mentioned above our hope is that our strategy will lead to the use of Morse boundaries as effective quasi-isometry invariants. More generally than in Conjecture \ref{conj:smallcanc}, we then ask:

\begin{problem}
    Use topological invariants of Morse boundaries such as \v{C}ech cohomology to distinguish quasi-isometry classes of groups.
\end{problem}

Another natural class of groups to look into regarding this problem is that of right-angled Coxeter groups.
%We also think that this could be a fruitful approach to study small cancellation quotients of free products, perhaps looking at the \v{C}ech cohomology of specific subspaces of their Morse boundary.

Finally, looking at \v{C}ech cohomology of Morse boundaries was also partly inspired by the fact that, for hyperbolic groups, the \v{C}ech cohomology of the Gromov boundary has been fruitfully studied and contains important information about the group itself, see e.g.\ \cite{BestvinaMess}. Therefore, in a different direction than the other questions, it is natural to ask:

\begin{problem}
    What kind of information about a group can be gleaned from the \v{C}ech cohomology of its Morse boundary?
\end{problem}

\subsection{Outline}

We deduce Theorem~\ref{thm:main_intro} as a consequence of the following more general result. 

\begin{thmintro}\label{thm:general_intro}
Let $(G,\mc{P})$ be a relatively hyperbolic pair. Suppose that $\mc{P}$ is a nonempty finite collection of finitely generated, virtually nilpotent subgroups, and that the Bowditch boundary is homeomorphic to the $(n-1)$--sphere, for some $n\geq 3$. Then
\begin{enumerate}
    \item $\cH^i(\partial_*G,\mc{R})=\{0\}$ for $1\leq i\leq n-3$ and any principal ideal domain $\mc{R}$; 
    \item $\cH^{n-2}(\partial_*G,\mc{F})$ is infinite dimensional for any field $\mc{F}$ of characteristic $0$.
\end{enumerate}
\end{thmintro}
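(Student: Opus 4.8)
The plan is to model $\partial_*G$ as a filtered union of compact subsets of the Bowditch boundary $\mc B(G,\mc P)\cong S^{n-1}$, compute the reduced \v Cech cohomology of each piece via Alexander duality in the sphere, and then carefully pass to the direct limit. First I would fix the Groves--Manning cusped space $X$ of $(G,\mc P)$: a proper geodesic hyperbolic space with $\partial X\cong\mc B(G,\mc P)\cong S^{n-1}$, in which the peripheral horoballs are isolated and each contributes a single boundary point, a parabolic point. A geodesic ray of $G$ is Morse precisely when its excursions into horoballs have uniformly bounded depth, and the endpoint map identifies the underlying set of $\partial_*G$ with the set $\Lambda\sq S^{n-1}$ of such ``Morse limit points''. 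Filtering by a Morse gauge $\mf N$ (equivalently, by a bound on excursion depths) writes $\Lambda=\bigcup_{\mf N}K_{\mf N}$ with each $K_{\mf N}\sq S^{n-1}$ compact, the family directed and admitting a cofinal sequence, and $\partial_*G$ carrying the associated direct-limit topology; in particular every compact subset of $\partial_*G$ lies in some $K_{\mf N}$. The crucial geometric input, for which I would develop the tools of \cite{Mackay-Sisto} for constructing and tracking Morse quasigeodesics in relatively hyperbolic groups, is a description of the open complements $U_{\mf N}:=S^{n-1}\setminus K_{\mf N}$: each $U_{\mf N}$ is a union of ``deep shadows'' of horoballs, every such shadow an open topological ball; for $\mf N$ large, $U_{\mf N}$ has infinitely many connected components; these components are acyclic (the deep horoball excursions available to a single geodesic ray organise in a tree-like pattern); and, since each deep shadow contains deeper ones, the inclusions $U_{\mf N'}\sq U_{\mf N}$ for $\mf N'\ge\mf N$ are $\pi_0$-surjective.

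Next, for any principal ideal domain $\mc R$ and any gauge $\mf N$, Alexander duality in $S^{n-1}$ gives $\cH^i(K_{\mf N};\mc R)\cong\widetilde H_{n-2-i}(U_{\mf N};\mc R)$. Since $U_{\mf N}$ is a disjoint union of acyclic open sets, the right-hand side vanishes for $i\ne n-2$; in particular (using $n\ge 3$) each $K_{\mf N}$ is connected, and $\cH^i(K_{\mf N};\mc R)=0$ for all $1\le i\le n-3$. For $i=n-2$ it equals $\widetilde H_0(U_{\mf N};\mc R)$, free of rank (number of components of $U_{\mf N}$)$\,-\,1$, hence infinite-dimensional over a field $\mc F$ once $\mf N$ is large; moreover the $\pi_0$-surjectivity above makes the transition maps $\cH^{n-2}(K_{\mf N'};\mc F)\to\cH^{n-2}(K_{\mf N};\mc F)$ surjective, with source dimensions tending to infinity.

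Finally one passes to the limit. For (1): given an open cover of $\partial_*G$ and a \v Cech $i$-cocycle with $1\le i\le n-3$, restrict to the strata $K_{\mf N}$; since each compact subset of $\partial_*G$ lies in some stratum and $\cH^j(K_{\mf N})=0$ for all $0\le j\le n-3$ and all $\mf N$ (the strata being connected), one refines the cover --- stratum by stratum and then, using the uniform vanishing, globally --- until the class bounds; hence $\cH^i(\partial_*G;\mc R)=0$. For (2) --- the main obstacle --- one must show that the infinite-dimensional stratum-wise cohomology in degree $n-2$ persists in $\cH^{n-2}(\partial_*G;\mc F)$, although \v Cech cohomology is not continuous under direct limits. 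The plan is to construct, for essentially each component of each $U_{\mf N}$, an explicit \emph{discrete $(n-2)$-cycle} in $\partial_*G$ --- a combinatorial $(n-2)$-cycle assembled from uniformly-Morse geodesics ``linking'' the corresponding parabolic point just outside its horoball --- develop from the tools of \cite{Mackay-Sisto} a calculus for such discrete cycles, show that pairing against them descends to a well-defined pairing on $\cH^{n-2}(\partial_*G;\mc F)$ which on each stratum is computed by the (perfect, over a field) Alexander-duality pairing, and conclude that infinitely many independent classes survive; equivalently, this identifies $\cH^{n-2}(\partial_*G;\mc F)$ with the inverse limit of the system of the previous paragraph, which is infinite-dimensional since its transition maps are surjective with dimensions tending to infinity. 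Characteristic-$0$ field coefficients are used so that these pairings are perfect and no torsion obstructs persistence of classes, whereas a general PID suffices for (1), where only the integral acyclicity of the $U_{\mf N}$ and the refinement argument are needed. The two hard points are therefore the homotopy-type analysis of the complements $U_{\mf N}$ --- in particular the tree-like structure of repeated deep horoball excursions --- and, above all, turning stratum-wise non-vanishing into non-vanishing for the direct-limit topology, for which the discrete-cycle formalism, extending \cite{Mackay-Sisto}, is the essential new device.
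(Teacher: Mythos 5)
Your high-level picture is right — model $\partial_*G$ as a direct limit of compact strata $K_{\mf N}=X_n\sq\mc S\cong S^{n-1}$ (this is the paper's Proposition~\ref{prop:Morse_Bow}), understand each stratum, then pass to the limit, with discrete cycles driving the non-vanishing part. But the two technical pillars you rest on each have a real gap, and the paper's argument is shaped precisely by the need to avoid them.

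\textbf{Gap 1: the topology of $U_{\mf N}$.} You assert that each ``deep shadow'' (ball around a parabolic point in the visual metric $\rho$) is an open topological ball, that the balls combine into acyclic components, and that one can then read off $\cH^i(K_{\mf N};\mc R)$ by Alexander duality. None of this is established, and I don't see how to get it cheaply. The sets $\tfrac{1}{n}B$ are $\rho$-metric balls, and visual metrics on Gromov boundaries are only determined up to a quasi-M\"obius change; metric balls for such a metric need not be topological cells, let alone have acyclic overlaps. The balls also genuinely overlap (the parabolic points are dense), so even granting that each ball were a cell, you would still have to control the nerve of the covering family — the ``tree-like pattern'' remark is an analogy with excursions of rays in the cusped space, not an argument about subsets of $S^{n-1}$. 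The paper sidesteps all of this: it never computes $\cH^*(K_{\mf N})$ and never invokes Alexander duality. Instead it proves a quantitative filling statement, Proposition~\ref{prop:Bowditch_filling}, for discrete cycles in annuli around parabolic points (using the Mal'cev completion of the nilpotent peripheral and Karidi's ball estimates, Lemma~\ref{nilpotent Lie group}), and then ``detours'' fillings away from the shrinking balls (Lemmas~\ref{lem:disjoint_detour}--\ref{lem:general_detour}). The metric, not the topology, of the balls is what is controlled.

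\textbf{Gap 2: passing to the limit.} Even if every $K_{\mf N}$ had the \v Cech cohomology you claim, neither direction of your limit argument follows. For vanishing, knowing $\cH^i(K_{\mf N};\mc R)=0$ for all $\mf N$ does \emph{not} yield $\cH^i(\varinjlim K_{\mf N};\mc R)=0$: a cover $\mc O$ of $X$ restricts to covers of each $X_n$, and each such restricted cover has a refinement killing $i$-cocycles, but those refinements degenerate as $n\ra\infty$ and need not assemble into a single refinement of $\mc O$ as a cover of $X$. Making the ``refine stratum by stratum, then globally'' step work is exactly what the paper's super-refinement formalism (Definition~\ref{defn:super-refinement}, Proposition~\ref{prop:limit-refinable}, Corollary~\ref{cor:refinable-cohomology}) and the controlled filling Proposition~\ref{diameter bound} are for; you would have to reconstruct that machinery. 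For non-vanishing, the identification $\cH^{n-2}(X;\mc F)\cong\varprojlim_{\mf N}\cH^{n-2}(K_{\mf N};\mc F)$ that you hope for is simply not a theorem — \v Cech cohomology is not continuous under direct limits with the weak topology — and your plan to salvage it via a ``perfect pairing against discrete cycles'' remains at the level of intent. The paper instead proves Lemma~\ref{lem:nonvanishing_criterion}, which gives a \emph{lower} bound on $\dim\cH^k(X,\mc F)$ in terms of $\mc O'$-fine cycles that do not bound $\mc O$-fine chains, and Proposition~\ref{prop:representing_homology}, which supplies enough such cycles inside a single stratum (using the relation between straight singular cycles and discrete cycles, Lemma~\ref{lem:convex_cover}); no duality or inverse limit identification is needed or claimed.

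So: the filtration, the recourse to \cite{Mackay-Sisto}, and the discrete-cycle theme are on the mark, but the Alexander-duality-per-stratum route is a different (and as written, incomplete) argument. The two places you flagged as ``the hard points'' are indeed the gaps; closing them along your lines would be substantial new work, and the paper's filling-and-detouring approach is designed to avoid having to close Gap~1 at all.
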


The assumptions of Theorem~\ref{thm:general_intro} are satisfied by all fundamental groups of non-compact, finite-volume, complete Riemannian manifolds with pinched negative sectional curvature (by the Margulis lemma). These include non-uniform lattices in rank--$1$ simple Lie groups.

Theorem~\ref{thm:general_intro} is proved by combining Theorem \ref{thm:vanishing} and Theorem \ref{thm:non_vanishing} in the main body of the text, which deal with the vanishing and non-vanishing part, respectively.

Our approach to these results relies heavily on the notion of \emph{discrete chains}, as described in Subsection~\ref{subsec:discrete_chains}. Indeed, we will translate vanishing and non-vanishing of \v{C}ech cohomology for ``sufficiently nice'' spaces into statements about discrete cycles being, or not being, boundaries of discrete chains. This is one of the main goals of Section \ref{sec:refine_cech}, see in particular Corollary \ref{cor:refinable-cohomology}. 

The topological spaces to which this strategy can be applied are those that we call \emph{super-refinable}, see Definition \ref{defn:super-refinable}. Roughly, these are spaces where each open cover admits a ``very small'' refinement. Another main goal of Section \ref{sec:refine_cech} is to show that (countable) direct limits of compact metric spaces are super-refinable, see Proposition \ref{prop:limit-refinable}. This is relevant because Morse boundaries are, by definition, direct limits of compact metric spaces (possibly over an uncountable index set, but not in our case).

In Section \ref{sect:Bowditch_filling} we prove Proposition \ref{prop:Bowditch_filling}, which allows us to fill discrete cycles in the Bowditch boundary of the relatively hyperbolic groups that we are interested in. This result is near-obvious in the setting of Theorem~\ref{thm:main_intro}, so Section~\ref{sect:Bowditch_filling} is only required to reach the generality of Theorem~\ref{thm:general_intro}.

In Section \ref{sect:Morse_filling} we relate Morse boundaries to Bowditch boundaries. Roughly, the strata of the Morse boundary are obtained by removing from the Bowditch boundary a collection of balls of specific radii around parabolic points. Hence, in order to fill a discrete cycle in the Morse boundary, the idea is to start with a filling in the Bowditch boundary (as constructed in Section \ref{sect:Bowditch_filling}) and ``detour'' it away from parabolic points. We implement this strategy to prove Proposition \ref{diameter bound}, which is one of the main results of the section. We note that a similar strategy was adopted in \cite{Mackay-Sisto} to construct suitable arcs in Bowditch boundaries, and in fact we further develop the techniques of that paper for our purposes. 

In Section \ref{sect:Morse_filling} we also show Proposition \ref{prop:representing_homology} in support of our non-vanishing result, Theorem \ref{thm:non_vanishing}. Roughly, considering the Bowditch boundary minus a finite collection of parabolic points $F$, this proposition provides, for each homology class, a representative that is supported in a fixed stratum of the Morse boundary (depending on $F$). In other words, these representatives stay away not just from $F$, but from \emph{all} parabolic points, in a suitable sense.

In Section \ref{sect:vanishing} we conclude the proof of our vanishing result, Theorem \ref{thm:vanishing}. The main point here is that the fillings constructed in Section \ref{sect:Morse_filling} need to be improved in order to apply Corollary \ref{cor:refinable-cohomology}, and to do so we have to ``make certain discrete chains finer''. In fact, we will use fillings to do so.

Finally, in Section \ref{sect:non-vanishing} we prove our non-vanishing result, Theorem \ref{thm:non_vanishing}. Besides the above-mentioned Proposition \ref{prop:representing_homology}, this boils down to Lemma \ref{lem:nonvanishing_criterion}, which reduces non-vanishing of \v{C}ech cohomology (over a field of characteristic $0$) to finding discrete cycles that are not boundaries of specific chains. This can be seen as a counterpart to Corollary \ref{cor:refinable-cohomology}.

\medskip
{\bf Acknowledgements.} We thank the referee for their several useful comments.

\addtocontents{toc}{\protect\setcounter{tocdepth}{2}}
\section{Discrete chains and super-refinements}
\label{sec:refine_cech}

In this section we define and study discrete chains and related notions. We could not find any papers in the literature studying the exact versions of these notions that we need. However, there are a few places where similar concepts have been considered. The closest we could find is \cite{Barcelo-Capraro-White}, where the authors define discrete homology groups at a given scale, and then consider a direct limit as the scale goes to infinity. Instead, here we are interested in the behaviour at small scales.

\subsection{\v{C}ech cohomology and discrete chains}\label{subsec:discrete_chains}

Let $X$ be a topological space and let $\mc{O}$ be an open cover of $X$. 

We denote by $N(\mc{O})$ the nerve of $\mc{O}$, and by $C_*(N(\mc{O}))$ its chain complex of simplicial chains with $\Z$--coefficients. In other words, for each $n\geq 0$, the group $C_n(N(\mc{O}))$ is the free $\Z$--module generated by ordered $(n+1)$--tuples $[O_0,\dots,O_n]$, where $O_0,\dots,O_n\in\mc{O}$ and $O_0\cap\dots\cap O_n\neq\emptyset$.

If $\mc{O}'<\mc{O}$ is a refinement, there are well-defined maps in homology and cohomology (see e.g.\ part~(1) of Lemma~\ref{lem:family} below):
\begin{align*}
&H_*(N(\mc{O}'),\Z)\ra H_*(N(\mc{O}),\Z), & &H^*(N(\mc{O}),\Z)\ra H^*(N(\mc{O}'),\Z) .
\end{align*}
The \emph{\v{C}ech homology}\footnote{We caution the reader that \v{C}ech homology fails to satisfy the Eilenberg--Steenrod exactness axiom and, thus, it is not a homology theory as such. Instead, \v{C}ech cohomology does constitute an honest cohomology theory.} and \emph{cohomology} of $X$ are then defined as: 
\begin{align*}
\cH_*(X,\Z)=\varprojlim_{\mc{O}} H_*(N(\mc{O}),\Z), & &\cH^*(X,\Z)=\varinjlim_{\mc{O}} H^*(N(\mc{O}),\Z),
\end{align*}
where the two limits are taken over the directed set of open covers $\mc{O}$ of $X$ with the order given by refinements. \v{C}ech (co)homology with other coefficients is defined analogously. 

Rather than working with open covers and their nerves, it is often simpler to study \emph{discrete chains}, which we now introduce. We will relate them to \v{C}ech (co)homology in Subsection~\ref{subsec:super-refinements}. 

We define a chain complex $\mscr{C}_*(X)$ as follows. For each $n\geq 0$, the $\Z$--module $\mscr{C}_n(X)$ is freely generated by ordered $(n+1)$--tuples $[x_0,\dots,x_n]$, where $x_0,\dots,x_n\in X$. As customary, the boundary operator is the $\Z$--linear extension of:
\[ \partial[x_0,\dots,x_n]=\sum_{i=0}^n (-1)^i\cdot [x_0,\dots,\widehat{x_i},\dots,x_n]. \]
We refer to tuples $[x_0,\dots,x_n]$ as \emph{discrete $n$--simplices}, and to general elements of $\mscr{C}_n(X)$ as \emph{discrete $n$--chains}. A discrete chain $c$ is a \emph{cycle} if $\partial c=0$, and a \emph{boundary} if $c=\partial d$ for some discrete chain $d$. It is convenient to also introduce the module of \emph{reduced} $n$--chains $\overline{\mscr{C}}_n(X)$: for $n\geq 1$ we have $\overline{\mscr{C}}_n(X)=\mscr{C}_n(X)$, while $\overline{\mscr{C}}_0(X)$ is the kernel of the coefficient-sum map $\mscr{C}_0(X)\ra\Z$.

The \emph{support} of a discrete simplex $[x_0,\dots,x_n]$ is the set $\{x_0,\dots,x_n\}\sq X$. If $c=\sum_i\s_i$ is a discrete chain, written as a sum of discrete simplices without any cancellations, we define its \emph{support} as $\supp(c):=\bigcup_i\supp(\s_i)$. 

With a slight abuse, we will often confuse a discrete chain $c$ with its support. For instance, for a subset $A\sq X$, we will write ``$c\sq A$'' meaning ``$\supp(c)\sq A$''. Similarly, we will speak of ``vertices $x\in c$'', rather than ``points $x\in\supp(c)$''.

We will often be interested in chains that are ``small'' relative to a fixed cover, in one of two possible different ways, as described in the following definition.

\begin{defn}\label{defn:tiny/fine}
    Given an open cover $\mc{O}$ of $X$, a subset $A\sq X$ is \emph{$\mc{O}$--tiny} if there exists $O\in\mc{O}$ such that $A\sq O$. A discrete chain is \emph{$\mc{O}$--tiny} if its support is. A discrete chain is \emph{$\mc{O}$--fine}\footnote{In the literature, it is common to refer to both $\mc{O}$--tiny subsets and $\mc{O}$--fine chains simply as ``$\mc{O}$--small''. We prefer to use distinct terms for the two concepts, also to highlight the analogy between $\mc{O}$--fine chains and ``$\delta$--fine'' chains introduced in Subsection~\ref{subsec:chains_metric}.} if all of its simplices are $\mc{O}$--tiny. We denote by $\mscr{C}_*(X,\mc{O})\sq\mscr{C}_*(X)$ the chain subcomplex of $\mc{O}$--fine discrete chains. For a refinement $\mc{O}'<\mc{O}$, we have $\mscr{C}_*(X,\mc{O}')\sq\mscr{C}_*(X,\mc{O})$.
\end{defn}

Although not strictly necessary for our purposes, the following definitions are helpful to illustrate our viewpoint on \v{C}ech (co)homology. Let $\mscr{H}_*(X,\mc{O})$ denote the homology of the chain complex $\mscr{C}_*(X,\mc{O})$. For each refinement $\mc{O}'<\mc{O}$, the inclusion $\mscr{C}_*(X,\mc{O}')\sq\mscr{C}_*(X,\mc{O})$ gives rise to a homomorphism $\mscr{H}_*(X,\mc{O}')\ra\mscr{H}_*(X,\mc{O})$. We can then define the \emph{discrete homology} of $X$ as:
\[ \mscr{H}_*(X):=\varprojlim_{\mc{O}} \mscr{H}_*(X,\mc{O}),\]
where the inverse limit is again taken over the directed set of open covers $\mc{O}$ of $X$ with the order given by refinements. For nice spaces, we will show in Corollary~\ref{cor:Cech_vs_discrete} that $\mscr{H}_*(X)$ is naturally isomorphic to $\cH_*(X,\Z)$ (so discrete homology also does not satisfy the Eilenberg--Steenrod axioms).

We conclude this subsection with a couple of definitions and an observation, which will be used multiple times throughout the paper.

\begin{defn}[Cones]\label{defn:cone}
For a discrete simplex $\s=[x_0,\dots,x_n]$ and a point $x\in X$, the \emph{cone} over $\s$ from $x$ is the simplex $\cone(x,\s):=[x,x_0,\dots,x_n]$. In particular, if $\s$ is the empty $-1$--simplex, we define $\cone(x,\s):=[x]$. 

For a discrete chain $c=\sum_j\s_j$, where the $\s_j$ are discrete simplices, the \emph{cone} over $c$ from $x$ is the chain $\cone(x,c):=\sum_j\cone(x,\s_j)$.
\end{defn}

\begin{lem}\label{lem:cone}
Let $c$ be a discrete chain in a topological space $X$. For every point $x\in X$, we have $\partial\cone(x,c)=c-\cone(x,\partial c)$. In particular, if $c\in\overline{\mscr{C}}_n(X)$ is a reduced cycle, then $\partial\cone(x,c)=c$.
\end{lem}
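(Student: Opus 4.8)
The plan is to run the standard simplicial cone-operator computation directly on discrete chains. First I would observe that both sides of $\partial\cone(x,c)=c-\cone(x,\partial c)$ are $\Z$--linear in $c$, since $\partial$ and $\cone(x,-)$ are each defined as $\Z$--linear extensions of maps on simplices; hence it suffices to prove the identity when $c=\s=[x_0,\dots,x_n]$ is a single discrete simplex.

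Next I would expand $\partial\cone(x,\s)=\partial[x,x_0,\dots,x_n]$ using the boundary formula, singling out the new vertex $x$ in position $0$:
\[
\partial[x,x_0,\dots,x_n]=[x_0,\dots,x_n]+\sum_{i=0}^{n}(-1)^{i+1}\,[x,x_0,\dots,\widehat{x_i},\dots,x_n].
\]
The first term is $\s$. Each remaining term equals $(-1)^{i+1}\cone\big(x,[x_0,\dots,\widehat{x_i},\dots,x_n]\big)$, so by $\Z$--linearity of $\cone(x,-)$ the sum is $-\cone(x,\partial\s)$ — where, when $n=0$, one reads $\partial\s$ as the empty $-1$--simplex and uses the convention $\cone(x,[\,])=[x]$ of Definition~\ref{defn:cone}. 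This gives $\partial\cone(x,\s)=\s-\cone(x,\partial\s)$, and the general identity follows by linearity.

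Finally, for the ``in particular'' clause I would argue that a reduced cycle $c\in\overline{\mscr C}_n(X)$ satisfies $\cone(x,\partial c)=0$: for $n\geq 1$ because $\partial c=0$, and for $n=0$ because $\partial c=\big(\sum_j a_j\big)[\,]=0$ when $c=\sum_j a_j[x_j]$, the coefficient sum vanishing by definition of $\overline{\mscr C}_0(X)$. Substituting into the identity above yields $\partial\cone(x,c)=c$. I do not expect any genuine difficulty in this proof; the only step needing a little care is the low-degree bookkeeping, i.e.\ handling the empty $(-1)$--simplex consistently so that the displayed formula is already valid for $0$--chains and so that the reduced hypothesis is invoked at exactly the right place.
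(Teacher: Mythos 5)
Your proposal is correct and follows essentially the same route as the paper: reduce to a single simplex by $\Z$--linearity, verify the cone--boundary identity there by expanding the boundary of $[x,x_0,\dots,x_n]$, and then deduce the ``in particular'' clause. The paper's own proof is terser (it simply declares the simplex computation ``simple'' and invokes linearity), whereas you spell out the expansion and, usefully, flag the low-degree bookkeeping with the empty $(-1)$--simplex — in particular you make explicit that the reduced-cycle hypothesis is exactly what forces $\cone(x,\partial c)=(\textstyle\sum_j a_j)[x]=0$ in degree $0$, which the paper leaves implicit.
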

\begin{proof}
For a discrete simplex $\s$, a simple computation shows that $\partial\cone(x,\s)=\s-\cone(x,\partial\s)$. The statement about general chains then follows from linearity of the coning operator.
\end{proof}

\begin{defn}[Face complex]\label{defn:face_complex}
Let $c=\sum_i\s_i$ be a discrete chain, written as a sum of discrete simplices without cancellations. The \emph{face complex} of $c$ is the chain subcomplex $\mscr{F}_*(c)\sq\mscr{C}_*(X)$ generated by the $\s_i$ and all their lower-dimensional faces.
\end{defn}

\subsection{Super-refinements}\label{subsec:super-refinements}

Let $X$ be a topological space.

The following strengthening of the notion of refinement will be crucial for us to relate properties of nerves of covers to properties of discrete chains, and hence to reduce \v{C}ech cohomology statements to statements about discrete chains.

\begin{defn}\label{defn:super-refinement}
Let $\mathcal O$ be an open cover of $X$. An open cover $\mc{O}'$ is a \emph{super-re\-fine\-ment} of $\mathcal O$ (denoted $\mc{O}'\ll\mc{O}$) if, for each $x\in X$, there exists $O\in\mc{O}$ with:
\begin{align*}%\label{super_refinement_parent}
    \bigcup \left\{O' \mid O'\in\mathcal O',\ x\in O'\right\} \subseteq O .
\end{align*}

An open set $O$ satisfying the above inclusion is called a \emph{parent} of $x$.
\end{defn}

\begin{defn}
\label{defn:super-refinable}
A topological space is \emph{super-refinable} if every open cover admits a super-refinement.
\end{defn}

It is readily seen that compact metric spaces are super-refinable due to the existence of Lebesgue numbers for covers (that is, the fact that, for every open cover, there exists $\epsilon>0$ such that every $\epsilon$--ball is contained in an open set from the cover). We will show in Subsection~\ref{subsec:direct_limits} that countable direct limits of compact metrisable spaces are super-refinable as well.

In order to connect discrete chains in $X$ to simplicial chains in nerves of open covers of $X$, we introduce the following terminology.

\begin{defn}\label{defn:family}
Let $\mc{O}$ be an open cover of $X$.
\begin{itemize}
\item A map $f\colon\mc{O}\ra X$ is a \emph{child map} if $f(O)\in O$ for all $O\in\mc{O}$.
\end{itemize}
Let $\mc{O}'<\mc{O}$ be a refinement.
\begin{itemize}
\item A map $f\colon\mc{O}'\ra\mc{O}$ is a \emph{spouse map} if $O'\sq f(O')$ for all $O'\in\mc{O}'$.
\end{itemize}
Suppose now that $\mc{O}'\ll\mc{O}$ is a super-refinement.
\begin{itemize}
\setlength\itemsep{.1cm}
\item A map $f\colon X\ra\mc{O}$ is an \emph{$(\mc{O}',\mc{O})$--parent map} if $O'\sq f(x)$ for all $O'\in\mc{O}'$ with $x\in O'$ (that is, $f(x)$ is a parent of $x$ according to Definition~\ref{defn:super-refinement}).
\item A map $f\colon X\ra X$ is an \emph{$(\mc{O}',\mc{O})$--sibling map} if it is the composition of an $(\mc{O}',\mc{O})$--parent map $X\ra\mc{O}$ and a child map $\mc{O}\ra X$.
\end{itemize}
\end{defn}

Each of the maps appearing in Definition~\ref{defn:family} naturally induces a chain map between the corresponding chain complexes. This is the content of the next lemma.

\begin{lem}\label{lem:family}
Let $\mc{O}$ be an open cover of $X$ with a refinement $\mc{O'}<\mc{O}$.
\begin{enumerate}
\item Every spouse map $f\colon\mc{O}'\ra\mc{O}$ naturally induces a chain map $f_*\colon C_*(N(\mc{O}'))\ra C_*(N(\mc{O}))$. If $g\colon\mc{O}'\ra\mc{O}$ is another spouse map, then $f_*$ and $g_*$ are chain-homotopic.
\end{enumerate}
Suppose now that $\mc{O}'\ll\mc{O}$ is a super-refinement.
\begin{enumerate}
\setcounter{enumi}{1}
\setlength\itemsep{.1cm}
\item Every child map $f\colon\mc{O}'\ra X$ naturally induces a chain map $f_*\colon C_*(N(\mc{O}'))\ra \mscr{C}_*(X,\mc{O})$.
If $g\colon\mc{O}'\ra X$ is another child map, then $f_*$ and $g_*$ are chain-homotopic.
\item Every $(\mc{O}',\mc{O})$--parent map $f\colon X\ra\mc{O}$ naturally induces a chain map $f_*\colon \mscr{C}_*(X,\mc{O}')\ra C_*(N(\mc{O}))$.
If $g\colon X\ra\mc{O}$ is another $(\mc{O}',\mc{O})$--parent map, then $f_*$ and $g_*$ are chain-homotopic.
\end{enumerate}

Finally, consider a chain of super-refinements $\mc{O}''\ll\mc{O}'\ll\mc{O}$.
\begin{enumerate}
\setcounter{enumi}{3}
\item Every $(\mc{O}'',\mc{O}')$--sibling map $f\colon X\ra X$ naturally induces a chain map $f_*\colon \mscr{C}_*(X,\mc{O}'')\ra \mscr{C}_*(X,\mc{O})$ and this is chain-homotopic to the standard inclusion. 
\end{enumerate}
\end{lem}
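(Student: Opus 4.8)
The plan is to establish parts (1)--(3) first, each by the same two-step recipe — define the chain map on generators, check it is a chain map, then build an explicit prism/cone chain homotopy between two choices — and then deduce part (4) by composing the data of (2) and (3).

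First, for part (1): given a spouse map $f\colon\mc{O}'\ra\mc{O}$, send a simplex $[O_0',\dots,O_n']$ of $N(\mc{O}')$ to $[f(O_0'),\dots,f(O_n')]$. This is a simplex of $N(\mc{O})$ because $O_0'\cap\dots\cap O_n'\neq\emptyset$ forces a common point, which lies in each $f(O_i')\supseteq O_i'$, so $f(O_0')\cap\dots\cap f(O_n')\neq\emptyset$; it is a chain map because face deletion is preserved. For the homotopy between two spouse maps $f,g$, I would use the standard simplicial prism operator $P([O_0',\dots,O_n'])=\sum_i(-1)^i[f(O_0'),\dots,f(O_i'),g(O_i'),\dots,g(O_n')]$, after checking that each listed tuple spans a simplex of $N(\mc{O})$ — again by producing a common point of $O_0'\cap\dots\cap O_n'$, which lies in every $f(O_j')$ and every $g(O_j')$. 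The identity $\partial P + P\partial = g_* - f_*$ is the usual computation.

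For part (2): a child map $f\colon\mc{O}'\ra X$ sends $[O_0',\dots,O_n']$ to the discrete simplex $[f(O_0'),\dots,f(O_n')]\in\mscr{C}_*(X)$. The point of assuming $\mc{O}'\ll\mc{O}$ (rather than merely $\mc{O}'<\mc{O}$) is to guarantee this lands in the \emph{$\mc{O}$--fine} subcomplex: if $O_0'\cap\dots\cap O_n'\neq\emptyset$, pick $x$ in that intersection; then each $O_i'$ contains $x$, so each $O_i'$ is contained in a common parent $O\in\mc{O}$ of $x$, whence $f(O_i')\in O_i'\subseteq O$ and the support $\{f(O_0'),\dots,f(O_n')\}$ is $\mc{O}$--tiny. (This is the one place where the super-refinement hypothesis is genuinely needed, and it is the only real subtlety in the lemma.) Chain-map and chain-homotopy (via the same prism formula, now producing discrete simplices, whose $\mc{O}$--fineness is checked by the same parent argument applied to a common point of the relevant $O_i'$'s) go through identically. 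Part (3) is dual in flavour: an $(\mc{O}',\mc{O})$--parent map $f\colon X\ra\mc{O}$ sends a discrete $\mc{O}'$--fine simplex $[x_0,\dots,x_n]$ — whose support lies in some $O'\in\mc{O}'$ — to $[f(x_0),\dots,f(x_n)]$; since $x_0,\dots,x_n\in O'$ and $O'$ contains some common point (any $x_i$ works), each $f(x_j)$ is a parent of that point and hence contains $O'$, so the $f(x_j)$ have a common point and span a simplex of $N(\mc{O})$. Chain map and the prism homotopy between two parent maps are routine by the now-familiar pattern.

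Finally, part (4): an $(\mc{O}'',\mc{O}')$--sibling map factors as $X\xrightarrow{p}\mc{O}'\xrightarrow{c}X$ with $p$ an $(\mc{O}'',\mc{O}')$--parent map and $c$ a child map. By part (3) (applied to the super-refinement $\mc{O}''\ll\mc{O}'$) $p$ induces $p_*\colon\mscr{C}_*(X,\mc{O}'')\ra C_*(N(\mc{O}'))$; by part (2) (applied to $\mc{O}'\ll\mc{O}$) $c$ induces $c_*\colon C_*(N(\mc{O}'))\ra\mscr{C}_*(X,\mc{O})$; compose. To see $c_*\o p_*$ is chain-homotopic to the inclusion $\iota\colon\mscr{C}_*(X,\mc{O}'')\ra\mscr{C}_*(X,\mc{O})$, note $(c\o p)\colon X\ra X$ satisfies $x\in O''\implies c(p(x))\in p(x)$, and $O''\subseteq p(x)$ (as $p(x)$ is a parent of any common point of $O''$), so both $x$ and $(c\o p)(x)$ lie in the single set $p(x)\in\mc{O}'\subseteq\mc{O}$ whenever $x$ ranges over an $\mc{O}''$-tiny set; this says the "straight-line" prism operator $H([x_0,\dots,x_n])=\sum_i(-1)^i[x_0,\dots,x_i,(c\o p)(x_i),\dots,(c\o p)(x_n)]$ produces $\mc{O}$--fine chains, and $\partial H + H\partial = (c\o p)_* - \iota$ by the standard computation. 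The main obstacle throughout is bookkeeping the fineness/tininess claims — each one reduces to extracting a point from a nonempty intersection and invoking the definition of parent — so once that pattern is isolated the rest is the classical acyclic-models-style prism argument.
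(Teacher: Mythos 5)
Your proof is essentially correct and follows the same approach as the paper: define each chain map on generators, check well-definedness by extracting a common point from a nonempty intersection and invoking the super-refinement, and establish homotopies via the standard simplicial prism operator. Your compositional formulation of part (4) (factoring the sibling map as $c\circ p$ and composing the chain maps from parts (3) and (2)) is equivalent to the paper's direct construction and arguably slightly cleaner.

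One small slip in part (4): to justify $\mc{O}$-fineness of the prism chains you write ``$p(x)\in\mc{O}'\subseteq\mc{O}$,'' but $\mc{O}'\ll\mc{O}$ does not make $\mc{O}'$ a subcollection of $\mc{O}$; moreover, a prism simplex $[x_0,\dots,x_i,(c\circ p)(x_i),\dots,(c\circ p)(x_n)]$ has vertices spread across several different sets $p(x_0),\dots,p(x_n)$, not a single $p(x)$. The correct observation (which is what the paper uses, and which you already ran for part (2)) is: since $\{x_0,\dots,x_n\}\subseteq O''$, every $p(x_j)$ contains $O''$; picking $y\in O''$, all the $p(x_j)$ contain $y$, so $\mc{O}'\ll\mc{O}$ provides a single $O\in\mc{O}$ containing $\bigcup_j p(x_j)$; and since $x_j\in p(x_j)$ and $(c\circ p)(x_j)\in p(x_j)$, this $O$ contains every vertex of the prism simplex. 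The fix is immediate given your own pattern, but as written the sentence is both imprecise and insufficient.
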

\begin{proof}
Part~(1) is classical (see e.g.\ \cite[Lemma~10.4.2]{Bott-Tu}) and the proof of the other three parts is very similar, so we will omit most of the computations.

Recall that $\mscr{C}_n(X,\mc{O})$ is the free $\Z$--module generated by $\mc{O}$--tiny discrete simplices $[x_0,\dots,x_n]$, while $C_n(N(\mc{O}))$ is the free $\Z$--module generated by the elements $[O_0,\dots,O_n]$, where $O_0,\dots,O_n\in\mc{O}$ and $O_0\cap\dots\cap O_n\neq\emptyset$.

\smallskip
{\bf Part~(1).} The chain map $f_n\colon C_n(N(\mc{O}'))\ra C_n(N(\mc{O}))$ takes $[O_0',\dots,O_n']$ to $[f(O_0'),\dots,f(O_n')]$. If $g\colon\mc{O}'\ra\mc{O}$ is another spouse map, a chain homotopy $H_n\colon C_n(N(\mc{O}'))\ra C_{n+1}(N(\mc{O}))$ between $f_*$ and $g_*$ is given by the formula:
\[ H_n[O_0',\dots,O_n']:=\sum_{i=0}^n (-1)^i \cdot \left[ f(O_0'),\dots,f(O_i'),g(O_i'),\dots,g(O_n') \right] . \]

\smallskip
{\bf Part~(2).} The chain map $f_n\colon C_n(N(\mc{O}'))\ra \mscr{C}_n(X,\mc{O})$ takes $[O_0',\dots,O_n']$ to $[f(O_0'),\dots,f(O_n')]$. We prove that $f_n$ is well-defined. Since $\mc{O}'$ is a super-refinement of $\mc{O}$, there exists a parent $O\in\mc{O}$ of some point $x\in O_0'\cap\dots\cap O_n'$, and we have: 
\[\{f(O_0'),\dots,f(O_n')\}\sq O_0'\cup\dots\cup O_n' \sq O.\] 
Thus, the discrete simplex $[f(O_0'),\dots,f(O_n')]$ is indeed $\mc{O}$--tiny.

Now, if $g\colon\mc{O}'\ra X$ is another child map, a chain homotopy $H_n\colon C_n(N(\mc{O}'))\ra\mscr{C}_{n+1}(X,\mc{O})$ between $f_*$ and $g_*$ is given by the formula:
\[ H_n[O_0',\dots,O_n']:=\sum_{i=0}^n (-1)^i \cdot \left[ f(O_0'),\dots,f(O_i'),g(O_i'),\dots,g(O_n') \right] . \]
This map is again well-defined because all $f(O_i')$ and $g(O_i')$ lie in any parent $O\in\mc{O}$ of any point of $O_0'\cap\dots\cap O_n'$. We leave to the reader the straightforward check that $\partial H_n+H_{n-1}\partial=g_n-f_n$ for all $n\geq 0$.

\smallskip
{\bf Part~(3).} The chain map $f_n\colon\mscr{C}_n(X,\mc{O}')\ra C_n(N(\mc{O}))$ takes $[x_0,\dots,x_n]$ to $[f(x_0),\dots,f(x_n)]$. If $\{x_0,\dots,x_n\}$ is contained in $O'\in\mc{O}'$, then 
\[f(x_0)\cap\dots\cap f(x_n)\supseteq O'\neq\emptyset,\] 
by definition of parent map. This proves that $f_n$ is well-defined.

If $g\colon X\ra\mc{O}$ is another parent map, a chain homotopy $H_n\colon\mscr{C}_n(X,\mc{O}')\ra C_{n+1}(N(\mc{O}))$ between $f_*$ and $g_*$ is given by the formula:
\[ H_n[x_0,\dots,x_n]:=\sum_{i=0}^n (-1)^i \cdot \left[ f(x_0),\dots,f(x_i),g(x_i),\dots,g(x_n) \right] . \]
Again, this is a well-defined map because all $f(x_i)$ and $g(x_i)$ contain any $O'\in\mc{O}'$ containing all $x_i$. As above, we have $\partial H_n+H_{n-1}\partial=g_n-f_n$ for all $n\geq 0$.

\smallskip
{\bf Part~(4).} The chain map $f_n\colon\mscr{C}_n(X,\mc{O}'')\ra \mscr{C}_n(X,\mc{O})$ takes $[x_0,\dots,x_n]$ to $[f(x_0),\dots,f(x_n)]$. Let us show that this is well-defined. First, by definition of sibling map, there exists an $(\mc{O}'',\mc{O}')$--parent map $F\colon X\ra\mc{O}'$ such that $f(x)\in F(x)$ for all $x\in X$. If $\{x_0,\dots,x_n\}$ is $\mc{O}''$--tiny, we have $F(x_0)\cap\dots\cap F(x_n)\neq\emptyset$ as in part~(3). It follows that $F(x_0)\cup\dots\cup F(x_n)$ is $\mc{O}$--tiny as in part~(2). Since this union contains all $f(x_i)$, we conclude that the discrete simplex $[f(x_0),\dots,f(x_n)]$ is indeed $\mc{O}$--tiny.

Now, if $g\colon X\ra X$ is another sibling map, a chain homotopy $H_n\colon\mscr{C}_n(X,\mc{O}')\ra \mscr{C}_{n+1}(X,\mc{O})$ between $f_*$ and $g_*$ is given by the formula:
\[ H_n[x_0,\dots,x_n]:=\sum_{i=0}^n (-1)^i \cdot \left[ f(x_0),\dots,f(x_i),g(x_i),\dots,g(x_n) \right] . \]
The map $H_n$ is well-defined by the argument in the previous paragraph, and the usual computation shows that $\partial H_n+H_{n-1}\partial=g_n-f_n$ for all $n\geq 0$.

This concludes the proof of the lemma.
\end{proof}

Part~(1) of Lemma~\ref{lem:family} shows that each refinement $\mc{O}'<\mc{O}$ gives rise to well-defined maps between the (co)homology groups of the nerves of $\mc{O}$ and $\mc{O}'$, which we used to define the \v{C}ech (co)homology groups of $X$ in Subsection~\ref{subsec:discrete_chains}.

The rest of Lemma~\ref{lem:family} allows us to relate the \v{C}ech homology of $X$ to its discrete homology, provided that $X$ is super-refinable. While we will not need the next result (preferring instead its reformulation given by Corollary~\ref{cor:refinable-cohomology}), it motivates our approach to \v{C}ech (co)homology.

\begin{cor}\label{cor:Cech_vs_discrete}
If $X$ is super-refinable, its \v{C}ech homology $\cH_*(X,\Z)$ is naturally isomorphic to its discrete homology $\mscr{H}_*(X)$.
\end{cor}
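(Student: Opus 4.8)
The plan is to build mutually inverse homomorphisms $\Phi\colon\cH_*(X,\Z)\to\mscr{H}_*(X)$ and $\Psi\colon\mscr{H}_*(X)\to\cH_*(X,\Z)$ between the two inverse limits, assembled from the chain maps of Lemma~\ref{lem:family}(2)--(4) arranged into a zig-zag. I would first record two elementary facts about super-refinements: the relation $\ll$ is transitive, and more precisely $\mc{O}''<\mc{O}'\ll\mc{O}$ or $\mc{O}'\ll\mc{O}_1<\mc{O}_2$ forces a super-refinement relation between the outer covers (in each case a parent of a point for the coarser super-refinement is still a parent for the finer cover); and any two open covers admit a common super-refinement (super-refine a common refinement). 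Combined with super-refinability, this makes the super-refinements ``cofinal'' among all refinements, which is exactly what lets the zig-zag close up.

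For each super-refinement $\mc{O}'\ll\mc{O}$, Lemma~\ref{lem:family}(2) and~(3) induce homomorphisms on homology, independent of the chosen child and parent maps,
\[ \phi_{\mc{O}',\mc{O}}\colon H_*(N(\mc{O}'),\Z)\to\mscr{H}_*(X,\mc{O}),\qquad \psi_{\mc{O}',\mc{O}}\colon\mscr{H}_*(X,\mc{O}')\to H_*(N(\mc{O}),\Z). \]
The core of the proof is a short list of compatibility identities, each proved by observing that the relevant composite of chain maps is induced by --- or, in one case, is chain-homotopic by the same explicit formula to one induced by --- a child, parent, or spouse map, and then invoking the uniqueness statement of the corresponding part of Lemma~\ref{lem:family}: (a) for $\mc{O}''\ll\mc{O}'\ll\mc{O}$, the composite $\psi_{\mc{O}',\mc{O}}\o\phi_{\mc{O}'',\mc{O}'}$ is the refinement map $H_*(N(\mc{O}''))\to H_*(N(\mc{O}))$, since a child map followed by a parent map is a spouse map (Lemma~\ref{lem:family}(1)); (b) for $\mc{O}''\ll\mc{O}'\ll\mc{O}$, the composite $\phi_{\mc{O}',\mc{O}}\o\psi_{\mc{O}'',\mc{O}'}$ is the inclusion-induced map $\mscr{H}_*(X,\mc{O}'')\to\mscr{H}_*(X,\mc{O})$, since a parent map followed by a child map is a sibling map (Lemma~\ref{lem:family}(4)); and (c) $\phi$ and $\psi$ are compatible with the refinement maps among the $H_*(N(-))$ and with the inclusion maps among the $\mscr{H}_*(X,-)$ --- for instance $\psi_{\mc{O}',\mc{O}_2}=(\text{refinement})\o\psi_{\mc{O}',\mc{O}_1}$ when $\mc{O}'\ll\mc{O}_1<\mc{O}_2$, because a spouse map after a parent map is again a parent map.

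Granting these identities, the construction is routine. For a compatible family $(a_\mc{O})\in\varprojlim_\mc{O} H_*(N(\mc{O}))$, define $\Phi\big((a_\mc{O})\big)_\mc{O}:=\phi_{\mc{O}',\mc{O}}(a_{\mc{O}'})$ for any $\mc{O}'\ll\mc{O}$; passing through a common super-refinement of two competing choices and using (a)--(c) together with the compatibility of $(a_\mc{O})$ shows this is independent of $\mc{O}'$ and defines a compatible family, so $\Phi$ is a well-defined homomorphism, and $\Psi$ is defined symmetrically using parent maps. Then identity~(a) together with compatibility of $(a_\mc{O})$ gives $\Psi\o\Phi=\mathrm{id}$, and~(b) gives $\Phi\o\Psi=\mathrm{id}$. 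Since $\Phi$ depends on no choices, it is the desired isomorphism, and it is natural, e.g.\ with respect to continuous maps between super-refinable spaces (which induce compatible maps on nerves of preimage covers and on discrete chain complexes).

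I do not expect a single hard step, but rather that the main work is the bookkeeping in (a)--(c): tracking which super-refinement relations are active at each stage and checking they survive the composites via the transitivity-type facts above, and dealing with the one place --- the independence of $\Phi$ on the choice of $\mc{O}'$ --- where a composite of a spouse map with a child map is not literally a child map. There one has to check directly, using the explicit chain homotopy in the proof of Lemma~\ref{lem:family}(2) (which only needs all points involved to lie in a common parent, a property that does persist under the composite), that the chain map still lands in the correct subcomplex $\mscr{C}_*(X,\mc{O})$ and is chain-homotopic to one coming from an honest child map.
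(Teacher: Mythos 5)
Your proposal is correct and follows the same strategy as the paper's proof: use Lemma~\ref{lem:family}(2)--(3) to construct zig-zag maps between nerve homology and discrete homology groups, verify via parts~(1) and~(4) that the relevant compositions are the standard refinement and inclusion maps, and assemble these into mutually inverse maps on the inverse limits. The extra compatibility bookkeeping you carry out (cofinality of super-refinements, common super-refinements, the spouse-then-child subtlety in the independence-of-choice check) is precisely what the paper's terse ``these maps give rise to homomorphisms'' step leaves implicit, but it does not change the structure of the argument.
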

\begin{proof}
Recall that we have:
\begin{align*}
\cH_*(X,\Z)=\varprojlim_{\mc{O}} H_*(N(\mc{O}),\Z), & &\mscr{H}_*(X)=\varprojlim_{\mc{O}} \mscr{H}_*(X,\mc{O}).
\end{align*}
For each super-refinement $\mc{O}'\ll\mc{O}$, parts~(2) and~(3) of Lemma~\ref{lem:family} give maps:
\begin{align*}
H_*(N(\mc{O}'),\Z)&\ra \mscr{H}_*(X,\mc{O}), & \mscr{H}_*(X,\mc{O}')&\ra H_*(N(\mc{O}),\Z).
\end{align*}
In addition, for a chain of super-refinements $\mc{O}''\ll\mc{O}'\ll\mc{O}$, parts~(1) and~(4) of Lemma~\ref{lem:family} guarantee that the compositions
\begin{align*}
H_*(N(\mc{O}''),\Z)&\ra \mscr{H}_*(X,\mc{O}')\ra H_*(N(\mc{O}),\Z), \\ \mscr{H}_*(X,\mc{O}'')&\ra H_*(N(\mc{O}'),\Z)\ra \mscr{H}_*(X,\mc{O}).
\end{align*}
coincide with the standard homomorphisms. 

Since $X$ is super-refinable, the above maps give rise to homomorphisms
\begin{align*}
\cH_*(X,\Z)&\ra \mscr{H}_*(X), & \mscr{H}_*(X)&\ra \cH_*(X,\Z),
\end{align*}
and both their compositions are the identity. This proves the corollary.
\end{proof}

In order to show vanishing of $\cH_n(X,\Z)$ and $\mscr{H}_n(X)$ for $n\geq 1$, it is sometimes more practical to check that $X$ satisfies the following (a priori stronger) condition\footnote{The letters D and A stand for Discrete and Acyclic.}:
\begin{enumerate}
\item[$(DA_n)$] \emph{for each open cover $\mathcal O$ there exists a refinement $\mathcal O'$ such that every discrete $\mathcal O'$--fine $n$--cycle is the boundary of a discrete $\mathcal O$--fine $(n+1)$--chain.}
\end{enumerate}
We only define $(DA_n)$ as above for $n\geq 1$, declaring instead $(DA_0)$ to be vacuous.

\begin{cor}\label{cor:refinable-cohomology}
Let $X$ be super-refinable and let $n\geq 1$. Let $\mc{F}$ be a field and $\mc{R}$ a principal ideal domain.
\begin{enumerate}
\item If $X$ satisfies $(DA_n)$, then $\cH_n(X,\Z)=\{0\}$ and $\cH^n(X,\mc{F})=\{0\}$.
\item If $X$ satisfies $(DA_n)$ and $(DA_{n-1})$, then $\cH^n(X,\mc{R})=\{0\}$.
\end{enumerate}
\end{cor}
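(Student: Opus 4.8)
The plan is to reduce everything to the statement that, for $k\geq 1$, the inverse system $\{H_k(N(\mc{O}),\Z)\}_{\mc{O}}$ of integral simplicial homology groups of nerves is \emph{pro-zero}: for every open cover $\mc{O}$ there is a refinement $\mc{O}'$ such that $H_k(N(\mc{O}'),\Z)\to H_k(N(\mc{O}),\Z)$ is the zero map. Granting this for $k=n$, and also for $k=n-1$ in part~(2), the corollary becomes formal. Since $\cH_n(X,\Z)=\varprojlim_{\mc{O}}H_n(N(\mc{O}),\Z)$, pro-vanishing in degree $n$ forces $\cH_n(X,\Z)=\{0\}$, because in a compatible family the component over $\mc{O}$ is the image of the component over a suitable finer $\mc{O}'$ under a map that can be taken to be $0$. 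For cohomology, the universal coefficient theorem gives, for each $\mc{O}$ and any coefficients $M$, a natural short exact sequence
\[0\to\mathrm{Ext}^1_\Z\bigl(H_{n-1}(N(\mc{O}),\Z),M\bigr)\to H^n(N(\mc{O}),M)\to\mathrm{Hom}_\Z\bigl(H_n(N(\mc{O}),\Z),M\bigr)\to 0,\]
and taking the (exact) direct limit over $\mc{O}$ turns this into a short exact sequence with middle term $\cH^n(X,M)$ and outer terms $\varinjlim_{\mc{O}}\mathrm{Ext}^1_\Z(H_{n-1}(N(\mc{O}),\Z),M)$ and $\varinjlim_{\mc{O}}\mathrm{Hom}_\Z(H_n(N(\mc{O}),\Z),M)$. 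For a pro-zero inverse system, the direct limit of the $\mathrm{Hom}$'s (respectively of the $\mathrm{Ext}^1$'s) into any fixed $M$ vanishes, since any homomorphism or extension class restricts to $0$ over a suitable finer cover. Hence $(DA_n)$ annihilates the $\mathrm{Hom}$-term; when $M=\mc{F}$ is a field the $\mathrm{Ext}$-term contributes nothing, so $(DA_n)$ alone gives $\cH^n(X,\mc{F})=\{0\}$, while if in addition $(DA_{n-1})$ holds then the $\mathrm{Ext}$-term dies as well and $\cH^n(X,\mc{R})=\{0\}$ for any principal ideal domain $\mc{R}$.

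The substance is therefore the pro-vanishing of $\{H_k(N(\mc{O}),\Z)\}$ from $(DA_k)$, and this is where super-refinability and Lemma~\ref{lem:family} enter. Fix $\mc{O}$; take $k=n$, the case $k=n-1$ being identical. Using that $X$ is super-refinable, choose a super-refinement $\mc{O}_1\ll\mc{O}$, then apply $(DA_n)$ to $\mc{O}_1$ to obtain a refinement $\mc{O}_2<\mc{O}_1$ in which every $\mc{O}_2$-fine $n$-cycle bounds an $\mc{O}_1$-fine $(n{+}1)$-chain, and finally choose a super-refinement $\mc{O}_3\ll\mc{O}_2$; I claim $\mc{O}'=\mc{O}_3$ works. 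Pick (Lemma~\ref{lem:family}) a child map $c\colon\mc{O}_3\to X$, which induces a chain map $c_*\colon C_*(N(\mc{O}_3))\to\mscr{C}_*(X,\mc{O}_2)$ by part~(2) since $\mc{O}_3\ll\mc{O}_2$, and an $(\mc{O}_1,\mc{O})$-parent map $p\colon X\to\mc{O}$, which induces a chain map $p_*\colon\mscr{C}_*(X,\mc{O}_1)\to C_*(N(\mc{O}))$ by part~(3) since $\mc{O}_1\ll\mc{O}$. Together with the inclusion $\iota\colon\mscr{C}_*(X,\mc{O}_2)\hookrightarrow\mscr{C}_*(X,\mc{O}_1)$, these compose to a chain map $p_*\circ\iota\circ c_*\colon C_*(N(\mc{O}_3))\to C_*(N(\mc{O}))$ that sends a generator $[O_0,\dots,O_m]$ to $[(p\circ c)(O_0),\dots,(p\circ c)(O_m)]$. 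A short set-theoretic check shows that $p\circ c\colon\mc{O}_3\to\mc{O}$ is a spouse map: given $O\in\mc{O}_3$, choose $O'\in\mc{O}_1$ with $O\sq O'$ (possible since $\mc{O}_3<\mc{O}_1$); then $c(O)\in O\sq O'$, so the parent $p(c(O))$ of $c(O)$ contains $O'$, hence contains $O$. By part~(1) of Lemma~\ref{lem:family}, $p_*\circ\iota\circ c_*$ therefore represents the standard refinement-induced map $H_*(N(\mc{O}_3),\Z)\to H_*(N(\mc{O}),\Z)$ on homology.

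To finish, observe that $p_*\circ\iota\circ c_*$ vanishes on $H_n$: if $z\in C_n(N(\mc{O}_3))$ is a cycle, then $c_*(z)$ is an $\mc{O}_2$-fine $n$-cycle, so by the choice of $\mc{O}_2$ we may write $c_*(z)=\partial d$ for some $\mc{O}_1$-fine chain $d$; thus $\iota(c_*(z))=\partial d$ is a boundary in $\mscr{C}_*(X,\mc{O}_1)$, and hence $p_*(\iota(c_*(z)))=\partial(p_*d)$ is a boundary in $C_*(N(\mc{O}))$, i.e.\ $[p_*\iota c_*(z)]=0$. So the standard map $H_n(N(\mc{O}_3),\Z)\to H_n(N(\mc{O}),\Z)$ is zero, proving pro-vanishing in degree $n$; the same argument with $n$ replaced by $n-1$ (using $(DA_{n-1})$) handles degree $n-1$, and for the field statement of part~(1) one repeats the whole argument with $\mc{F}$-coefficients in place of $\Z$-coefficients.

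The step I expect to require the most care is the identification of $p_*\circ\iota\circ c_*$ with the standard refinement map: one must notice that the composite of a child map with a parent map is \emph{literally} a spouse map (so that part~(1) of Lemma~\ref{lem:family} applies), and one must keep $\mc{O}_1,\mc{O}_2,\mc{O}_3$ nested in exactly the order that makes parts~(1)--(3) of Lemma~\ref{lem:family} simultaneously usable. Once that bookkeeping is in place, the vanishing on $H_n$, the inverse-limit computation, and the universal-coefficient argument are all routine.
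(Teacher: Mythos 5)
Your proof is correct and takes essentially the same approach as the paper: the chain $\mc{O}_3\ll\mc{O}_2<\mc{O}_1\ll\mc{O}$ (with $\mc{O}_2<\mc{O}_1$ supplied by $(DA_n)$), the composite spouse map $p\circ c$ built from a child map and a parent map, and the filling of $c_*(z)$ by an $\mc{O}_1$--fine chain via Lemma~\ref{lem:family} correspond exactly to the paper's $\mc{O}'\ll\tilde{\mc{O}}'<\tilde{\mc{O}}\ll\mc{O}$, $s=g\circ f$, and the filling of $f_*(c)$. Your handling of part~(2) by passing the natural universal-coefficients short exact sequences to a (filtered, hence exact) direct limit and killing the two outer terms separately is a mild reorganisation of the paper's finite diagram chase over a double refinement $\mc{O}''<\mc{O}'<\mc{O}$, not a different argument.
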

\begin{proof}
Suppose that $X$ satisfies $(DA_n)$ and let $\mathcal{O}$ be an open cover of $X$. 

For part~(1), we want to show that there exists a refinement $\mathcal{O}^\prime<\mathcal O$ and a spouse map $s\colon \mathcal O^\prime \to\mathcal O$ such that the induced maps
\begin{align*}
s_n&\colon H_n(N(\mathcal O'),\Z)\to H_n(N(\mathcal O),\Z), & s^n&\colon H^n(N(\mathcal O),\mc{F})\to H^n (N(\mathcal O ^\prime),\mc{F})
\end{align*} 
are trivial. It then follows by definition of \v{C}ech (co)homology that $\cH_n(X,\Z)=0$ and $\cH^n(X,\mc{F})=0$.

Consider a chain of refinements $\mc{O}'\ll \tilde{\mathcal O}^\prime < \tilde{\mathcal O}\ll\mc{O}$, where $\tilde{\mathcal O}^\prime < \tilde{\mathcal O}$ is a refinement provided by $(DA_n)$. Let $f\colon\mc{O}'\ra X$ be a child map and let $g\colon X\ra\mc{O}$ be an $(\tilde{\mathcal O}, \mc{O})$--parent map. Let $f_*,g_*$ be the chain maps provided by Lemma~\ref{lem:family}. Observe that $s:=g\circ f\colon \mc {O}'\to \mc {O}$ is a spouse map. 

Now, let $c\in C_n(N(\mc{O}'))$ be a cycle. Then $f_*(c)\in\mscr{C}_n(X,\tilde{\mc{O}}')$ is an $\tilde{\mc{O}}'$--fine discrete cycle and hence there exists an $\tilde{\mc{O}}$--fine discrete chain $d$ with $f_*(c)=\partial d$. Since $f_*$ and $g_*$ are chain maps, we have that:
\[\partial g_*(d)=g_*(\partial d)=g_*f_*(c)=s_*(c).\]

This shows that the induced map in homology $s_n\colon H_n(N(\mathcal O'),\Z)\to H_n(N(\mathcal O),\Z)$ is trivial. The induced map in cohomology $s^n\colon H^n(N(\mathcal O),\mc{F})\to H^n(N(\mathcal O'),\mc{F})$ is then also trivial, by the universal coefficients theorem. This proves part~(1).

In order to prove part~(2), let the refinement $\mc{O}'$ and the spouse map $s\colon\mc{O}'\ra\mc{O}$ be those constructed in part~(1). Then consider a refinement $\mc{O}''<\mc{O}'$ and a spouse map $t\colon\mc{O}''\ra\mc{O}$ obtained by applying the same procedure to the cover $\mc{O}'$. We will show that, when $X$ satisfies $(DA_{n-1})$, the spouse map $s\o t$ induces the trivial map $H^n(N(\mathcal O),\mc{R})\to H^n(N(\mathcal O''),\mc{R})$.

The universal coefficients theorem (in the form stated e.g.\ in \cite[Section~5.5, Theorem~3]{Spanier}) gives the following commutative diagram with exact rows:
\[
\begin{tikzcd}
0 \arrow[r] & {\rm Ext}(H_{n-1}(N(\mathcal O),\Z),\mc{R}) \arrow[r] \arrow[d, "(s_{n-1})^*"] & H^n(N(\mathcal O),\mc{R}) \arrow[r] \arrow[d, "s^n"] & {\rm Hom}(H_n(N(\mathcal O),\Z),\mc{R}) \arrow[r] \arrow[d, "(s_n)^*"] & 0 \\
0 \arrow[r] & {\rm Ext}(H_{n-1}(N(\mathcal O'),\Z),\mc{R}) \arrow[r] \arrow[d, "(t_{n-1})^*"] & H^n(N(\mathcal O'),\mc{R}) \arrow[r] \arrow[d, "t^n"] & {\rm Hom}(H_n(N(\mathcal O'),\Z),\mc{R}) \arrow[r] \arrow[d, "(t_n)^*"] & 0 \\
0 \arrow[r] & {\rm Ext}(H_{n-1}(N(\mathcal O''),\Z),\mc{R}) \arrow[r] & H^n(N(\mathcal O''),\mc{R}) \arrow[r] & {\rm Hom}(H_n(N(\mathcal O''),\Z),\mc{R}) \arrow[r] & 0
\end{tikzcd}
\]
Since $X$ satisfies $(DA_n)$ and $(DA_{n-1})$, we have shown that the maps in homology $s_n$ and $t_{n-1}$ are trivial, provided that $n\geq 2$. Hence, in the above diagram, the left-hand map $(t_{n-1})^*$ and the right-hand map $(s_n)^*$ are trivial. This latter statement also holds for $n=1$, since homology is free in degree $0$, which leads to vanishing of the ${\rm Ext}$ terms.

A quick diagram chase then shows that the composition $t^n\o s^n$ is trivial, as required.
\end{proof}

We conclude this subsection with the following observation needed in Section~\ref{sect:vanishing}.

\begin{rmk}\label{rmk:super_super_refinement}
Consider an open cover $\mc{O}$ and a double super-refinement $\mc{O}''\ll\mc{O}'\ll\mc{O}$. Then, for any pairwise-intersecting collection $\{O_i''\}_{i\in I}\sq\mc{O}''$, there exists some $O\in\mc{O}$ containing all $O_i''$.

Indeed, fixing some $i_0\in I$, there exist elements $O_i'\in\mc{O}'$ with $O_{i_0}''\cup O_i''\sq O_i'$, since $\mc{O}''\ll\mc{O}'$ and $O_{i_0}''\cap O_i''\neq\emptyset$. Now, all $O_i'$ contain any given point of $O_{i_0}''$, hence their union is contained in an element of $\mc{O}$, again because $\mc{O}'\ll\mc{O}$.
\end{rmk}

\subsection{Direct limits of compact spaces}\label{subsec:direct_limits}

We will prove the vanishing part of Theorems~\ref{thm:main_intro} and~\ref{thm:general_intro} by checking that our Morse boundaries satisfy condition $(DA_n)$ in the required range and applying Corollary~\ref{cor:refinable-cohomology}. Thus, we first need to show that these spaces are super-refinable, which is the goal of this subsection (Proposition~\ref{prop:limit-refinable}). 

Recall that, given a sequence of topological spaces $X_n\subseteq X_{n+1}$, the direct limit $X=\varinjlim X_n$ is the union of the $X_n$, endowed with the topology where $U\subseteq X$ is open if and only if $U\cap X_n$ is open for all $n$. All our Morse boundaries will be spaces of this form.

\begin{prop}\label{prop:limit-refinable}
Any countable direct limit of compact metrisable spaces is super-refinable. 
\end{prop}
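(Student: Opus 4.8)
The plan is to derive the proposition from two classical facts of general topology: first, that a countable direct limit of compact metrisable spaces is a paracompact Hausdorff space; and second, that every open cover of a paracompact Hausdorff space has an open \emph{barycentric refinement}, i.e.\ a refinement $\mc O'$ with the property that for every $x$ there is a member of the original cover containing $\bigcup\{O'\in\mc O'\mid x\in O'\}$. The second condition is verbatim the defining property of a super-refinement (Definition~\ref{defn:super-refinement}), so these two facts together prove the statement, and the only real work is to establish paracompactness of $X:=\varinjlim X_n$, where each $X_n$ is compact metrisable and $X_n\sq X_{n+1}$.

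First I would record the formal preliminaries: since $X_n$ is compact and $X_{n+1}$ is Hausdorff, $X_n$ is closed in $X_{n+1}$ and hence in $X$, and inherits its original topology as a subspace of $X$; the space $X=\bigcup_n X_n$ is $\sigma$--compact, hence Lindel\"of; and $X$ is $T_1$, since one-point sets are closed in the $X_n$. The crux is that $X$ is normal. Given disjoint closed sets $C,D\sq X$, I would construct a Urysohn function $f\colon X\to[0,1]$ with $f\equiv 0$ on $C$ and $f\equiv 1$ on $D$ as a limit $f=\bigcup_n f_n$ of compatible continuous maps $f_n\colon X_n\to[0,1]$ satisfying $f_n\equiv 0$ on $C\cap X_n$ and $f_n\equiv 1$ on $D\cap X_n$: given $f_n$, the set $X_n\cup(C\cap X_{n+1})\cup(D\cap X_{n+1})$ is closed in $X_{n+1}$ and carries a well-defined continuous function ($f_n$ on $X_n$, and the constants $0$ and $1$ on the two other pieces), which extends to $f_{n+1}\colon X_{n+1}\to[0,1]$ by the Tietze theorem since $X_{n+1}$ is normal. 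The resulting $f$ is continuous by definition of the direct-limit topology, as its restriction to each $X_n$ equals $f_n$; hence $X$ is normal. Being normal and $T_1$ it is in particular regular and Hausdorff, and a regular Lindel\"of space is paracompact; invoking the barycentric refinement theorem then finishes the proof.

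The step that I expect to carry the weight is precisely the normality of $X$: this is the only place where the countability of the limit and the direct-limit topology are genuinely used, through the iterated Tietze extensions and the ensuing continuity of $f=\bigcup_n f_n$; everything else is either formal or standard. If one preferred a self-contained argument avoiding the barycentric refinement theorem, one could instead build a super-refinement of a given cover $\mc O$ by hand, as a union $\bigcup_n\mc W_n$ of finite families with $\mc W_n$ covering $X_n\setminus\bigcup_{m<n}\bigcup\mc W_m$ by sets disjoint from $X_{n-1}$ — so that each point of $X$ lies in only finitely many $\mc W_n$ — using a Lebesgue number of $\mc O|_{X_n}$ to make $\mc W_n$ ``super-fine'' and normality of $X$ to thicken subsets of $X_n$ into open subsets of $X$. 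In that approach the delicate point is to guarantee that every partial union $\mc W_1\cup\dots\cup\mc W_n$ is itself a super-refinement of $\mc O$, i.e.\ to control how a new layer interacts with the earlier ones along the frontier of $\bigcup_{m<n}\bigcup\mc W_m$; making this work appears to require carrying the closures of the $\mc W$'s through the induction, which is why I would favour the route via paracompactness.
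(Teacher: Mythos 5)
Your proof is correct, but it takes a genuinely different route from the paper's. The paper gives a self-contained, constructive argument: it invokes Construction~\ref{cover construction} to produce an explicit candidate cover $\mscr{U}(\underline\eps)$, chooses finite ``net'' sets $I_k\subseteq X_k-X_{k-1}$, and then runs an inductive choice of scales $\eps_n,\eta_n$ (the Claim with properties~(A) and~(B)) to force the star condition by hand. You instead reduce to the classical barycentric-refinement theorem for paracompact Hausdorff spaces, and establish paracompactness via normality (your iterated-Tietze Urysohn function, which is the only place the direct-limit topology and countability enter), $T_1$, $\sigma$-compactness hence Lindel\"of, and the fact that regular Lindel\"of spaces are paracompact. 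Each step you outline is sound: $X_n$ is closed in $X$ with its original topology as subspace topology, the pasting lemma makes the partial Urysohn data continuous on a closed subset of the normal space $X_{n+1}$, Tietze extends it, and $f=\bigcup f_n$ is continuous by the universal property of the colimit; and paracompact $T_1$ is equivalent to full normality (A.~H.~Stone), which gives open star and hence barycentric refinements, the latter being verbatim Definition~\ref{defn:super-refinement}. The trade-off is that your proof is shorter and more conceptual but leans on substantial general-topology machinery, while the paper's proof is elementary and, more importantly in context, exhibits the concrete scaffold $\mscr{U}(\underline\eps)$ of Construction~\ref{cover construction}, which the paper reuses later (e.g.\ in the proof of Lemma~\ref{lem:moreover}); your second, ``by hand'' alternative sketch is close in spirit to what the paper actually does, and your worry about controlling closures across layers is exactly what the paper's inductive constants $\delta_{n+1}$ and conditions~(A)/(B) are designed to handle.
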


Before proving the proposition, we need to make a couple of observations. First, we can and will assume in the following discussion that the union of the $X_n$ is equipped with a single metric $d(\cdot,\cdot)$ that induces the topology of each $X_n$. (Of course, we stress that $d(\cdot,\cdot)$ does not induce the topology of the direct limit $X$, which is strictly finer.)

In general, the existence of the metric $d(\cdot,\cdot)$ is guaranteed by the following result; see \cite[Lemma~1.4]{Isbell} or \cite{Zarichnyi} for short proofs. For our Morse boundaries, we will actually simply consider the restriction of a visual metric on the Bowditch boundary.

\begin{lem}
Let $A$ be a closed subspace of a compact metrisable space $B$. Every metric $d_A\colon A\x A\ra\R$ inducing the topology of $A$ can be extended to a metric $d_B\colon B\x B\ra\R$ inducing the topology of $B$.
\end{lem}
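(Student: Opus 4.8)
The plan is to build $d_B$ as the restriction to $B$ of a metric on a concrete ambient space assembled from two ingredients: a faithful copy of $(A,d_A)$ sitting inside a Banach space, and a metric on the collapse $B/A$ that records everything happening away from $A$. (One could instead follow Hausdorff's classical direct construction of an extension metric, but this route seems cleaner.) I may assume $A\neq\emptyset$, the empty case being trivial.

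First I would invoke the Kuratowski embedding. Since $A$ is closed in the compact space $B$, the pair $(A,d_A)$ is a compact metric space; fixing $a_0\in A$, the map $\kappa\colon A\to C(A)$, $\kappa(a):=d_A(a,\cdot)-d_A(a_0,\cdot)$, satisfies $\|\kappa(a)-\kappa(b)\|_\infty=\sup_{c\in A}|d_A(a,c)-d_A(b,c)|=d_A(a,b)$ (the supremum being attained at $c=a$ or $c=b$, and bounded by $d_A(a,b)$ everywhere via the triangle inequality). Thus $\kappa$ is an isometric, in particular injective and continuous, embedding into the Banach space $C(A)$ with the sup norm. Since $A$ is closed in the metrisable space $B$ and $C(A)$ is convex in a locally convex space, Dugundji's extension theorem produces a continuous map $\widetilde\kappa\colon B\to C(A)$ with $\widetilde\kappa|_A=\kappa$. (Here one genuinely wants Dugundji rather than a coordinatewise Tietze argument, since continuity is required for the sup-norm topology on $C(A)$, which is strictly finer than the product topology on $\R^A$.)

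Next I would manufacture an auxiliary pseudometric on $B$ that is invisible on $A$. The quotient $B/A$ obtained by collapsing $A$ to a point is compact and Hausdorff, and — because $A$ is compact — has a countable base, hence is metrisable; I would pick a compatible metric $\sigma$ on it and, writing $q\colon B\to B/A$ for the quotient map, set $\psi:=\sigma\circ(q\times q)$. Then $\psi$ is a continuous pseudometric on $B$ with $\psi(x,y)=0$ exactly when $x=y$ or $\{x,y\}\subseteq A$; in particular $\psi\equiv 0$ on $A\times A$. I would then define
\[ d_B(x,y):=\|\widetilde\kappa(x)-\widetilde\kappa(y)\|_\infty+\psi(x,y) \]
and verify the three required properties. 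Symmetry and the triangle inequality are immediate, being a sum of pseudometrics. Positive-definiteness: if $d_B(x,y)=0$ then $\psi(x,y)=0$, so $x=y$ or $x,y\in A$, and in the second case $0=\|\kappa(x)-\kappa(y)\|_\infty=d_A(x,y)$ forces $x=y$. That $d_B$ restricts to $d_A$ on $A\times A$ is clear, since there $\psi$ vanishes and the $\widetilde\kappa$-term equals $d_A$. Finally, $d_B$ is continuous on $B\times B$ for the original topology (both summands are), so the identity $(B,\mathrm{orig})\to(B,d_B)$ is a continuous bijection from a compact space to a Hausdorff space, hence a homeomorphism; therefore $d_B$ induces the topology of $B$.

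The step I expect to be the main obstacle is the construction of $\psi$. One cannot simply rescale or perturb an arbitrary compatible metric on $B$ to make it agree with $d_A$ on $A$; and since $\widetilde\kappa$ need not be injective on $B\setminus A$, whatever one adds to separate the remaining points must vanish identically on $A$ while still recovering the topology of $B$ near $A$. Recognising that $B/A$ supplies exactly such an ingredient — and that it is metrisable, which uses compactness of $A$ — is the conceptual crux; the remaining checks are routine bookkeeping.
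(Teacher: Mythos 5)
The paper does not give its own proof of this lemma; it simply cites \cite[Lemma~1.4]{Isbell} and \cite{Zarichnyi} for short proofs, so there is no in-paper argument to compare against. Your proof, however, is correct and complete. Each step holds: the Kuratowski map into $C(A)$ is genuinely isometric (the supremum defining $\|\kappa(a)-\kappa(b)\|_\infty$ is attained at $c=a$ or $c=b$); Dugundji's extension theorem applies since $B$ is metrisable, $A$ is closed, and $C(A)$ is a convex subset of a locally convex space, and you rightly flag why Tietze coordinatewise would not give sup-norm continuity; the quotient $B/A$ is a Hausdorff continuous image of the compact metric space $B$, hence compact, second countable, and metrisable, so $\psi$ is a well-defined continuous pseudometric vanishing precisely on $\Delta\cup(A\times A)$; and the final sum $d_B=\|\widetilde\kappa(\cdot)-\widetilde\kappa(\cdot)\|_\infty+\psi$ is a continuous metric on $B$ restricting to $d_A$ on $A$, so the compact-to-Hausdorff continuous identity map is a homeomorphism. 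One minor phrasing quibble: the parenthetical ``because $A$ is compact'' attached to second countability of $B/A$ is a bit misleading. Second countability of $B/A$ follows from its being a Hausdorff continuous image of the compact metrisable $B$; compactness of $A$ (equivalently, closedness of $A$ in a compact $B$) is what yields Hausdorff-ness of $B/A$ and, more to the point, what prevents the classical failure of metrisability for quotients like $\R/\Z$. The conclusion is right, so this does not affect correctness.
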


The following will be our main recipe to construct open covers of countable direct limits of compact metrisable topological spaces. 

\begin{cst}\label{cover construction}
Consider a countable direct limit $X=\varinjlim X_n$ of metric spaces and a sequence of positive reals $\underline{\eps}=(\eps_n)_{n\geq 0}$. We describe a family $\mscr{U}(\underline{\eps})$ of open sets of $X$ that will come in handy multiple times in the future. 

There is an element $U_p\in\mscr{U}(\underline{\eps})$ for each point $p\in X$, constructed as follows. If $p\in X_k-X_{k-1}$ for some $k\geq 0$, set $U_p(n)=\emptyset$ for $n<k$, and define $U_p(k)$ as the open $\eps_k$--ball around $p$ within $X_k$. For $n>k$, we define iteratively $U_p(n)$ as the open $\eps_n$--neighbourhood of $U_p(n-1)$ within $X_n$. Finally, set $U_p:=\bigcup_n U_p(n)$.

Note that, since $U_p(n)$ is an open subset of $X_n$, we have that $U_p\cap X_n$ is the union of the open sets $U_p(m)\cap X_n$ for $m\geq n$, and hence it is open. This shows that $U_p$ is open in $X$, by definition of the the direct limit topology. 

Note that it is possible for $U_p(n)$ to be strictly contained in the intersection $U_p\cap X_n$. For instance, if $p\in X_0$, then $U_p\cap X_0$ is a ball of radius $\sum_{n\geq 0}\eps_n$, whereas $U_p(0)$ is just an $\eps_0$--ball. It is also useful to introduce the notation $\mscr{U}_k(\underline{\eps}):=\{U_p \mid p\in X_k-X_{k-1}\}$ and $\mscr{U}_k^n(\underline{\eps})=\{U_p(n) \mid p\in X_k-X_{k-1}\}$.
\end{cst}

If $\mc{U}$ is a family of subsets of a topological space $X$ and $A\sq X$ is another subset, it is convenient to write:
\[ \mc{U}[A]:=\bigcup_{U\in\mc{U},\ U\cap A\neq\emptyset} \overline U; \]
here $\overline U$ denotes the closure of $U$ in $X$. We are finally ready to prove Proposition~\ref{prop:limit-refinable}.

\begin{proof}[Proof of Proposition~\ref{prop:limit-refinable}]
As observed above, it suffices to consider a sequence of isometric embeddings of compact metric spaces $\dots\hookrightarrow X_n\hookrightarrow X_{n+1}\hookrightarrow\dots$ with $n\geq 0$. Let $X=\varinjlim X_n$ be their direct limit. Let $\mc{O}$ be an open cover of $X$. Our aim is to construct a super-refinement of $\mc{O}$.

We rely on Construction~\ref{cover construction}, for a sequence $\underline{\eps}=(\eps_n)_{n\geq 0}$ to be determined later. For every $k\geq 0$, choose a finite subset $I_k\sq X_k-X_{k-1}$ such that the $\eps_k$--balls $\{U_i(k)\in \mscr{U}_k^k(\underline{\eps})  \mid i\in I_k\}$ cover $X_k - X_{k-1}$. (For instance, cover $X_k$ by finitely many $\eps_k/2$--balls, select only those that intersect $X_k-X_{k-1}$, for each of them pick a point in its intersection with $X_k-X_{k-1}$, then take the finitely many $\eps_k$--balls with these centres.) 

Define $I:=\bigsqcup_{n\geq 0}I_n$ and $\mc{U}_k^n:=\{U_i(n) \mid i\in I_k\}$ (that is, the elements of $\mc{U}_k^n$ are open sets of $X_n$ constructed as in Construction \ref{cover construction} starting from the points of $I_k$). We say that a finite subset $J\sq I_0\cup\dots\cup I_k$ is \emph{$k$--good} if $J\cap I_k\neq\emptyset$ and the union of closures $\bigcup_{j\in J}\overline{U_j(k)}$ is contained in some $O\in\mc{O}$; we fix one such open set $O_J$ for every good set $J$.

\smallskip
{\bf Claim.} \emph{
There exist numbers $\eps_n,\eta_n>0$ such that the following hold for $n\geq 0$.
\begin{itemize}
\item[(A)] If $B$ is an $\eta_n$--ball in $X_n$, then the set $\big(\mc{U}_0^n\cup\dots\cup\mc{U}_n^n\big)[B]$ is contained in an element of $\mc{O}$.
\item[(B)] For every $0\leq k<n$ and every $k$--good set $J$, we have $\bigcup_{j\in J}\overline{U_j(n)}\sq O_J$.
\end{itemize}
}

\smallskip
First, assuming the Claim, we construct a super-refinement of $\mc{O}$. For each $i\in I_k$, set $V_i:=U_i-X_{k-1}$ and define $\mc{V}=\{V_i \mid i\in I\}$. It is clear that this is an open cover of $X$. Let us show that it super-refines $\mc{O}$.

Consider a point $x\in X_n$ and let $J\sq I$ be the subset of indices $j$ such that $x\in V_j$. Note that $J\sq I_0\cup\dots\cup I_n$, so $J$ is finite. It follows that, for some $m>n$, the point $x$ lies in $U_j(m)$ for all $j\in J$. By property~(A), there exists $O\in\mc{O}$ containing $\bigcup_{j\in J}\overline{U_j(m)}$. Thus, $J$ is $k$--good for some $k$ and property~(B) ensures that $O_J\in\mc{O}$ contains $\bigcup_{j\in J}\overline{U_j(m')}$ for all $m'>k$. This implies that $O_J$ contains all $U_j$ with $j\in J$, hence all $V_j$ with $j\in J$, as we wanted.

Note that we have only used property~(A) with the ball $B$ replaced by a single point, and the need for closures of the $U_j(n)$ is also not apparent yet. The full strength of property~(A) is needed in the proof of the Claim.

\smallskip
We conclude by proving the Claim, that is, by showing that $\eps_n,\eta_n$ can be chosen in a way that (A) and (B) hold. Let $\lambda_n$ be a Lebesgue number for the open cover $\{O\cap X_n \mid O\in\mc{O}\}$ of $X_n$. We proceed by induction on $n$.

Consider the base step $n=0$. Property~(B) holds vacuously in this case (note the ``$k<n$''). Regarding property~(A), consider an $\eta_0$--ball $B\sq X_0$ and recall that $\mc{U}_0^0$ is a finite cover of $X_0$ by $\eps_0$--balls. The diameter of $\mc{U}_0^0[B]$ is at most $4\eps_0+2\eta_0$. Choosing $\eps_0,\eta_0$ with $4\eps_0+2\eta_0<\lambda_0$, we ensure that $\mc{U}_0^0[B]$ is contained in an element of $\mc{O}$. 

For the inductive step, suppose that $\eps_0,\dots,\eps_n$ and $\eta_0,\dots,\eta_n$ have been chosen to satisfy (A) and (B), and let us look for suitable values of $\eps_{n+1}$ and $\eta_{n+1}$.

Observe that there are only finitely many $k$--good sets $J$ with $k\leq n$ and, for each of them, we have
\[ d\Big(\bigcup_{j\in J}\overline{U_j(n)}\ ,\ X_{n+1}-O_J\Big)>0,\]
as this is the distance between two disjoint compact sets. Thus, we can choose a number $\delta_{n+1}>0$ that is smaller than all these distances. We then choose $\eps_{n+1}$ and $\eta_{n+1}$ so that the following inequalities hold:
\begin{align*}
4\eps_{n+1}+2\eta_{n+1}&<\lambda_{n+1}, & 2\eps_{n+1}+2\eta_{n+1}&<\eta_n, & 3\eps_{n+1}+2\eta_{n+1}&<\delta_{n+1}.
\end{align*}
This immediately guarantees that property~(B) is satisfied, as $\eps_{n+1}<\delta_{n+1}$ and each $U_j(n+1)$ is the $\eps_{n+1}$--neighbourhood of $U_j(n)$ in $X_{n+1}$.

Regarding property~(A), consider an $\eta_{n+1}$--ball $B\sq X_{n+1}$. Let $J\sq I_0\cup\dots\cup I_n$ be the set of indices $j$ such that $U_j(n+1)\cap B\neq\emptyset$.

If $J=\emptyset$, then $\big(\mc{U}_0^{n+1}\cup\dots\cup\mc{U}_{n+1}^{n+1}\big)[B]=\mc{U}_{n+1}^{n+1}[B]$. As in the base step, this is a set of diameter at most $4\eps_{n+1}+2\eta_{n+1}<\lambda_{n+1}$, which ensures that it is contained in an element of $\mc{O}$. 

If instead $J\neq\emptyset$, we can choose for every $j\in J$ a point $x_j\in U_j(n)$ with $d(x_j,B)<\eps_{n+1}$. The set $\{x_j \mid j\in J\}$ is contained in $X_n$ and it has diameter at most $2\eps_{n+1}+2\eta_{n+1}<\eta_n$, so it is contained in an $\eta_n$--ball $B'\sq X_n$. For every $j\in J$, the set $U_j(n)$ meets $B'$, so property~(A) for $n$ implies that $J$ is $k$--good for some $k\leq n$. Let $\mc{O}_J\in\mc{O}$ be the corresponding open set.

Now, the set $\big(\mc{U}_0^{n+1}\cup\dots\cup\mc{U}_{n+1}^{n+1}\big)[B]$ is contained in the $(3\eps_{n+1}+2\eta_{n+1})$--neighbourhood of $\bigcup_{j\in J} \overline{U_j(n)}$ within $X_{n+1}$. Since $3\eps_{n+1}+2\eta_{n+1}<\delta_{n+1}$, this set is contained in $O_J\in\mc{O}$, as required.
\end{proof}

\subsection{Discrete chains in metric spaces}\label{subsec:chains_metric}

Let $(X,\rho)$ be a metric space. In this short subsection, we introduce some important notation relating to chains in the presence of a metric.

If $c$ is a discrete or singular chain in $X$, we write $\diam(c):=\diam(\supp(c))$ for short. With a slight abuse, we will sometimes also write things like ``$\rho(x,c)\geq 1$'' for a point $x\in X$, rather than the more correct ``$\rho(x,\supp(c))\geq 1$''.

If $c=\sum_i\s_i$, written as a sum of discrete (resp.\ singular) simplices without cancellations, we define 
\[\fin(c):=\sup_i\diam(\s_i).\] 
If $\fin(c)\leq\delta$ for some $\delta>0$, we say that $c$ is \emph{$\delta$--fine}.

\subsection{Discretisation of singular simplices}

Given a singular chain $d$, one can associate to it a discrete chain that we will denote by $\disc(d)$, informally by only keeping track of the vertices. More precisely, we have the following.

\begin{defn}
     If $\sigma\colon\Delta^i\ra X$ is a singular simplex, then $\disc(\sigma_i)$ is the discrete simplex $[x_0,\dots,x_i]$, where $x_i$ is the image under $\sigma$ of the $i$--th vertex of the standard simplex $\Delta^i$. This definition is then extended linearly; namely, for $d=\sum c_i\sigma_i$ we set $\disc(d):=\sum c_i \disc(\sigma_i)$.
\end{defn}

\begin{lem}\label{singular->fine}
Let $X$ be a metric space and let $\hat d\sq X$ be a singular chain. Then there exists a discrete chain $d\sq X$ with $\partial d=\disc(\partial\hat d)$ and  $\fin(d)\leq 2\fin(\partial\hat d)$.
\end{lem}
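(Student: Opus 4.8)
The plan is to produce $d$ from $\hat d$ by first passing to a barycentric subdivision that is fine enough for discretising it to be harmless, and then correcting for the resulting boundary mismatch. Throughout I use that $\disc$ is a chain map from the singular chain complex of $X$ to $\mscr{C}_*(X)$: for a singular simplex $\sigma$ one has $\disc(\partial\sigma)=\partial\disc(\sigma)$, because the $j$-th face of $\sigma$ discretises to $[\sigma(v_0),\dots,\widehat{\sigma(v_j)},\dots]$. I also use the standard barycentric subdivision operator $\mathrm{Sd}$ on singular chains together with the standard chain homotopy to the identity, iterated to give operators $\mathrm{Sd}^N$ and $T^{(N)}$ with $\mathrm{Sd}^N-\mathrm{id}=\partial T^{(N)}+T^{(N)}\partial$. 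The two relevant features of these operators are: (i) they are assembled from precompositions of simplices with affine maps between standard simplices, so every singular simplex occurring in $\mathrm{Sd}^N(\sigma)$ or in $T^{(N)}(\sigma)$ has image contained in the image of $\sigma$; and (ii) for $N$ large, every simplex occurring in $\mathrm{Sd}^N(\sigma)$ is $\sigma$ restricted to a sub-simplex of arbitrarily small diameter.

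Set $\delta:=\fin(\partial\hat d)$ and let $k=\dim\hat d$; we may assume $\delta>0$, the case $\delta=0$ being degenerate (then every simplex of $\partial\hat d$ is constant, and $\disc(\partial\hat d)$ is checked directly to bound within the constant simplices, or to vanish). Since $\hat d=\sum_i c_i\sigma_i$ is a finite combination of maps out of the compact space $\Delta^k$, each $\sigma_i$ is uniformly continuous, so there is $\eta>0$ with $\diam(\sigma_i(A))\le\delta$ for every $i$ and every $A\sq\Delta^k$ of diameter $\le\eta$. Choose $N$ so that every sub-simplex of the $N$-th barycentric subdivision of $\Delta^k$ has diameter $\le\eta$, and set
\[ d:=\disc(\mathrm{Sd}^N\hat d)-\disc(T^{(N)}\partial\hat d). \]
Since $\disc$ and $\mathrm{Sd}^N$ are chain maps and $\partial\hat d$ is a cycle, and $\partial T^{(N)}\partial\hat d=(\mathrm{Sd}^N-\mathrm{id})\partial\hat d-T^{(N)}\partial\partial\hat d=(\mathrm{Sd}^N-\mathrm{id})\partial\hat d$, we get
\[ \partial d=\disc(\mathrm{Sd}^N\partial\hat d)-\disc\big((\mathrm{Sd}^N-\mathrm{id})\partial\hat d\big)=\disc(\partial\hat d), \]
as required.

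It remains to bound $\fin(d)$. Write $\partial\hat d$ in reduced form as a combination of pairwise distinct singular simplices $\mu_\ell$, so that $\diam(\mu_\ell)\le\delta$ for all $\ell$. Every simplex occurring in $\disc(\mathrm{Sd}^N\hat d)$, before or after cancellation, is the discretisation of some $\sigma_i$ restricted to a sub-simplex of diameter $\le\eta$, hence has all its vertices in a set of diameter $\le\delta$; and every simplex occurring in $\disc(T^{(N)}\partial\hat d)$ is the discretisation of a singular simplex whose image lies inside the image of some $\mu_\ell$, hence again has all its vertices in a set of diameter $\le\delta$. Since forming the reduced form only deletes simplices, $\fin(d)\le\delta=\fin(\partial\hat d)\le 2\fin(\partial\hat d)$.

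The step that needs the most care is the boundary correction. The naive candidate $\disc(\mathrm{Sd}^N\hat d)$ does have small simplices, but its boundary is $\disc(\mathrm{Sd}^N\partial\hat d)$ — the discretisation of a subdivision of $\partial\hat d$ — rather than $\disc(\partial\hat d)$ itself; subtracting $\disc(T^{(N)}\partial\hat d)$ repairs this, and the point making the repair admissible is precisely that $T^{(N)}$ applied to a singular chain is supported in the images of that chain's simplices, which keeps the correction term $\delta$--fine. One should also verify the standard facts about $\mathrm{Sd}$ and $T$ quoted above (that they are realised by precomposition with affine maps, hence never enlarge images, and that iterated subdivision shrinks sub-simplex diameters to $0$), but these are routine. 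Note that the argument in fact delivers the stronger bound $\fin(d)\le\fin(\partial\hat d)$.
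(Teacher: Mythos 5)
Your proof is correct, and it takes a genuinely different route from the one in the paper. The paper performs an iterated \emph{relative} barycentric subdivision of $\hat d$ that never introduces new vertices on the simplices appearing in $\partial\hat d$; after enough iterations the resulting singular chain $\hat d'$ has $\partial\hat d'=\partial\hat d$ on the nose and fineness approaching $\fin(\partial\hat d)$, so one simply sets $d=\disc(\hat d')$. You instead apply the ordinary iterated subdivision $\mathrm{Sd}^N$, which destroys the boundary identification, and then repair the boundary with the standard chain homotopy $T^{(N)}$ between $\mathrm{Sd}^N$ and the identity, defining $d=\disc(\mathrm{Sd}^N\hat d)-\disc(T^{(N)}\partial\hat d)$. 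The boundary computation using $\partial T^{(N)}+T^{(N)}\partial=\mathrm{Sd}^N-\mathrm{id}$ and the fact that $\partial\hat d$ is a cycle is correct, and the fineness bound follows from the image-preserving property of $\mathrm{Sd}$ and $T$ together with uniform continuity of the finitely many $\sigma_i$. The trade-off: the paper's relative subdivision avoids any correction term at the cost of a slightly nonstandard subdivision operator (whose precise definition for singular chains the paper leaves somewhat implicit), whereas your version stays entirely within the textbook machinery of $\mathrm{Sd}$ and $T$ at the cost of one extra (but well-controlled) term. Your argument in fact gives the cleaner bound $\fin(d)\leq\fin(\partial\hat d)$, a bit better than the stated $2\fin(\partial\hat d)$. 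The only minor soft spot is the dismissal of the case $\fin(\partial\hat d)=0$, which deserves a sentence (e.g.\ $\partial\hat d=0$ gives $d=0$; if $\partial\hat d\neq 0$ but all its simplices are constant, one coherently chooses a basepoint per connected component and cones), but this edge case is equally glossed over in the paper.
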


\begin{proof}
It suffices to perform an iterated relative barycentric subdivision of $\hat d$ without ever altering the cell structure on $\partial\hat d$. More precisely, recall that, in the usual barycentric subdivision, the new vertices are the barycentres of all simplices. Instead, in this relative subdivision, the new vertices are the barycenters of simplices that are not part of $\partial \hat d$, as well as the vertices of simplices that are part of $\partial \hat d$. 

Iterating this relative subdivision procedure, the fineness of $\hat d$ converges to the fineness of $\partial\hat d$. Thus, we can just take $d=\disc(\hat d')$ for a sufficiently high subdivision $\hat d'$ of the original $\hat d$.
\end{proof}

\section{Filling discrete cycles in the Bowditch boundary}\label{sect:Bowditch_filling}

The goal of this section is to prove Proposition \ref{prop:Bowditch_filling} below, which gives a way of filling cycles in the Bowditch boundary of certain relatively hyperbolic groups. We now recall some background on relatively hyperbolic groups, introduce the setup and state the proposition. We note that no other results from this section are used in the rest of the paper.

\subsection{Cusped space and Bowditch boundary}

We now briefly discuss cusped spaces and their boundaries, and we refer the reader to \cite[Subsection 2.1]{Mackay-Sisto} for more details, as we will mostly use techniques from that paper. These notions were first considered in \cite{Bow:rel-hyp}.

Given a connected graph $\Gamma$, one can construct a hyperbolic space, called the \emph{combinatorial horoball} $\mc{H}(\Gamma)$, by gluing strips of the hyperbolic plane onto the edges of the graph, see \cite[Definition 2.1]{Mackay-Sisto}. We refer to the copy of the graph contained in the combinatorial horoball as the horosphere, and we recall that, for $x,y\in \Gamma$, we have that $d_{\mc{H}(\Gamma)}(x,y)$ and $2\log (d_{\Gamma}(x,y))$ differ by a uniformly bounded amount, see \cite[Lemma 2.2]{Mackay-Sisto}.

A relatively hyperbolic group pair $(G,\mc{P})$ is a finitely generated group $G$ together with a collection of finitely generated subgroups $\mc{P}$, called peripheral subgroups, such that one obtains a hyperbolic space by gluing combinatorial horoballs onto the cosets of the peripheral subgroups, see \cite[Definition 2.3]{Mackay-Sisto}. This glued-up space is the \emph{cusped space} $\mc{X}(G,P)$, and its Gromov boundary is the \emph{Bowditch boundary} $\partial_B(G,\mc{P})$. The combinatorial horoballs inside the cusped space lie at bounded Hausdorff distance from horoballs as defined in terms of Busemann functions, see \cite[Lemma 2.5]{Bow:rel-hyp}.

\subsection{Setup and statement of main proposition}\label{subsec:setup}

Let $(G,\mc{P})$ be a relatively hyperbolic pair. We assume that $G$ is finitely generated and that $\mc{P}$ consists of finitely many finitely generated, virtually nilpotent subgroups. In addition, we assume that the Bowditch boundary $\mc{S}:=\partial_B(G,\mc{P})$ is homeomorphic to a sphere $S^{k+1}$ for some $k\geq 1$.

For $a,b>0$ and $K\geq 1$, we write $a\sim_Kb$ as shorthand for $1/K\leq a/b\leq K$. As customary, we also write $a\vee b:=\max\{a,b\}$ and $a\wedge b:=\min\{a,b\}$.

Let $(\mc{X},d_\mc{X})$ be the cusped space for $(G,\mc{P})$ and let $\delta_0$ be its hyperbolicity constant. For simplicity we choose, for all points $x\in \mc{X}$ and $y\in \mc{X}\cup\partial \mc{X}$, a geodesic from $x$ to $y$, which we denote by $[x,y]$. 

Let $\eps_0$ be a parameter for visual metrics, which can be chosen depending only on $\delta_0$ (e.g.\ as in \cite[Proposition~III.H.3.21]{BH}). Denote by $\rho_x$ the visual metric on $\mc{S}=\partial \mc{X}$ based at a point $x\in \mc{X}$. There exists a constant $C$ (depending only on $\delta_0,\eps_0$) such that $\rho_x(\xi,\xi')\sim_Ce^{-\eps_0(\xi|\xi')_x}$ for all $x\in \mc{X}$ and $\xi,\xi'\in\partial \mc{X}$. Here $(\cdot|\cdot)_x$ denotes Gromov products based at $x$. Choose a basepoint $o\in \mc{X}$ and set $\rho:=\rho_o$.

For each parabolic point $p\in\partial \mc{X}$, denote by $\mc{H}_p\sq \mc{X}$ the corresponding horoball and by $\mc{K}_p\sq\mc{H}_p$ its boundary horosphere. Choose a point $e_p\in\mc{K}_p$ nearest to $o$ and set $r_p:=e^{-\eps_0d_\mc{X}(o,\mc{H}_p)}$.

If $Z$ is a metric space, $x\in Z$ and $0<r_1<r_2$, we denote by $A_Z(x;r_1,r_2)$ the closed metric annulus around $x$ with inner radius $r_1$ and outer radius $r_2$. When the metric space under consideration is clear, we simply write $A(x;r_1,r_2)$. Similarly, $B_Z(x,r)$ or $B(x,r)$ is the closed $r$--ball around $x$.

We are ready to state the main result of Section~\ref{sect:Bowditch_filling}:

\begin{prop}\label{prop:Bowditch_filling}
Given the setup above, there exist a function $g\colon\R^+\ra\R^+$ and a constant $K\geq 1$ such that the following statements hold for every $\delta>0$. Let $c\in\overline{\mscr{C}}_i(\mc{S})$ be a reduced discrete cycle with $\fin(c)\leq g(\delta)$.
\begin{enumerate}
\setlength\itemsep{.1cm}
\item[(0)] If $i\leq k$, then $c=\partial d$ for a $\delta$--fine discrete chain $d\sq\mc{S}$ with $\diam(d)\leq K\sqrt{\diam(c)}$.
\item If $i<k$ and $c\sq A(p;r_1,r_2)$ for a parabolic point $p\in\mc{S}$ and radii $\delta\leq r_1<r_2\leq r_p/K$, then $c=\partial d$ for a $\delta$--fine discrete chain $d\sq\mc{S}$ with $\diam(d)\leq K\sqrt{\diam(c)}$ and $d\sq A(p;r_1/K,Kr_2)$.
%the chain $d$ provided by Item~(0) can be constructed so that, in addition, $d\sq A(p;r_1/K,Kr_2)$.
\item If $i\leq k$ and $c\sq B(p,r_2)$ for a parabolic point $p\in\mc{S}$ and a radius $0<r_2\leq r_p/K$, then $c=\partial d$ for a $\delta$--fine discrete chain $d\sq\mc{S}$ with $\diam(d)\leq K\sqrt{\diam(c)}$ and $d\sq B(p,Kr_2)$.
\end{enumerate}
\end{prop}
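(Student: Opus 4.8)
The plan is to prove statements (0), (1) and (2) simultaneously, by induction on the degree $i$ running from $i=0$ to $i=k$: part~(0) in degree $i$ will be allowed to use all of (0), (1), (2) in degrees $<i$, while parts~(1) and~(2) in degree $i$ will be deduced from part~(0) in degree $i$ together with the ``radial'' geometry of horoballs. At each degree the argument produces a threshold function and a constant, and since only finitely many degrees $i\le k$ occur one then takes the minimum of the thresholds and the maximum of the constants to obtain the uniform $g$ and $K$ in the statement. Throughout, the governing facts are that $\rho$ on $\mc{S}=\partial\mc{X}$ is comparable to $e^{-\eps_0(\cdot|\cdot)_o}$, that shadows of balls of $\mc{X}$ behave predictably, and — most importantly — that inside a combinatorial horoball $\mc{H}_p$ the metric is logarithmically distorted with respect to the horospherical metric on $\mc{K}_p$ (the ``$2\log$'' estimate recalled in Subsection on the cusped space). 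It is precisely this logarithmic distortion that makes the correct diameter bound $K\sqrt{\diam(c)}$ rather than $K\,\diam(c)$: a cycle can sit deep inside a large horoball, and to contract it one is forced out to the scale $\sqrt{\diam(c)}$.

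The base case $i=0$ is a quantitative form of connectedness. A reduced $0$--cycle is a combination of differences $[\xi]-[\xi']$ with $\rho(\xi,\xi')\leq\diam(c)$, and filling it amounts to joining $\xi$ to $\xi'$ by a $\delta$--fine string of vertices whose union has diameter at most $K\sqrt{\diam(c)}$ (and, for parts~(1) and~(2), stays inside the prescribed annulus or ball around $p$). This is the discrete-chain incarnation of the arc-construction arguments of \cite{Mackay-Sisto}: one projects $\xi$ and $\xi'$ to points of $\mc{X}$ ``at scale $\diam(c)$'', joins these by a path in $\mc{X}$ while keeping track of where it enters and exits horoballs, and reads off a $\delta$--fine sequence of boundary points shadowing the path; the only place where such a path is forced out to scale $\sqrt{\diam(c)}$ is where it must climb past a horoball of size comparable to or larger than $\diam(c)$, and staying away from $B(p,r_1/K)$ in the annular case is arranged by routing the path around the horoball $\mc{H}_p$ rather than into it.

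For the inductive step of part~(0), given a reduced $i$--cycle $c$ with $D:=\diam(c)$ and $\fin(c)\leq g(\delta)$, the idea is to ``push $c$ towards a point''. One builds a finite sequence of reduced $i$--cycles $c=c_0,c_1,\dots,c_M$ with $\diam(c_j)$ decreasing to at most $g(\delta)$ and $c_j-c_{j+1}=\partial e_j$ for $\delta$--fine $(i+1)$--chains $e_j$, after which $c_M$ is filled directly (for instance by $\cone(x,c_M)$ for $x\in\supp(c_M)$, as in Lemma~\ref{lem:cone}, this being $\delta$--fine because $c_M$ is $g(\delta)$--fine of small diameter). Each $e_j$ is the prism chain associated with a vertex-level homotopy that moves the vertices of $c_j$ a bounded amount towards a chosen centre along $\delta$--fine paths; the existence of such paths, and the fact that together with their neighbours they stay inside $B(\mathrm{centre},K\sqrt D)$, is exactly the base case applied to the pairs (vertex, its pushed image) and to adjacent pushed images. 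When $\supp(c)$ lies in the shadow of a horoball of size $\gtrsim D$, the homotopy must route $c$ up to scale $\sqrt D$ and across; the portion of the relevant $c_j$ that lands inside a ball (respectively an annulus) around such a parabolic point $p$ is split off as a reduced cycle — using the degree--$(i-1)$ hypothesis to carry out the splitting and the regluing along $(i-1)$--dimensional interfaces — and then filled by invoking part~(2) (respectively part~(1)) of the inductive hypothesis in degree $i$. The bookkeeping required to keep finitely many cycles, prisms, splittings and auxiliary fillings all $\delta$--fine is what forces $g(\delta)$ to be a small (explicit but unenlightening) function of $\delta$.

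Parts~(1) and~(2) are obtained by running the same ``push to a point'' scheme, but with the centre taken to be $p$ itself and the pushes performed \emph{radially}, i.e.\ by replacing a boundary point near $p$ by one whose geodesic from $o$ descends further into $\mc{H}_p$. Since descending into $\mc{H}_p$ strictly decreases the visual distance to $p$, this radial contraction never leaves $B(p,Kr_2)$, which yields part~(2); and when $i<k$ one can arrange the radial contraction to stop on a fixed sub-annulus instead of collapsing all the way onto $p$, filling the $(i-1)$--dimensional ``difference cycles'' produced at each step within the annulus by the lower-degree hypothesis — this gives part~(1). This is exactly where the hypothesis $i<k$ is used: a $k$--cycle may genuinely link $p$, so there is no sub-annulus onto which it contracts. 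I expect the main obstacle to be making the ``push to a point'' construction precise while simultaneously keeping every auxiliary chain $\delta$--fine, controlling the total diameter by $K\sqrt D$, and correctly isolating the pieces that enter balls or annuli around parabolic points so that the inductive hypothesis applies to them; this is where the visual-metric estimates for cusped spaces from \cite{Mackay-Sisto}, together with the refinements needed here, do the real work, the homological content (prisms, cones, telescoping) being routine on top of those estimates.
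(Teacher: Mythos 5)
Your proposal takes a genuinely different route from the paper, and as stated it has a circularity and a substantive gap.

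\textbf{Circularity.} You announce the induction as ``prove (0) in degree $i$ from (0), (1), (2) in degrees $<i$, then deduce (1), (2) in degree $i$ from (0) in degree $i$''. But in the inductive step for (0)$_i$ you propose to fill the part of $c_j$ near a parabolic point ``by invoking part~(2) (respectively part~(1)) of the inductive hypothesis in degree $i$''. Those split-off pieces are $i$--cycles, not $(i-1)$--cycles, so you would need (1)$_i$ / (2)$_i$ while proving (0)$_i$ --- exactly the reverse of the dependency you announced. The paper avoids any such degree-induction on the main statement; its recursion (the chain map $R_*$ in Lemma~\ref{lem:deep_barycentre_2}) is on fineness of simplices of a fixed filling, and it bootstraps between a ``shallow barycentre'' lemma and a ``deep barycentre'' lemma, not between different parts of the Proposition in the same degree.

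\textbf{Missing geometric input.} The decisive step when a cycle sits deep in a horoball $\mc{H}_p$ is not a radial homotopy: it is a projection $\Psi_p$ to the horosphere $\mc{K}_p$, a filling there using the fact that the horosphere is coarsely an $(k+1)$--dimensional nilpotent Lie group with Karidi's ball structure (Lemma~\ref{nilpotent Lie group} and Lemma~\ref{a'}), and a projection $\Phi_p$ back to $\mc{S}$, followed by a recursive refinement. Your proposal never invokes any of this, and ``radial pushes'' into the horoball do not by themselves justify that an $i$--cycle with $i<k$ can be filled within an annulus; that fact depends on the annulus in the horosphere being homeomorphic to $S^{n-1}\x(0,1]$ with $n=k+1$, which is exactly where the sphere hypothesis on $\partial_B(G,\mc P)$ and the virtual nilpotency of the peripherals enter. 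Your topological remark that ``a $k$--cycle may genuinely link $p$'' is the right heuristic, but the proof needs the concrete nilpotent-group statement, not just the heuristic. Similarly, the $\sqrt{\diam(c)}$ bound is not a consequence of a push-to-a-point scheme; in the paper it falls out of the barycentre dichotomy, specifically the estimate $\diam(c)\gtrsim r_2^2/r_p$ when $\mf b(c)\in\mc H_p$ (Lemma~\ref{lem:barycentre_vs_horoball}(3)), so that $r_2\lesssim\sqrt{r_p\diam(c)}\leq\sqrt{\diam(c)}$. Your intuition about logarithmic distortion being responsible for the $\sqrt{\ }$ is correct, but the proposal gives no mechanism for actually producing a filling on that scale.

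In short: the barycentre dichotomy (deep vs.\ shallow in a horoball), the horosphere projections $\Psi_p,\Phi_p$, and the nilpotent Lie group filling of Karidi type are the core of the paper's proof, and none of them appear in the proposal; the proposed degree-induction also has an internal circular dependency. The base-case heuristic (path constructions as in \cite{Mackay-Sisto}) and the prism/telescope idea are fine in spirit but do not supply the missing geometry.
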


We note that, if we are only interested in fundamental groups of finite-volume real hyperbolic manifolds, then we can use $\mathbb H^n$ instead of the cusped space $\mc{X}$. In this case, the boundary is isometric to a round sphere and the proposition is almost immediate. 

In particular, the reader only interested in the proof of Theorem~\ref{thm:main_intro} can safely skip the rest of Section~\ref{sect:Bowditch_filling}, which is more technical than the rest of the paper.

\subsection{Geometry of nilpotent groups}

The following lemma can be easily deduced from a result of Karidi on balls in nilpotent Lie groups \cite{Karidi}. Roughly speaking it says that, in an $n$--dimensional, connected, nilpotent Lie group, discrete $i$--cycles can be filled within annuli by chains of roughly the same fineness for $i<n-1$, while $(n-1)$--cycles can be ``discretely homotoped'' far away, again with the same fineness. 

\begin{lem}\label{nilpotent Lie group}
Let $\mc{N}$ be an $n$--dimensional, connected, nilpotent Lie group, equipped with a left-invariant Riemannian metric. There exists $L\geq 1$ such that the following holds for all $r_2>r_1\geq L$, all $x\in\mc{N}$ and all discrete cycles $c\in\overline{\mscr{C}}_i(A_{\mc{N}}(x;r_1,r_2))$.
\begin{enumerate}
\setlength\itemsep{.1cm}
\item If $0\leq i<n-1$ and $\fin(c)\leq r_1/L$, then $c=\partial d$ for some discrete chain $d\sq A_{\mc{N}}(x;r_1/L,Lr_2)$ with $\fin(d)\leq L(\fin(c)\vee 1)$.
\item If $i=n-1$ and $\fin(c)\leq r_1/L$, then $c-\partial d\sq A_{\mc{N}}(x;r_2/L,Lr_2)$ for some discrete chain $d\sq A_{\mc{N}}(x;r_1/L,Lr_2)$ with $\fin(d)\leq L(\fin(c)\vee 1)$.
\end{enumerate}
\end{lem}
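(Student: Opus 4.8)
The plan is to import Karidi's description of large metric balls in connected nilpotent Lie groups, which says that balls of radius $R$ around the identity are bi-Lipschitz (with constant independent of $R$, after rescaling) to a fixed convex body in $\R^n$ that depends polynomially on $R$ in each "weight direction". More precisely, Karidi produces, for each $R\geq 1$, a linear isomorphism $T_R\colon\R^n\ra T_e\mc{N}$ and a convex body $\Omega\sq\R^n$ (the unit ball of a norm), both independent of $R$, such that the Riemannian ball $B_{\mc{N}}(e,R)$ is within bounded multiplicative error of $\exp(T_R(R\cdot\Omega))$, where $T_R$ is diagonal with entries that are powers of $R$ according to the lower central series grading. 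The upshot we need is: there is a constant $M\geq 1$ and, for every $R\geq 1$, a bi-Lipschitz homeomorphism $\Phi_R\colon (\R^n,\|\cdot\|)\ra(\mc{N},d_{\mc{N}})$ with bi-Lipschitz constant at most $M$ on the scale relevant to us, carrying the Euclidean annulus $A_{\R^n}(0;s_1,s_2)$ into the Riemannian annulus $A_{\mc{N}}(e;R s_1/M, MR s_2)$ and conversely, for $s_1,s_2$ of order $1$. Combined with left-invariance of the metric (to move the centre $x$ to $e$) and a rescaling $R=r_1$, this reduces both statements to the model case of filling discrete cycles inside a \emph{fixed} Euclidean annulus $A_{\R^n}(0;1,R')$ (with $R'=r_2/r_1$ possibly large), up to a bounded distortion of fineness.

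In the Euclidean model, part~(1) with $i<n-1$ follows from the standard fact that $A_{\R^n}(0;1,R')$ is homotopy equivalent to $S^{n-1}$, hence $(i)$--acyclic for $i<n-1$, together with a \emph{quantitative} filling: first apply Lemma~\ref{singular->fine} in reverse — realise the discrete cycle $c$ by a genuine singular cycle of comparable fineness supported in (a slight thickening of) the annulus, fill it by a singular chain in the annulus using a bounded-geometry chain homotopy (e.g.\ coning radially to the sphere $\{|v|=\sqrt{1\cdot R'}\}$ piece by piece, or using a fixed Lipschitz contraction of the annulus onto a sphere), subdivide finely, and apply Lemma~\ref{singular->fine} to extract a discrete chain $d$ with $\fin(d)$ controlled linearly in $\fin(c)\vee 1$. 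The condition $\fin(c)\le r_1/L$ (equivalently $\fin\le$ order $1$ in the model) is exactly what guarantees the cycle sits in a thin enough neighbourhood of the annulus that the thickening is still contained in $A_{\R^n}(0;1/C,CR')$, which after undoing the bi-Lipschitz map becomes the asserted $A_{\mc{N}}(x;r_1/L,Lr_2)$. Part~(2), the case $i=n-1$, is the same argument except that $H_{n-1}$ of the annulus is $\Z$: instead of filling $c$ outright, we push it radially \emph{outward} to the outer boundary sphere of radius $\sim R'$, i.e.\ construct a singular chain $\hat d$ with $\partial\hat d=c-c'$ where $c'$ is supported in $\{|v|\sim R'\}$; discretising gives $d\sq A_{\mc{N}}(x;r_1/L,Lr_2)$ with $c-\partial d$ supported in the outer sub-annulus $A_{\mc{N}}(x;r_2/L,Lr_2)$, as required. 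The bookkeeping of the various multiplicative constants is routine: every constant depends only on $\mc{N}$ (through $n$, the norm $\|\cdot\|$ and $M$) and one sets $L$ to be their maximum, enlarged if necessary so that $r_1\geq L$ forces $R=r_1$ into the range where Karidi's estimates and the bi-Lipschitz bounds apply.

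The main obstacle is producing the \emph{uniform} bi-Lipschitz model with constants independent of the radius $R$ and, simultaneously, ensuring that the distortion of \emph{fineness} — not just of the ambient annulus — stays linear: a priori a bi-Lipschitz map with constant $M$ only distorts diameters of small sets by $M$, which is fine, but one must check that radial coning in the Euclidean model does not blow up fineness by a factor depending on $R'$ (it does not, because radial projection onto a sphere is $1$--Lipschitz in the radial direction after rescaling, and the coning simplices have diameter comparable to that of the cone point together with its image). A secondary subtlety is that Karidi's theorem is stated for balls around a single point; transferring to annuli requires subtracting two nested balls and checking that the difference of the two convex bodies $R\Omega$ at scales $r_1$ and $r_2$ is genuinely an annulus-like region with the correct homotopy type — here one uses that $T_R$ scales \emph{monotonically} in $R$ along each graded piece, so $\exp(T_{r_1}(r_1\Omega))\sq\exp(T_{r_2}(r_2\Omega))$ up to bounded error, giving the nested structure. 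Once these geometric inputs are in place, the homological/discretisation part is entirely standard and handled by Lemma~\ref{singular->fine} together with the acyclicity of the annulus below the top dimension.
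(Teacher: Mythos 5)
The overall structure of your plan — convert a discrete cycle to a singular one of comparable fineness, fill it using the topological description of large annuli, then discretise the filling via Lemma~\ref{singular->fine} — matches the paper's proof, and you correctly flagged the fragile step. Unfortunately the ``uniform bi-Lipschitz model'' you rely on is not merely hard to produce: it does not exist for non-abelian $\mc{N}$. If some $\Phi_R\colon(\R^n,\|\cdot\|)\to(\mc{N},d_{\mc{N}})$ were $M$-bi-Lipschitz uniformly in $R$ after the radius-$R$ rescaling you describe, the polynomial growth exponent of $\mc{N}$ would have to equal $n$; by the Bass--Guivarc'h formula this forces $\mc{N}$ to be abelian. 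The failure is already visible in the Heisenberg group: traversing distance $z$ in the central direction costs $\sim\sqrt{z}$ in the intrinsic metric, so the exponential-coordinates chart distorts small distances by unbounded factors. Karidi's theorem is strictly a statement about the \emph{shape} of large balls --- a sandwich $\mf{B}(R/a)\subseteq B(R)\subseteq\mf{B}(aR)$ between nested topological $n$--discs --- and carries no small-scale metric information.

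This matters because your plan also leans on the bi-Lipschitz model to bound the fineness of the singular filling $\hat d$ (``bounded-geometry chain homotopy'', Lipschitz radial contraction). That bound is both unobtainable and unnecessary. Lemma~\ref{singular->fine} controls the fineness of the discretised output by the fineness of $\partial\hat d$, not of $\hat d$ itself: the relative barycentric subdivision never touches the boundary. So the singular filling may be arbitrarily coarse; one only needs its boundary --- namely $\hat c$, or $\hat c - \hat c'$ in the top-dimensional case --- to be fine, and that is arranged earlier when converting the discrete cycle to a singular one (and, for the pushed-out $\hat c'$, by an extra subdivision). The paper's proof therefore uses Karidi's containment directly: the singular cycle $\hat c$ sits in $\mf{A}(r_1/2a,2ar_2)$, which is homeomorphic to $S^{n-1}\times(0,1]$ and contained in $A(r_1/2a^2,2a^2 r_2)$; one fills there for $i<n-1$, or deformation-retracts to an outer sub-annulus for $i=n-1$, and then invokes Lemma~\ref{singular->fine}. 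No bi-Lipschitz chart appears anywhere.

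If you drop the bi-Lipschitz upgrade and work directly with Karidi's nested discs, relying on the boundary-only fineness control from Lemma~\ref{singular->fine} rather than on a Lipschitz filling, your argument becomes correct and essentially coincides with the paper's.
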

\begin{proof}
For simplicity, we write $B(r):=B_{\mc{N}}(1,r)$ and $A(r_1,r_2):=A_{\mc{N}}(1;r_1,r_2)$, where $1\in\mc{N}$ is the identity. By the main result of \cite{Karidi}, there exist a constant $a>1$ and a collection of topological $n$--discs $1\in\mf{B}(r)\sq\mc{N}$ such that, for every $r>1$, we have $\mf{B}(r/a)\sq B(r)\sq\mf{B}(ar)$. In addition, the sets $\mf{A}(r_1,r_2):=\mf{B}(r_2)-\mf{B}(r_1)$ are homeomorphic to $S^{n-1}\x(0,1]$.

First of all, observe that for every (reduced) singular cycle $\hat c\sq\mc{N}$, we have $\hat c=\partial\hat d$ for a singular chain $\hat d$ with $\diam(\hat d)\leq 2a^2\cdot (\diam(\hat c)\vee 2)$. Indeed, setting $D:=\diam(\hat c)\vee 2$, we have $\hat c\sq B(D)\sq\mf{B}(aD)$ up to left multiplication. Since $\mf{B}(aD)$ is a topological disc, there exists a singular chain $\hat d\sq\mf{B}(aD)$ with $\partial\hat d=\hat c$. Finally, since $\mf{B}(aD)\sq B(a^2D)$, we have $\diam(\hat d)\leq 2a^2D$.

Consider now a discrete chain $c$. We want to argue that there exists a singular chain $\hat{c}$ with $\disc(\hat{c})=c$ and fineness controlled in terms of the fineness of $c$. We can obtain this by ``filling in'' discrete $1$--simplices with singular $1$--simplices, at which point discrete $2$--simplices yield singular $1$--cycles, and we can fill in those as well, and proceed inductively. It follows that, for every discrete $i$--chain $c\sq\mc{N}$, there exists a singular $i$--chain $\hat c\sq\mc{N}$ with $c=\disc(\hat c)$ and 
\[\fin(\hat c)\leq \tfrac{1}{2}(2a)^{2i}\cdot(\fin(c)\vee 2).\]

Now, consider a discrete cycle $c\in\overline{\mscr{C}}_i( A_{\mc{N}}(x;r_1,r_2))$ with $\fin(c)\vee 2\leq (2a)^{-2i}r_1$ and $i\geq 0$. Up to left multiplication, we can assume that $c\sq A(r_1,r_2)$. By the previous paragraph, we have $c=\disc(\hat c)$ for a (reduced) singular cycle $\hat c$ with $\fin(\hat c)\leq r_1/2$. In particular, we have 
\[\hat c\sq A(r_1/2,2r_2)\sq\mf{A}(r_1/2a,2ar_2):=\mf{A}.\] 
Note that $\mf{A}\sq A(r_1/2a^2,2a^2r_2)$.

Recall that $\mf{A}$ is homeomorphic to $S^{n-1}\x(0,1]$. Thus, if $i\neq n-1$, there exists a singular chain $\hat d\sq\mf{A}$ with $\hat c=\partial\hat d$. If $i=n-1$, there exists nevertheless $\hat d\sq\mf{A}$ with $\partial \hat d=\hat c-\hat c'$ where $\hat c'\sq\mf{A}(r_2/2a,2ar_2)$ is a singular chain homologous to $\hat c$, which we can and will arrange to be finer than $\hat c$ (e.g.\ by barycentric subdivision). Note that $\hat c'$ exists since $\mf{A}(r_2/2a,2ar_2)$ is a deformation retract of $\mf{A}$.

Finally, since $\fin(\partial\hat d)=\fin(\hat c)\leq \frac{1}{2}(2a)^{2i}(\fin(c)\vee 2)$, Lemma~\ref{singular->fine} yields a discrete chain $d\sq\mf{A}$ with $\partial d=\disc(\partial\hat d)$ and $\fin(d)\leq (2a)^{2i}(\fin(c)\vee 2)$. For $i\neq n-1$, we have $\partial d=c$ and, for $i=n-1$, we have $c-\partial d=\disc(\hat c')\sq\mf{A}(r_2/2a,2ar_2)$, as required.
\end{proof}

\subsection{Preliminary lemmas}\label{subsec:prelim_lemmas}

In the rest of Section~\ref{sect:Bowditch_filling}, we consider the setup described in Subsection~\ref{subsec:setup}. Also recall that $\overline{\mscr{C}}_*(\mc{S})$ denotes the chain complex of reduced discrete chains in the Bowditch boundary $\mc{S}$, where $0$--chains are required to have zero coefficient sum.

The following lemma says that we can fill discrete cycles while staying away from a specified point in the boundary, and that each point has neighborhoods where cycles can be filled.

For short, we write $f=o(1)$ for a function $f:\mathbb R^+\to \mathbb R^+$ with $\lim_{t\to 0} f(t)=0$.

\begin{lem}\label{b}
There exist functions $f,h=o(1)$ such that the following statements hold for every point $\xi\in\mc{S}$ and every $\delta,\eps>0$.
\begin{enumerate}
\setlength\itemsep{.1cm}
\item If $c\in\overline{\mscr{C}}_*(\mc{S})$ is a discrete $\delta$--fine cycle with $\rho(\xi,c)>\epsilon$, then there exists a discrete chain $d\sq\mc{S}$ with $\partial d=c$ and $\fin(d)\leq f(\delta)$ and $\rho(\xi,d)>h(\epsilon)$.
\item There exists an open subset $\xi\in W\sq\mc{S}$ such that $\diam(W)\leq\eps$ and such that, for every discrete $\delta$--fine cycle $c\in\overline{\mscr{C}}_*(W)$, there exists a discrete chain $d\sq W$ with $\partial d=c$ and $\fin(d)\leq f(\delta)$.
\end{enumerate}
\end{lem}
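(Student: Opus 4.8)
\textbf{Proof strategy for Lemma~\ref{b}.}

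The plan is to reduce both statements to the Euclidean/spherical situation via the local structure of the Bowditch boundary near parabolic and non-parabolic points, and to exploit the fact that $\mc{S}\cong S^{k+1}$ is a sphere together with Lemma~\ref{nilpotent Lie group}. First I would set up the basic dichotomy: a point $\xi\in\mc{S}$ is either a conical limit point or a parabolic point. Away from parabolic points the visual metric $\rho$ is, on small enough scales, bi-Lipschitz to a standard metric on an open subset of $S^{k+1}$ (or at worst quasi-Möbius / quasisymmetric to it in a way that is enough to fill cycles), so filling a small cycle in a small ball is just filling a small cycle in a ball in $\R^{k+1}$, which can be done with controlled diameter (hence controlled fineness after discretising, using Lemma~\ref{singular->fine}) and controlled distance from a chosen point. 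Near a parabolic point $p$, the horosphere $\mc{K}_p$ sits (coarsely) at bounded Hausdorff distance from a horoball defined via a Busemann function, and the punctured neighbourhood $\mc{S}\setminus\{p\}$ near $p$ is bi-Lipschitz-modeled, in the visual metric based near $e_p$, on the nilpotent group $\mc{P}_i$ with a left-invariant metric (after the usual inversion sending $p$ to $\infty$); this is exactly the regime Lemma~\ref{nilpotent Lie group} is designed for. The key scaling input throughout is the relation $\rho_x(\xi,\xi')\sim_C e^{-\eps_0(\xi|\xi')_x}$ together with the estimates on $d_{\mc{H}(\Gamma)}$ versus $2\log d_\Gamma$ from \cite[Lemma~2.2]{Mackay-Sisto}, which translate metric annuli in $\mc{S}$ into metric annuli of comparable (multiplicative) size in the relevant model spaces.

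For part~(1), given $\xi$ and a $\delta$--fine cycle $c$ with $\rho(\xi,c)>\eps$, I would cover the support of $c$ by finitely many small balls, in each of which $\mc{S}$ is bi-Lipschitz-modeled either on a ball in $S^{k+1}$ (if the ball avoids parabolic points at the relevant scale) or on an annulus/ball in a nilpotent model (if it is near a parabolic point), and fill $c$ piecewise using Mayer–Vietoris-type coning (Lemma~\ref{lem:cone}) together with the filling results in each model, being careful that the intermediate cycles produced on overlaps are themselves small and can be filled. The distance control $\rho(\xi,d)>h(\eps)$ comes from the fact that every filling can be taken to have diameter at most $O(\sqrt{\diam})$ of the piece being filled — this is the $\sqrt{\ }$ loss already visible in Proposition~\ref{prop:Bowditch_filling}(0), which ultimately stems from the logarithmic relation between distances in horoballs and distances in horospheres — so a filling of a piece at distance $>\eps$ from $\xi$ stays at distance $\gtrsim \eps - O(\sqrt{\eps}) \gtrsim h(\eps)$ from $\xi$ for a suitable $o(1)$ function $h$; similarly $\fin(d)\le f(\delta)$ with $f = o(1)$ since each model-space filling multiplies fineness by a bounded factor and the discretisation step (Lemma~\ref{singular->fine}) only doubles it. Actually, the cleanest route is probably to first prove Proposition~\ref{prop:Bowditch_filling} and then derive part~(1) of this lemma as a corollary: cover the complement of a small ball around $\xi$ by the "outer" region, fill there, and keep the filling in an annulus/complement-of-ball bounded away from $\xi$, using statements~(1) and~(2) of Proposition~\ref{prop:Bowditch_filling}. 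For part~(2), I would take $W$ to be a small ball $B(\xi,\eps')$ with $\eps'$ chosen (as an $o(1)$ function of $\eps$) so that $\diam(W)\le\eps$ and so that $W$ is modeled on a ball in $S^{k+1}$ or in a nilpotent model where small cycles fill within a comparably small ball; then any $\delta$--fine cycle in $W$ fills within a set of diameter $O(\sqrt{\diam W})=O(\sqrt{\eps'})$, which stays inside $W$ provided $\eps'$ was chosen small enough — choosing $W$ slightly larger than the naive ball, i.e.\ taking a ball of radius $\eps'$ and observing the filling lands in the ball of radius $C\sqrt{\eps'}<\eps'$ once $\eps'<C^{-2}$, or more robustly taking $W$ to be an open set whose closure is contained in a ball where filling works.

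The main obstacle I expect is the bookkeeping at the interface between the conical-point model (where $\mc{S}$ looks like a piece of $S^{k+1}$) and the parabolic-point model (where it looks like a nilpotent group): the bi-Lipschitz comparisons hold only at scales small relative to $r_p$, so a single cycle $c$ might have pieces straddling many different scales and many different parabolic points, and one must organise the covering and the coning so that the intermediate pieces remain controlled and so that the constants ($K$ in $\sim_K$, the Lipschitz constants, the $L$ from Lemma~\ref{nilpotent Lie group}) compound into genuinely uniform $f,h=o(1)$ functions independent of $\xi$, $\delta$, $\eps$. This is essentially the technical heart of Section~\ref{sect:Bowditch_filling}, and it is why Proposition~\ref{prop:Bowditch_filling} — with its explicit annulus and ball versions — is proved first and this lemma is extracted from it; the routine-but-lengthy part is verifying that the filling diameter bound $K\sqrt{\diam(c)}$ propagates correctly through the patching, and that the fineness hypothesis $\fin(c)\le g(\delta)$ is what is needed to stay within the validity range of all the local models simultaneously.
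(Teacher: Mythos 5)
Your approach badly overshoots what the lemma actually requires, and one of your suggested shortcuts is circular. The paper's proof is a one-paragraph soft argument: $\mc{S}$ is homeomorphic to $S^{k+1}$, so it carries two metrics, the visual metric $\rho$ and a round metric $d_\circ$, inducing the same compact topology; hence they are uniformly comparable (there is a modulus of continuity $\omega$ with $\omega(t)\to 0$ as $t\to 0$ controlling $\rho$ in terms of $d_\circ$ and vice versa). In the round metric, cycles supported in a ball fill with the same fineness and within the same ball (take $W$ to be a round ball for part (2)), while cycles supported in the complement of a round ball fill with fineness a fixed multiple of $\fin(c)$ and stay in a slightly smaller complement (part (1)). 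Because $f$ and $h$ in the statement are only required to be $o(1)$, they happily absorb the modulus $\omega$ twice over, and that's the whole proof. No estimate on the visual metric, no distinction between conical and parabolic points, no Lemma~\ref{nilpotent Lie group}, no $\sqrt{\ }$-loss: the lemma sits entirely at the level of topology plus compactness.

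The crucial point you missed is exactly this $o(1)$ freedom. You treat Lemma~\ref{b} as if it demanded the same multiplicative (bi-Lipschitz) control as Proposition~\ref{prop:Bowditch_filling}, which is what drags you into the machinery of nilpotent models, horospheres, and the patching-across-scales analysis that is the real technical heart of the section. None of that is needed here, and trying to run it would in fact make the proof much harder to close: the cross-scale bookkeeping you flag as "the main obstacle" is a genuine obstacle, and the whole point of the paper's design is that Lemma~\ref{b} sidesteps it by being qualitative. Worse, your suggested "cleanest route" --- first prove Proposition~\ref{prop:Bowditch_filling}, then extract Lemma~\ref{b} as a corollary --- is circular: the paper uses Lemma~\ref{b} (both parts) inside the proof of Lemma~\ref{lem:shallow_barycentre}, which in turn feeds Corollary~\ref{cor:shallow_barycentre} and everything downstream up to Proposition~\ref{prop:Bowditch_filling}. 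So Lemma~\ref{b} must be proved first, independently, and the soft metric-comparison argument is precisely what makes that possible.
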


The reason why, in the above statement, we introduce $\delta$ and conclude that $\fin(d)\leq f(\delta)$, rather than simply writing $\fin(d)\leq f(\fin(c))$, is to deal with $0$--cycles. These are $0$--fine but cannot be filled with a $0$--fine $1$--chain.

\begin{proof}
The lemma follows from the fact that $\mc{S}$ is homeomorphic to a sphere, and the fact that the statement holds for a sphere with its round metric. More precisely, let us consider two metrics on $\mc{S}$, namely $\rho$ and a metric that makes it isometric to a round sphere. 

For item (2) we can take $W$ to be a ball in the round metric. In said ball, any $c\in\overline{\mscr{C}}_*(W)$ can be filled by a chain with the same fineness (with respect to the round metric). Comparing $\rho$ to the round metric, using the fact that they induce the same topology on $\mc{S}$, yields item (2).

Item (1) is similar, with the small difference that, in the complement of a ball in the round metric, any $c\in\overline{\mscr{C}}_*(W)$ can be filled by a chain whose fineness is at most a fixed multiple of $\fin(c)$.
\end{proof}

For each parabolic point $p\in\mc{S}=\partial \mc{X}$, we can define two maps projecting the Bowditch boundary to the horosphere around $p$ and vice versa:
\begin{align*}
\Psi_p&\colon\mc{S}-\{p\}\ra\mc{K}_p, & \Phi_p\colon\mc{K}_p\ra\mc{S}-\{p\}.
\end{align*}
More precisely, we define these maps as follows. For every $\xi\in\mc{S}-\{p\}$, the point $\Psi_p(\xi)$ is within distance $1$ of a bi-infinite geodesic from $p$ to $\xi$. This would not be strictly necessary, but for convenience, we will require $\Psi_p$ to be injective. This can always be arranged, since we allow $\Psi_p$ to take values outside the vertex set of $\mc{K}_p$ and any set of cardinality $2^{\aleph_0}$ (a suitable neighbourhood of $\xi$ in $\mc{S}$) can be injected into any set of the same cardinality (a suitable neighbourhood of a point in $\mc{K}_p$). Conversely, for every $x\in\mc{K}_p$, the point $\Phi_p(x)$ is the endpoint at infinity of a bi-infinite geodesic from $p$ that passes within distance $1$ of $x$. Again for convenience, we will require that $\Phi_p\Psi_p(\xi)=\xi$ for all $\xi\in\mc{S}-\{p\}$.

The next three lemmas are general facts about cusped spaces, which do not require the Bowditch boundary to be a sphere, nor the peripherals to be virtually nilpotent. They are similar to \cite[Lemma 5.6]{Mackay-Sisto} and \cite[Lemma 6.1]{MS2}, and can be proven with the same method. Namely, one can use that finite configurations of points in a hyperbolic space can be approximated by trees, and compute all relevant Gromov products in said trees (see \cite[Figures 1 and 2]{Mackay-Sisto}). We give more details below only for the first lemma. 

We denote by $d_{\mc{K}_p}(\cdot,\cdot)$ the intrinsic path metric of a horosphere $\mc{K}_p$.

\begin{lem}\label{lem:Psi}
There exists a constant $C\geq 1$, depending only on $\delta_0,\eps_0$, such that the following statements hold for all $x,y\in\mc{S}-\{p\}$: 
\begin{enumerate}
\item $d_{\mc{K}_p}(e_p,\Psi_p(x))\sim_C 1\vee\left(\frac{r_p}{\rho(p,x)}\right)^{1/\eps_0}$;
\item if $\rho(x,y)\leq \rho(p,\{x,y\})$, then $d_{\mc{K}_p}(\Psi_p(x),\Psi_p(y))\sim_C 1\vee\left(\frac{r_p\cdot\rho(x,y)}{\rho(p,x)^2}\right)^{1/\eps_0}$.
\end{enumerate}
\end{lem}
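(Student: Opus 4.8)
\textbf{Proof proposal for Lemma~\ref{lem:Psi}.}

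The plan is to follow the standard ``approximating trees'' technique for hyperbolic spaces, as used in \cite[Lemma~5.6]{Mackay-Sisto} and \cite[Lemma~6.1]{MS2}. Fix the parabolic point $p$ and points $x,y\in\mc{S}-\{p\}$. First I would translate the quantities appearing in the statement into Gromov products in the cusped space $\mc{X}$: by the relation $\rho_o(\xi,\xi')\sim_C e^{-\eps_0(\xi|\xi')_o}$, the ratios $r_p/\rho(p,x)$ and $r_p\cdot\rho(x,y)/\rho(p,x)^2$ become, up to additive error, exponentials of linear combinations of the Gromov products $(p|x)_o$, $(p|y)_o$, $(x|y)_o$ together with $d_\mc{X}(o,\mc{H}_p)$ (recall $r_p=e^{-\eps_0 d_\mc{X}(o,\mc{H}_p)}$, and $e_p$ is a closest point of $\mc{K}_p$ to $o$). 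On the other side, I would relate $d_{\mc{K}_p}(\cdot,\cdot)$ on the horosphere to distances in the combinatorial horoball via \cite[Lemma~2.2]{Mackay-Sisto}: for $u,v\in\mc{K}_p$ one has $d_{\mc{H}(\Gamma)}(u,v)\sim 2\log d_{\mc{K}_p}(u,v)$ up to bounded error (when $d_{\mc{K}_p}(u,v)\geq 1$), so it suffices to estimate $d_\mc{X}(e_p,\Psi_p(x))$ and $d_\mc{X}(\Psi_p(x),\Psi_p(y))$ up to additive constants, and the $1\vee(\cdots)$ and the $1/\eps_0$ exponent in the statement exactly account for the cases where these horoball distances are $O(1)$.

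The core of the argument is then a finite tree approximation. For part~(1), consider the five points $o$, $p$, $x$, $e_p$, $\Psi_p(x)$ (noting $\Psi_p(x)$ lies within distance $1$ of a geodesic $[p,x]$ and $e_p$ within bounded distance of $\mc{H}_p$): a finite configuration in a $\delta_0$--hyperbolic space is $O(\delta_0)$--approximated by a finite tree, and in that tree one reads off $d(e_p,\Psi_p(x))$ directly in terms of the pairwise Gromov products, giving $d_\mc{X}(e_p,\Psi_p(x))\approx 2\big((p|x)_o \wedge d_\mc{X}(o,\mc{H}_p)\big)$ up to additive error, modulo sorting out which branch of the tree the relevant points sit on. Exponentiating and using the horoball--distance dictionary yields $d_{\mc{K}_p}(e_p,\Psi_p(x))\sim_C 1\vee (r_p/\rho(p,x))^{1/\eps_0}$. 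For part~(2), under the hypothesis $\rho(x,y)\leq\rho(p,\{x,y\})$ one has $(x|y)_o\geq (p|x)_o\wedge(p|y)_o$, which forces the tree spanned by $o,p,x,y$ to have the ``$p$ splits off first, then $x$ and $y$ split'' shape; adding $\Psi_p(x),\Psi_p(y)$ (each near the respective geodesic from $p$) and computing $d(\Psi_p(x),\Psi_p(y))$ in this tree gives the estimate $\approx 2(x|y)_o - 2(p|x)_o + \text{(something comparable to $d_\mc{X}(o,\mc{H}_p)$)}$, which exponentiates to $1\vee\big(r_p\rho(x,y)/\rho(p,x)^2\big)^{1/\eps_0}$. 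The requirement that $\Phi_p\Psi_p=\mathrm{id}$ and the injectivity of $\Psi_p$ play no role in these estimates; they are just convenient normalisations.

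The main obstacle, as usual with this method, is bookkeeping rather than ideas: one must (i) be careful that the tree approximation controls \emph{all} pairwise Gromov products of the six-point configuration simultaneously with a single additive constant depending only on $\delta_0$; (ii) handle the degenerate regimes where $\rho(p,x)$ is comparable to $r_p$ (so $\Psi_p(x)$ is within bounded distance of $e_p$ and the $1\vee(\cdots)$ clause kicks in) and where $\rho(x,y)$ is comparable to $\rho(p,x)$; and (iii) translate the additive $O(\delta_0,\eps_0)$ errors in distances into multiplicative $\sim_C$ errors after exponentiation, absorbing the conversion constant from \cite[Lemma~2.2]{Mackay-Sisto} and the visual-metric constant $C$ into a single $C$ depending only on $\delta_0,\eps_0$. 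Since the paper explicitly says this lemma ``can be proven with the same method'' as the cited results and promises to ``give more details below only for the first lemma,'' I would write out the tree picture for part~(1) in full (with a figure analogous to \cite[Figures~1 and~2]{Mackay-Sisto}) and indicate that part~(2) is identical with the extra point $y$ inserted.
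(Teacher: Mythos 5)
Your overall strategy --- approximating trees for the relevant six-point configuration, translating boundary quantities into Gromov products, and converting between the horosphere metric $d_{\mc{K}_p}$ and the horoball metric via the $2\log$ relation of \cite[Lemma~2.2]{Mackay-Sisto} --- is exactly the approach the paper sketches: they too read a quantity $\ell$ off an approximating tree and then exponentiate.

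However, the two explicit intermediate formulas you write down are both wrong, and they are precisely the bookkeeping you say is the only remaining obstacle. For part~(1) you claim $d_{\mc{X}}(e_p,\Psi_p(x))\approx 2\big((p|x)_o\wedge d_{\mc{X}}(o,\mc{H}_p)\big)$, a minimum. Unwinding the stated estimate instead forces $d_{\mc{X}}(e_p,\Psi_p(x))\approx 2\max\big(0,\,(p|x)_o-d_{\mc{X}}(o,\mc{H}_p)\big)$: from $r_p=e^{-\eps_0 d_{\mc{X}}(o,\mc{H}_p)}$ and $\rho(p,x)\sim e^{-\eps_0(p|x)_o}$ one gets $(r_p/\rho(p,x))^{1/\eps_0}\sim e^{(p|x)_o-d_{\mc{X}}(o,\mc{H}_p)}$, and the $2\log$ dictionary then gives the displayed positive-part-of-a-difference. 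These disagree qualitatively: if $(p|x)_o\approx d_{\mc{X}}(o,\mc{H}_p)$ and both are large, your minimum is large, whereas the correct answer is $O(1)$ --- and indeed in that regime $\Psi_p(x)$ \emph{should} be near $e_p$ since $\rho(p,x)\approx r_p$. In the paper's tree picture, the controlling quantity $\ell$ is the depth of the branch point \emph{below} the horosphere, i.e.\ $(p|x)_o-d(o,e_p)$, not a min. For part~(2) the Gromov products appear with reversed signs: the same calculation from the stated formula gives $d_{\mc{X}}(\Psi_p(x),\Psi_p(y))\approx \max\big(0,\ 4(p|x)_o-2(x|y)_o-2d_{\mc{X}}(o,\mc{H}_p)\big)$, whereas you wrote $\approx 2(x|y)_o-2(p|x)_o+(\dots)$. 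Since under the hypothesis $\rho(x,y)\leq\rho(p,\{x,y\})$ one has $(x|y)_o\gtrsim(p|x)_o$, your expression is always nonnegative and can be large in regimes where the actual distance should be $O(1)$. So the tree-reading step needs to be redone before the exponentiation and constant-absorption are justified.
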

\begin{proof}[Proof sketch]
The outline for item (1) is as follows, in the case that $\Psi_p(x)$ is sufficiently far from $e_p$. Figure \ref{horob1} shows an approximating tree for the relevant points. The two highlighted segments have the same length $\ell$ because their endpoints lie on a horosphere. We have that $(p|x)_o$ is equal to $d(o,e_p)+\ell$ up to bounded additive error, and we also have that $d_{\mc{K}_p}(e_p,\Psi_p(x))\sim_D e^\ell$ for some $D$ depending only on the cusped space $\mc{X}$. Recalling the definition of the visual metric, we have that $\rho(p,x)\sim_{D'} r_p e^{-\epsilon_0\ell}$ for some $D'$. Hence, $(r_p/\rho(p,x))^{1/\epsilon_0}\sim_{D''} e^\ell$ for some $D''$ and, as mentioned above, the right-hand side is $\sim_D d_{\mc{K}_p}(e_p,\Psi_p(x))$, as required.

In item (2), in principle there would be two approximating trees to consider, up to swapping $x$ and $y$. These are pictured in Figure \ref{horob2}. However, in the right-hand configuration, $(p|x)_o-(x|y)_o$ is approximately $t$ as marked in the Figure \ref{horob2}. Thus, if $t$ is large we have $\rho(x,y)\gg \rho(p,x,y)$, while for small $t$ the right-hand configuration collapses into the left-hand one. Estimates analogous to those used for item~(1) conclude the proof.
\end{proof}

\begin{figure}[ht] 
	%	\hspace{5cm}
	%	\captionsetup{width=1.8\linewidth}
	\centering
	\includegraphics{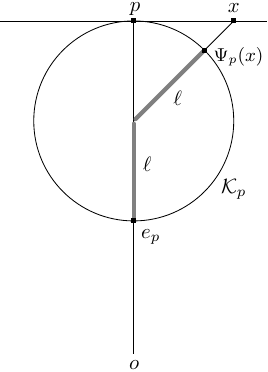}
	\caption{A point in the boundary and its ``projection'' to the horosphere corresponding to the parabolic point $p$, as in the proof of item~(1) of Lemma~\ref{lem:Psi}.}\label{horob1}
\end{figure} 

\begin{figure}[ht] 
%	\hspace{5cm}	%	\captionsetup{width=1.8\linewidth}
\centering
\includegraphics{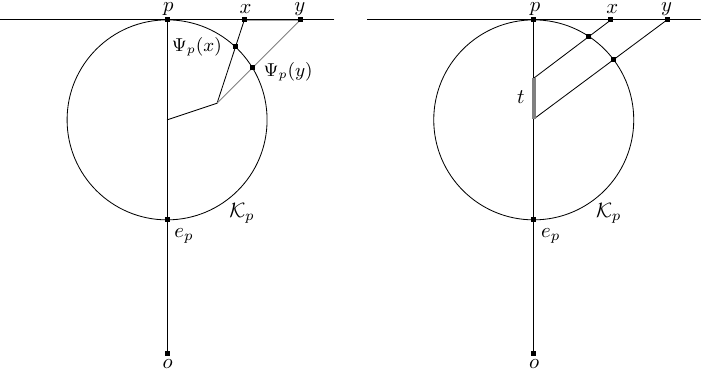}
\caption[align=left]{Illustration of the proof idea of item~(2) of Lemma~\ref{lem:Psi}.}\label{horob2}
\end{figure}

\begin{lem}\label{lem:Phi}
There exists a constant $C\geq 1$, depending only on $\delta_0,\eps_0$, such that the following statements hold for all points $u,v\in\mc{K}_p$ outside the $C$--ball around $e_p$:
\begin{enumerate}
\item $\rho(p,\Phi_p(u))\sim_C\frac{r_p}{d_{\mc{K}_p}(e_p,u)^{\eps_0}}$;
\item if $d_{\mc{K}_p}(u,v)\leq \tfrac{1}{2}\cdot d_{\mc{K}_p}(e_p,u)$, then $\rho(\Phi_p(u),\Phi_p(v))\leq C\cdot \frac{r_p\cdot \left(1\vee d_{\mc{K}_p}(u,v)\right)^{\eps_0}}{d_{\mc{K}_p}(e_p,u)^{2\eps_0}}$.
\end{enumerate}
\end{lem}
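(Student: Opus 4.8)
\textbf{Proof plan for Lemma~\ref{lem:Phi}.}

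The plan is to reduce both estimates to computations in an approximating tree, exactly as in the proof sketch for Lemma~\ref{lem:Psi}, and in fact to observe that the two lemmas are essentially inverse to each other under the maps $\Phi_p$ and $\Psi_p$. For item~(1), I would fix $u\in\mc{K}_p$ with $d_{\mc{K}_p}(e_p,u)\geq C$ and consider the finite configuration consisting of $o$, $e_p$, $p$, $u$, and a point $\eta=\Phi_p(u)$ on a bi-infinite geodesic from $p$ passing within distance $1$ of $u$. Approximating this configuration by a tree (see \cite[Figure~1]{Mackay-Sisto}), the geodesic $[p,\eta]$ enters the horoball $\mc{H}_p$, comes within bounded distance of $u$, and exits again; letting $\ell$ be the "depth" to which it descends, the horospherical geometry gives $d_{\mc{K}_p}(e_p,u)\sim_D e^{\ell}$ for a constant $D$ depending only on $\mc{X}$ (this is the exponential distortion of horospheres recalled after \cite[Lemma~2.2]{Mackay-Sisto}, combined with the additive comparison $d_{\mc{H}(\Gamma)}\approx 2\log d_\Gamma$). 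On the other hand the tree shows $(p|\eta)_o = d(o,e_p)+\ell$ up to bounded additive error, so by the definition of the visual metric and the comparison $\rho(\xi,\xi')\sim_C e^{-\eps_0(\xi|\xi')_o}$ we get $\rho(p,\eta)\sim_{D'} r_p\, e^{-\eps_0\ell}$, where $r_p=e^{-\eps_0 d_\mc{X}(o,\mc{H}_p)}$ and $d_\mc{X}(o,\mc{H}_p)$ agrees with $d(o,e_p)$ up to bounded error. Combining the two displays yields $\rho(p,\Phi_p(u))\sim_C r_p/d_{\mc{K}_p}(e_p,u)^{\eps_0}$, which is item~(1). (The hypothesis $d_{\mc{K}_p}(e_p,u)\geq C$ ensures $\ell$ is large enough that the tree approximation has the qualitative shape described; for $u$ near $e_p$ the statement would degenerate, which is why it is excluded.)

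For item~(2), I would run the analogue of the two-tree analysis used for item~(2) of Lemma~\ref{lem:Psi}. Fix $u,v\in\mc{K}_p$ with $d_{\mc{K}_p}(e_p,u)\geq C$ and $d_{\mc{K}_p}(u,v)\leq\frac12 d_{\mc{K}_p}(e_p,u)$, and set $\eta=\Phi_p(u)$, $\zeta=\Phi_p(v)$. We want to estimate $(\eta|\zeta)_o$ from below. Approximating the configuration $\{o,e_p,p,u,v,\eta,\zeta\}$ by a tree, the geodesics $[p,\eta]$ and $[p,\zeta]$ both descend into $\mc{H}_p$ near $e_p$, travel close to $u$ and $v$ respectively, and then diverge. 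Since $d_{\mc{K}_p}(u,v)$ is at most half of $d_{\mc{K}_p}(e_p,u)$, the two geodesics stay together at least until depth comparable to $\ell:=\log d_{\mc{K}_p}(e_p,u)$ (up to bounded error), and they remain within bounded distance of each other for an additional stretch governed by $\log(1\vee d_{\mc{K}_p}(u,v))$ before separating. Reading off the Gromov product from the tree gives
\[
(\eta|\zeta)_o \;\geq\; d(o,e_p) + 2\ell - \log\!\bigl(1\vee d_{\mc{K}_p}(u,v)\bigr) - O(1),
\]
so that $\rho(\eta,\zeta)\sim_C e^{-\eps_0(\eta|\zeta)_o}\leq C'\, r_p\,\bigl(1\vee d_{\mc{K}_p}(u,v)\bigr)^{\eps_0}/d_{\mc{K}_p}(e_p,u)^{2\eps_0}$, which is the claimed inequality (only an upper bound on $\rho$, hence only a lower bound on the Gromov product, is needed). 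As remarked in the excerpt, this lemma is "similar to \cite[Lemma~5.6]{Mackay-Sisto} and \cite[Lemma~6.1]{MS2}, and can be proven with the same method", so I would write the proof as a short sketch in the same style as the one given for Lemma~\ref{lem:Psi}, referring to the corresponding figures.

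The main obstacle is purely bookkeeping: carefully tracking which distances are compared up to a multiplicative constant versus an additive constant as one passes between the cusped space $\mc{X}$, the visual metric $\rho$ on $\mc{S}$, and the intrinsic path metric on $\mc{K}_p$, and making sure the constant $C$ really depends only on $\delta_0$ and $\eps_0$. The conceptual content — that $\Phi_p$ converts word-length distance on the horosphere into visual distance near $p$ with the exponent $\eps_0$ (once for item~(1), twice in the denominator for item~(2)) — is already transparent from the tree picture, and the hypotheses $d_{\mc{K}_p}(e_p,u)\geq C$ and $d_{\mc{K}_p}(u,v)\leq\frac12 d_{\mc{K}_p}(e_p,u)$ are exactly what is needed to guarantee that the approximating trees have the expected combinatorial type.
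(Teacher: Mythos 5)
Your proof takes exactly the approach the paper prescribes: the paper does not actually write out a proof for this lemma, stating only that it (and Lemma~\ref{lem:Psi}) follows by approximating the relevant finite point configurations by trees and computing Gromov products there, and your tree computations of $(p|\Phi_p(u))_o$ and $(\Phi_p(u)|\Phi_p(v))_o$ are the correct way to fill this in. One small wording slip worth fixing: in item~(1) you describe $\ell$ as the depth to which $[p,\eta]$ descends, but that geodesic starts at $p$ and already lies at infinite depth; you mean the depth reached by the portion of $[o,\eta]$ (equivalently, by $[e_p,u]$) inside the horoball $\mc{H}_p$ — and indeed that is the quantity that actually appears in the estimates $d_{\mc{K}_p}(e_p,u)\sim e^\ell$ and $(p|\eta)_o\approx d(o,e_p)+\ell$ you then use, which are correct.
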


For a discrete chain $c\sq\mc{S}$, we define the \emph{barycentre} $\mf{b}(c)\in \mc{X}\cup \mc{S}$ as follows: choose a vertex $\xi\in c$ and let $\mf{b}(c)\in[o,\xi]$ be the point with $d_\mc{X}(o,\mf{b}(c))=\inf_{\eta,\eta'\in c}(\eta|\eta')_o$. Observe that $\diam(c)\sim_C e^{-\eps_0d_\mc{X}(o,\mf{b}(c))}$, for a constant $C$ only depending on $\delta_0,\eps_0$.

\begin{lem}\label{lem:barycentre_vs_horoball}
For every $K\geq 1$, there exists $D\geq 0$ such that the following holds for all parabolic points $p\in\mc{S}$, all points $\xi\in B(p,r_p)\sq\mc{S}$ and all points $z\in [0,\xi]$.
\begin{enumerate}
\item If $\frac{1}{K}\cdot\frac{\rho(p,\xi)^2}{r_p}\leq e^{-\eps_0d_\mc{X}(o,z)}\leq r_p$, then $d_\mc{X}(z,\mc{H}_p)\leq D$. 
\item If $e^{-\eps_0d_\mc{X}(o,z)}\leq K\cdot\frac{\rho(p,\xi)^2}{r_p}$, then $d_\mc{X}(z,\mc{X}-\mc{H}_p)\leq D$. 
\end{enumerate}
In addition, there exists $K_0$, depending only on $\delta_0,\eps_0$, with the following property.
\begin{enumerate}
\item[(3)] If $z\in\mc{H}_p$, then $\frac{1}{K_0}\cdot\frac{\rho(p,\xi)^2}{r_p}\leq e^{-\eps_0d_\mc{X}(o,z)}$.
\end{enumerate}
\end{lem}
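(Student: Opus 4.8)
The plan is to reduce everything to computations in an approximating tree for the finite configuration of points $\{o, e_p, p, \xi, z\}$, exactly in the spirit of the proof sketches for Lemmas~\ref{lem:Psi} and~\ref{lem:Phi} (see \cite[Figures 1 and 2]{Mackay-Sisto}). The geodesic ray $[o,p]$ passes within bounded distance of $e_p$, since $e_p$ is a point of $\mc{K}_p$ nearest to $o$ and $\mc{H}_p$ is at bounded Hausdorff distance from a Busemann horoball centred at $p$. So in the approximating tree the rays $[o,p]$ and $[o,\xi]$ fellow-travel along an initial segment of length $(p|\xi)_o$, which equals $d_\mc{X}(o,\mc{H}_p)$ up to bounded error when $\xi$ is close enough to $p$ that the branch point lies beyond $e_p$; and recall $r_p = e^{-\eps_0 d_\mc{X}(o,\mc{H}_p)}$ while $\rho(p,\xi)\sim_C e^{-\eps_0(p|\xi)_o}$. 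First I would record, from the tree picture, that a point $z\in[o,\xi]$ at distance $t$ from $o$ satisfies: if $t \le (p|\xi)_o$ (up to the additive constant) then $z$ fellow-travels $[o,p]$, hence lies within bounded distance of $\mc{H}_p$ once $t \ge d_\mc{X}(o,\mc{H}_p)$, i.e.\ once $e^{-\eps_0 t}\le C' r_p$; and the distance of $z$ from $\partial\mc{H}_p\simeq\mc{K}_p$ inside the horoball is comparable to $(p|\xi)_o - t$, which in turn is comparable to $\frac{1}{\eps_0}\log\big(e^{-\eps_0 t}\cdot r_p / \rho(p,\xi)^2\big)$ up to bounded error (this is the same "length-$\ell$ on a horosphere" bookkeeping as in Lemma~\ref{lem:Psi}(1)).

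With that dictionary in hand each item is a one-line translation. For item~(1), the hypothesis $\frac{1}{K}\frac{\rho(p,\xi)^2}{r_p}\le e^{-\eps_0 d_\mc{X}(o,z)}$ says $(p|\xi)_o - d_\mc{X}(o,z)$ is bounded above by a constant depending on $K$ (via $\eps_0$ and the approximation constants), so $z$ is within bounded distance of the branch point, hence of $[o,p]$; and $e^{-\eps_0 d_\mc{X}(o,z)}\le r_p$ says $d_\mc{X}(o,z)\ge d_\mc{X}(o,\mc{H}_p)$ up to a constant, so $z$ has passed the entry point of $[o,p]$ into $\mc{H}_p$ — together these give $d_\mc{X}(z,\mc{H}_p)\le D$. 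For item~(2), $e^{-\eps_0 d_\mc{X}(o,z)}\le K\frac{\rho(p,\xi)^2}{r_p}$ gives $d_\mc{X}(o,z)\ge (p|\xi)_o + O_K(1)$, i.e.\ $z$ lies beyond the branch point along $[o,\xi]$, on the segment heading to $\xi$ and away from $p$; since $\xi\in B(p,r_p)$ keeps $\xi$ in the "cusp region" and any such geodesic leaves $\mc{H}_p$ within bounded time after the branch point, we get $d_\mc{X}(z,\mc{X}-\mc{H}_p)\le D$. For item~(3), if $z\in\mc{H}_p$ then its distance into the horoball past $\mc{K}_p$ is nonnegative, which by the dictionary forces $(p|\xi)_o - d_\mc{X}(o,z)\le$ const, i.e.\ $e^{-\eps_0 d_\mc{X}(o,z)}\ge \frac{1}{K_0}\frac{\rho(p,\xi)^2}{r_p}$ with $K_0$ absorbing only $\delta_0,\eps_0$ and the universal approximation constants (no dependence on a chosen $K$, since this direction uses no hypothesis relating $z$ to $\rho(p,\xi)$ beyond $z\in\mc{H}_p$).

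The main obstacle is bookkeeping the additive errors cleanly: the tree approximation of a five-point configuration is only valid up to an additive constant depending on $\delta_0$, and one must check that the thresholds "$z$ past the branch point" and "$z$ inside $\mc{H}_p$" are each detected by the stated exponential inequalities up to multiplicative constants that genuinely depend only on the allowed parameters — $K$ for (1)–(2), and nothing beyond $\delta_0,\eps_0$ for (3). A secondary point to handle is the degenerate regime where $\xi$ is not close enough to $p$ for the $[o,p]$/$[o,\xi]$ branch point to lie beyond $e_p$ (so $(p|\xi)_o$ is not well-approximated by $d_\mc{X}(o,\mc{H}_p)$); but there the hypothesis $\xi\in B(p,r_p)$ together with the relevant exponential bound already pins $d_\mc{X}(o,z)$ to a bounded window near $d_\mc{X}(o,\mc{H}_p)$, so $z$ is within bounded distance of $e_p\in\mc{K}_p$ and all three conclusions hold trivially. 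I would dispatch that case first and then run the tree computation in the main case.
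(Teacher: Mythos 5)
Your approach --- approximate the configuration $\{o,e_p,p,\xi,z\}$ by a tree and translate the hypotheses into statements about where $z$ sits along $[o,\xi]$ relative to the entry and exit of that geodesic into $\mc{H}_p$ --- is exactly what the paper prescribes for this lemma (it gives no detailed proof, only a method sketch shared with Lemma~\ref{lem:Psi}). Your treatment of the degenerate regime where the branch point is not beyond $e_p$, and your item~(3) reasoning, are also fine. However, the ``dictionary'' in the middle of the proposal has genuine errors, and they matter because they cause you to invoke a false geometric claim.

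The correct formula is this: in the approximating tree, $[o,\xi]$ enters $\mc{H}_p$ at parameter $d_\mc{X}(o,\mc{H}_p)$ and exits at parameter $2(p|\xi)_o - d_\mc{X}(o,\mc{H}_p)$, the branch point (deepest point) being at parameter $(p|\xi)_o$. Since $\rho(p,\xi)\sim e^{-\eps_0(p|\xi)_o}$ and $r_p=e^{-\eps_0 d_\mc{X}(o,\mc{H}_p)}$, one has
\[
\frac{1}{\eps_0}\log\frac{e^{-\eps_0 t}\,r_p}{\rho(p,\xi)^2}\;=\;\big(2(p|\xi)_o-d_\mc{X}(o,\mc{H}_p)\big)-t+O(1),
\]
the remaining parameter-distance to the \emph{exit}. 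You wrote this formula down, but then asserted it equals ``$(p|\xi)_o-t$'' --- that is a different quantity (distance to the branch point), and it is \emph{not} comparable to the log expression unless $(p|\xi)_o$ happens to be close to $d_\mc{X}(o,\mc{H}_p)$. This propagates into wrong translations of the hypotheses. In item~(1), the condition $\frac{1}{K}\rho(p,\xi)^2/r_p\le e^{-\eps_0 d_\mc{X}(o,z)}$ gives an \emph{upper} bound $d_\mc{X}(o,z)\le 2(p|\xi)_o-d_\mc{X}(o,\mc{H}_p)+O_K(1)$, i.e.\ $z$ is before the exit; it does \emph{not} say ``$(p|\xi)_o-d_\mc{X}(o,z)$ is bounded above'' and does not place $z$ near the branch point. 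Combined with the other hypothesis ($z$ past the entry), this directly gives $d_\mc{X}(z,\mc{H}_p)\le D$. In item~(2), the hypothesis gives $d_\mc{X}(o,z)\ge 2(p|\xi)_o-d_\mc{X}(o,\mc{H}_p)-O_K(1)$, i.e.\ $z$ is within $O_K(1)$ of the exit or beyond it, and the conclusion follows at once. Your version instead only places $z$ past the branch point and then needs the claim that ``any such geodesic leaves $\mc{H}_p$ within bounded time after the branch point''; that claim is false --- the geodesic takes roughly another $(p|\xi)_o-d_\mc{X}(o,\mc{H}_p)$ to exit, and this is unbounded as $\xi\to p$. With the corrected translations the false claim is unnecessary and the lemma follows; but as written, the proposal would not compile into a proof.
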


Finally, the next lemma is a consequence of Lemma~\ref{nilpotent Lie group} and we provide a complete proof.

\begin{lem}\label{a'}
There exists a constant $L\geq 1$ such that the following holds for every horosphere $\mc{K}=\mc{K}_p\sq \mc{X}$ with $p\in\mc{S}$ a parabolic point, all $r_2>r_1\geq L$ and all discrete cycles $c\in\overline{\mscr{C}}_i (A_{\mc{K}}(e_p;r_1,r_2))$.
\begin{enumerate}
\setlength\itemsep{.1cm}
\item If $i<k$ and $\fin(c)\leq r_1/L$, then $c=\partial d$ for some discrete chain $d\sq A_{\mc{K}}(e_p;r_1/L,Lr_2)$ with $\fin(d)\leq L(1\vee\fin(c))$.
\item If $i=k$ and $\fin(c)\leq r_1/L$, then $c-\partial d\sq A_{\mc{K}}(e_p;r_2/L,Lr_2)$ for some discrete chain $d\sq A_{\mc{K}}(e_p;r_1/L,Lr_2)$ with $\fin(d)\leq L(1\vee\fin(c))$.
\end{enumerate}
\end{lem}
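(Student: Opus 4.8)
The plan is to transport the problem, via a quasi-isometry, to a connected nilpotent Lie group, apply Lemma~\ref{nilpotent Lie group} there, transport the resulting filling back to $\mc{K}_p$, and finally correct it by a prism (chain-homotopy) to account for the fact that the quasi-isometry composed with its quasi-inverse is not the identity on vertices. Throughout, for a vertex map $\phi$ we write $(\phi)_*$ for the induced map on discrete chains (apply $\phi$ to each vertex); it commutes with $\partial$ and preserves coefficient sums. \emph{Step 1 (a uniform quasi-isometry to a nilpotent Lie group).} Since $\mc{P}$ is finite and the action of $g\in G$ gives an isometry $\mc{K}_p\ra\mc{K}_{gp}$ carrying $e_p$ within bounded distance of $e_{gp}$, there are only finitely many isometry types among the horospheres $\mc{K}_p$; in its intrinsic path metric, each is (the geometric realisation of) a Cayley graph of some $P\in\mc{P}$ with its word metric. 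A finitely generated virtually nilpotent group has a finite-index torsion-free nilpotent subgroup $P_0$, which embeds as a cocompact lattice in its Malcev completion, a simply connected nilpotent Lie group; equipping the latter with a left-invariant Riemannian metric, the \v{S}varc--Milnor lemma gives a quasi-isometry $P_0\ra\mc{N}_p$, hence one $P\ra\mc{N}_p$. Composing, we obtain for every parabolic $p$ a quasi-isometry $\phi_p\colon\mc{K}_p\ra\mc{N}_p$ (with $\mc{N}_p$ one of finitely many nilpotent Lie groups) and a quasi-inverse $\psi_p\colon\mc{N}_p\ra\mc{K}_p$, where the quasi-isometry constants and $M:=\sup_x d(\psi_p\phi_p(x),x)$ are \emph{uniform} in $p$. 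Moreover $\dim\mc{N}_p=k+1$ — this being the dimension of the peripheral nilpotent group, forced by $\mc{S}\cong S^{k+1}$ (transparently so in the real-hyperbolic case, where $\mc{N}_p=\R^{k+1}$). Set $\bar{e}_p:=\phi_p(e_p)$.

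\emph{Step 2 (push forward and apply Lemma~\ref{nilpotent Lie group}).} Choose $L\geq 1$ large in terms of $\delta_0$, $\eps_0$ and the finitely many peripherals and Lie groups $\mc{N}_p$; in particular $L$ exceeds the uniform quasi-isometry constants, $M$, and the constants of Lemma~\ref{nilpotent Lie group} for each $\mc{N}_p$. Let $r_2>r_1\geq L$ and let $c\in\overline{\mscr{C}}_i(A_{\mc{K}_p}(e_p;r_1,r_2))$ be a discrete cycle with $\fin(c)\leq r_1/L$. Then $(\phi_p)_*(c)$ is a discrete cycle in $\mc{N}_p$ supported in an annulus $A_{\mc{N}_p}(\bar{e}_p;r_1/L',L'r_2)$ for a uniform $L'$, with $\fin((\phi_p)_*(c))$ controlled by $\fin(c)$; enlarging $L$ and using $r_1\geq L$ to absorb the additive quasi-isometry constant, we may assume $r_1/L'$ exceeds the Lemma~\ref{nilpotent Lie group} constant for $\mc{N}_p$ and that $\fin((\phi_p)_*(c))\leq (r_1/L')/L$. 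Since $\dim\mc{N}_p-1=k$: if $i<k$, part~(1) of Lemma~\ref{nilpotent Lie group} gives $\hat{d}\sq A_{\mc{N}_p}(\bar{e}_p;r_1/L'',L''r_2)$ with $\partial\hat{d}=(\phi_p)_*(c)$ and $\fin(\hat{d})\leq L''(1\vee\fin(c))$; if $i=k$, it gives such a $\hat{d}$ with $(\phi_p)_*(c)-\partial\hat{d}=:\hat{c}'\sq A_{\mc{N}_p}(\bar{e}_p;r_2/L'',L''r_2)$. In the first case put $\hat{c}':=0$; in both, $L''$ is uniform.

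\emph{Step 3 (pull back, correct by a prism, conclude).} Put $d_1:=(\psi_p)_*(\hat{d})\sq\mc{K}_p$; using that $\psi_p(\bar{e}_p)$ lies within $M$ of $e_p$ and that $r_1\geq L$, it is supported in $A_{\mc{K}_p}(e_p;r_1/L,Lr_2)$, with $\fin(d_1)\leq L(1\vee\fin(c))$. Its boundary is $(\psi_p\phi_p)_*(c)$, not $c$, so introduce the simplicial prism operator $P$ for the vertex maps $(\mathrm{id},\psi_p\phi_p)$,
\[ P[x_0,\dots,x_i]:=\sum_{j=0}^{i}(-1)^j\,[x_0,\dots,x_j,\psi_p\phi_p(x_j),\psi_p\phi_p(x_{j+1}),\dots,\psi_p\phi_p(x_i)], \]
extended linearly; it satisfies $\partial P+P\partial=(\psi_p\phi_p)_*-\mathrm{id}$. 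Each simplex of $P(c)$ has support within $M$ of a simplex of $c$, hence diameter $\leq\fin(c)+2M$, so $\fin(P(c))\leq L(1\vee\fin(c))$ and $P(c)\sq A_{\mc{K}_p}(e_p;r_1/L,Lr_2)$ (again using $r_1\geq L$). Since $\partial c=0$, we get $\partial P(c)=(\psi_p\phi_p)_*(c)-c$. Now set $d:=d_1-P(c)$. Using $\partial\hat{d}=(\phi_p)_*(c)-\hat{c}'$,
\[ \partial d=(\psi_p\phi_p)_*(c)-(\psi_p)_*(\hat{c}')-\big((\psi_p\phi_p)_*(c)-c\big)=c-(\psi_p)_*(\hat{c}'). \]
For $i<k$ this says $\partial d=c$; for $i=k$ it says $c-\partial d=(\psi_p)_*(\hat{c}')$, which is supported in $A_{\mc{K}_p}(e_p;r_2/L,Lr_2)$ (the $(\psi_p)_*$--image of $A_{\mc{N}_p}(\bar{e}_p;r_2/L'',L''r_2)$, for $L$ large). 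In both cases $d\sq A_{\mc{K}_p}(e_p;r_1/L,Lr_2)$ and $\fin(d)\leq\max\{\fin(d_1),\fin(P(c))\}\leq L(1\vee\fin(c))$, as required.

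The main obstacle is Step~1: one must know that the horospheres $\mc{K}_p$ are \emph{uniformly} quasi-isometric to a fixed finite family of connected nilpotent Lie groups equipped with left-invariant Riemannian metrics, and that these Lie groups have dimension exactly $k+1$ — so that the dichotomy $i<k$ versus $i=k$ in the statement matches the dichotomy $i<\dim\mc{N}_p-1$ versus $i=\dim\mc{N}_p-1$ governing Lemma~\ref{nilpotent Lie group}. Granting this, Steps~2 and~3 are routine bookkeeping, the only delicate point being to take $L$ large enough that all additive quasi-isometry errors are absorbed by the standing hypothesis $r_1\geq L$.
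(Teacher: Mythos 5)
Your proof takes essentially the same route as the paper: reduce to finitely many peripherals, pass to a torsion-free finite-index nilpotent subgroup, embed in the Mal'cev completion, and invoke Lemma~\ref{nilpotent Lie group}; you merely spell out the chain-transport (prism operator) that the paper leaves to the reader as ``since the inclusion $N\hookrightarrow\mc{N}$ is a quasi-isometry, the statement follows.'' The one place you flag as an obstacle---that $\dim\mc{N}_p=k+1$---is proved in the paper by observing that $N$ acts freely and cocompactly on both $\mc{N}\cong\R^n$ and $\mc{S}-\{p\}\cong\R^{k+1}$ (the latter because $p$ is a bounded parabolic point), so the cohomological dimension of $N$ equals both $n$ and $k+1$; with that in hand your argument is complete.
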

\begin{proof}
Since there are only finitely many $G$--orbits of horospheres in $\mc{X}$, and peripherals are virtually nilpotent, it suffices to prove the statement in every finitely generated nilpotent group $N$ (equipped with a word metric). Passing to a finite-index subgroup, we can assume that $N$ is torsion-free; see e.g.\ \cite[Theorem~2.1]{Baumslag}. Set $n:=\sum{\rm rk} (N^{(i)}/N^{(i+1)})$.

The group $N$ is a uniform lattice in its Mal'cev completion $\mc{N}$, which is an $n$--dimensional, connected, nilpotent Lie group; see e.g.\ \cite[Theorem~13.40]{Drutu-Kapovich}.

Observe that $n=k+1$. Indeed, $N$ acts freely and cocompactly both on $\mc{N}$ and $\mc{S}-\{\ast\}$ (since the parabolic point corresponding to $N$ is a \emph{bounded parabolic point}, see e.g. \cite[Section 6]{Bow:rel-hyp}). These spaces are homeomorphic, respectively, to $\R^n$ and $\R^{k+1}$, hence the cohomological dimension of $N$ equals both $n$ and $k+1$.

Now, since the inclusion $N\hookrightarrow\mc{N}$ is a quasi-isometry, the statement follows from Lemma~\ref{nilpotent Lie group}. 
\end{proof}

\subsection{The main argument}

In order to simplify notation throughout this subsection, we will write $a\sim b$, for two quantities $a,b>0$, when there exists a constant $C\geq 1$, depending only on $\delta_0,\eps_0$, such that $a\sim_C b$. Similarly, we write $a\lesssim b$ when $a\leq Cb$ for such a constant $C$. 

For a constant $D\geq 0$, we say that a point $x\in \mc{X}$ is \emph{$D$--deep} in a horoball $\mc{H}_p$ if $x\in\mc{H}_p$ and $d_\mc{X}(x,\mc{K}_p)\geq D$. Recall that $\overline{\mscr{C}}_*(\mc{S})$ denotes \emph{reduced} discrete chains in $\mc{S}$; this means that $0$--chains are required to have zero coefficient sum.

We now embark in the proof of Proposition~\ref{prop:Bowditch_filling}. The following result gives us some initial control on fineness and diameter of fillings.

\begin{lem}\label{lem:shallow_barycentre}
For every $D\geq 0$, there exist a function $f=o(1)$ and a constant $K\geq 1$ with the following property. Let $c\in\overline{\mscr{C}}_*(\mc{S})$ be a discrete cycle such that some point $z\in [o,\mf{b}(c)]$ is not $D$--deep in any horoball. If $c$ is $\delta$--fine for some $\delta>0$, then $c=\partial d$ for a discrete chain $d\sq\mc{S}$ with $\fin(d)\leq f(e^{\eps_0d_\mc{X}(o,z)}\cdot\delta)$ and $\diam(d)\leq Ke^{-\eps_0d_\mc{X}(o,z)}$.
\end{lem}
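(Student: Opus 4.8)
The idea is to leverage the fact that the barycentre $\mf{b}(c)$ lies ``close to the surface'' of the horoball structure, so that $c$ itself lives in a ball in $\mc{S}$ whose $\rho$--diameter is comparable to $e^{-\eps_0 d_\mc{X}(o,z)}$ but which is, in a suitable sense, ``round'' rather than ``cusped''. Concretely, first I would record the basic estimate $\diam(c)\sim e^{-\eps_0 d_\mc{X}(o,\mf{b}(c))}$ from the discussion before Lemma~\ref{lem:barycentre_vs_horoball}, and observe that since $z\in[o,\mf{b}(c)]$ we have $d_\mc{X}(o,z)\leq d_\mc{X}(o,\mf{b}(c))$, so $e^{-\eps_0 d_\mc{X}(o,z)}\gtrsim\diam(c)$; hence an upper bound on $\diam(d)$ of the form $K e^{-\eps_0 d_\mc{X}(o,z)}$ is a genuine (if possibly lossy) target, and correspondingly $e^{\eps_0 d_\mc{X}(o,z)}\cdot\delta\gtrsim \delta/\diam(c)\geq \fin(c)/\diam(c)$ is the natural rescaled fineness parameter.

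The main step is to produce a single point $\eta_0\in\mc{S}$ and a radius $R\sim e^{-\eps_0 d_\mc{X}(o,z)}$ such that $c\sq B(\eta_0,R)$ and, crucially, $B(\eta_0, K'R)$ contains no parabolic point $p$ whose horoball $\mc{H}_p$ is ``relevant'' — equivalently, such that the geodesic cone from $o$ over $B(\eta_0,K'R)$ stays $D'$--shallow (within bounded distance of the complement of every horoball) down to depth $\sim d_\mc{X}(o,z)$. This is exactly where the hypothesis on $z$ is used: if $z$ is not $D$--deep in any horoball and $z\in[o,\mf{b}(c)]$, then a neighbourhood of $z$ along geodesics to points of $c$ also avoids being deep in horoballs, by thin-triangle/approximating-tree arguments in $\mc{X}$, and Lemma~\ref{lem:barycentre_vs_horoball}(2) (contrapositive of (1), really) translates ``$z$ shallow'' into a statement bounding $\rho(p,\xi)^2/r_p$ from below for the relevant parabolic $p$, forcing any such $p$ to be far from $c$ relative to $R$. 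I would then take $\eta_0$ to be any vertex of $c$ (or the endpoint of $[o,z]$ extended), set $R:=\Lambda e^{-\eps_0 d_\mc{X}(o,z)}$ for a suitable constant $\Lambda$, and conclude $c\sq B(\eta_0,R)$ from the Gromov-product bound defining $\mf{b}(c)$.

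Once we are inside a ball $B(\eta_0,R)$ that is ``uniformly bi-Lipschitz to a Euclidean ball'' (because the visual metric $\rho$ on $\mc{S}=S^{k+1}$ is, away from deep parts of horoballs and at the appropriate scale, comparable to a round/Euclidean metric up to a multiplicative constant depending only on $\delta_0,\eps_0$ — this is the geometric heart and should follow from the approximating-tree estimates, cf.\ Lemma~\ref{lem:Psi} and the fact that $\mc{S}$ is a sphere), I would rescale by $1/R$, apply the filling statement for round spheres exactly as in the proof of Lemma~\ref{b}(2) — namely, in a round ball any reduced cycle is filled by a chain of comparable fineness, with the usual caveat for $0$--cycles handled by introducing the parameter $\delta$ — and rescale back. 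This yields $d\sq B(\eta_0,K'' R)\sq\mc{S}$ with $\diam(d)\lesssim R\sim e^{-\eps_0 d_\mc{X}(o,z)}$ and $\fin(d)\leq f_0\big(\fin(c)/R\big)\lesssim f_0\big(e^{\eps_0 d_\mc{X}(o,z)}\delta\big)$ for the round-sphere filling function $f_0=o(1)$; absorbing constants into $f$ and $K$ gives the claim.

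\textbf{Main obstacle.} The delicate point is the passage from ``$z$ is not $D$--deep in any horoball'' to ``a definite-size $\rho$--ball around $c$ is free of deep horoball parts and hence bi-Lipschitz-Euclidean at scale $R$'': one must control, via thin triangles in $\mc{X}$, the whole cone over $B(\eta_0,K'R)$ — not just the single geodesic $[o,\mf{b}(c)]$ — and show it stays shallow, which requires choosing the constant $K'$ (equivalently $\Lambda$) carefully against $\delta_0$ and $D$, and then quoting Lemma~\ref{lem:barycentre_vs_horoball} with the right value of its input $K$. Everything downstream (rescaling, the round-sphere filling, bookkeeping the $o(1)$ function) is routine once this geometric separation statement is in hand.
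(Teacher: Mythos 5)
Your proposal hinges on the claim that, at scale $R\sim e^{-\eps_0 d_{\mc X}(o,z)}$ and away from deep horoball parts, the visual metric $\rho_o$ is bi-Lipschitz to a round/Euclidean metric with constant depending only on $\delta_0,\eps_0$. This is false in general, and it is precisely the obstruction that makes the lemma nontrivial outside the real hyperbolic case (the paper notes explicitly that for fundamental groups of cusped real hyperbolic manifolds one can work directly in $\mathbb H^n$, where the boundary is a round sphere and the result is ``almost immediate''). For a cusped space associated to, say, a complex hyperbolic lattice, the visual metric on the boundary sphere is locally of Carnot type and is not bi-Lipschitz to any Euclidean metric, even in the thick part. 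What does hold, and what the paper actually uses, is the self-similarity statement of \cite[Lemma~4.6]{Mackay-Sisto}: choosing $g\in G$ with $g^{-1}o$ near $z$, the action of $g$ carries a rescaled small ball $(B(\xi,r),\rho_o/r)$ bi-Lipschitz onto a macroscopic open subset of $(\mc{S},\rho_o)$. One then fills the cycle at macroscopic scale using the global homeomorphism of $\mc{S}$ with a round sphere (Lemma~\ref{b}(2), which gives only a $o(1)$ control on fineness, not a linear one) and transports back by $g^{-1}$. Replacing your ``local Euclidean comparison'' by this self-similarity argument is not a cosmetic change; the $o(1)$ rather than linear fineness control in the statement is a direct consequence of which comparison is available.

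A second, independent gap: you claim that since $z$ is not $D$--deep, a thin-triangle argument shows the whole cone over $B(\eta_0,K'R)$ stays shallow, hence forcing any relevant parabolic far from $c$ relative to $R$. This is false exactly when the geodesic $[o,z]$ enters deep into a horoball $\mc H_p$ and exits again before reaching $z$ (so $z$ itself is shallow but the segment $[o,z]$ is not). In that situation the parabolic point $p$ is $\rho_o$--\emph{close} to all of $c$ — indeed $\rho(p,c)$ is comparable to $e^{-\eps_0 d_{\mc X}(o,z)}$ — and every geodesic ray from $o$ to a vertex of $c$ passes deep through $\mc H_p$, so the cone is decidedly not shallow. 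The paper handles this as a separate case by changing the visual basepoint to $z$ (compared to $\rho_{go}$ via the shallowness of $z$) and applying Lemma~\ref{b}(1), which fills $c$ while staying a definite distance from the prescribed point $p$; the diameter and fineness bounds for $d$ relative to the original $\rho_o$ then come from Gromov-product estimates controlling how far the filling can spread past $z$. Your proposal neither detects this case nor has a mechanism to avoid the nearby parabolic point, so it would produce a filling whose diameter is not controlled by $e^{-\eps_0 d_{\mc X}(o,z)}$.
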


\begin{proof}
Throughout the proof, balls and diameters will usually refer to the metric $\rho=\rho_o$; in case of ambiguity, we will indicate the relevant metric as a subscript. 

The reason for introducing $\delta$ in the statement is the same as for Lemma \ref{b}. In the argument below, we will instead use $\fin(c)$ with a slight abuse, even though for $0$--cycles we should use some $\delta>0$ instead.

We distinguish two cases in the proof.

\smallskip\noindent
{\bf Case~(1):} \emph{the geodesic $[o,z]$ goes at least $(D+K'+10\delta_0)$--deep into a horoball $\mc{H}_p$, for a constant $K'$ only depending on $D$ to be determined below.} 

\smallskip
Consider the metrics $\rho:=\rho_o$ and $\rho':=\rho_{z}$. Since $[0,z]\sq [0,\mf{b}(c)]$ goes $(D+10\delta_0)$--deep into $\mc{H}_p$, while $z$ is not $D$--deep in $\mc{H}_p$, all geodesic lines from $p$ to the vertices of $c$ intersect the $10\delta_0$--neighbourhood of $z$. Hence $\rho'(p,c)$ is bounded away from zero purely in terms of $\delta_0,\eps_0$, see Figure \ref{MorseinHoroballs}.

Since $z$ is not $D$--deep in any horoball, there exists $g\in G$ such that the metrics $\rho'$ and $\rho_{go}$ are $K$--bi-Lipschitz equivalent, for a constant $K$ only depending on $D$. In addition, $g$ gives an isometry between $(\mc{S},\rho_o)$ and $(\mc{S},\rho_{go})$, and Lemma~\ref{b}(1) holds in the former space. We conclude that there exists $d\sq\mc{S}$ with $\partial d=c$ and
\begin{align*}
\fin_{\rho'}(d)&\leq f_D(\fin_{\rho'}(c)), & \rho'(p,d)&\geq\eta_D,
\end{align*} 
for a function $f_D=o(1)$ and a constant $\eta_D>0$ depending on $D$. In particular, we have $\sup_{\xi\in d} (\xi|p)_{z}\leq K'$, for a constant $K'$ only depending on $D$ and $\delta_0$.

Let $x,y\in[o,z]$ be the points with $d_\mc{X}(o,x)=(p|z)_o$ and $d_\mc{X}(y,z)=\sup_{\xi\in d} (\xi|p)_{z}$. Assuming now that $[o,z]$ goes at least $(D+K'+10\delta_0)$--deep into $\mc{H}_p$, we have $d_\mc{X}(o,y)\geq d_\mc{X}(o,x)+10\delta_0$. Thus, all rays from $o$ to the vertices of $d$ track the geodesic $[o,z]$ at least up to $y$, hence $\inf_{\xi,\xi'\in d} (\xi|\xi')_o\geq d_\mc{X}(o,y)-10\delta_0$ and $\diam_{\rho}(d)\lesssim  e^{-\eps_0 d_\mc{X}(o,y)}\leq e^{\eps_0K'}e^{-\eps_0 d_\mc{X}(o,z)}$, as required.

\begin{figure}[ht] 
	%	\hspace{5cm}
	%	\captionsetup{width=1.8\linewidth}
	\centering
	\includegraphics{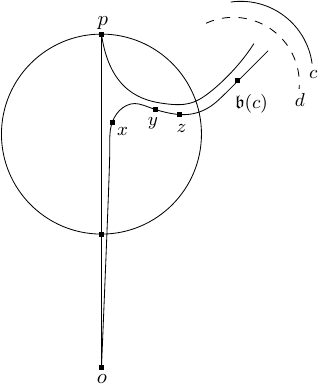}
	\caption{The points relevant for case (1) in the proof of Lemma \ref{lem:shallow_barycentre}.}\label{MorseinHoroballs}
\end{figure} 

We are left to control the fineness of $d$. Let $E(d)$ be the set of pairs $(\xi,\xi')\in\mc{S}^2$ corresponding to the edges of $d$. Recall that $\fin_{\rho'}(d)\leq f_D(\fin_{\rho'}(c))$. In addition, $\rho$ and $\rho'$ are $e^{\eps_0d_\mc{X}(o,z)}$--bi-Lipschitz equivalent. It follows that, for some constant $C$ depending only on $\delta_0,\eps_0$, we have:
\begin{align*}
\inf_{(\xi,\xi')\in E(d)} (\xi|\xi')_{z} \geq -\frac{\log f_D(\fin_{\rho'}(c))}{\eps_0}-C\geq -\frac{\log f_D(e^{\eps_0d_\mc{X}(o,z)}\fin_{\rho}(c))}{\eps_0}-C.
\end{align*}
Suppose for a moment that, for some $t\geq 1$, the right-hand side is at least $K'+(\log t)/\eps_0$. Then, recalling that $K'\geq \sup_{\xi\in d} (\xi|p)_{z}=d_\mc{X}(y,z)$, we obtain:
\[ \inf_{(\xi,\xi')\in E(d)} (\xi|\xi')_o \geq d_\mc{X}(o,y)+(\log t)/\eps_0-10\delta_0.\]
We then have $\fin_{\rho}(d)\lesssim  e^{-\eps_0d_\mc{X}(o,y)}/t\lesssim 1/t$. In conclusion, if $e^{\eps_0d_\mc{X}(o,z)}\fin_{\rho}(c)$ is smaller than a certain constant depending on $D$ and $t$, then $\fin_{\rho}(d)\leq 1/t$. 

This settles case~(1).

\smallskip\noindent
{\bf Case~(2):} \emph{the geodesic $[o,z]$ does not go $(D+K'+10\delta_0)$--deep into any horoball.}

\smallskip
In this case we will use a form of (partial) self-similarity of the boundary made precise in \cite[Lemma~4.6]{Mackay-Sisto}.

Let $D'\geq 0$ be a constant such that the geodesic $[o,z]$ is contained in the $D'$--neigh\-bour\-hood of the orbit $Go\sq \mc{X}$. Let $L_0=L_0(D')$ be the constant provided by \cite[Lemma~4.6]{Mackay-Sisto}. Let $f$ be the function provided by Lemma~\ref{b}, and choose an open cover $\mscr{W}$ of $\mc{S}$ by the sets provided by Lemma~\ref{b}(2) with $\eps=1/L_0$. Let $\alpha$ be a Lebesgue number for $\mscr{W}$.

Let $y\in[o,z]$ be the point with $d_\mc{X}(y,z)=D''$, where $D''$ is a sufficiently large constant to be determined below. (If $d_\mc{X}(o,z)< D''$, we can directly apply Lemma \ref{b}(1) because, in this case, the condition on the diameter is vacuous if $K$ is large enough.) Choose an element $g\in G$ with $d_\mc{X}(g^{-1}o,y)\leq D'$. Choose a vertex $\xi\in c$.

By \cite[Lemma~4.6]{Mackay-Sisto}, $g$ induces an $L_0$--bi-Lipschitz map from the rescaled ball $(B(\xi,r),\rho_o/r)$ to an open subset $U\sq\mc{S}$ containing $B(g\xi,1/L_0)$, for some radius $r\sim e^{-\eps_0d_\mc{X}(o,y)}$. Recall that $\diam(c)\sim e^{-\eps_0d_\mc{X}(o,\mf{b}(c))}\leq e^{-\eps_0D''}e^{-\eps_0d_\mc{X}(o,y)}$. Thus, if $D''$ is sufficiently large, we have $\diam(c)\leq r$, hence $c\sq B(\xi,r)$ and:
\[ \diam(gc)\leq L_0\diam(c)/r\lesssim L_0e^{-\eps_0d_\mc{X}(y,\mf{b}(c))}\leq L_0 e^{-\eps_0D''}.\]

Now, if $D''$ is sufficiently large (depending only on $D',\delta_0,\eps_0$), we also have $\diam(gc)\leq\alpha$. Since $\alpha$ is a Lebesgue number for $\mscr{W}$, the cycle $gc$ is then contained in some $W\in\mscr{W}$. Since $\diam(W)\leq 1/L_0$ and $g\xi\in gc\sq W$, we have $W\sq B(g\xi,1/L_0)\sq U$. In addition, by our choice of $\mscr{W}$, there exists a discrete chain $d'\sq W$ with $\partial d'=gc$ and $\fin(d')\leq f(\fin(gc))$.

In conclusion, setting $d:=g^{-1}d'$, we have $c=\partial d$ and:
\begin{align*}
\diam(d) &\leq rL_0\diam(d')\leq rL_0\diam(\mc{S})\lesssim L_0e^{-\eps_0d_\mc{X}(o,y)}= L_0e^{\eps_0D''}e^{-\eps_0d_\mc{X}(o,z)}, \\
\fin(d)&\leq rL_0\fin(d')\leq rL_0f(\fin(gc))\lesssim L_0f(L_0\fin(c)/r), 
% wlog f is weakly increasing; same observation omitted multiple times in various places
\end{align*}
where $r\sim e^{\eps_0D''}e^{-\eps_0d_\mc{X}(o,z)}$. Since $L_0$ and $D''$ only depend on $D'$ (and $\delta_0,\eps_0$), they only depend on $D$. This settles case~(2), proving the lemma.
\end{proof}

As a consequence of the previous lemma, we can already fill discrete cycles $c$ in the Bowditch boundary with good fineness and diameter control, provided that the barycentre $\mf{b}(c)$ is not deep in any horoball. This is the content of the following corollary.

\begin{cor}\label{cor:shallow_barycentre}
For every $D\geq 0$, there exist a function $g:\R^+ \to \R^+$ and $K\geq 1$ such that the following hold for all $\delta>0$ and all discrete cycles $c\in\overline{\mscr{C}}_*(\mc{S})$ with $\fin(c)\leq g(\delta)$.
\begin{enumerate} 
\setlength\itemsep{.1cm}
\item If some point $z\in [o,\mf{b}(c)]$ is not $D$--deep in any horoball, then $c=\partial d$ for a discrete chain $d\sq\mc{S}$ with $\fin(d)\leq\delta$ and $\diam(d)\leq Ke^{-\eps_0d_\mc{X}(o,z)}$.
\item If $\mf{b}(c)$ is not $D$--deep in any horoball, then $c=\partial d$ for a discrete chain $d\sq\mc{S}$ with $\fin(d)\leq\delta$ and $\diam(d)\leq K\diam(c)$.
\end{enumerate}
\end{cor}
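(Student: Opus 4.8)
\textbf{Proof plan for Corollary~\ref{cor:shallow_barycentre}.}
The plan is to derive this corollary directly from Lemma~\ref{lem:shallow_barycentre} by unwinding the relationship between $\fin(c)$, the basepoint distance $d_\mc{X}(o,z)$, and the target fineness $\delta$. First I would observe that statement~(2) is the special case of statement~(1) obtained by taking $z=\mf{b}(c)$: indeed, if $\mf{b}(c)$ itself is not $D$--deep in any horoball, then $z:=\mf{b}(c)$ lies on $[o,\mf{b}(c)]$ and is not $D$--deep, and the bound $\diam(d)\leq Ke^{-\eps_0 d_\mc{X}(o,\mf{b}(c))}\sim K'\diam(c)$ becomes exactly the claimed bound $\diam(d)\leq K\diam(c)$ after absorbing the comparison constant (from $\diam(c)\sim_C e^{-\eps_0 d_\mc{X}(o,\mf{b}(c))}$, noted just before Lemma~\ref{lem:barycentre_vs_horoball}) into $K$. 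So it suffices to prove~(1).

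For~(1), fix $D\geq 0$ and let $f=o(1)$ and $K_0\geq 1$ be the function and constant supplied by Lemma~\ref{lem:shallow_barycentre} for this $D$. Given the desired output fineness $\delta>0$, the issue is that Lemma~\ref{lem:shallow_barycentre} only guarantees $\fin(d)\leq f(e^{\eps_0 d_\mc{X}(o,z)}\cdot\delta_{\mathrm{in}})$, where $\delta_{\mathrm{in}}$ is the fineness of $c$, and the amplification factor $e^{\eps_0 d_\mc{X}(o,z)}$ is \emph{a priori} unbounded. The key point is that this is harmless because $e^{-\eps_0 d_\mc{X}(o,z)}\geq e^{-\eps_0 d_\mc{X}(o,\mf{b}(c))}\sim\diam(c)\geq\fin(c)$, so $e^{\eps_0 d_\mc{X}(o,z)}\cdot\fin(c)$ is bounded above by a uniform constant times $e^{\eps_0 d_\mc{X}(o,z)}\diam(c)$. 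Hmm — that last quantity is itself $\lesssim e^{\eps_0(d_\mc{X}(o,z)-d_\mc{X}(o,\mf{b}(c)))}$, which is still not obviously bounded since $z$ can be much closer to $o$ than $\mf{b}(c)$. Let me instead argue more carefully: since $z\in[o,\mf{b}(c)]$, we have $d_\mc{X}(o,z)\leq d_\mc{X}(o,\mf{b}(c))$, hence $e^{\eps_0 d_\mc{X}(o,z)}\cdot\fin(c)\leq e^{\eps_0 d_\mc{X}(o,\mf{b}(c))}\cdot\fin(c)\sim \fin(c)/\diam(c)\leq 1$ (up to a uniform constant, since $\fin(c)\leq\diam(c)$). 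Thus $e^{\eps_0 d_\mc{X}(o,z)}\cdot\fin(c)$ is bounded by a universal constant $C_0$ whenever $\fin(c)\leq\diam(c)$, which always holds. Wait, but then $f$ applied to this is just bounded by $f(C_0)$, not small. The resolution is that we want to make $e^{\eps_0 d_\mc{X}(o,z)}\cdot\fin(c)$ \emph{small}, not just bounded, and for that we impose $\fin(c)\leq g(\delta)$ for a suitable $g$.

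Concretely, here is how I would define $g$. Choose $\delta'>0$ small enough that $f(\delta')\leq\delta$; set $g(\delta):=\delta'\cdot(\text{small universal factor})$, more precisely we need $e^{\eps_0 d_\mc{X}(o,z)}\cdot\fin(c)\leq\delta'$. Using $e^{\eps_0 d_\mc{X}(o,z)}\leq e^{\eps_0 d_\mc{X}(o,\mf{b}(c))}\leq C_1/\diam(c)\leq C_1/\fin(c)$ is \emph{circular}; instead note that this is the wrong bound. The correct observation: since $z\in[o,\mf b(c)]$, either $d_\mc X(o,z)$ is bounded (in which case $e^{\eps_0 d_\mc X(o,z)}$ is a constant and $g(\delta)=\delta'/e^{\eps_0 d_\mc X(o,z)}$ works after taking a sup over the bounded range) or $d_\mc X(o,z)$ is large — but then, because $[o,z]$ still needs a point not $D$--deep in a horoball and $c$ sits ``near'' the far endpoint, the self-similarity case~(2) of the proof of Lemma~\ref{lem:shallow_barycentre} already forces $\fin(d)\lesssim L_0 f(L_0\fin(c)/r)$ with $r\sim e^{-\eps_0 d_\mc X(o,z)}$, i.e.\ exactly the combination $\fin(c)/r = e^{\eps_0 d_\mc X(o,z)}\fin(c)$ again. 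The clean way to package all of this: define $g(\delta)$ by first picking $\delta'$ with $f(\delta')\leq\delta$, and then observing that the quantity $\fin(c)\cdot e^{\eps_0 d_\mc X(o,z)}$ appearing inside $f$ is, by the displayed estimates in the proof of Lemma~\ref{lem:shallow_barycentre}, bounded above by $C_2\cdot\fin(c)/\diam(c)\cdot\diam(c)\cdot e^{\eps_0 d_\mc X(o,z)}$ — so I would just track this explicitly through both cases of that proof and set $g(\delta)$ to be $\delta'$ divided by the worst-case amplification constant, which turns out to be uniform once we also use that $\diam(c)\leq\diam(\mc S)$ is bounded. The diameter bound $\diam(d)\leq Ke^{-\eps_0 d_\mc X(o,z)}$ is simply inherited verbatim from Lemma~\ref{lem:shallow_barycentre} with the same $K$. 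The main obstacle is exactly this bookkeeping: making sure the factor $e^{\eps_0 d_\mc X(o,z)}$ multiplying $\fin(c)$ is controlled \emph{uniformly} (independently of $z$ and $c$), which requires combining $d_\mc X(o,z)\leq d_\mc X(o,\mf b(c))$ with $\diam(c)\sim e^{-\eps_0 d_\mc X(o,\mf b(c))}$ and $\diam(c)\leq\diam(\mc S)$, so that $\fin(c)\cdot e^{\eps_0 d_\mc X(o,z)}\leq \fin(c)\cdot e^{\eps_0 d_\mc X(o,\mf b(c))}\lesssim \fin(c)/\diam(c)$ — and then choosing $g(\delta)$ small enough that even with this (bounded but possibly large) amplification, $f$ of the result is at most $\delta$; since $\fin(c)/\diam(c)\le 1$ this just needs $g(\delta)$ small relative to $\diam(\mc S)$ and $\delta'$.
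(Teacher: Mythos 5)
Your proposal correctly reduces part~(2) to part~(1) via $z=\mf{b}(c)$, and correctly identifies the central difficulty: Lemma~\ref{lem:shallow_barycentre} only controls $\fin(d)\leq f\bigl(e^{\eps_0 d_\mc{X}(o,z)}\fin(c)\bigr)$, so one must tame the amplification factor $e^{\eps_0 d_\mc{X}(o,z)}$. However, the way you close the argument has a genuine gap. You establish $e^{\eps_0 d_\mc{X}(o,z)}\fin(c)\lesssim \fin(c)/\diam(c)\leq 1$, i.e.\ the quantity is \emph{bounded by a universal constant}, and then claim that taking $g(\delta)$ small enough makes $f$ of it at most $\delta$. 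This does not follow: the ratio $\fin(c)/\diam(c)$ can be as large as $1$ no matter how small you make $\fin(c)$ (take a cycle supported on two points, where $\fin(c)=\diam(c)$), so nothing you do to $g$ drives $e^{\eps_0 d_\mc{X}(o,z)}\fin(c)$ below a threshold on which $f$ is $\leq\delta$. A bounded input gives $f$ bounded by a constant, not by $\delta$.

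The missing idea, which is what the paper uses, is a dichotomy together with the coning trick (Definition~\ref{defn:cone}). Set $g(\delta):=\delta\tau$ where $\tau>0$ is chosen so that $f(\tau')\leq\delta$ whenever $\tau'\leq\tau$. If $e^{\eps_0 d_\mc{X}(o,z)}\fin(c)\leq\tau$, apply Lemma~\ref{lem:shallow_barycentre} directly and you are done. Otherwise, $e^{-\eps_0 d_\mc{X}(o,z)}<\fin(c)/\tau\leq\delta$, so using $d_\mc{X}(o,z)\leq d_\mc{X}(o,\mf{b}(c))$ and $\diam(c)\sim e^{-\eps_0 d_\mc{X}(o,\mf{b}(c))}$ we get $\diam(c)\lesssim e^{-\eps_0 d_\mc{X}(o,z)}\lesssim\delta$. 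In this regime the cycle is already small, so the trivial filling $d:=\cone(x,c)$ over any vertex $x\in c$ does the job: $\partial d=c$, $\fin(d)=\diam(d)=\diam(c)\lesssim\delta$, and $\diam(d)\lesssim e^{-\eps_0 d_\mc{X}(o,z)}$. Your argument never considers this trivial filling, which is precisely what rescues the case your bound cannot handle.
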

\begin{proof}
As in the previous proof, we will use $\fin(c)$ with a slight abuse, rather than introducing a positive fineness constant.

Observe that part~(2) follows from part~(1) simply taking $z=\mf{b}(c)$ and recalling that $\diam(c)\sim e^{-\eps_0d_\mc{X}(o,\mf{b}(c))}$. (This is assuming that $\mf{b}(c)\in\mc{X}$. If instead $\mf{b}(c)\in\mc S$, the support of $c$ is a single point, and we can take $d$ supported on the same point.) We now prove part~(1). 

Let $f$ be as in Lemma~\ref{lem:shallow_barycentre}. Define $g(\delta):=\delta\cdot \tau$, for any $\tau>0$ such that $f(\tau')\leq \delta$ for all $\tau'\leq \tau$. By the lemma, we have $c=\partial d$ for a discrete chain with $\diam(d)\leq Ke^{-\eps_0d_\mc{X}(o,z)}$ and $\fin(d)\leq f(e^{\eps_0d_\mc{X}(o,z)}\fin(c))$. If $e^{\eps_0d_\mc{X}(o,z)}\fin(c)\leq \tau$, we have $\fin(d)\leq\delta$ as required. 

Otherwise $e^{-\eps_0d_\mc{X}(o,z)}\leq \fin(c)/\tau\leq \delta$ (since $\fin(c)\leq g(\delta)$ by assumption), so we can just define $d'$ as the cone over $c$ from any of its vertices (Definition~\ref{defn:cone}). Then $\partial d'=c$ and $\fin(d')=\diam(d')=\diam(c)$, where $\diam(c)\sim e^{-\eps_0d_\mc{X}(o,\mf{b}(c))}\leq e^{-\eps_0d_\mc{X}(o,z)}\leq\delta$.
\end{proof}

Just like Proposition \ref{prop:Bowditch_filling}, the next two lemmas roughly say that discrete cycles in annuli around parabolic points can be filled in slightly larger annuli. However, these two lemmas require an additional technical condition (cycle barycentres should lie in horoballs) that we can only drop later on by a bootstrapping argument.

Recall that the face complex $\mscr{F}_*(d)$ of a discrete chain $d$ was introduced in Definition~\ref{defn:face_complex}.

\begin{lem}\label{lem:deep_barycentre_1}
There exists a constant $K\geq 1$ such that the following holds for every parabolic point $p\in\mc{S}$ and all $0<r_1<r_2\leq r_p/K$. 

If $c$ is a discrete cycle in $\overline{\mscr{C}}_i(A(p;r_1,r_2))$ with $\mf{b}(c)\in\mc{H}_p$ and $\fin(c)\leq r_1^2/r_p$, then there exists a discrete chain $d\sq A(p;r_1/K,Kr_2)$ with $\diam(d)\leq K\sqrt{\diam(c)}$ and the following properties.
\begin{enumerate}
\item[$(*)$] For every simplex $\s\in\mscr{F}_*(d)$, we have $\diam(\s)\leq K\cdot\frac{\rho(p,\s)^2}{r_p}$.
\item[$(**)$] If $i<k$, we have $\partial d=c$. If $i=k$, we have $c-\partial d\sq B(p, Kr_1)$.
\end{enumerate}
\end{lem}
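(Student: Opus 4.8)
\textbf{Proof plan for Lemma~\ref{lem:deep_barycentre_1}.}

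The plan is to push the cycle $c$ from the Bowditch boundary onto the horosphere $\mc{K}_p$ via $\Psi_p$, fill it there using the nilpotent filling result Lemma~\ref{a'}, and pull the filling back via $\Phi_p$. First I would set up the metric bookkeeping: by Lemma~\ref{lem:Psi}(1)--(2), the map $\Psi_p$ sends $A(p;r_1,r_2)$ into an annulus $A_{\mc{K}_p}(e_p;\varrho_1,\varrho_2)$ with $\varrho_j\sim (r_p/r_j)^{1/\eps_0}$, and it sends an edge $[\xi,\xi']$ of $c$ (which, since $\fin(c)\leq r_1^2/r_p\leq\rho(p,\{\xi,\xi'\})$, satisfies the hypothesis of part~(2)) to an edge of length $\sim 1\vee (r_p\rho(\xi,\xi')/\rho(p,\xi)^2)^{1/\eps_0}$. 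The condition $\fin(c)\leq r_1^2/r_p$ is exactly what guarantees $\fin(\Psi_p(c))\lesssim 1$, so after enlarging constants the pushed-forward cycle $c':=\Psi_p(c)$ is an $L$--fine discrete $i$--cycle in $A_{\mc{K}_p}(e_p;\varrho_1/C,C\varrho_2)$, with $\varrho_1/C\geq L$ once $r_2\leq r_p/K$ with $K$ large. (One should check $\Psi_p$ is a chain map on the face complex; it is, since it is a genuine map of sets sending the vertices of each simplex to vertices.)

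Next I would apply Lemma~\ref{a'}. If $i<k$, part~(1) gives a discrete chain $d'\sq A_{\mc{K}_p}(e_p;\varrho_1/CL,CL\varrho_2)$ with $\partial d'=c'$ and $\fin(d')\leq L(1\vee\fin(c'))\lesssim 1$. If $i=k$, part~(2) gives $d'$ in the same annulus with $c'-\partial d'\sq A_{\mc{K}_p}(e_p;\varrho_2/L,L\varrho_2)$; translating back through $\Phi_p$ via Lemma~\ref{lem:Phi}(1), the annulus $\varrho_2/L\leq d_{\mc{K}_p}(e_p,\cdot)\leq L\varrho_2$ corresponds to $\rho(p,\cdot)\sim r_p/\varrho_2^{\eps_0}\sim r_1$, up to a bounded multiplicative constant, which yields $c-\partial(\Phi_p d')\sq B(p,Kr_1)$ as in~$(**)$. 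Now set $d:=\Phi_p(d')$ (extended to the faces of $d'$ so that $\Phi_p$ is a chain map on $\mscr{F}_*(d')$). Using Lemma~\ref{lem:Phi}(1) on the outer/inner radii of the annulus containing $d'$ shows $d\sq A(p;r_1/K,Kr_2)$ after adjusting $K$.

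The remaining points are the diameter bound $\diam(d)\leq K\sqrt{\diam(c)}$ and the key property~$(*)$. For~$(*)$: a simplex $\s\in\mscr{F}_*(d)$ is $\Phi_p$ of a simplex $\s'\in\mscr{F}_*(d')$ with $\fin(\s')\lesssim 1$; applying Lemma~\ref{lem:Phi}(2) to the edges of $\s'$ (whose hypothesis $d_{\mc{K}_p}(u,v)\leq\tfrac12 d_{\mc{K}_p}(e_p,u)$ holds since the edges have bounded length while $d_{\mc{K}_p}(e_p,u)\geq\varrho_1/CL\geq L$) gives $\rho(\Phi_p u,\Phi_p v)\lesssim r_p/d_{\mc{K}_p}(e_p,u)^{2\eps_0}$; but $r_p/d_{\mc{K}_p}(e_p,u)^{\eps_0}\sim\rho(p,\Phi_p u)\sim\rho(p,\s)$ by Lemma~\ref{lem:Phi}(1) (using that $\diam(\s)$ is small compared to $\rho(p,\s)$, which is part of what we are proving and can be bootstrapped since all points of $\s'$ lie in a bounded-diameter subset of the annulus), so $\diam(\s)\lesssim\rho(p,\s)^2/r_p$, which is~$(*)$. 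For the diameter bound: $d'$ lies in $A_{\mc{K}_p}(e_p;\varrho_1/CL,CL\varrho_2)$, and $\Phi_p$ of a point at distance $t$ from $e_p$ is at $\rho$--distance $\sim r_p/t^{\eps_0}$ from $p$, so $d\sq B(p,Kr_2)$, giving $\diam(d)\leq 2Kr_2$; but we want $\sqrt{\diam(c)}$, so we should instead note $\diam(c)\geq$ (something like) $r_1$ is \emph{false} in general — rather, one uses that either $r_2\geq\sqrt{r_p\,r_1}$, in which case $\diam(d)\leq 2Kr_2$ is not obviously $\lesssim\sqrt{r_1}$; the correct route is that $\diam(c)\sim e^{-\eps_0 d_\mc{X}(o,\mf{b}(c))}$ and $\mf{b}(c)\in\mc{H}_p$ forces, via Lemma~\ref{lem:barycentre_vs_horoball}(3), $\diam(c)\gtrsim \rho(p,c)^2/r_p\gtrsim r_1^2/r_p$, hence $r_1\lesssim\sqrt{r_p\diam(c)}$ and thus $\diam(d)\lesssim Kr_2$; to finish one further invokes that $r_2$ can be taken comparable to $\sqrt{\diam(c)\cdot r_p/r_p}$...

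\emph{The main obstacle} will be precisely this last diameter estimate: controlling $\diam(d)$ by $K\sqrt{\diam(c)}$ rather than merely by $Kr_2$. The point is that the filling $d'$ on the horosphere has diameter comparable to the \emph{outer} radius $\varrho_2$, so naively $\diam(d)\sim r_2$, which can be much larger than $\sqrt{\diam(c)}$. The resolution must be that the cycle $c$, having its barycentre \emph{inside} the horoball $\mc{H}_p$, cannot actually be spread across the full annulus $A(p;r_1,r_2)$ in a way that matters — by Lemma~\ref{lem:barycentre_vs_horoball}, $\mf{b}(c)\in\mc{H}_p$ translates (via the relation $\diam(c)\sim e^{-\eps_0 d_\mc{X}(o,\mf{b}(c))}$ and parts (1)--(3) of that lemma) into a lower bound $\diam(c)\gtrsim \big(\sup_{\xi\in c}\rho(p,\xi)\big)^2/r_p$ or similar, so the \emph{effective} outer radius of the annulus actually occupied by $c$ is $\lesssim\sqrt{r_p\diam(c)}$; then choosing $\Psi_p,\Phi_p$ and the annulus radii relative to this effective radius (not $r_2$) yields $\diam(d)\lesssim\sqrt{r_p\cdot\diam(c)}\cdot$ (dimensionally one more factor) — the bookkeeping of exactly which power of $r_p$ appears is where care is needed, and matching it to the clean statement $\diam(d)\leq K\sqrt{\diam(c)}$ is the delicate part of the argument.
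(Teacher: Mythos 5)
Your strategy---project $c$ to the horosphere via $\Psi_p$, fill there with Lemma~\ref{a'}, pull back with $\Phi_p$---is exactly the paper's route, and your treatment of the annulus radii under $\Psi_p$ and $\Phi_p$, of the fineness bound $\fin(\Psi_p(c))\lesssim 1$, and of properties~$(*)$ and~$(**)$ all match the paper's argument. The genuine gap is the diameter bound, which you explicitly flag at the end without resolving.

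The missing step is a simple normalisation that you circle around but never quite land on. Since only the upper bound $r_2$ is used in the hypothesis $c\sq A(p;r_1,r_2)$, one may assume without loss of generality that $r_2=\max_{\xi\in c}\rho(p,\xi)$: this number is at most the original $r_2$, so the conclusion $d\sq A(p;r_1/K,Kr_2)$ for the smaller value implies it for the original, and the remaining conclusions do not involve $r_2$ at all. With $r_2=\rho(p,\xi)$ for a vertex $\xi$ maximising $\rho(p,\cdot)$, the hypothesis $\mf{b}(c)\in\mc{H}_p$ and Lemma~\ref{lem:barycentre_vs_horoball}(3), applied with $z=\mf{b}(c)\in[o,\xi]$, give
\[
\frac{r_2^2}{r_p}=\frac{\rho(p,\xi)^2}{r_p}\lesssim e^{-\eps_0 d_{\mc{X}}(o,\mf{b}(c))}\sim\diam(c),
\]
hence $r_2\lesssim\sqrt{r_p\,\diam(c)}$, and since $d\sq B(p,Kr_2)$ this yields $\diam(d)\leq 2Kr_2\lesssim\sqrt{r_p\,\diam(c)}$. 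The residual factor $\sqrt{r_p}$ that worries you (``dimensionally one more factor'') is harmless because $r_p=e^{-\eps_0 d_{\mc{X}}(o,\mc{H}_p)}\leq 1$, so $\sqrt{r_p\,\diam(c)}\leq\sqrt{\diam(c)}$. This is precisely the first line of the paper's proof, and once it is in place the rest of your argument closes.
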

\begin{proof}
Choose a vertex $\xi\in c$ maximising the distance $\rho(p,\xi)$. Without loss of generality, we can assume that $r_2=\rho(p,\xi)$. Since $\mf{b}(c)\in\mc{H}_p$, Lemma~\ref{lem:barycentre_vs_horoball}(3) implies that $\diam(c)\gtrsim r_2^2/r_p$.

By Lemma~\ref{lem:Psi}(1), we have $\Psi_p(c)\sq A_{\mc{K}_p}(e_p; R_1,R_2)$ with $R_1\sim(r_p/r_2)^{1/\eps_0}$ and $R_2\sim(r_p/r_1)^{1/\eps_0}$. Since $\fin(c)\leq r_1^2/r_p$ by assumption, Lemma~\ref{lem:Psi}(2) also yields $\fin(\Psi_p(c))\lesssim 1$. 

In view of Lemma~\ref{a'} and the previous paragraph, there exists a constant $L$ such that the following holds. Provided that $r_2\leq r_p/L$, there exists a chain $\mc{D}\sq A_{\mc{K}_p}(e_p;R_1/L,LR_2)$ such that $\fin(\mc{D})\leq L$ and:
\begin{itemize}
\item if $i<k$, then $\partial\mc{D}=\Psi_p(c)$; 
\item if $i=k$, then $\Psi_p(c)-\partial\mc{D}$ is disjoint from $B_{\mc{K}_p}(e_p,R_2/L)$.
\end{itemize}

Now, set $d:=\Phi_p(\mc{D})$. Lemma~\ref{lem:Phi} guarantees that there exists a constant $K$ depending on $L,\delta_0,\eps_0$ such that, if $r_2\leq r_p/K$, the following holds: $d\sq A_{\mc{S}}(p; r_1/K, Kr_2)$ and, for every simplex $\s\sq d$, we have $\diam(\s)\leq K\cdot\frac{\rho(p,\s)^2}{r_p}$. In particular, we have: 
\[\diam(d)\leq 2Kr_2\lesssim 2K\sqrt{r_p\diam(c)}\leq 2K\sqrt{\diam(c)}.\]

Finally, if $i<k$, we have $\partial d=\Phi_p(\partial\mc{D})=\Phi_p\Psi_p(c)=c$. If instead $i=k$, we obtain $c-\partial d\sq B(p,Kr_1)$ by another application of Lemma~\ref{lem:Phi}(1).
\end{proof}

The fineness of the filling $d$ provided by the previous lemma is not small enough for our purposes. The next result will solve this problem with a refining procedure.

\begin{lem}\label{lem:deep_barycentre_2}
There exist $g$ and $K\geq 1$ such that the following holds for all parabolic points $p\in\mc{S}$ and all $\delta>0$. 

Let $c\in\overline{\mscr{C}}_i(\mc{S})$ be a cycle with $\fin(c)\leq g(\delta)$ and $\mf{b}(c)\in\mc{H}_p$. Then:
\begin{enumerate}
\setlength\itemsep{.1cm}
\item if $i<k$ and $c\sq A(p;r_1,r_2)$ for radii $\delta\leq r_1<r_2\leq r_p/K$, we have $c=\partial d$ for some $d\sq A(p;r_1/K,Kr_2)$ with $\fin(d)\leq\delta$ and $\diam(d)\leq K\sqrt{\diam(c)}$;
\item if $i\leq k$ and $c\sq B(p,r_2)$ for a radius $0<r_2\leq r_p/K$, we have $c=\partial d$ for some $d\sq B(p,Kr_2)$ with $\fin(d)\leq\delta$ and $\diam(d)\leq K\sqrt{\diam(c)}$.
\end{enumerate}
\end{lem}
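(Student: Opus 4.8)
The plan is to upgrade Lemma~\ref{lem:deep_barycentre_1} by fixing its one defect: the filling $d$ it provides has fineness only $\lesssim 1$ (after passing through $\Phi_p$), which is far too coarse. The point is that property~$(*)$ from Lemma~\ref{lem:deep_barycentre_1} tells us that every simplex $\s$ of $d$ (and of its face complex) has diameter comparable to $\rho(p,\s)^2/r_p$, i.e.\ it sits in an annulus $A(p;\rho(p,\s)/2, 2\rho(p,\s))$ whose parameters are roughly self-similar to the original picture, only at a smaller scale. So the strategy is to refine $d$ one simplex at a time — or rather one dimension at a time, from the top down — by coning: replace each top-dimensional simplex $\s$ of $d$ by a finer filling of its boundary $\partial\s$, obtained by a \emph{recursive} application of the very statement we are proving (in lower dimension, hence an induction on $i$), or by Corollary~\ref{cor:shallow_barycentre} when the barycentre of $\partial\s$ is shallow. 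Because $\diam(\s)\lesssim\rho(p,\s)^2/r_p$, the cycle $\partial\s$ lives in a controlled annulus around $p$ and one checks $\fin(\partial\s)$ is small enough (relative to that annulus's inner radius) to feed the hypotheses; the resulting fine filling of $\partial\s$ stays in a slightly larger annulus, still within $A(p;r_1/K', K'r_2)$ for a slightly worse constant.

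First I would set up the induction on $i$. For the base case one can either start from $i=0$ (or $i=1$) where $\disc$-cycles are finite sets / cycles of $1$-simplices and everything is elementary, or — more uniformly — handle small $i$ via Corollary~\ref{cor:shallow_barycentre}. For the inductive step, apply Lemma~\ref{lem:deep_barycentre_1} to get a coarse filling $d_0$ with property~$(*)$ and $\diam(d_0)\leq K\sqrt{\diam(c)}$, satisfying $\partial d_0 = c$ (if $i<k$) or $c-\partial d_0\sq B(p,Kr_1)$ (if $i=k$, part~(2) only); in the latter case the leftover cycle in $B(p,Kr_1)$ is handled by the same argument recursively, or absorbed into the statement of part~(2) since a ball is an annulus with inner radius $0$ and the condition ``$r_1\geq\delta$'' is dropped there. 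Then, for each top simplex $\s$ of $d_0$: its boundary $\partial\s$ is an $(i-1)$-cycle supported in $A(p;\rho(p,\s)/K_1,K_1\rho(p,\s))$ with $\fin(\partial\s)=\diam(\s)\lesssim\rho(p,\s)^2/r_p$, whose barycentre either lies in $\mc{H}_p$ (apply the inductive hypothesis of the present lemma in dimension $i-1$ to get a $\delta$-fine filling $e_\s$ in a controlled annulus) or does not (apply Corollary~\ref{cor:shallow_barycentre}). Replace $\s$ by $\cone(x_\s, \partial\s) - \cone(x_\s, e_\s)$ for a vertex $x_\s\in\s$ — wait, rather: since $e_\s$ and $\s$ are both fillings of $\partial\s$, the difference $\s - e_\s$ is a cycle; fill it by coning from a vertex (Lemma~\ref{lem:cone}), which changes the chain $d_0$ by a boundary without changing $\partial d_0$. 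The net effect is to replace $d_0$ by a chain with the same boundary but with this $\s$ subdivided into the fine chain $e_\s$ (up to a boundary term one more dimension up, which is harmless since we only care about $\partial(\text{new }d) = \partial d_0$). Iterating over all top simplices, then all $(i-1)$-simplices of the result, etc., terminates because each stage only introduces simplices of strictly smaller diameter, and after finitely many rounds all simplices are $\delta$-fine. Finally, one tracks that throughout this process no simplex ever escapes $A(p; r_1/K, Kr_2)$ — this follows because each refinement of a simplex $\s$ stays within $A(p;\rho(p,\s)/K_2, K_2\rho(p,\s))\sq A(p;r_1/(K_1K_2), K_1K_2 r_2)$ — and one absorbs the finitely many constant losses into a single $K$. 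The diameter bound $\diam(d)\leq K\sqrt{\diam(c)}$ is inherited from $d_0$ (the refinement only shrinks diameters of pieces, and the support is contained in the same annulus of outer radius $Kr_2\lesssim\sqrt{\diam(c)}$ by the argument in Lemma~\ref{lem:deep_barycentre_1}).

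The main obstacle I anticipate is the bookkeeping of the refinement recursion, specifically ensuring that at every level the fineness hypothesis is met: when we pass to $\partial\s$, we need $\fin(\partial\s)\leq g_{i-1}(\delta)$ \emph{and} $\fin(\partial\s)$ small relative to the inner radius $\sim\rho(p,\s)$ of the annulus it lives in. The first is arranged by choosing $g=g_i$ in the statement small enough, working backwards through the (finitely many) dimensions $\leq k$; the second needs $\diam(\s)\lesssim\rho(p,\s)^2/r_p \ll \rho(p,\s)$, i.e.\ $\rho(p,\s)\ll r_p$, which holds because $r_2\leq r_p/K$ — but one must be careful that after several rounds of refinement the simplices can have $\rho(p,\cdot)$ as small as $\sim r_1/K$, so one also needs the original constraint $r_1\geq\delta$ to guarantee the recursion bottoms out (a $\delta$-fine chain needs no further refinement). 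This interplay between $\delta$, $r_1$, $r_p$, and the dimension-dependent constants is where the real care is required; everything else is a routine assembly of Lemma~\ref{lem:deep_barycentre_1}, Corollary~\ref{cor:shallow_barycentre}, Lemma~\ref{lem:cone}, and the estimates on $\Psi_p,\Phi_p$ already in hand.
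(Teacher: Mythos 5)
The overall two-stage strategy is right: apply Lemma~\ref{lem:deep_barycentre_1} to get a coarse filling $d$ with property~$(*)$, then exploit $(*)$ to refine $d$ down to fineness $\delta$. But the refinement step as you describe it does not work, for three related reasons.

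First, there is a dimension slip: a top simplex $\s$ of the $(i+1)$--chain $d_0$ has dimension $i+1$, so $\partial\s$ is an $i$--cycle, not an $(i-1)$--cycle. Appealing to ``the inductive hypothesis of the present lemma in dimension $i-1$'' is therefore circular --- you would be invoking the lemma in the very same dimension $i$. Second, even setting that aside, the proposed recursion on the deep-barycentre case has no visible termination: when you recursively fill $\partial\s$ by another pass through Lemma~\ref{lem:deep_barycentre_1}, the resulting pieces again satisfy $(*)$, so a piece $\tau$ near the outer annulus still has $\diam(\tau)\lesssim\rho(p,\tau)^2/r_p\sim r_2^2/r_p$; the absolute fineness bound does not improve from one round to the next, so ``iterating over simplices of strictly smaller diameter'' is not actually what happens. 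Third --- and this is the point the proposal misses --- the role of property~$(*)$ in the paper is precisely to \emph{eliminate} the deep-barycentre case from the refinement step. The paper builds a chain map $R_*\colon\mscr{F}_*(d)\to\mscr{C}_*(\mc{S})$ \emph{bottom-up} on the simplex dimension $j$ (so $R_0=\mathrm{id}$, and $R_{j+1}(\s)$ is a filling of the already-refined $R_j(\partial\s)$), and the inequality $\diam(R_j(\partial\s))\lesssim\rho(p,R_j(\partial\s))^2/r_p$ coming from $(*)$ is fed into Lemma~\ref{lem:barycentre_vs_horoball} to produce a point $z\in[o,\mf{b}(R_j(\partial\s))]$ within bounded distance of the horosphere $\mc{K}_p$, i.e.\ not deep in any horoball. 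Hence every filling needed in the refinement is supplied by Corollary~\ref{cor:shallow_barycentre} alone, and the fineness constants $\eta_j=g_j(\eta_{j+1})$ can be chosen in a single backward pass over $j=k,\dots,0$. No recursive call to the deep-barycentre lemma (or induction on $i$) is made or needed. Your sketch treats the two cases ``$\mf{b}(\partial\s)$ deep'' / ``$\mf{b}(\partial\s)$ shallow'' as symmetric, proposing recursion in the first case, but the content of $(*)$ is that the first case never arises during the refinement.

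Two smaller points: the refinement should proceed bottom-up in $j$ (so that faces are refined compatibly before the simplices containing them, and fineness is propagated forward via $\eta_j=g_j(\eta_{j+1})$); and for part~(2) the residual cycle $c-\partial d'$ sitting in $B(p,K'\delta)$ is disposed of in one stroke by coning from the parabolic point $p$ itself (the cone is $2K'\delta$--fine automatically), rather than by another round of the same argument.
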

\begin{proof}
We prove in detail only part~(1), since the argument for part~(2) is identical.

\smallskip
{\bf Part~(1).} Assume that $c\sq A(p;r_1,r_2)$ and $i<k$.

Ensuring that $g(\delta)\leq\delta^2$, we have $\fin(c)\leq \delta^2\leq r_1^2\leq r_1^2/r_p$. Thus, Lemma~\ref{lem:deep_barycentre_1} gives $c=\partial d$ for some $d\sq A(p;r_1/K,Kr_2)$ with $\diam(d)\leq K\sqrt{\diam(c)}$. In addition, for every simplex $\s\in\mscr{F}_*(d)$, we have $\diam(\s)\leq K\cdot\frac{\rho(p,\s)^2}{r_p}$. 

Our only task is now to replace $d$ by a $\delta$--fine chain with the same properties. We will achieve this by constructing a chain map $R_*\colon\mscr{F}_*(d)\ra \mscr{C}_*(\mc{S})$ such that:
\begin{itemize}
\item $R_*(\s)=\s$ for every $\eta_0$--fine simplex $\s\in\mscr{F}_*(d)$ (in particular, $R_0={\rm id}$);
\item for every simplex $\s\in\mscr{F}_j(d)$ with $j\geq 1$, we have $\fin(R_j(\s))\leq\eta_j$ and $\diam(R_j(\s)\cup\s)\leq M_j\cdot\frac{\rho(p,\s)^2}{r_p}$.
\end{itemize}
The constants $\eta_j$ and $M_j$ are defined inductively as follows. Set $M_0:=K$ and $\eta_k:=\delta$. For each $j\geq 0$, let $D_j$ be the constant provided by Lemma~\ref{lem:barycentre_vs_horoball} in relation to the constant $2^5M_j$. We then let $K_{j+1}$ and $g_j$ be the constant and function provided by Corollary~\ref{cor:shallow_barycentre} applied to the constant $D_j$. Finally, we set $\eta_j:=g_j(\eta_{j+1})$ and $M_{j+1}:=3^5K_{j+1}M_j+K$.

We now construct the chain map $R_*$. We proceed by induction on $j$, setting $R_0={\rm id}$. For the inductive step, suppose that the degree--$j$ map $R_j$ has been defined with the above properties and let us construct $R_{j+1}$.

Without loss of generality, assume that $r_2\leq r_p/2M_j$. This implies that, for every simplex $\tau\in\mscr{F}_j(d)$, we have $\diam(R_j(\tau)\cup\tau)\leq \frac{M_jr_2}{r_p}\cdot\rho(p,\tau)\leq \frac{1}{2}\rho(p,\tau)$. In particular, this guarantees that $\rho(p,R_j(\tau))\geq\frac{1}{2}\rho(p,\tau)$. Similarly, assuming that $r_2\leq r_p/2K$, we have $\diam(\s)\leq\frac{1}{2}\rho(p,\s)$ for every simplex $\s\in\mscr{F}_*(d)$.

As a consequence, for each simplex $\s\in\mscr{F}_{j+1}(d)$ we have:
\begin{align*}
\diam(R_j(\partial\s))&\leq 2M_j\cdot\max_{\tau\sq\partial\s}\tfrac{\rho(p,\tau)^2}{r_p}\leq 2^3M_j\cdot\tfrac{\rho(p,\partial\s)^2}{r_p}\leq 2^5M_j\cdot\tfrac{\rho(p,R_j(\partial\s))^2}{r_p}, \\
\rho(p,R_j(\partial\s))&=\min_{\tau\sq\partial\s}\rho(p,R_j(\tau))\leq\min_{\tau\sq\partial\s}\tfrac{3}{2}\rho(p,\tau)\leq \tfrac{9}{4}\rho(p,\s).
\end{align*}

Now, for a simplex $\s\in\mscr{F}_{j+1}(d)$, we need to define $R_{j+1}(\s)$ so that $\partial R_{j+1}(\s)=R_j(\partial\s)$. If $\s$ is $\eta_0$--fine, then so is $\partial\s$, hence $R_j(\partial\s)=\partial\s$ and we set $R_{j+1}(\s):=\s$.

Suppose instead that $\s$ is not $\eta_0$--fine. By the inductive hypothesis, we have $\fin(R_j(\partial\s))\leq\eta_j=g_j(\eta_{j+1})$. In addition, by the above inequalities, there exists a point $z\in [o,\mf{b}(R_j(\partial\s))]$ with $e^{-\eps_0d_\mc{X}(o,z)}=2^5M_j\cdot\tfrac{\rho(p,R_j(\partial\s))^2}{r_p}$.
% slightly cheating: there should be a multiplicative factor depending only on $\delta_0,\eps_0$, but there are too many constants already.
By Lemma~\ref{lem:barycentre_vs_horoball} and our choice of $D_j$, we then have $d_\mc{X}(z,\mc{K}_p)\leq D_j$. In particular, $z$ is not $D_j$--deep in any horoball, so Corollary~\ref{cor:shallow_barycentre}(1) implies that $R_j(\partial\s)=\partial\s'$ for a discrete $(j+1)$--chain $\s'\sq\mc{S}$ with $\fin(\s')\leq\eta_{j+1}$ and $\diam(\s')\leq K_{j+1}e^{-\eps_0d_\mc{X}(o,z)}$. It follows that:
\[\diam(\s\cup\s')\leq K\cdot\tfrac{\rho(p,\s)^2}{r_p}+2^5K_{j+1}M_j\cdot\tfrac{\rho(p,R_j(\partial\s))^2}{r_p}\leq (K+3^5K_{j+1}M_j)\cdot\tfrac{\rho(p,\s)^2}{r_p}.\]
Thus, we can set $R_{j+1}(\s):=\s'$.

This completes the construction of $R_*$. We now bound the diameter of $R_{i+1}(d)$ and show that this chain is contained in a suitable annulus around $p$.

Recall that $d\sq A(p;r_1/K,Kr_2)$ and that, for every simplex $\s\in\mscr{F}_{i+1}(d)$, we have $\diam(R_{i+1}(\s)\cup\s)\leq\frac{1}{2}\rho(p,\s)$ and $\rho(p,R_{i+1}(\s))\geq\frac{1}{2}\rho(p,\s)$. This immediately implies that $R_{i+1}(d)\sq A(p;r_1/2K,2Kr_2)$.

Without loss of generality, assume that $r_2=\max_{\xi\in c}\rho(p,\xi)$. Since $\mf{b}(c)\in\mc{H}_p$, Lemma~\ref{lem:barycentre_vs_horoball}(3) guarantees that $\diam(c)\gtrsim r_2^2/r_p$. Set $M:=\max_{j\leq k}M_j$. For every simplex $\s\in\mscr{F}_*(d)$, we have: 
\[\diam(R_*(\s)\cup\s)\leq M\cdot\tfrac{\rho(p,\s)^2}{r_p}\leq MK^2\cdot\tfrac{r_2^2}{r_p}\lesssim MK^2\diam(c)\lesssim MK^2\sqrt{\diam(c)}.\]
Since $\diam(d)\leq K\sqrt{\diam(c)}$, we obtain $\diam(R_{i+1}(d))\lesssim (MK^2+K)\sqrt{\diam(c)}$.

Finally, define $g(\delta)=\delta^2\wedge \eta_0$, recalling that $\eta_0=g_0g_1\dots g_{k-1}(\delta)$. Also note that $c\in\mscr{F}_*(d)$ since $c=\partial d$. Thus, if $c$ is $g(\delta)$--fine, we have $R_i(c)=c$ and, in particular, $c=\partial R_{i+1}(d)$. As shown above, $R_{i+1}(d)$ is $\delta$--fine, it is contained in a suitable annulus around $p$ and it admits the required diameter bound. This proves part~(1).

\smallskip
{\bf Part~(2).} Assume that $c\sq B(p,r_2)$ and $i\leq k$. Set $r_1:=\delta$.

Exactly as in part~(1), the combination of Lemma~\ref{lem:deep_barycentre_1} and the construction of a refining chain map $R_*$ (this time defined on the chain complex $\mscr{F}_*(d)\oplus\mscr{F}_*(c)$) yields a discrete chain $d'$ and a constant $K'\geq 1$ such that $\fin(d')\leq\delta$ and $\diam(d')\leq K'\sqrt{\diam(c)}$ and $c-\partial d'\sq B(p,K'\delta)$. If $r_2\leq\delta$, we simply take $d'=0$.

Now, define $d''$ as the cone over $c-\partial d'$ from $p$ (Definition~\ref{defn:cone}). We have $\partial d''=c-\partial d'$ and $d''\sq B(p,K'\delta)$, hence $\fin(d'')\leq\diam(d'')\leq 2K'\delta$.

In conclusion, setting $d:=d'+d''\sq B(p,K'r_2)$, we have $\partial d=c$ and $\fin(d)\leq 2K'\delta$. In addition, note that we can assume that $\diam(c)\geq\delta^2$ (otherwise, we could have just taken $d$ to be instead the cone over $c$ from any of its vertices, since $\delta^2\leq\delta$). Thus, we have $\diam(d)\leq 2K'\delta+K'\sqrt{\diam(c)}\leq 3K'\sqrt{\diam(c)}$. 

This proves part~(2).
\end{proof}

Finally, we can use Corollary~\ref{cor:shallow_barycentre} to remove from Lemma~\ref{lem:deep_barycentre_2} the requirement that $\mf{b}(c)\in\mc{H}_p$. That is, we can prove Proposition \ref{prop:Bowditch_filling}.

\begin{proof}[Proof of Proposition \ref{prop:Bowditch_filling}]
Let $g_1,K_1$ be the function and constant provided by Lem\-ma~\ref{lem:deep_barycentre_2}. Choose $D\geq 0$ such that, if $\mf{b}(c)$ is $D$--deep in some horoball $\mc{H}_q$, then $c\sq B(q,r_q/K_1)$. Now, let $g_2,K_2$ be the function and constant provided by Corollary~\ref{cor:shallow_barycentre} in relation to the constant $D$. Finally, set $g:=g_1\wedge g_2$ and $K':=K_1\vee K_2$. 

Suppose that $c\in\overline{\mscr{C}}_i(\mc{S})$ is $g(\delta)$--fine and $i\leq k$. If $\mf{b}(c)$ is not $D$--deep in any horoball, then $c=\partial d$ with $\fin(d)\leq\delta$ and $\diam(d)\leq K'\diam(c)$, by Corollary~\ref{cor:shallow_barycentre}. If $\mf{b}(c)$ is $D$--deep in a horoball $\mc{H}_q$, then $c=\partial d$ with $\fin(d)\leq\delta$ and $\diam(d)\leq 2r_q\wedge K'\sqrt{\diam(c)}$, by Lemma~\ref{lem:deep_barycentre_2}(2). 

This proves part~(0). Regarding parts~(1) and~(2), we are left to show that the filling $d$ constructed in the previous paragraph lies in the required annulus/ball around the parabolic point $p$. For this, we can assume that $\mf{b}(c)\not\in\mc{H}_p$, otherwise a suitable filling of $c$ is provided by Lemma~\ref{lem:deep_barycentre_2}. 

Since $\mf{b}(c)\not\in\mc{H}_p$, there exists a constant $C$, depending only on $\delta_0,\eps_0$, such that $\diam(c)\leq C \rho(p,c)^2/r_p$. In addition, if $\mf{b}(c)\in\mc{H}_q$ for some parabolic point $q\in\mc{S}$, then we similarly have $r_q\leq C\rho(p,c)^2/r_p$. Note that $\rho(p,c)\leq r_2$ in part~(2), while we can assume without loss of generality that $\rho(p,c)=r_1$ in part~(1). 

Now, recall that the filling $d$ satisfies $\diam(d)\leq 2r_q$ if $\mf{b}(c)$ is deep in a horoball $\mc{H}_q$, and $\diam(d)\leq K'\diam(c)$ otherwise. Thus, provided that $r_2\leq \frac{r_p}{2K'C} \wedge \frac{r_p}{4C}$, we have $\diam(d)\leq r_2/2$ in part~(2), and $\diam(d)\leq r_1/2$ in part~(1). This implies that $d$ is contained in the ball $B(p,2r_2)$ in part~(2), and in the annulus $A(p; r_1/2,2r_2)$ in part~(1).
\end{proof}

\section{Filling discrete cycles in the Morse boundary.}\label{sect:Morse_filling}

This section has two main goals, namely Proposition \ref{diameter bound} and Proposition \ref{prop:representing_homology}, which both hold for relatively hyperbolic groups with sphere boundary (see below for the precise assumptions). Proposition \ref{diameter bound} roughly says that sufficiently low-dimensional cycles in a given stratum of the Morse boundary can be filled in a controlled stratum and with controlled fineness and diameter. While Proposition \ref{diameter bound} will be used for our vanishing results for \v{C}ech cohomology, Proposition \ref{prop:representing_homology} will be used for our non-vanishing results. This proposition says that, for a given finite collection of parabolic points $F$ in the Bowditch boundary $\mc{S}$, we can represent the entire homology of $\mc{S}-F$ by cycles of arbitrarily small fineness that are contained in some fixed stratum of the Morse boundary.

\subsection{Notation}

If $B$ is a metric ball in a metric space, we denote by $\mf{r}(B)$ its radius, and by $\lambda B$ the ball with the same centre as $B$ and $\mf{r}(\lambda B)=\lambda\mf{r}(B)$. If $\mscr{B}$ is a family of metric balls, we write $\lambda\mscr{B}=\{\lambda B\mid B\in\mscr{B}\}$, and denote by $\mf{r}_{\max}(\mscr{B})$ and $\mf{r}_{\min}(\mscr{B})$, respectively, the supremum and the infimum of radii of balls in $\mscr{B}$.

Recall that we use the notation $a\vee b:=\max\{a,b\}$ and $a\wedge b:=\min\{a,b\}$.

\subsection{Assumptions}\label{subsect:ass_3}

Let $(G,\mc{P})$ be a relatively hyperbolic pair where $G$ is finitely generated and $\mc{P}$ consists of finitely many finitely generated subgroups. Throughout most of Section~\ref{sect:Morse_filling}, we will further assume as in Section~\ref{sect:Bowditch_filling} that the elements of $\mc{P}$ are virtually nilpotent and that the Bowditch boundary $\mc{S}$ is homeomorphic to a sphere $S^{k+1}$ for some $k\geq 1$. The only exceptions will be Proposition \ref{prop:Morse_Bow} and Theorem \ref{thm:path_conn}, which we state under more general hypotheses.

Choose a visual metric $\rho$ on $\mc{S}$. Let $g$ and $K$ be the corresponding function and constant provided by Proposition~\ref{prop:Bowditch_filling}. Without loss of generality, we can assume that $g(\delta)\leq\delta$ for all $\delta>0$. To simplify the proof of Lemma~\ref{lem:general_detour} below, we also assume that $K\geq 9$.

Each parabolic point $p\in\mc{S}$ comes with a characteristic radius $r_p$, as defined in Subsection~\ref{subsec:setup}. We fix a constant $M\geq 2K^2$ and consider the following collection of balls in $\mc{S}$:
\[ \mf{B}:=\{ B(p,r_p/M) \mid p\in\mc{S} \text{ parabolic}\}. \]
Up to enlarging $M$, we can and will assume (see e.g. \cite[Lemma 3.2]{Mackay-Sisto}) that, whenever two balls $B,B'\in\mf{B}$ with $\mf{r}(B)\leq \mf{r}(B')$ have nonempty intersection, we have:
\[\tag{$\ast$} \mf{r}(B) \leq \frac{1}{K^4} \cdot \mf{r}(B'). \label{eq:small_min} \]

For every $n\geq 1$, we define the sets:
\[ X_n:=\mc{S}-\bigcup_{B\in\mf{B}}\tfrac{1}{n}B. \]
We endow each $X_n$ with the subspace topology induced by $\mc{S}$. 

Under weak assumptions on the peripherals, the Morse boundary of a relatively hyperbolic group can be described as the direct limit of the strata $X_n$ (we do not yet assume that the Bowditch boundary is a sphere or that the peripherals are virtually nilpotent).

\begin{prop}
\label{prop:Morse_Bow}
Let $(G,\mc{P})$ be a relatively hyperbolic pair where $G$ is finitely generated and $\mc{P}$ consists of finitely many finitely generated groups with empty Morse boundary. Then the Morse boundary $\partial_*G$ is homeomorphic to $\varinjlim X_n$.
\end{prop}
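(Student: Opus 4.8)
The plan is to show that the Morse boundary $\partial_* G$ is homeomorphic to the direct limit $\varinjlim X_n$ by exhibiting, for each $n$, a homeomorphism of $X_n$ onto a \emph{Morse stratum} $\partial_*^{(N)}G$ (the set of points of $\partial_*G$ that are endpoints of $N$--Morse geodesics, with its natural metrisable topology), in a way that is compatible with the inclusions $X_n\hookrightarrow X_{n+1}$ and $\partial_*^{(N)}G\hookrightarrow\partial_*^{(N')}G$, so that the two systems are cofinal and induce the same direct limit topology. The starting point is the standard dictionary between the cusped space $\mc{X}(G,\mc{P})$ and the word metric on $G$: a geodesic ray in $\mc{X}$ represents a Morse direction of $G$ precisely when it avoids going deep into the horoballs, and quantitatively the Morse gauge of the corresponding geodesic in $G$ is controlled by (and controls) how deeply the ray penetrates the horoballs $\mc{H}_p$. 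So the first step is to make this precise: a point $\xi\in\mc{S}=\partial_B(G,\mc{P})$ lies in the Morse boundary $\partial_*G$ if and only if the geodesic $[o,\xi]$ in $\mc{X}$ stays within bounded distance of the horospheres, and the relevant bound is a function of the Morse gauge. Using Lemma~\ref{lem:barycentre_vs_horoball}, penetration depth into $\mc{H}_p$ translates into the visual-metric statement ``$\rho(p,\xi)$ is not too small compared to $r_p$'', which is exactly the condition defining the sets $X_n = \mc{S}-\bigcup_{B\in\mf{B}}\tfrac1n B$.

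Concretely, I would argue as follows. \textbf{Step 1: the point sets agree.} Show $\bigcup_n X_n = \bigcup_{N} \partial_*^{(N)}G$ as subsets of $\mc{S}$; that is, $\xi$ is \emph{not} a Morse direction iff for every $n$ there is a parabolic $p$ with $\xi\in\tfrac1n B(p,r_p/M)$, i.e.\ $\rho(p,\xi)/r_p\to 0$ along some sequence of parabolic points (or $\xi$ is itself parabolic). This is where one uses that peripherals have empty Morse boundary: a ray tracking a horosphere arbitrarily far cannot be Morse, because inside a horoball one finds arbitrarily long ``detours'' (equivalently, the peripheral subgroups contribute no Morse directions), while a ray that stays boundedly close to all horospheres is Morse by a standard contracting-geodesic argument in the cusped space. \textbf{Step 2: the stratum topologies agree.} Fix $n$; I claim that for a suitable $N=N(n)$ one has $X_n\subseteq\partial_*^{(N)}G$ and that the subspace topology $X_n$ inherits from $\mc{S}$ coincides with the Morse-stratum topology restricted to $X_n$. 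One inclusion of topologies comes from continuity of the identity map, using that the $\mc{X}$--geodesics to points of $X_n$ have uniformly bounded horoball penetration, hence the visual metric and the Morse topology give the same notion of convergence on $X_n$; the other inclusion is the analogous bound in the reverse direction. Conversely each stratum $\partial_*^{(N)}G$ is contained in some $X_{n(N)}$. \textbf{Step 3: cofinality and the direct limit.} The two nested families $\{X_n\}$ and $\{\partial_*^{(N)}G\}$ are then mutually cofinal inside $\mc{S}$, with matching subspace topologies on corresponding members, so $\varinjlim X_n$ and $\varinjlim \partial_*^{(N)}G = \partial_* G$ are canonically homeomorphic (a direct limit over a cofinal subsystem is unchanged, and the direct limit topology only depends on the telescope of subspaces).

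I expect the main obstacle to be Step 2, the identification of the topologies on a single stratum: the Morse boundary topology on $\partial_*^{(N)}G$ is \emph{a priori} defined via convergence of Morse geodesics in $G$ (Gromov-product style), while $X_n$ carries a visual metric from $\partial_B(G,\mc{P})$, and these only visibly agree once one has uniform quasi-convexity/contraction estimates for the $\mc{X}$--geodesics with bounded horoball penetration. This requires translating between the word metric on $G$, the cusped metric $d_{\mc{X}}$, and the logarithmic distortion inside horoballs ($d_{\mc{H}(\Gamma)}\approx 2\log d_\Gamma$), and checking that a sequence $\xi_m\to\xi$ in $X_n$ (visual metric) corresponds to Morse geodesics in $G$ converging in the Morse sense with a \emph{fixed} gauge. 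All of this is essentially present in the literature — it is the quasi-isometric invariance of Morse boundaries together with the cusped-space model — so I would phrase Step 2 by invoking those results (in the spirit of \cite{Mackay-Sisto}) and spell out only the bookkeeping that matches the constant $N(n)$ to $n$ via Lemma~\ref{lem:barycentre_vs_horoball} and the choice of $M$. A minor point to handle carefully is that the balls $\tfrac1n B$ for $B\in\mf{B}$ overlap in complicated ways; condition~(\ref{eq:small_min}) guarantees that removing them is well-behaved, and since parabolic points are dense one should double-check that each $X_n$ is closed (hence compact) in $\mc{S}$, which is immediate from the definition as a complement of a union of open balls.
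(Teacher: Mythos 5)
Your proposal reconstructs in detail the argument that the paper delegates entirely to a citation: the paper's proof is a one-line pointer to \cite[Proposition 6.4]{charney2019complete} together with the remark that replacing $\mathbb{H}^3$ by the cusped space makes that argument go through in general. Your three-step plan (pointwise identification via horoball penetration depth, matching the visual-metric topology on each $X_n$ with the Morse-stratum topology, and interleaving the two exhaustions to identify the direct limits) is exactly the structure of the cited argument, and you correctly flag Step~2 as the technical heart.
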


\begin{proof}
This is proven for fundamental groups of finite volume hyperbolic manifolds in \cite[Proposition 6.4]{charney2019complete}, but the argument works more generally replacing $\mathbb H^3$ with the cusped space for $G$, up to minor modifications that we now discuss. All references below are from \cite{charney2019complete}. 

Lemma 6.1 uses CAT(0)-ness of the neutered space under consideration, but only to deduce that Morse geodesics are strongly contracting (quantitatitvely). This still holds since the neutered space is relatively hyperbolic, and by \cite{Sisto-distformrelhyp} a geodesic in a relatively hyperbolic space is strongly contracting if for every $C>0$ there exists $B>0$ such that the intersections of the geodesic with the $C$–neighborhoods of the peripheral subsets have diameter bounded by $B$. Lemma 6.1 also uses that horospheres are flats, but only to the extent that they do not contain Morse rays. In Lemma 6.2 the visual metric used is $e^{-(\cdot|\cdot)}$, while in general it is $e^{-\epsilon(\cdot|\cdot)}$ for some small positive $\epsilon$; this does not affect the argument. The proof of part (2) of the same lemma uses convexity of horoballs to conclude that the intersection of the ray $\gamma$ under consideration and the horoball is a subgeodesic. In a general cusped space, this intersection lies within uniformly bounded Hausdorff distance of a subgeodesic, so this does not cause issues. In the same part of the proof, the argument involving the midpoint of a geodesic in a horoball with endpoints on the horosphere can be performed equally well in a combinatorial horoball rather than in the half-space model for $\mathbb H^3$. Finally, the argument in the proof of Proposition 6.4 again uses that horospheres are flats, but again to the same extent as the proof of Lemma 6.1
\end{proof}

As an aside, in view of the description of Morse boundaries given above, we point out that results of \cite{Mackay-Sisto} yield path-connectedness (under weaker assumptions than the boundary being a sphere):

\begin{thm}
\label{thm:path_conn}
    Let $(G,\mc{P})$ be a relatively hyperbolic pair where $G$ is finitely generated and $\mc{P}$ consists of finitely many finitely generated, virtually nilpotent subgroups. Suppose that $G$ is one-ended and does not split over a subgroup conjugate into some peripheral group. Then $\partial_*G$ is path-connected.
\end{thm}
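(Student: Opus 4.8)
The plan is to reduce path-connectedness of $\partial_*G$ to the existence of Morse quasi-geodesic rays joining any two points of the Morse boundary, and then to invoke the machinery of \cite{Mackay-Sisto}. Concretely, by Proposition~\ref{prop:Morse_Bow} we may identify $\partial_*G$ with $\varinjlim X_n$, where $X_n=\mc{S}-\bigcup_{B\in\mf{B}}\tfrac1n B$ is the Bowditch boundary with a family of open horoball-shrinkings removed. A path in the direct limit is the same as a path whose image lies in some fixed $X_n$ (since a compact subset of the direct limit lies in a single $X_n$), so it suffices to show: for any two conical limit points $\xi,\xi'\in\mc{S}$ that are Morse directions, there is an $n$ and a path from $\xi$ to $\xi'$ inside $X_n$. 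The point is that the strata $X_n$ are exactly the ``Morse part'' of the boundary at a definite scale, and staying in $X_n$ means staying uniformly away from all parabolic points in the visual metric.

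The key input is the arc-construction technology of \cite{Mackay-Sisto}: under the hypotheses that $G$ is one-ended and does not split over a subgroup conjugate into a peripheral, the Bowditch boundary $\mc{S}$ is connected with no global cut points, and more importantly one can build, between any two points, arcs that ``detour'' around parabolic points, i.e.\ arcs contained in $\mc{S}$ minus prescribed small balls around parabolic points. This is precisely the mechanism that Section~\ref{sect:Morse_filling} further develops (see the discussion preceding Proposition~\ref{diameter bound}, and Lemma~\ref{lem:general_detour}). So the steps I would carry out are: (i) record that, under the stated hypotheses, $\mc{S}$ is a Peano continuum (connected, locally connected, compact metric) with no cut points --- the local connectivity and absence of cut points being where one-endedness and the no-splitting hypothesis enter, via Bowditch's and Swenson's results on boundaries of relatively hyperbolic groups; (ii) given $\xi,\xi'$, apply the Mackay--Sisto detour construction to produce an arc $\gamma$ from $\xi$ to $\xi'$ avoiding a uniform neighbourhood of every parabolic point --- more precisely, an arc that, for each parabolic $p$, misses the ball $\tfrac1n B(p,r_p/M)$ for a suitable uniform $n$; (iii) conclude that $\gamma\sq X_n$ for this $n$, hence $\gamma$ is a path in $\partial_*G=\varinjlim X_n$, so $\partial_*G$ is path-connected.

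The main obstacle is step~(ii): one must control the arc \emph{simultaneously} near all parabolic points, not just finitely many, so that it lies in a single stratum $X_n$ rather than only in $\bigcup_n X_n=\mc{S}$ minus the parabolic points. This is the same subtlety that forces us to do real work in Section~\ref{sect:Morse_filling} (the ``detour away from \emph{all} parabolic points'' phrasing in Proposition~\ref{prop:representing_homology}), and the resolution is the same: a parabolic point $p$ is only relevant if the arc comes close to it, in which case the characteristic radius $r_p$ is small relative to the scale of the arc, so the detour around $p$ can be carried out at a cost proportional to $r_p$ and does not accumulate. Packaging this uniformly --- i.e.\ showing that a finite ``detour budget'' suffices because balls $\tfrac1n B(p,r_p/M)$ that the arc could plausibly hit have uniformly bounded radius relative to the geometry of the arc (cf.\ the separation estimate~(\ref{eq:small_min})) --- is the crux. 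Once that is in hand, everything else is a formal consequence of the identification $\partial_*G\cong\varinjlim X_n$ and the fact that arcs are automatically contained in a single stratum.
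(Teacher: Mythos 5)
Your proposal is correct and follows essentially the same route as the paper: identify $\partial_*G$ with $\varinjlim X_n$ via Proposition~\ref{prop:Morse_Bow}, then invoke the Mackay--Sisto arc-construction (the paper cites \cite[Corollary 7.4]{Mackay-Sisto}, which is exactly the ``arcs avoiding balls around parabolic points'' result you describe in step~(ii)) to produce arcs lying in a single stratum $X_n$. The paper simply outsources the ``detour budget'' analysis you flag as the crux to that corollary rather than redoing it.
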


\begin{proof}
    The hypotheses of the theorem are the same as \cite[Theorem 1.2]{Mackay-Sisto}, which is a special case of \cite[Theorem 1.3]{Mackay-Sisto}. In the proof of said theorem it is argued that \cite[Corollary 7.4]{Mackay-Sisto} applies, which is a result that allows one to connect pairs of points in the Bowditch boundary with arcs that avoid balls around parabolic points. By the description above, each of these arcs is contained in (a stratum of) the Morse boundary.
\end{proof}

The next lemma allows us to approximate each stratum $X_n$ by the complement of a \emph{finite} collection of balls in $\mc{S}$. 

\begin{lem}\label{lem:avoid_small_balls}
For every $n\geq 1$ and $\eps>0$ there exists $r=r(n,\eps)>0$ such that $X_n$ is $\eps$--dense in the set:
\[\mc{S}-\bigcup \left\{\tfrac{1}{n}B \mid B\in\mf{B},\ \mf{r}(B)\geq r\right\} .\]
\end{lem}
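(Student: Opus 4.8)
The plan is to show that only finitely many of the balls $\tfrac{1}{n}B$ with $B\in\mf{B}$ can have radius bounded below by any fixed $r>0$, and that throwing those out of the complement only removes an $\eps$--small neighbourhood of $X_n$. First I would observe that $\mf{B}$ consists of balls $B(p,r_p/M)$ over parabolic points $p$, and recall from Subsection~\ref{subsec:setup} that $r_p=e^{-\eps_0 d_{\mc{X}}(o,\mc{H}_p)}$. Since $\mc{X}$ is locally finite and $G$ acts cocompactly on it away from the horoballs, for each $R>0$ there are only finitely many horoballs $\mc{H}_p$ with $d_{\mc{X}}(o,\mc{H}_p)\leq R$ (this is where the finiteness of $\mc{P}$ and the structure of the cusped space enter); equivalently, $r_p\to 0$ as $p$ ranges over parabolic points, and only finitely many $p$ satisfy $r_p/(Mn)\geq r$. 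Hence the family $\mscr{B}_r:=\{\tfrac1n B \mid B\in\mf{B},\ \mf{r}(B)\geq r\}$ is finite, and the set in the statement is
\[ Y_r:=\mc{S}-\bigcup\mscr{B}_r \;=\; X_n \;\cup\; \bigcup\left\{\tfrac1n B \mid B\in\mf{B},\ \mf{r}(B)<r\right\}. \]

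Next I would quantify how far a point of $Y_r$ can be from $X_n$. Any $y\in Y_r\setminus X_n$ lies in some $\tfrac1n B$ with $B=B(p,r_p/M)\in\mf{B}$ and $\mf{r}(B)<r$; thus $\rho(y,p)\leq \tfrac{1}{n}\cdot\tfrac{r_p}{M}<\tfrac{r}{n}\leq r$. I want a point of $X_n$ within $\eps$ of $y$. The point $p$ itself is not in $X_n$, so instead I would move slightly off $p$: since $\mc{S}$ is a sphere (in particular has no isolated points and is $(k+1)$--dimensional with $k\geq1$), the ball $\tfrac1n B$ has nonempty boundary sphere, and one can pick a point $z$ with $\rho(z,p)$ equal to, say, $\tfrac{2}{n}\cdot\tfrac{r_p}{M}$ — i.e. just outside $\tfrac1n B$ — provided $z$ is not swallowed by some \emph{other} small ball of $\mf{B}$. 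Here property~\eqref{eq:small_min} is the key tool: if $z$ were also in $\tfrac1n B'$ for another $B'\in\mf{B}$, then $B$ and $B'$ intersect, so one of them has radius at most $K^{-4}$ times the other; iterating/combining these nesting constraints (exactly as in \cite[Lemma 3.2]{Mackay-Sisto}), one sees that a point at distance $\sim\tfrac1n\mf{r}(B)$ from $p$ can lie in only finitely many members of $\tfrac1n\mf{B}$, and among those there is a \emph{largest}, whose $\tfrac1n$--ball we can then step just outside of. Carrying this out yields a $z\in X_n$ with $\rho(y,z)\lesssim \tfrac1n\mf{r}(B)\lesssim r$.

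Finally I would choose $r=r(n,\eps)$ small enough that this bound is $<\eps$: tracking the constants, $\rho(y,z)$ is bounded by a fixed multiple (depending only on $n$, $M$, $K$ and the nesting combinatorics, not on $B$) of $\mf{r}(B)<r$, so taking $r$ equal to $\eps$ divided by that multiple does the job. Since $y\in Y_r\setminus X_n$ was arbitrary and points of $X_n$ are trivially $0$--dense in themselves, $X_n$ is $\eps$--dense in $Y_r$, which is the claim.

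\textbf{Main obstacle.} The only delicate point is the second step: given a point near a parabolic point $p$ whose ball is being discarded, produce a \emph{nearby} point that survives in $X_n$, i.e. that is not trapped inside \emph{another} (necessarily smaller, by \eqref{eq:small_min}) discarded ball. This is a purely combinatorial consequence of the separation property~\eqref{eq:small_min} of $\mf{B}$ — the balls of comparable size are essentially disjoint and a point at a given scale from $p$ meets only boundedly many of them — but writing it cleanly requires invoking the packing estimate behind \eqref{eq:small_min} (cf. \cite[Lemma 3.2]{Mackay-Sisto}) rather than just its stated consequence. Everything else (finiteness of large balls, the diameter bookkeeping) is routine.
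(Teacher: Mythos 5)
The paper proves this by a short compactness argument, entirely avoiding the packing estimate $(\ast)$: assuming the conclusion fails for every $r=1/m$, one picks $x_m\in K_m:=\mc{S}-\bigcup\{\tfrac1n B : B\in\mf{B},\ \mf{r}(B)\geq 1/m\}$ with $\rho(x_m,X_n)\geq\eps$, extracts a convergent subsequence $x_m\to x$ using compactness of $\mc{S}$, and notes that the nested structure of the $K_m$ forces $x\in\bigcap_m K_m=X_n$, contradicting $\rho(x,X_n)\geq\eps$. Your proposal takes a genuinely different, constructive route: produce an explicit nearby point of $X_n$ using $(\ast)$. That route is more informative (it would yield quantitative control on $r(n,\eps)$) but is also substantially harder, and your sketch does not close the key step.

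The gap is exactly the one you flag, and it is real. Stepping just outside the largest ball $\tfrac1n B$ containing $y$ does not put you in $X_n$: the boundary sphere of $\tfrac1n B$ can be crowded with other, not necessarily smaller, balls of $\tfrac1n\mf{B}$ whose centres lie outside $\tfrac1n B$. In particular, a much larger ball $B'\in\mf{B}$ (possibly one of the discarded ones) can have $\partial(\tfrac1n B')$ passing between $y$ and its step-out $z$, so $z\in\tfrac1n B'$ even though $y\notin\tfrac1n B'$; the maximality of $B$ among balls containing $y$ gives no control over such $B'$. Iterating the step-out is then not obviously a decreasing process, and even when it is, one still has to rule out the sequence of step-outs converging to a parabolic point (which is excluded from $X_n$). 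Making this work would require choosing the step-out direction to dodge the finitely many ``large'' balls near $y$ simultaneously, plus a careful nested induction over scales — none of which is present in your sketch, and the reference to \cite[Lemma 3.2]{Mackay-Sisto} is for a ball-separation statement, not for this kind of escape-from-the-union argument. The paper's compactness proof sidesteps all of this, at the cost of giving no explicit formula for $r(n,\eps)$.
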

\begin{proof}
Fix $n\geq 1$ and $\eps>0$. Consider, for $m\geq 1$, the compact sets:
\[ K_m:=\mc{S}-\bigcup \left\{\tfrac{1}{n}B \mid B\in\mf{B},\ \mf{r}(B)\geq\tfrac{1}{m}\right\}.\]
Note that we have $\bigcap_{m\geq 1} K_m=X_n$. If there exists $\overline m$ such that $X_n$ is $\eps$--dense in $K_{\overline m}$, we can set $r(n,\eps):=1/{\overline m}$. Otherwise, for each $m\geq 1$ there exists a point $x_m\in K_m$ with $\rho(x_m,X_n)\geq \eps$. Passing to a subsequence, we have $x_m\ra x$ with both $x\in\bigcap_{m\geq 1} K_m=X_n$ and $\rho(x,X_n)\geq\eps>0$, a contradiction.
\end{proof}

\subsection{Detouring}

Recall that, from now on, we assume that $\mc{P}$ consists of virtually nilpotent subgroups and the Bowditch boundary $\mc{S}$ is homeomorphic to a sphere $S^{k+1}$ with $k\geq 1$.

In order to construct fillings in the Morse boundary $X=\varinjlim X_n$, our strategy will be to construct fillings in the Bowditch boundary $\mc{S}$ using Proposition~\ref{prop:Bowditch_filling}, and then to detour these around suitable collections of balls in $\mc{S}$, thus pushing the fillings into a stratum of $X$. The final result of this type will be Proposition~\ref{diameter bound}.

To begin with, the following lemma explains how to detour chains in $\mc{S}$ around collections of \emph{pairwise disjoint} balls. Recall that $g$ and $K$ were introduced in Proposition~\ref{prop:Bowditch_filling}. 

\begin{lem}\label{lem:disjoint_detour}
Consider a finite subset $\mscr{B}\sq\tfrac{1}{N}\mf{B}$, for some $N\geq 1$. Suppose that $2K\mscr{B}$ is pairwise disjoint. Let $d\in\mscr{C}_i(\mc{S})$ be a $g(\delta)$--fine discrete chain, where $1\leq i\leq k$ and $0<\delta\leq\mf{r}_{\min}(\mscr{B})$. Suppose that $\supp(\partial d)$ is disjoint from all balls in $\mscr{B}$. 

Then there exists a discrete chain $d'$ in $\mc{S}$ such that:
\begin{enumerate}
\item $d'$ is $\delta$--fine and $\partial d'=\partial d$;
\item $\supp(d')$ is disjoint from all balls in $\tfrac{1}{K}\mscr{B}$;
\item $d$ and $d'$ coincide outside the union of the balls in $2K\mscr{B}$;
\item $\diam(d')\leq \diam(d)+4K\sqrt{\mf{r}_{\max}(\mscr{B})}$;
% a sharper, but unnecessary bound is: $\diam(d')\leq \diam(d)+2K\sqrt{\diam(d)\wedge 4\mf{r}_{\max}(\mscr{B})}$;
\item if $d$ is $g(g(\delta))$--fine, then $d-d'=\partial e$ for a $\delta$--fine discrete $(i+1)$--chain $e$ contained in the union of the balls in $2K^2\mscr{B}$.
\end{enumerate}
\end{lem}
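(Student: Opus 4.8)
The plan is to detour $d$ around each ball of $\mscr B$ independently, which is legitimate because $2K\mscr B$ is pairwise disjoint and $d$ is fine relative to $\mscr B$: every simplex $\sigma$ of $d$ satisfies $\diam(\sigma)\le g(\delta)\le\mf{r}_{\min}(\mscr B)$, so since $K\ge 9$ no $\sigma$ can meet two distinct balls of $\mscr B$. Hence the simplices of $d$ whose support meets $\bigcup\mscr B$ partition into subchains $d_B$ (the ones meeting $B$), one for each $B=B(p,r)\in\mscr B$, and I set $d_{\rm in}:=\sum_{B\in\mscr B}d_B$, so that $d=(d-d_{\rm in})+\sum_B d_B$. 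Since each simplex of $d_B$ meets $B$ and has diameter at most $r$, we get $\supp(d_B)\sq 2B$.

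The first step I would carry out is to establish that, for each $B$, the chain $c_B:=\partial d_B$ is a reduced cycle supported in $2B\setminus B$. Reducedness is automatic (a boundary always has zero coefficient sum), and $\supp(c_B)\sq\supp(d_B)\sq 2B$; the point is to rule out $\supp(c_B)\cap B\neq\emptyset$. If a simplex $\rho$ with $\supp(\rho)\cap B\neq\emptyset$ had nonzero coefficient in $\partial d_B$, it could not appear in any $\partial d_{B'}$ with $B'\neq B$ (its support meets $B$, forcing $B'=B$), hence it would have nonzero coefficient in $\partial d_{\rm in}=\partial d-\partial(d-d_{\rm in})$; but $\supp(\partial(d-d_{\rm in}))$ avoids $\bigcup\mscr B$, and by hypothesis so does $\supp(\partial d)$, a contradiction. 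I expect this cancellation argument, together with the radius bookkeeping in the next step, to be the part that needs the most care: once each $c_B$ genuinely lives in an annulus around the parabolic point $p$, the rest is a direct application of Proposition~\ref{prop:Bowditch_filling}.

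Next I would fill each $c_B$ with Proposition~\ref{prop:Bowditch_filling}(1): its dimension is $i-1<k$ (as $i\le k$), its fineness is $\le\fin(d)\le g(\delta)$, and $c_B\sq A(p;r_1,2r)$ with $r_1:=\rho(p,\supp(c_B))>r\ge\mf{r}_{\min}(\mscr B)\ge\delta$ (the minimum is attained and strictly exceeds $r$ since $\supp(c_B)$ is finite and misses the closed ball $B$), while $2r\le r_p/K^2\le r_p/K$ because $\mf{r}(B)\le r_p/M$ and $M\ge 2K^2$. This yields a $\delta$-fine chain $e_B\sq A(p;r_1/K,2Kr)$ with $\partial e_B=c_B$ and $\diam(e_B)\le K\sqrt{\diam(c_B)}$; since $r_1/K>r/K$, $e_B$ misses $\tfrac{1}{K}B$, and it lies in $2KB$. (When $c_B=0$, take $e_B:=0$.) Then I would set $d':=(d-d_{\rm in})+\sum_{B\in\mscr B}e_B$. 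Properties~(1) and~(3) are immediate: $\partial d'=\partial d-\sum_B c_B+\sum_B c_B=\partial d$, and $d,d'$ differ only inside $\bigcup 2KB$. Property~(2) holds because $d-d_{\rm in}$ misses $\bigcup\mscr B$ while each $e_B$ misses $\tfrac{1}{K}B$ and sits in $2KB$, which is disjoint from every other $\tfrac{1}{K}B'$. For~(4), note $\supp(c_B)\sq\supp(d)$ and $\diam(c_B)\le\diam(2B)\le 4\mf{r}_{\max}(\mscr B)$, so every vertex of $e_B$ lies within $\diam(e_B)\le 2K\sqrt{\mf{r}_{\max}(\mscr B)}$ of $\supp(d)$; hence $\supp(d')$ lies in the $2K\sqrt{\mf{r}_{\max}(\mscr B)}$-neighbourhood of $\supp(d)$ and $\diam(d')\le\diam(d)+4K\sqrt{\mf{r}_{\max}(\mscr B)}$.

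Finally, for~(5) I would re-run the construction under the stronger hypothesis that $d$ is $g(g(\delta))$-fine, this time filling $c_B$ via Proposition~\ref{prop:Bowditch_filling}(1) with target fineness $g(\delta)$ in place of $\delta$ (allowed since now $\fin(c_B)\le g(g(\delta))$); because $g(\delta)\le\delta$, the resulting $e_B$ is still $\delta$-fine, so~(1)--(4) are unchanged. Now $d_B-e_B$ is an $i$-cycle supported in $2KB=B(p,2Kr)$ with $2Kr\le r_p/K$, and it is $g(\delta)$-fine (as both $d_B$ and $e_B$ are); when $c_B=0$ this role is played by $d_B$ itself, supported in $2B$. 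By Proposition~\ref{prop:Bowditch_filling}(2) each of these cycles bounds a $\delta$-fine $(i+1)$-chain $f_B\sq B(p,2K^2r)\sq 2K^2B$, and $e:=\sum_{B\in\mscr B}f_B$ is a $\delta$-fine chain contained in the union of the balls of $2K^2\mscr B$ with $\partial e=\sum_B(d_B-e_B)=d-d'$, as required.
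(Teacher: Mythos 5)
Your proof is correct and takes essentially the same route as the paper: decompose $d$ into a part supported away from $\mscr{B}$ plus local pieces $d_B$ near each ball, show $\partial d_B$ lives in the annulus $2B\setminus B$, fill each $\partial d_B$ via Proposition~\ref{prop:Bowditch_filling}(1), reassemble, and finally fill $d_B-e_B$ via Proposition~\ref{prop:Bowditch_filling}(2) for part~(5). The only cosmetic differences are your choice to define $d_B$ as the simplices \emph{meeting} $B$ (the paper uses those \emph{contained in} $2B$, which leads to the same annulus fact by essentially your cancellation argument) and your explicit re-running of the construction with target fineness $g(\delta)$ for part~(5), which is implicit in the paper.
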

\begin{proof}
We begin by noting that, for every ball $B\in\mscr{B}$ centred at a parabolic point $p\in\mc{S}$, we have $2K\mf{r}(B)\leq r_p/K$, since we have chosen $M\geq 2K^2$ in the definition of $\mf{B}$. This will allow us to freely apply Proposition~\ref{prop:Bowditch_filling} later in the proof.

\begin{figure}[ht] 
	%	\hspace{5cm}
	%	\captionsetup{width=1.8\linewidth}
	\centering
	\includegraphics{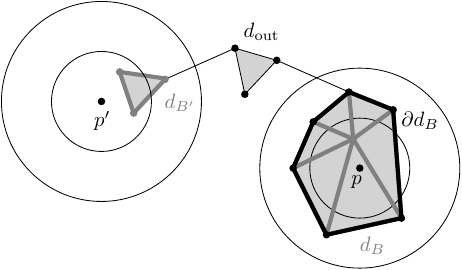}
	\caption{The decomposition of the discrete chain $d$ into parts close to parabolic points plus the rest $d_{\rm out}$.}\label{dout}
\end{figure} 

Write $d=d_{\rm out}+\sum_{B\in\mscr{B}}d_B$, where $d_B$ is obtained by grouping all simplices in a reduced expression for $d$ whose support is entirely contained in $2B$; this is illustrated in Figure \ref{dout}. Note that the balls in $2\mscr{B}$ are pairwise disjoint, since $K\geq 1$.

Observe that $\supp(d_{\rm out})$ is disjoint from all balls $B\in\mscr{B}$. Indeed, since $d$ is $g(\delta)$--fine and $g(\delta)\leq\delta\leq\mf{r}_{\min}(\mscr{B})$, every simplex $\s$ appearing in $d_{\rm out}$ has diameter at most $\mf{r}(B)$ and intersects the complement of $2B$, for each $B\in\mscr{B}$.

Similarly, each $\supp(\partial d_B)$ is contained in the annulus $2B-B$. To see this, consider a codimension--$1$ face $\tau$ of a simplex $\s$ appearing in $d$, and suppose that $\supp(\tau)\cap B\neq\emptyset$. Then $\supp(\s)\sq 2B$, since $d$ is $\mf{r}(B)$--fine, as observed above. It follows that all simplices of $d$ containing $\tau$ appear in $d_B$. Since $\supp(\partial d)\cap B=\emptyset$, the simplex $\tau$ does not appear in $\partial d$, hence it cannot appear in $\partial d_B$ either.

Now, $\partial d_B$ is a $g(\delta)$--fine discrete, reduced $(i-1)$--chain in the annulus $2B-B$ and we have $i-1\leq k-1$. Proposition~\ref{prop:Bowditch_filling}(1) yields a $\delta$--fine discrete $i$--chain $d_B'$ in the annulus $2KB-\tfrac{1}{K}B$ with $\partial d_B'=\partial d_B$ and $\diam(d_B')\leq K\sqrt{\diam(\partial d_B)}$. If $\partial d_B=0$, we simply take $d_B'=0$.
% requirement: $2\mf{r}(B)\leq r_p/K$.
Define $d':=d_{\rm out}+\sum_{B\in\mscr{B}}d_B'$.

Checking that $d'$ satisfies (1) and (3) is straightforward. Moreover, recall that $\supp(d_{\rm out})$ is disjoint from all balls $B\in\mscr{B}$. Since $\supp(d_B')\sq 2KB-\tfrac{1}{K}B$ and the balls in $2K\mscr{B}$ are pairwise disjoint, we see that $d'$ also satisfies (2).

Regarding property~(4), note that every point of $\supp(d_B')$ is at distance at most $K\sqrt{\diam(\partial d_B)}$ from a point of $\supp(\partial d_B)\sq\supp(d)$. Since we have both $\diam(\partial d_B)\leq\diam(d)$ and $\diam(\partial d_B)\leq \diam(2B)\leq 4\mf{r}_{\max}(\mscr{B})$, this yields the required inequality.

Finally, we prove property~(5). If $d$ is $g(g(\delta))$--fine, Pro\-po\-si\-tion~\ref{prop:Bowditch_filling}(2) ensures that $d_B-d_B'=\partial e_B$ for a $\delta$--fine discrete chain $e_B$ contained in the ball $2K^2B$. 
% requirement: $2K\mf{r}(B)\leq r_p/K$.
Thus, it suffices to set $e:=\sum_{B\in\mscr{B}}e_B$. 
\end{proof}

We now discuss detouring around general collections of balls, which are allowed to intersect each other. For a function $f$ and an integer $n\geq 0$, we denote by $f^{(n)}$ the $n$--fold composition of $f$ (in particular, $f^{(0)}={\rm id}$ and $f^{(1)}=f$).

\begin{lem}\label{lem:general_detour}
Consider a finite subset $\mscr{B}\sq\tfrac{1}{N}\mf{B}$ for some $N\geq 2K$. Define 
\[m:=1+\lfloor\log_K(\mf{r}_{\max}(\mscr{B})/\mf{r}_{\min}(\mscr{B}))\rfloor.\] 
Let $d\in\mscr{C}_i(\mc{S})$ be a $g^{(m)}(\delta)$--fine discrete chain with $1\leq i\leq k$ and $0<\delta\leq\mf{r}_{\min}(\mscr{B})$. Suppose that $\supp(\partial d)$ is disjoint from all balls in $\mscr{B}$.

Then there exists a discrete chain $d'$ in $\mc{S}$ such that:
\begin{enumerate}
\item $d'$ is $\delta$--fine and $\partial d'=\partial d$;
\item $\supp(d')$ is disjoint from all balls in $\tfrac{1}{2K}\mscr{B}$;
\item $\supp(d')$ is contained in the $8K\mf{r}_{\max}(\mscr{B})$--neighbourhood of $\supp(d)$;
% in general, this is not true of the Hausdorff distance: to get $d'$, we might have to erase some bits of $d$ that are *cycles* inside balls B
\item if $\partial d\neq 0$, then $\diam(d')\leq \diam(d)+8K\sqrt{2K\diam(d)}$;
\item if $d$ is $g^{(m+1)}(\delta)$--fine, then $d-d'=\partial e$ for a $\delta$--fine discrete chain $e$ contained in the $(4K^2+8K)\cdot\mf{r}_{\max}(\mscr{B})$--neighbourhood of $\supp(d)$.
\end{enumerate}
\end{lem}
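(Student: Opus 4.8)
The plan is to reduce the general case to the pairwise-disjoint case handled by Lemma~\ref{lem:disjoint_detour}, by peeling off balls one "scale" at a time, from largest radius to smallest. Recall that by our normalisation~\eqref{eq:small_min}, whenever two balls $B,B'\in\mf{B}$ (hence in $\tfrac1N\mf{B}$) with $\mf{r}(B)\le\mf{r}(B')$ intersect, then $\mf{r}(B)\le K^{-4}\mf{r}(B')$. Consequently, if we group the balls of $\mscr{B}$ into bands $\mscr{B}_1,\dots,\mscr{B}_m$ according to which interval $[\,\mf{r}_{\max}(\mscr{B})/K^\ell,\ \mf{r}_{\max}(\mscr{B})/K^{\ell-1}\,)$ their radius falls in — so that $m=1+\lfloor\log_K(\mf{r}_{\max}(\mscr{B})/\mf{r}_{\min}(\mscr{B}))\rfloor$ by definition — then within a single band $\mscr{B}_\ell$ any two intersecting balls have radii within a factor $K$ of each other, which by~\eqref{eq:small_min} forces them to be \emph{equal}. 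After slightly enlarging each ball of $\mscr{B}_\ell$ (by a factor $2K$, say), the balls in a fixed band can only fail to be disjoint when they coincide, so after discarding duplicates, $2K\mscr{B}_\ell$ is pairwise disjoint and Lemma~\ref{lem:disjoint_detour} applies to it.

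\textbf{Key steps.} First I would set up the banding $\mscr{B}=\mscr{B}_1\sqcup\dots\sqcup\mscr{B}_m$ as above, with $\mscr{B}_1$ containing the largest balls, and record the observation that $2K\mscr{B}_\ell$ is pairwise disjoint for each $\ell$. Next I would run an induction: set $d^{(0)}:=d$, and having produced $d^{(\ell-1)}$ which is $g^{(m-\ell+1)}(\delta)$--fine, disjoint from $\tfrac1{K}\mscr{B}_1,\dots,\tfrac1K\mscr{B}_{\ell-1}$, with $\partial d^{(\ell-1)}=\partial d$ and $\supp(\partial d^{(\ell-1)})$ disjoint from the remaining balls $\mscr{B}_\ell\cup\dots\cup\mscr{B}_m$, apply Lemma~\ref{lem:disjoint_detour} with the disjoint family $\mscr{B}_\ell$ and fineness parameter $g^{(m-\ell)}(\delta)$ in place of $\delta$ there. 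This produces $d^{(\ell)}:=d'$ which is $g^{(m-\ell)}(\delta)$--fine, disjoint from $\tfrac1K\mscr{B}_\ell$, agrees with $d^{(\ell-1)}$ outside $\bigcup 2K\mscr{B}_\ell$, and satisfies the diameter estimate of Lemma~\ref{lem:disjoint_detour}(4). The two points needing care in the induction are: (i) the detour in band $\ell$ does not re-enter balls from earlier bands $\mscr{B}_1,\dots,\mscr{B}_{\ell-1}$ whose disjointness we already established — this uses that $d^{(\ell)}$ differs from $d^{(\ell-1)}$ only inside $\bigcup 2K\mscr{B}_\ell$, while those earlier balls are \emph{larger}, so by~\eqref{eq:small_min} a ball of $2K\mscr{B}_\ell$ meeting a ball $\tfrac1K B'$ with $B'\in\mscr{B}_{\ell'}$, $\ell'<\ell$, is actually swallowed by $\tfrac{1}{2K}B'$ with room to spare, and one checks $d^{(\ell-1)}$ was already disjoint from such a neighbourhood; (ii) the hypothesis of Lemma~\ref{lem:disjoint_detour} that $\supp(\partial d^{(\ell-1)})$ misses $\mscr{B}_\ell$ — but $\partial d^{(\ell-1)}=\partial d$ throughout, which is disjoint from all of $\mscr{B}$ by assumption. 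After $m$ steps, $d':=d^{(m)}$ is $\delta$--fine, $\partial d'=\partial d$, and $\supp(d')$ is disjoint from $\tfrac1{K}\mscr{B}_\ell$ for every $\ell$, hence (a fortiori) from $\tfrac1{2K}\mscr{B}$; this gives (1) and (2). For (3) and (4), each detour step moves points of the chain by at most $K\sqrt{\diam(\partial d_B)}$ for some sub-ball $B$, and $\diam(\partial d_B)\le\min\{\diam(d^{(\ell-1)}),\,4\mf{r}_{\max}(\mscr{B}_\ell)\}$; summing the geometric series $\sum_\ell K\sqrt{4\mf{r}_{\max}(\mscr{B})/K^{\ell-1}}$ over the $m$ bands yields a bound of the form $8K\sqrt{\mf{r}_{\max}(\mscr{B})}$ for the total displacement (the geometric ratio $K^{-1/2}<1$ makes the series converge, and the constant $8K$ absorbs the $\sqrt2$'s and the sum), giving the neighbourhood statement (3); combined with the crude bound $\mf{r}_{\max}(\mscr{B})\le\tfrac{1}{2K}\diam(d)$ when $\partial d\ne 0$ (since some ball of $\mscr{B}$ must be "near" $\supp(\partial d)$... actually more carefully: each $\mscr{B}_\ell$-detour displacement is $\le 8K\sqrt{2K\diam(d)}$ in total after noting $\mf{r}_{\max}(\mscr{B}_\ell)$ is controlled by $\diam(d)$ via the fact that $\partial d$ enters the annulus around each relevant ball), this gives (4). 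Finally, for (5), the extra fineness hypothesis $d\in$ $g^{(m+1)}(\delta)$--fine means at each step we may invoke Lemma~\ref{lem:disjoint_detour}(5) instead of merely (1)--(4): step $\ell$ produces $d^{(\ell-1)}-d^{(\ell)}=\partial e_\ell$ for a $g^{(m-\ell)}(\delta)$--fine (in particular $\delta$--fine) chain $e_\ell$ supported in $\bigcup 2K^2\mscr{B}_\ell$, which lies in the $(4K^2+8K)\mf{r}_{\max}(\mscr{B})$--neighbourhood of $\supp(d)$ once we account for the cumulative displacement from earlier bands; then $e:=\sum_\ell e_\ell$ satisfies $d-d'=\partial e$ with the required support bound.

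\textbf{Main obstacle.} The genuinely delicate point is (i) above: verifying that detours performed at the $\ell$-th (small) scale stay clear of the already-cleared larger balls, so that the "disjoint from $\tfrac1{2K}\mscr{B}$" conclusion is not destroyed by later steps. This is exactly where the strong separation~\eqref{eq:small_min} (a gap of $K^4$, not just $K$) is used, and it is why the statement asks for disjointness from the shrunken family $\tfrac{1}{2K}\mscr{B}$ rather than from $\mscr{B}$ itself — one needs slack both to enlarge the current-band balls for Lemma~\ref{lem:disjoint_detour} and to ensure the detour chain, which can wander a distance $\sim K\cdot\mf{r}(B)$ away from a ball $B$, remains well inside the complement of $\tfrac1{2K}B'$ for every strictly larger ball $B'$. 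The bookkeeping of the fineness parameters ($g^{(m)}(\delta)$ dropping to $g^{(m-\ell)}(\delta)$ after $\ell$ steps, with one spare iterate of $g$ reserved for part~(5)) and of the accumulating displacement constants is routine but must be done in the right order — from largest to smallest radius — for the geometric sums to close up.
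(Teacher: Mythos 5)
Your proposal is correct and follows essentially the same route as the paper's proof: band $\mscr{B}$ by radius into $m$ scales, apply Lemma~\ref{lem:disjoint_detour} iteratively from largest to smallest band (noting $2K\mscr{B}_\ell$ is pairwise disjoint by~\eqref{eq:small_min} and $N\geq 2K$), track the fineness budget $g^{(m-\ell)}(\delta)$, use the $K^4$ gap to prevent later small-scale detours from re-entering the shrunken larger balls, and sum the geometric series of displacements for parts~(3) and~(4). The one place where the paper is somewhat more careful than your sketch is in verifying~(2): it splits the later bands into $\mscr{B}_{i+1},\mscr{B}_{i+2},\mscr{B}_{i+3}$ (whose enlargements are shown to be entirely disjoint from $\tfrac1K B$ via~\eqref{eq:small_min}) and $\mscr{B}_{\geq i+4}$ (whose cumulative displacement is summed and bounded by $\tfrac{1}{2K}\mf{r}(B)$); your phrasing ``is actually swallowed by $\tfrac{1}{2K}B'$'' compresses this two-case argument but captures the same idea.
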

\begin{proof}
We begin by partitioning $\mscr{B}=\mscr{B}_1\sqcup\dots\sqcup\mscr{B}_m$ so that a ball $B$ lies in $\mscr{B}_i$ exactly when $\mf{r}(B)$ lies in the half-open interval $\left[\mf{r}_{\max}(\mscr{B})/K^{i-1},\mf{r}_{\max}(\mscr{B})/K^i\right)$. Note that, in view of Equation~(\ref{eq:small_min}) and the fact that $N\geq 2K$, each set $2K\mscr{B}_i$ consists of pairwise disjoint balls.

We then set $d_0:=d$ and iteratively define $d_i$ as the chain obtained by applying Lemma~\ref{lem:disjoint_detour} to $d_{i-1}$ and the family of balls $\mscr{B}_i$. Finally, we set $d':=d_m$.

Note that each application of Lemma~\ref{lem:disjoint_detour} is allowed. Indeed, the chain $d_i$ is $g^{(m-i)}(\delta)$--fine and $g^{(m-i)}(\delta)\leq\delta\leq \mf{r}_{\min}(\mscr{B})\leq \mf{r}_{\min}(\mscr{B}_{i+1})$. In addition, we have $\partial d_i=\partial d$, which is disjoint from all balls in $\mscr{B}$. 

We are left to check that the final chain $d'=d_m$ satisfies all required properties. The previous paragraph shows that $d_m$ is $\delta$--fine and $\partial d_m=\partial d$, proving (1).

Regarding (3), note that $d_{i+1}$ and $d_i$ only differ within a union of balls that intersect $\supp(d_i)$ and lie in $2K\mscr{B}_{i+1}$. Hence $\supp(d_{i+1})$ is contained in a neighbourhood of $\supp(d_i)$ of radius at most $2\cdot\mf{r}_{\max}(2K\mscr{B}_{i+1})\leq 4K\cdot\mf{r}_{\max}(\mscr{B})/K^i$. Recalling that $K\geq 2$, we deduce that $\supp(d_m)$ is contained in a neighbourhood of $\supp(d)$ of radius at most:
\[ 4K\cdot\mf{r}_{\max}(\mscr{B})\cdot\sum_{i\geq 0}\frac{1}{K^i}\leq8K\cdot\mf{r}_{\max}(\mscr{B}).\]

Now, we prove (2). Consider a ball $B\in\mscr{B}$ and say it lies in $\mscr{B}_i$. By Lemma~\ref{lem:disjoint_detour}, the chain $d_i$ is supported outside $\tfrac{1}{K}B$. We need to check that this property is not affected too much by subsequent detouring.

First, observe that the chain $d_{i+3}$ is also supported outside $\tfrac{1}{K}B$. Indeed, setting for a moment $\mscr{B}':=\mscr{B}_{i+1}\cup\mscr{B}_{i+2}\cup\mscr{B}_{i+3}$, the chains $d_i$ and $d_{i+3}$ only differ within the balls in $2K\mscr{B}'$. And, for every $B'\in\mscr{B}'$, the ball $2KB'$ is disjoint from $\tfrac{1}{K}B$. Indeed, if this were not the case, the balls $2KB'$ and $2KB$ would also intersect. These two balls are rescalings of elements of $\mf{B}$ by the same factor $\geq 1$, because $N\geq 2K$ and $B,B'\in\tfrac{1}{N}\mf{B}$. Thus, Equation~(\ref{eq:small_min}) would imply that $\mf{r}(B')\leq \mf{r}(B)/K^4$, contradicting the fact that, since $B'\in\mscr{B}'$, we have $\mf{r}(B')>\mf{r}(B)/K^4$.

Now, recalling that $K\geq 9$, we obtain that $\supp(d_m)$ is contained in a neighbourhood of $\supp(d_{i+3})$ of radius at most:
\begin{align*}
4K\cdot\sum_{j\geq i+4}\mf{r}_{\max}(\mscr{B}_j)&\leq 4K\cdot\mf{r}_{\max}(\mscr{B}_{i+1})\cdot\sum_{j\geq 3}\tfrac{1}{K^j}\\
&\leq 4K\cdot\mf{r}_{\max}(\mscr{B}_{i+1})\cdot\tfrac{1}{8K^2}<\tfrac{1}{2K}\cdot\mf{r}(B).
\end{align*}
In conclusion, $\supp(d_m)$ is disjoint from $\tfrac{1}{2K}B$, for every $B\in\mscr{B}$. 

Let us prove (4). Let $1\leq k\leq m$ be the smallest integer such that $d_k\neq d_{k-1}$. In particular, we have $d=d_{k-1}$ and there exists a ball $B\in\mscr{B}_k$ such that $\supp(d)\cap \tfrac{1}{K}B\neq\emptyset$ (otherwise we could have simply taken $d_k=d_{k-1}$, rather than applying Lemma~\ref{lem:disjoint_detour}). Since $\supp(\partial d)$ is nonempty and disjoint from $B$, it follows that $\diam(d)\geq\tfrac{1}{2}\mf{r}(B)\geq\tfrac{1}{2K}\mf{r}_{\max}(\mscr{B}_k)$.
% the assumption that $\partial d\neq\emptyset$ shouldn't really be necessary here, but it does simplify the proof a lot

Now, by part~(4) of Lemma~\ref{lem:disjoint_detour} and recalling that $\sqrt{K}\geq 3$, we obtain: \begin{align*}
\diam(d_m)&\leq \diam(d) + 4K\sum_{j\geq k}\sqrt{\mf{r}_{\max}(\mscr{B}_j)} \\ 
&\leq\diam(d)+4K\sqrt{\mf{r}_{\max}(\mscr{B}_k)}\sum_{j\geq 0}K^{-j/2}\leq \diam(d)+ 8K\sqrt{2K\diam(d)}.
\end{align*}

Finally, we prove (5). Part~(5) of Lemma~\ref{lem:disjoint_detour} gives $\delta$--fine chains $e_1,\dots,e_k$ with $d_i-d_{i-1}=\partial e_i$ and $\supp(e_i)$ contained in the union of the balls in $2K^2\mscr{B}_i$ that intersect $\supp(d_{i-1})$. In particular, $\supp(e_i)$ is contained in a neighbourhood of $\supp(d_{i-1})$ of radius at most 
$4K^2\cdot\mf{r}_{\max}(\mscr{B})$, and hence in a neighbourhood of $\supp(d)$ of radius at most $(4K^2+8K)\cdot\mf{r}_{\max}(\mscr{B})$, by the proof of property~(3). We conclude by setting $e:=e_1+\dots + e_k$.
\end{proof}

The detouring construction developed in Lemma~\ref{lem:general_detour} allows us to fill discrete cycles in a given stratum of $X$ while remaining in a controlled larger stratum of $X$. This is the content of the next proposition, which is the main ingredient for the vanishing part of Theorems~\ref{thm:main_intro} and~\ref{thm:general_intro}.

Recall that $\overline{\mscr{C}}_*(\mc{S})$ denotes the chain complex of reduced discrete chains in $\mc{S}$, where $0$--chains are required to have zero coefficient sum.

\begin{prop}\label{diameter bound} 
There exist functions $f_1(n)\geq n$, $0<g_1(\delta,n)\leq\delta$ and $\mathfrak{h}(r)$ (going to $0$ for $r\ra 0$) such that the following holds for all $\delta>0$, $n\geq 1$, $0\leq i<k$.

If $c\in\overline{\mscr{C}}_i(\mc{S})$ is a $g_1(\delta,n)$--fine discrete cycle supported in $X_n$, then there exists a $\delta$--fine chain $d$ supported in $X_{f_1(n)}$ with $\partial d=c$ and $\diam(d)\leq \mathfrak{h}(\diam(c))$.
\end{prop}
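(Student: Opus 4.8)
The strategy is to combine the Bowditch filling from Proposition~\ref{prop:Bowditch_filling} with the detouring technology of Lemma~\ref{lem:general_detour}. First I would use Proposition~\ref{prop:Bowditch_filling}(0) to obtain an initial filling $d_0\sq\mc{S}$ of the cycle $c$ with $\partial d_0=c$, with $\fin(d_0)$ as small as we like (by making $g_1(\delta,n)$ small enough that $g^{(m)}(\delta')$-fineness is achieved for the relevant iterate count $m$, see below), and with $\diam(d_0)\leq K\sqrt{\diam(c)}$. Since $c$ is supported in $X_n$, its support is disjoint from all the balls $\tfrac1n B$ with $B\in\mf{B}$; in particular $\supp(\partial d_0)=\supp(c)$ is disjoint from a collection of balls $\mscr{B}\sq\tfrac1n\mf{B}$. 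The problem is that $d_0$ itself may dip into these balls and hence fail to be supported in any stratum $X_{f_1(n)}$.

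The key step is to detour $d_0$ away from the balls of $\tfrac1n\mf{B}$ using Lemma~\ref{lem:general_detour}. The subtlety is that $\mf{B}$ is an \emph{infinite} collection, whereas Lemma~\ref{lem:general_detour} applies to a finite subfamily. Here I would invoke Lemma~\ref{lem:avoid_small_balls}: for a given target stratum and error $\eps$, only finitely many balls $B\in\mf{B}$ have radius $\geq r(n,\eps)$, and $X_n$ is $\eps$-dense in the complement of the enlarged versions of these large balls. Concretely, one fixes a target index $n'$, applies Lemma~\ref{lem:general_detour} to the finite family $\mscr{B}$ of those $\tfrac1n B$ with $\mf{r}(B)$ at least some threshold, obtaining $d'=d_0'$ with $\partial d'=c$, with $\supp(d')$ disjoint from all $\tfrac1{2K}B$ for $B\in\mscr{B}$, with $\fin(d')\leq\delta$, with $\supp(d')$ contained in the $8K\mf{r}_{\max}(\mscr{B})$-neighbourhood of $\supp(d_0)\sq X_n$, and with $\diam(d')\leq\diam(d_0)+8K\sqrt{2K\diam(d_0)}$. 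The diameter bound then composes: $\diam(d_0)\leq K\sqrt{\diam(c)}$ feeds into $\diam(d')\leq K\sqrt{\diam(c)}+8K\sqrt{2K^2\sqrt{\diam(c)}}$, which defines a function $\mathfrak{h}(r)$ going to $0$ as $r\to 0$. The number of detour iterations is $m=1+\lfloor\log_K(\mf{r}_{\max}(\mscr{B})/\mf{r}_{\min}(\mscr{B}))\rfloor$, which is controlled once the radius threshold (depending on $n$ and the target stratum) is fixed; this dictates how small $g_1(\delta,n)$ must be, namely $g_1(\delta,n):=g^{(m)}(\delta)$ for this $m$, and gives $f_1(n)$.

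The remaining point is bookkeeping: I must check that $\supp(d')\sq X_{f_1(n)}$ for a suitable $f_1(n)\geq n$. The balls in $\mscr{B}$ of radius at least the threshold are dodged down to $\tfrac1{2K}$ of their size by property~(2) of Lemma~\ref{lem:general_detour}; the balls \emph{not} in $\mscr{B}$ are small (radius $<r(n,\eps)$), and since $\supp(d')$ lies in an $8K\mf{r}_{\max}(\mscr{B})$-neighbourhood of $X_n\sq$ (complement of $\tfrac1n$ of each ball of $\mf{B}$), one uses that $d'$ can only get within a controlled distance of these small balls; choosing the threshold $r(n,\eps)$ small enough relative to $\eps$ and the detour displacement, $\supp(d')$ stays outside $\tfrac1{f_1(n)}B$ for every $B\in\mf{B}$, where $f_1(n)$ is chosen accordingly (using Lemma~\ref{lem:avoid_small_balls} to convert ``$\eps$-dense in the complement of large balls'' into ``contained in a slightly smaller stratum''). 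The main obstacle, and the step requiring the most care, is precisely this last interplay between the infinitude of $\mf{B}$, the finite detouring, and the quantitative relationship among $n$, the radius threshold, the detour displacement $8K\mf{r}_{\max}(\mscr{B})$, and the resulting $f_1(n)$ — one must verify these constants can be chosen consistently and that $f_1(n)$ depends only on $n$ (not on $\delta$ or on the particular cycle $c$).
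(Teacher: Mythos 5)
Your overall strategy matches the paper's: fill $c$ in the Bowditch boundary via Proposition~\ref{prop:Bowditch_filling}(0), then detour the filling around a \emph{finite} family of large balls via Lemma~\ref{lem:general_detour}, using Lemma~\ref{lem:avoid_small_balls} to set the radius threshold. The choice of $f_1(n)=2K(n\vee 2K)$, the iterate count $m$, and the composite diameter function $\mathfrak{h}$ are all as you describe, modulo bookkeeping (the paper uses $g^{(m+1)}(\overline\delta)$ with $\overline\delta:=\delta/3\wedge 2K\overline r$, which is what your ``small enough'' needs to unwind to).

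The gap is in your final step, and it starts from a false premise. You write that $\supp(d_0)\sq X_n$ (because $c\sq X_n$); but the filling $d_0$ provided by Proposition~\ref{prop:Bowditch_filling}(0) lives in $\mc{S}$, not in any stratum, and can dip arbitrarily deeply into balls of $\mf{B}$ of all sizes. Consequently the argument that ``$d'$ can only get within a controlled distance of the small balls'' and therefore $\supp(d')\sq X_{f_1(n)}$ does not go through: after detouring, $\supp(d')$ avoids only the \emph{large} balls in $\mscr{B}$, and there is no mechanism preventing it from entering the infinitely many small balls $\tfrac1N B$ with $\mf{r}(B)<r(N,\delta/3)$. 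So $\supp(d')$ need not lie in any stratum. The paper resolves this differently: by the choice of threshold, Lemma~\ref{lem:avoid_small_balls} gives that $X_N$ is $\delta/3$-dense in the set where $\supp(d'')$ lives, and one then \emph{projects} each vertex of $d''$ to a nearest point of $X_N$ (as in Remark~\ref{rmk:chain_projection}), which fixes the boundary $c\sq X_n\sq X_N$, adds at most $2\delta/3$ to fineness and diameter, and lands the chain in $X_N$. This projection step is the missing idea.

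Two smaller points you also need: the fineness budget must split as $\delta/3$ (for the detour) plus $2\delta/3$ (for the projection), and one must first reduce to the case $\delta\leq\diam(c)$ (coning $c$ otherwise), because the additive $2\delta/3$ from projection would otherwise spoil the requirement that $\mathfrak{h}(r)\to 0$ as $r\to 0$.
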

\begin{proof}
Define $f_1(n):=2K(n\vee 2K)$ and set $N:=f_1(n)$ for simplicity. Define $\overline r:=\tfrac{1}{N}\cdot r\big(N,\delta/3\big)$, where $r(\cdot,\cdot)$ is the function in Lem\-ma~\ref{lem:avoid_small_balls}, and consider:
\[ \mscr{B}:=\left\{B\in\tfrac{1}{N}\mf{B} \mid \mf{r}(B)\geq \overline r \right\}. \] 
Set $m=m(\delta,n):=1+\lfloor\log_K(\mf{r}_{\max}(\mscr{B})/\mf{r}_{\min}(\mscr{B}))\rfloor$ and $g_1(\delta,n):=g^{(m+1)}(\overline\delta)$, where $\overline\delta:=\delta/3\wedge 2K\overline r$. Finally, define $\mathfrak{h}(r):=r+Kr^{1/2}+16K^2r^{1/4}$.

Now, let $c$ be a $g_1(\delta,n)$--fine, reduced, discrete $i$--cycle with $i<k$ and $\supp(c)\sq X_n$. Without loss of generality, we can assume that $\delta\leq\diam(c)$. Otherwise, we could simply define $d$ as the cone over $c$ from any of its vertices (Definition~\ref{defn:cone}), which would satisfy $\supp(d)\sq X_n$, $\fin(d)=\diam(d)=\diam(c)$ and $c=\partial d$.

By Proposition~\ref{prop:Bowditch_filling}(0), there exists a $g^{(m)}(\overline\delta)$--fine discrete chain $d'$ in $\mc{S}$ with $\partial d'=c$ and $\diam(d')\leq K\sqrt{\diam(c)}$. Note that $c=\partial d'$ is disjoint from all balls in $2K\mscr{B}$, since $\supp(c)\sq X_n$ and every ball of $2K\mscr{B}$ is contained in one of $\tfrac{1}{n}\mf{B}$.

Applying Lemma~\ref{lem:general_detour} to $d'$ and the family of balls $2K\mscr{B}$, we obtain a $\overline\delta$--fine (hence $\delta/3$--fine) discrete chain $d''$ in $\mc{S}$ such that $\partial d''=\partial d'$ and $\supp(d'')$ is disjoint from all balls in $\mscr{B}$. In addition:
\[\diam(d'')\leq \diam(d')+8K\sqrt{2K\diam(d')}\leq K\sqrt{\diam(c)}+16K^2(\diam(c))^{1/4} .\]

Finally, by Lemma~\ref{lem:avoid_small_balls} and our choice of $\mscr{B}$, every point of $\supp(d'')$ is $\delta/3$--close to a point of $X_N$. Replacing every point of $\supp(d'')$ with a closest point in $X_N$, we obtain a $\delta$--fine chain $d$ with $\partial d=\partial d''=c$ and $\supp(d)\sq X_N$. Finally, since $\delta\leq\diam(c)$, we have $\diam(d)\leq 2\delta/3+\diam(d'')\leq \mathfrak{h}(\diam(c))$, concluding the proof.
\end{proof}

The next proposition is required for the non-vanishing part of Theorems~\ref{thm:main_intro} and~\ref{thm:general_intro}. This is the only place in Section~\ref{sect:Morse_filling} where we explicitly use the fact that $\mc{S}$ is homeomorphic to a sphere. In the rest of the section, we have only used this assumption indirectly, in the form of Proposition~\ref{prop:Bowditch_filling}. 

Before proving Proposition~\ref{prop:representing_homology}, we need the following observation.

\begin{rmk}[Straight chains and straightening]\label{rmk:straight}
Let $(\mathbb{S},d_{\o})$ be a constant-cur\-va\-ture sphere. If $D\sq\mathbb{S}$ is a finite subset with $\diam(D)\leq\diam(\mathbb{S})/2$, then there is a unique singular simplex $\s_D$ that has vertex set $D$ and maps every affine line in the standard simplex to a geodesic in $\mathbb{S}$ parametrised proportionally to arc-length. 

We refer to simplices obtained in this way as \emph{straight simplices}. A singular chain in $(\mathbb{S},d_{\o})$ is \emph{straight} if all its simplices are straight.

If $c$ is a $\diam(\mathbb{S})/2$--fine discrete chain in $(\mathbb{S},d_{\o})$, we define its \emph{straightening} $\wh c$. This is the singular chain obtained by replacing every discrete simplex of $c$ with the straight singular simplex with the same (ordered) vertex set. Note that $\disc(\wh c)=c$.

If $c$ is $\delta$--fine in $(\mathbb{S},d_{\o})$, then $\supp(\wh c)$ is contained in the $\delta$--neighbourhood of $\supp(c)$. Indeed, all metric balls in $(\mathbb{S},d_{\o})$ with radius $\leq\diam(\mathbb{S})/2$ are convex.
\end{rmk}

\begin{prop}\label{prop:representing_homology}
For every finite (nonempty) set of parabolic points $F\sq\mc{S}$, there exists a stratum $X_N$ with the following property. 

For every $\delta>0$, every class in $H_k(\mc{S}-F,\Z)$ is represented by a singular cycle $c$ such that $\disc(c)$ is $\delta$--fine and contained in $X_N$.
\end{prop}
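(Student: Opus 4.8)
The plan is to reduce everything to the topology of the sphere $\mc{S}\cong S^{k+1}$ with a constant-curvature metric $d_\o$, and then use the detouring machinery of Section~\ref{sect:Morse_filling} (specifically Lemma~\ref{lem:general_detour}) to push representatives away from \emph{all} parabolic points, not just those in $F$. Write $F=\{p_1,\dots,p_\ell\}$. Since the parabolic points $p_j$ have pairwise disjoint small balls and since $\mc{S}-F$ is homotopy equivalent to a wedge of finitely many $k$--spheres (remove $\ell$ points from $S^{k+1}$), the group $H_k(\mc{S}-F,\Z)$ is free of finite rank. Fix a finite generating set of homology classes; by general position each class has a representative that is a \emph{smooth} singular cycle (with respect to some fixed smooth structure on $\mc{S}$) missing $F$, and in fact — again by general position, since $k<k+1$ — one whose image is a proper subset of $\mc{S}$ with empty interior; shrinking, we may assume each such representative has arbitrarily small $d_\o$--diameter of individual simplices, i.e. is $\delta'$--fine for any prescribed $\delta'$. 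Apply $\disc(\cdot)$ to get a $\delta'$--fine discrete cycle $c_0$ with $\disc(\widehat{c_0})=c_0$ in the sense of Remark~\ref{rmk:straight}; by that remark the straightening $\widehat{c_0}$ is a singular cycle whose support lies in the $\delta'$--neighbourhood of $\supp(c_0)$, and $\widehat{c_0}$ is homologous to the original representative in $\mc{S}-F$ provided $\delta'$ is small enough that the straight-line homotopy stays off $F$.

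Next I would detour $c_0$ away from all parabolic balls. The point is that $\supp(c_0)$ is disjoint from $F$ but may still meet balls $\tfrac1n B$ for parabolic points other than the $p_j$. Choose $n$ large enough (depending only on $F$ and the $\delta'$--neighbourhoods used above) so that $\supp(c_0)\sq X_n$; this is possible because $\supp(c_0)$ is a compact subset of $\mc{S}-F$ and $\bigcup_n X_n=\mc{S}-\{\text{parabolic points}\}\supseteq$ a neighbourhood of $\supp(c_0)$, combined with Lemma~\ref{lem:avoid_small_balls} to reduce to finitely many balls. Then run the argument of Proposition~\ref{diameter bound}: apply Proposition~\ref{prop:Bowditch_filling}(0) is \emph{not} what we want here (we are not filling $c_0$, which is a $k$--cycle and $i=k$ is out of the vanishing range); instead we directly apply Lemma~\ref{lem:general_detour} to the chain $c_0$ itself, viewing it as a chain whose boundary is empty (so $\partial d=0$, which is trivially disjoint from all balls), with $\mscr{B}=\{B\in\tfrac1N\mf{B}\mid \mf{r}(B)\geq\overline r\}$ for a suitable threshold $\overline r$ coming from Lemma~\ref{lem:avoid_small_balls} applied at scale $\delta$. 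This produces a $\delta$--fine discrete chain $c_0'$ with $\partial c_0'=\partial c_0=0$ (so $c_0'$ is again a cycle), supported outside all balls in $\tfrac1{2K}\mscr{B}$, and — crucially — satisfying conclusion~(5) of Lemma~\ref{lem:general_detour}, namely $c_0-c_0'=\partial e$ for a $\delta$--fine discrete $(k+1)$--chain $e$, provided $c_0$ was taken fine enough ($g^{(m+1)}(\overline\delta)$--fine). Finally, discretely correct the vertices of $c_0'$ to land in $X_N$ using Lemma~\ref{lem:avoid_small_balls}, exactly as at the end of the proof of Proposition~\ref{diameter bound}; call the result $c$. Then $\disc(c)=c$ is $\delta$--fine and $\supp(c)\sq X_N$.

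It remains to check that $c$, as a \emph{singular} cycle via its straightening $\widehat{c}$, represents the \emph{same} class as $c_0$ in $H_k(\mc{S}-F,\Z)$. For this, note that the $(k+1)$--chain $e$ from Lemma~\ref{lem:general_detour}(5) is supported in a bounded neighbourhood of $\supp(c_0)$, hence (for $\delta'$ and $\delta$ small) disjoint from $F$; straightening turns $c_0-c_0'=\partial e$ into the singular identity $\widehat{c_0}-\widehat{c_0'}=\partial\widehat{e}$ with $\widehat e$ supported off $F$ (here one uses that straightening is a chain map on $\diam(\mathbb S)/2$--fine chains, which follows from Remark~\ref{rmk:straight} once one observes geodesic simplices with a common face agree on that face). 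Likewise the final vertex-correction from $c_0'$ to $c$ moves each vertex by at most $\delta/3$, and the straight-line homotopy between $\widehat{c_0'}$ and $\widehat{c}$ stays in a small neighbourhood of $\supp(c_0')\sq\mc{S}-F$, hence is a homology in $\mc{S}-F$. Running this on each of the finitely many generating classes and taking $N$ to be the maximum of the finitely many strata produced gives the required single stratum $X_N$ working for all classes.

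\medskip
\noindent\textbf{Main obstacle.} The delicate point is bookkeeping the fineness thresholds: Lemma~\ref{lem:general_detour}(5) requires the input to be $g^{(m+1)}(\delta)$--fine where $m$ depends on the ratio $\mf{r}_{\max}/\mf{r}_{\min}$ of the (finite) family $\mscr{B}$, which in turn depends on the chosen stratum $N$ and on $\delta$. So one must fix the target $\delta$, then determine $N$ and $\mscr{B}$ (via Lemma~\ref{lem:avoid_small_balls}), then determine how fine the initial smooth representative must be, and only then invoke general position — the order of quantifiers matters, exactly as in Proposition~\ref{diameter bound}. The genuinely new input over Proposition~\ref{diameter bound} is the homological bookkeeping in the last paragraph: one must verify that all the chains and homotopies constructed (the straightening error $e$, the vertex-correction homotopy) stay within $\mc{S}-F$, which is where we finally use that $F$ is \emph{finite} and that the supports involved are small compact sets bounded away from $F$.
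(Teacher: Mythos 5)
Your plan matches the paper's proof very closely: start with a straight singular representative for each class, discretize, detour using Lemma~\ref{lem:general_detour} applied to the cycle itself (with $\partial c_0=0$, so the disjointness hypothesis is vacuous), project the vertices into a stratum via Lemma~\ref{lem:avoid_small_balls}, and then straighten the chain $e$ from Lemma~\ref{lem:general_detour}(5) to certify that the class is unchanged in $H_k(\mc{S}-F,\Z)$. You also correctly identify the crux: one must make the locality estimates quantitative so that $e$ (and the vertex-correction homotopy) stay off $F$. In the paper this is done by fixing, in the round metric, the union $\Sigma$ of $D$--spheres around $F$ as a support for representatives, and by comparing $\rho$ and $d_\o$ via a modulus $\xi$, which is precisely the bookkeeping you defer.

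Two concrete slips, neither fatal but worth fixing. First, the assertion ``choose $n$ so that $\supp(c_0)\sq X_n$'' is false in general: parabolic points are dense in $\mc{S}$, so a compact set in $\mc{S}-F$ need not lie in any $X_n$, and $\bigcup_n X_n$ does not contain a neighbourhood of $\supp(c_0)$. Fortunately you never actually use this: Lemma~\ref{lem:general_detour} only needs $\supp(\partial c_0)=\emptyset$ disjoint from the balls, which is automatic; so simply delete the claim. Second, to land in $X_N$ via Lemma~\ref{lem:avoid_small_balls} you need the detoured cycle to avoid the balls in $\mscr{B}$ themselves (not merely $\tfrac1{2K}\mscr{B}$). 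Apply Lemma~\ref{lem:general_detour} to the family $2K\mscr{B}$ rather than $\mscr{B}$ (after enlarging $N$ so $N\geq 4K^2$ to stay in the permitted range $\tfrac1{N'}\mf{B}$), as the paper does; then the output is supported outside $\mscr{B}$ and the density statement of Lemma~\ref{lem:avoid_small_balls} applies directly. Finally, you should state explicitly that the initial generating representatives can be chosen supported on a fixed compact set bounded a definite $d_\o$--distance away from $F$ (the paper's $\Sigma$); this is what lets ``for $\delta',\delta$ small'' become a genuine uniform estimate rather than a per-class one.
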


\begin{proof}
Let $d_{\o}$ be a constant-curvature (i.e.\ round) metric on $\mc{S}$. We continue denoting by $\rho$ our chosen visual metric on $\mc{S}$. When speaking of neighbourhoods, radii and fineness, we will always refer to $\rho$ unless explicitly stated otherwise.

Note that $\rho$ and $d_{\o}$ induce the same topology on $\mc{S}$, which is compact. Thus, there exists a weakly increasing function $\xi$ such that $\xi(t)\ra 0$ for $t\ra 0$ and, for all $x,y\in\mc{S}$, we have:
\[ d_{\o}(x,y)\leq \xi(\rho(x,y)).\]

Let $3D>0$ be the minimum distance $d_{\o}(x,y)$ for distinct points $x,y\in F$. Let $\Sigma$ be the union of the $D$--spheres around the points of $F$ in the metric $d_{\o}$. Note that, since $F$ is nonempty, the homology of $\mc{S}-F$ is entirely supported on $\Sigma$.

Up to shrinking $\delta$, we can assume that $\xi(\delta)< D/3$. We then choose the integer $N$ so that:
\[ \xi\Big( \frac{\delta}{3}+(4K^2+8K)\cdot\frac{\diam(\mc{S},\rho)}{N/2K}\Big)\leq \frac{D}{3}.\]
Enlarging $N$ if necessary, we further assume that $N\geq 4K^2$.

As in the proof of Proposition~\ref{diameter bound}, set $\overline r:=\tfrac{1}{N}\cdot r(N,\delta/3)$, where $r(\cdot,\cdot)$ is the function in Lemma~\ref{lem:avoid_small_balls}, and define:
\[ \mscr{B}:=\left\{B\in\tfrac{1}{N}\mf{B} \mid \mf{r}(B)\geq \overline r \right\}. \]
Set $m:=1+\lfloor\log_K(\mf{r}_{\max}(\mscr{B})/\mf{r}_{\min}(\mscr{B}))\rfloor$ and $\delta':=g^{(m+1)}(\delta/3 \wedge 2K\overline r)$. 

Now consider any nonzero class $\alpha\in H_k(\mc{S}-F,\Z)$. Represent $\alpha$ by a straight, singular $k$--cycle $\wh c$ such that $c:=\disc(\wh c)$ is supported on $\Sigma $ and $\delta'$--fine (with respect to the visual metric $\rho$). This possible in view of Remark~\ref{rmk:straight}. 

Applying Lemma~\ref{lem:general_detour} to $c$ and the collection of balls $2K\mscr{B}$, we obtain a $\delta/3$--fine discrete cycle $c'$ supported outside all balls in $\mscr{B}$. In addition, using part~(5) of Lemma~\ref{lem:general_detour}, there exists a $\delta/3$--fine discrete chain $e$ with $\partial e=c-c'$ and $\supp(e)$ contained in the $(4K^2+8K)\cdot\mf{r}_{\max}(2K\mscr{B})$--neighbourhood of $\supp(c)\sq\Sigma$. Note that:
\[ \mf{r}_{\max}(2K\mscr{B})\leq \frac{\diam(\mc{S},\rho)}{N/2K}.\]

By Lemma~\ref{lem:avoid_small_balls} and our choice of $\mscr{B}$, every point of $\supp(c')$ is $\delta/3$--close to a point of $X_N$. Thus, we can modify $c'$ to a $\delta$--fine discrete cycle $c''$, and $e$ to a $\delta$--fine discrete chain $e'$, so that $\partial e'=c-c''$ and that $\supp(c'')\sq X_N$. Note that $\supp(e')$ is contained in the $\delta/3$--neighbourhood of $\supp(e)$. 

Since $\xi(\delta)<D/3$, the chain $e'$ is $D/3$--fine with respect to the metric $d_{\o}$. In addition, our choice of $N$ guarantees that $\supp(e')$ is contained in the $D/3$--neighbourhood of $\supp(c)\sq\Sigma$ with respect to the metric $d_{\o}$. Remark~\ref{rmk:straight} then shows that the straightening $\wh e'$ of $e'$ is supported in the $2D/3$--neighbourhood of $\Sigma$, which is disjoint from $F$.
 
Finally, denoting by $\wh c''$ the straightening of $c''$, and recalling that $\wh c$ was straight to begin with, we obtain $\partial \wh e'=\wh c-\wh c''$. Since $\supp(\wh e')\cap F=\emptyset$, this shows that $\alpha=[\wh c]=[\wh c'']$ in $H_k(\mc{S}-F,\Z)$. Since $\disc(\wh c'')=c''$, which is $\delta$--fine and supported on $X_N$, this concludes the proof.
\end{proof}

We will also need the following relation between singular and discrete cycles, for which it is convenient to use the constant-curvature metric $d_\o$.

\begin{lem} \label{lem:convex_cover}
Let $F$ be a finite subset of $(\mc{S}, d_\o)$. Every open cover $\mc U$ of $\mc {S} - F$ containing only convex open sets of diameter $\leq \diam(\mc{S})/2$ satisfies the following property. 

If $c$ is a straight $\mc{U}$--fine singular cycle in $\mc{S}-F$ and $\disc(c)$ is the boundary of a $\mc{U}$--fine discrete chain, then $[c]=0$ in the singular homology of $\mc{S}-F$.
\end{lem}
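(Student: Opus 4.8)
The plan is to reduce the statement to a comparison between singular and simplicial homology of the nerve of $\mc{U}$. First I would invoke the fact that $\mc{U}$ is a good cover (in the sense that all finite intersections of its members, being intersections of convex sets of small diameter on a constant-curvature sphere, are either empty or convex, hence contractible). By the nerve theorem, the geometric realisation $|N(\mc{U})|$ is then homotopy equivalent to $\mc{S}-F$; in particular there is an isomorphism $H_k(N(\mc{U}),\Z)\cong H_k(\mc{S}-F,\Z)$, and this isomorphism is realised by explicit chain-level maps in both directions. Going from $N(\mc{U})$ to $\mc{S}-F$ one picks a child map $f\colon\mc{U}\ra\mc{S}-F$ (valued in $\mc{S}-F$ since each $U\in\mc{U}$ lies in $\mc{S}-F$) and sends $[U_0,\dots,U_j]$ to the straight singular simplex on $f(U_0),\dots,f(U_j)$, which is well-defined because the $U_i$ intersect and their union has diameter $\leq\diam(\mc{S})/2$, so the relevant geodesics stay inside a convex set; call this chain map $\Theta_*$. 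Going the other way one uses a partition-of-unity / barycentric-style subdivision argument to push a $\mc{U}$--fine singular cycle into $C_*(N(\mc{U}))$, and these two maps are mutually inverse on homology.

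Next, given a straight $\mc{U}$--fine singular cycle $c$ in $\mc{S}-F$, consider the simplicial chain on $N(\mc{U})$ obtained by recording, for each singular simplex of $c$, a choice of member of $\mc{U}$ containing each of its vertices (using $\mc{U}$--fineness of $c$ to see the chosen $U$'s pairwise intersect); this produces a simplicial cycle $\gamma\in C_k(N(\mc{U}))$ whose image under $\Theta_*$ is straight-simplex-by-straight-simplex chain homotopic to $c$ — indeed, since $c$ is itself straight, $\Theta_k(\gamma)$ and $c$ differ only by a chain homotopy supported within the convex sets $U$ of the cover (one linearly interpolates each vertex of a simplex of $c$ to the chosen child point $f(U)$, staying inside $U$). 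Hence $[\Theta_k(\gamma)]=[c]$ in $H_k(\mc{S}-F,\Z)$, and it suffices to show $[\gamma]=0$ in $H_k(N(\mc{U}),\Z)$.

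Now I use the hypothesis that $\disc(c)=\partial b$ for some $\mc{U}$--fine discrete chain $b\sq\mc{S}-F$. By Remark~\ref{rmk:super_super_refinement} applied to the cover $\mc{U}$ (or more simply, by $\mc{U}$--fineness of $b$ together with $\mc{U}$--fineness of each simplex of $b$), every simplex of $b$ has all its vertices in a common member of $\mc{U}$; choosing one such member for each simplex and each of its faces gives a simplicial chain $\beta\in C_{k+1}(N(\mc{U}))$ with $\partial\beta=\gamma$, after checking that the choices can be made compatibly on faces — this is exactly the content of the well-definedness arguments in Lemma~\ref{lem:family}, and the vertex-to-child comparison of the previous paragraph ensures the boundary of $\beta$ equals $\gamma$ up to the simplicial chain recording the vertices of $\disc(c)$, which is precisely $\gamma$. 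Therefore $[\gamma]=0$ in $H_k(N(\mc{U}),\Z)$, hence $[c]=0$ in $H_k(\mc{S}-F,\Z)$, as claimed.

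The main obstacle I anticipate is the bookkeeping in making the choices of "containing member of $\mc{U}$" coherent across faces, i.e.\ producing honest chain maps $C_*^{\mathrm{disc}}(\mc{S}-F,\mc{U})\ra C_*(N(\mc{U}))$ and $C_*(N(\mc{U}))\ra C_*^{\mathrm{sing}}(\mc{S}-F)$ that are compatible on the nose rather than just up to homotopy; this is morally identical to parts (2) and (3) of Lemma~\ref{lem:family}, and the clean way to handle it is to fix once and for all a child map $f\colon\mc{U}\ra\mc{S}-F$ and, for each point of $\mc{S}-F$, an assignment of a containing member of $\mc{U}$, and then verify that the resulting square of chain maps commutes up to the chain homotopies already produced in that lemma. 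The straightness hypothesis on $c$ is what lets the singular-to-simplicial and simplicial-to-singular comparisons be carried out by explicit geodesic interpolation inside the convex cover sets, avoiding any appeal to abstract acyclic-carrier machinery beyond what the nerve theorem provides.
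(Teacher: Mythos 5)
The paper's proof is far more direct than yours and avoids the nerve of $\mc{U}$ entirely: take the $\mc{U}$--fine discrete chain $d$ with $\partial d=\disc(c)$, form its straightening $\wh d$ (Remark~\ref{rmk:straight}). Each discrete simplex of $d$ has all its vertices in a common convex $U\in\mc{U}\sq\mc{S}-F$, so the corresponding straight singular simplex is contained in that $U$; hence $\wh d$ is a $\mc{U}$--fine singular chain in $\mc{S}-F$. Straightening commutes with $\partial$, and since $c$ is straight we have $\wh{\disc(c)}=c$ (up to a small correction that also bounds inside the convex sets), so $\partial\wh d=c$ and $[c]=0$ in $H_k(\mc{S}-F)$. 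That is the whole argument.

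Your route through the nerve theorem is not only longer but has a genuine gap exactly at the point you flag as ``the main obstacle'': producing a chain $\beta\in C_{k+1}(N(\mc{U}))$ from the $\mc{U}$--fine discrete chain $b$. As Lemma~\ref{lem:family}(3) makes explicit, a map $\mscr{C}_*(X,\mc{O}')\ra C_*(N(\mc{O}))$ requires $\mc{O}'\ll\mc{O}$ to be a \emph{super}-refinement; if you merely pick, for each vertex $x$ of $b$, some $U_x\in\mc{U}$ containing $x$, there is no guarantee that $\bigcap_i U_{x_i}\neq\emptyset$ for the vertices of an $\mc{U}$--tiny simplex, so $[U_{x_0},\dots,U_{x_{k+1}}]$ need not be a simplex of $N(\mc{U})$. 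You cannot fix this by passing to a super-refinement, because the hypothesis only grants that $b$ is $\mc{U}$--fine, and $\mc{U}$ is part of the given data. An acyclic-carrier argument that avoids super-refinements would require the carrier $\Phi(\s)=\{U\in\mc{U} : U\cap\supp(\s)\neq\emptyset\}$ to be acyclic, which is not clear in general. The same issue would reappear in your claim $[\Theta_k(\gamma)]=[c]$, since that comparison also needs a coherent vertex-to-cover-member assignment. Separately, your opening appeal to the nerve theorem would require verifying that $\mc{U}$ is a good cover, which is plausible but unnecessary. Your convexity/geodesic-interpolation intuition is the right one, but it yields the result much more cleanly when applied directly to $d$ rather than routed through $N(\mc{U})$.
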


\begin{proof}
    Suppose that $\disc(c)=\partial d$, for a $\mc{U}$--fine discrete chain $d$. Let $\wh c$ and $\wh d$ be the straightenings of $\disc(c)$ and $d$, as defined in Remark~\ref{rmk:straight}. By convexity of the elements of $\mc{U}$, the singular chains $\wh c$ and $\wh d$ are again $\mc{U}$--fine (and contained in $\mc{S}-F$). We have $\partial\wh d=\wh c$ by construction and, moreover, the singular cycle $c-\wh c$ is also the boundary of a singular chain that is $\mc{U}$--fine (hence contained in $\mc{S}-F$), again by convexity of $\mc{U}$. In conclusion, $c$ is the boundary of singular chain contained in $\mc{S}-F$.
\end{proof}

\begin{rem}\label{rem:convex_cover_exists}
It is easy to see that, for every finite subset $F\sq\mc{S}$, the complement $\mc{S} - F$ admits a convex cover as in the lemma.
\end{rem}

\section{Vanishing of \v{C}ech cohomology}\label{sect:vanishing}

Throughout Section~\ref{sect:vanishing}, we again consider a relatively hyperbolic group $G$, its Morse boundary $X:=\partial_*G$ and the filtration $X=\varinjlim X_n$. Notation and assumptions are exactly the same as in Section~\ref{sect:Morse_filling} (in particular, see Subsection~\ref{subsect:ass_3} for the precise assumptions on $G$).

We fix a visual metric $\rho$ on the Bowditch boundary and equip $X$ and all strata $X_n$ with its restriction. We stress once more that $\rho$ does not induce the topology of $X$, though it does induce that of each $X_n$. When speaking e.g.\ of fineness of chains below, this will always be meant with respect to this metric.

Our only tool in Section~\ref{sect:vanishing} is Proposition~\ref{diameter bound}, so all results in this section hold more generally for any countable direct limit $X=\varinjlim X_n$ of compact metric spaces $X_n$ satisfying Proposition~\ref{diameter bound}. 

The main result for this section is the vanishing of \v{C}ech cohomology in low degrees, proving the first part of Theorem~\ref{thm:general_intro} and, consequently, also the first part of Theorem \ref{thm:main_intro}.

\begin{thm}\label{thm:vanishing}
Let $\mc{R}$ be a principal ideal domain. Then, for $X=\partial_*G$, we have $\cH^i(X,\mc{R})=\{0\}$ for $1\leq i<k$.
\end{thm}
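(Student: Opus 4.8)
The plan is to derive this from Corollary~\ref{cor:refinable-cohomology}. The space $X=\partial_*G$ is homeomorphic to the countable direct limit $\varinjlim X_n$ by Proposition~\ref{prop:Morse_Bow}, and each $X_n$ is compact metrisable, being a closed subset of the sphere $\mc{S}$; hence $X$ is super-refinable by Proposition~\ref{prop:limit-refinable}. By part~(2) of Corollary~\ref{cor:refinable-cohomology} it therefore suffices to show that $X$ satisfies condition $(DA_i)$ for every $1\leq i<k$: for a fixed such $i$ one only invokes $(DA_i)$ and $(DA_{i-1})$, and $(DA_0)$ is vacuous, so establishing $(DA_j)$ for all $1\leq j<k$ is enough.

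Fix then $1\leq i<k$ and an open cover $\mc{O}$ of $X$; one must construct a refinement $\mc{O}'<\mc{O}$ such that every $\mc{O}'$-fine discrete $i$-cycle bounds an $\mc{O}$-fine discrete $(i+1)$-chain. The input is Proposition~\ref{diameter bound}: such a cycle $c$ has finite support, hence $\supp(c)\sq X_n$ for some $n$, and provided $c$ is $g_1(\delta,n)$-fine it is the boundary of a $\delta$-fine chain $d$ supported in the \emph{fixed} stratum $X_{f_1(n)}$, with $\diam(d)\leq\mathfrak{h}(\diam(c))$. Since $X_{f_1(n)}$ is compact, $\mc{O}$ restricts there to a cover with a positive Lebesgue number, so if $\delta$ is below that number then $d$ is automatically $\mc{O}$-fine.

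The difficulty is that one cannot simply make $\mc{O}'$ fine enough: on a direct limit $\varinjlim X_n$, any open set meeting the lowest stratum has positive diameter there, and this lower-bounds its diameter on every $X_n$, so the mesh of $\mc{O}'$ along $X_n$ cannot tend to $0$ as $n\to\infty$ --- whereas the fineness threshold $g_1(\,\cdot\,,n)$ that lets one fill $X_n$-cycles into an $\mc{O}$-fine chain may degenerate with $n$. As anticipated in the outline of Section~\ref{sect:vanishing}, the way around this is to first make $c$ finer using fillings: one replaces each simplex $\s$ of $c$ by a much finer chain $\s'$ filling $\partial\s$ (this is legitimate once $\mc{O}'$ is chosen fine enough, since then $\fin(\partial\s)\leq\diam(\s)$ is uniformly small and $i<k$), and records the difference $\s-\s'=\partial\tau_\s$; the resulting cycle $c'=\sum_\s\s'$ is uniformly finer than $c$ and differs from it by the explicit boundary $\partial\sum_\s\tau_\s$. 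Organising this ``refinement via fillings'' inductively --- in the spirit of the chain-map construction $R_*$ in the proof of Lemma~\ref{lem:deep_barycentre_2} --- and passing to several super-refinements below $\mc{O}$, one can then invoke Remark~\ref{rmk:super_super_refinement} to certify that the refined chain, and all the auxiliary bounding chains produced along the way, are genuinely $\mc{O}$-fine.

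I expect the genuinely hard part to be this last step: arranging the refinement procedure so that the fineness it achieves keeps pace with the strata $X_{f_1(\cdot)}$ into which the fillings are pushed, while simultaneously controlling the fineness, the diameter, and the supporting strata of every bounding chain. The diameter estimate $\diam(d)\leq\mathfrak{h}(\diam(c))$ in Proposition~\ref{diameter bound}, together with the fact that $f_1$ and the functions $g_1(\,\cdot\,,n)$ and $\mathfrak{h}$ are fixed before $\mc{O}$ enters the picture, are precisely what should make this bookkeeping close up.
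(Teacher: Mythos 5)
Your plan follows the paper's argument exactly: super-refinability of $X=\varinjlim X_n$ from Proposition~\ref{prop:limit-refinable}, verification of $(DA_i)$ for $1\leq i<k$, and an application of Corollary~\ref{cor:refinable-cohomology}. You correctly identify both the filling input (Proposition~\ref{diameter bound}) and the genuine obstacle --- that the mesh of a cover of $\varinjlim X_n$ cannot shrink with $n$ while the fineness threshold $g_1(\cdot,n)$ does --- and the ``refinement via fillings'' you sketch is precisely what the paper carries out in Propositions~\ref{arbitrarily fine subdivisions 2}, \ref{moreover} and~\ref{arbitrarily fine subdivisions} to obtain Proposition~\ref{prop:cycles_boundaries}. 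The bookkeeping you flag as the hard part closes up in those propositions by an inductive chain-map/chain-homotopy construction in the spirit of $R_*$ as you anticipate, together with Construction~\ref{cover construction} to choose the super-refinement and a ladder of super-refinements via Remark~\ref{rmk:super_super_refinement} to keep every bounding chain $\mc{O}$--tiny.
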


All chains and simplices in the following discussion are assumed to be discrete without explicit mention.

\subsection{Subdivision results}

Let $\mscr{C}_i(X_n)$ be the free $\Z$--module generated by (discrete) $i$--simplices
with support contained in $X_n$. For $\delta>0$, let $\mscr{C}_i(X_n;\delta)$ be the submodule of $\mscr{C}_i(X_n)$ generated by $\delta$--fine simplices. For an open cover $\mc{O}$ of $X$, we denote by $\mscr{C}_i(X_n,\mc{O})$ the submodule of $\mscr{C}_i(X_n)$ formed by $\mc{O}$--fine chains, and by $\mscr{C}_i(X_n,\mc{O};\delta)$ its intersection with $\mscr{C}_i(X_n;\delta)$.

The main aim of this subsection is the following result, proved at the end.

\begin{prop}[Arbitrarily fine subdivisions]\label{arbitrarily fine subdivisions}
For every open cover $\mc{O}$ of $X$, there exist a refinement $\mc{O}'$ and a function $f_2(n)\geq n$ such that the following hold.
\begin{enumerate}
\item For every $\delta>0$, there is a chain map $\phi^{\delta}\colon \mscr{C}_*(X_n,\mc{O}')\ra \mscr{C}_*(X_{f_2(n)},\mc{O};\delta)$ defined in degrees $\leq k$. \label{part1}
\item The composition $\iota\phi^{\delta}\colon \mscr{C}_*(X_n,\mc{O}')\ra \mscr{C}_*(X_{f_2(n)},\mc{O})$ is chain-homotopic to $\iota$, where $\iota$ denotes all standard inclusions. \label{part2}
\end{enumerate}
\end{prop}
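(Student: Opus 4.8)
The plan is to build the chain map $\phi^\delta$ degree by degree, using the fillings from Proposition~\ref{diameter bound} at each stage, and to control the cover issue by choosing $\mc{O}'$ to be a sufficiently fine ``super-super-refinement'' of $\mc{O}$. First I would fix $\mc{O}$ and choose a chain of super-refinements $\mc{O}' \ll \mc{O}'' \ll \mc{O}$ (using that $X$ is super-refinable by Proposition~\ref{prop:limit-refinable}), so that by Remark~\ref{rmk:super_super_refinement} any pairwise-intersecting collection of sets from $\mc{O}'$ lies in a common element of $\mc{O}$. The key geometric input is that $\mf{h}(r)\to 0$ as $r\to 0$, so that a cycle $c$ of very small diameter has a filling $d$ of small diameter too; combined with a Lebesgue number argument for $\mc{O}$ this will let us say that fillings of $\mc{O}'$-tiny cycles are $\mc{O}$-fine, provided the constituent simplices are fine enough in the metric $\rho$.

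The construction of $\phi^\delta$ proceeds by induction on the dimension $j$ of simplices, exactly in the style of the proofs of Lemma~\ref{lem:family} and Lemma~\ref{lem:deep_barycentre_2}. In degree $0$, send each vertex to itself (it already lies in $X_n\sq X_{f_2(n)}$ and a single point is $\delta$-fine). Having defined $\phi^\delta$ in degrees $<j$ as a chain map into $\delta$-fine chains supported in some stratum $X_{n_j}$, for a $j$-simplex $\s$ supported in an $\mc{O}'$-tiny set we have that $\phi^\delta(\partial\s)$ is a $\delta$-fine cycle supported in $X_{n_j}$ with controlled (small) diameter; here one uses that $\partial\s$ has small diameter because $\s$ is $\mc{O}'$-tiny, and that $\phi^\delta$ so far only slightly enlarges diameters in a way governed by $\mf{h}$. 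Feeding this cycle into Proposition~\ref{diameter bound} (which applies in degrees $<k$, i.e.\ for $j-1 < k$, hence $j\le k$ — this is exactly where the degree bound ``$\leq k$'' comes from) produces a $\delta$-fine filling $d_\s$ supported in $X_{f_1(n_j)}$ with $\diam(d_\s)\le \mf{h}(\diam \phi^\delta(\partial\s))$, and we set $\phi^\delta(\s):=d_\s$. To make this a well-defined chain map one must, as usual, only define it on a reduced expression and extend linearly, and one must check that choices can be made consistently — I would handle this by fixing, once and for all, a filling for each possible ``small cycle'' via a choice function, or by noting that the standard chain-homotopy machinery only requires any choices. The bound $f_2(n)$ is then obtained by composing the finitely many $f_1(\cdot)$'s used as $j$ ranges over $0,\dots,k$, together with whatever enlargement is needed to ensure all the intermediate diameters are small enough that the fillings remain $\mc{O}$-fine; one must verify this is a finite bookkeeping problem, since $k$ is fixed and the diameter-control functions $\mf{h}$, $f_1$ depend only on the cover data.

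For part~(2), the chain homotopy $H$ between $\iota\phi^\delta$ and $\iota$ is constructed by the same inductive ``prism'' formula appearing throughout Lemma~\ref{lem:family}: on a $j$-simplex $\s$, set $H(\s)$ to be a filling of the cycle $\s - \phi^\delta(\s) - H(\partial\s)$, which is again a $\delta'$-fine (for a suitable smaller $\delta'$) small-diameter cycle supported in a controlled stratum, so Proposition~\ref{diameter bound} applies to produce $H(\s)$ with $\partial H(\s) = \s - \phi^\delta(\s) - H(\partial\s)$ and hence $\partial H + H\partial = \iota - \iota\phi^\delta$. The only subtlety is that this consumes one more dimension than $\phi^\delta$, so we need Proposition~\ref{diameter bound} in degree $j\le k$ again, i.e.\ for cycles of dimension $<k$; since $\phi^\delta$ is only needed in degrees $\le k$, the homotopy is needed only on simplices of dimension $\le k$, producing chains of dimension $\le k+1$ — but the \emph{cycles being filled} have dimension $\le k$, which is fine as long as we only claim the homotopy identity in degrees $\le k$, which is all that's needed. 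One must be slightly careful to run the two inductions (for $\phi^\delta$ and for $H$) in the right order, or simultaneously, so that at each step the relevant object is a genuine cycle of sufficiently small fineness — this is routine but must be spelled out.

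The main obstacle I anticipate is the cover bookkeeping: ensuring that the fillings produced by Proposition~\ref{diameter bound}, which are only guaranteed to be $\delta$-fine (in the metric $\rho$) and of controlled $\rho$-diameter, are actually $\mc{O}$-fine in the sense of the abstract cover. This is where the super-refinement structure and Remark~\ref{rmk:super_super_refinement} must be used: I would choose $\mc{O}'$ so fine that, via a Lebesgue number $\lambda$ for $\{O\cap X_{f_2(n)} : O\in\mc{O}\}$ on each relevant stratum, any chain of $\rho$-diameter $< \lambda$ sitting near the image of an $\mc{O}'$-tiny set lies in a single element of $\mc{O}$ — but since $X$ is not compact this Lebesgue-number argument must be applied stratum-by-stratum, and one has to check the finitely many strata $X_n, X_{f_1(n)}, \dots, X_{f_2(n)}$ that actually occur, keeping the constant $\delta$ (and the $g_1(\delta,\cdot)$ fineness thresholds fed back into Proposition~\ref{diameter bound}) uniformly small across all of them. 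Since $n$ is fixed in the statement and only finitely many strata are involved, this is ultimately a finite optimization, but writing it cleanly is the delicate part.
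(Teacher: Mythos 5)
There is a genuine gap, and it lies in the cover bookkeeping that you flag as the "delicate part" and then dismiss as "ultimately a finite optimization." The assertion that "$\partial\s$ has small diameter because $\s$ is $\mc{O}'$--tiny" is false, and the Lebesgue-number shortcut built on it does not work. The reason is topological: the direct-limit topology on $X=\varinjlim X_n$ is strictly finer than the $\rho$--topology, so an open cover $\mc{O}'$ of $X$ --- however far you push it down a chain of super-refinements --- is not a cover by sets of uniformly small $\rho$--diameter. Elements of $\mc{O}'$ must be open in $X$ (i.e.\ have open trace on every stratum $X_m$), and such sets typically spread across all strata and have $\rho$--diameter comparable to $\diam(\mc{S})$. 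Hence an $\mc{O}'$--tiny simplex $\s$ carries \emph{no} a priori bound on $\diam(\s)$, the iterated bound $\mathfrak{h}^{(j)}(\diam(\s))$ is uncontrolled, and you cannot conclude that the Proposition~\ref{diameter bound} fillings have $\rho$--diameter below a Lebesgue number of $\{O\cap X_{f_2(n)}\}$. This is not a bookkeeping issue --- it is the central obstruction, and it is exactly why the paper does not argue as you propose.

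The paper's actual route replaces your Lebesgue-number step by a separate substantial result, Proposition~\ref{moreover}, whose heart is Lemma~\ref{lem:moreover}. That lemma builds the refinement $\mc{O}'$ by hand using Construction~\ref{cover construction} (the inductively-shrinking balls $U_x^{\underline\eps}(m)$), with parameters $\eps_j$ chosen so that fillings stay within the fixed $O_x\in\mc{O}$ at every stratum. To make this work one first proves a cover-agnostic subdivision result, Proposition~\ref{arbitrarily fine subdivisions 2}, whose part~(3) controls the \emph{compatibility} of subdivisions across adjacent strata (the ``$(r,X_n)$--related'' business); this is precisely what lets one push an $\mc{O}'$--tiny cycle from $X_n$ down through $X_{n-1},\dots,X_\ell$ while staying inside $O_x$. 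Only then is Proposition~\ref{arbitrarily fine subdivisions} proved, by applying Proposition~\ref{moreover} iteratively through a nested chain of super-refinements $\overline{\mc{O}}_{i+1}<\mc{O}_i'<\mc{O}_i<\overline{\mc{O}}_i$, and the chain homotopy is built not by refilling with Proposition~\ref{diameter bound} but by coning (Lemma~\ref{lem:cone}) inside a single $\overline{\mc{O}}_{k-i-1}$--tiny set, which is where Remark~\ref{rmk:super_super_refinement} is genuinely used. So your skeleton (induct on degree, fill boundaries, prism homotopy) is in the right spirit, but as written it is missing the entire mechanism that bridges metric fineness and cover fineness on a non-compact direct limit; repairing it essentially amounts to re-deriving Proposition~\ref{moreover}.
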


In particular, part~(2) of Proposition~\ref{arbitrarily fine subdivisions} implies that, for every cycle $c\in \mscr{C}_i(X_n,\mc{O}')$, the cycles $c$ and $\phi^{\delta}(c)$ are \emph{$\mc{O}$--homologous}, in the sense that there exists an $\mc{O}$--fine chain $d$ such that $\partial d=c-\phi^{\delta}(c)$. 

As a first step towards the proof of Proposition~\ref{arbitrarily fine subdivisions}, we need Proposition~\ref{arbitrarily fine subdivisions 2}, which is an analogue disregarding covers. The proof of Proposition~\ref{arbitrarily fine subdivisions 2} is almost exactly the same, but it relies on Proposition~\ref{diameter bound} rather than Proposition~\ref{moreover} below.

Recall that the face complex $\mscr{F}_*(d)$ of a discrete chain $d$ was introduced in Definition~\ref{defn:face_complex}.

\begin{defn}
Consider $r>0$ and a subset $Y\sq X$. A chain $c$ is \emph{$(r,Y)$--related} to a chain $c'$ if there exists a chain map $T:\mscr{F}_*(c)\to \mscr{C}_*(Y)$ such that $T(c)=c'$ and, for each simplex $\sigma\in \mscr{F}_*(c)$, the support of the chain $T(\sigma)$ is contained in the $r$--neighbourhood of $\supp(\sigma)$.
\end{defn}

\begin{rmk}[Projecting discrete chains]\label{rmk:chain_projection}
Let $c$ be an $i$--chain in $X$. If $\supp(c)$ is contained in the $\eps$--neighbourhood of a subspace $Y\sq X$, for some $\eps>0$, we can project $c$ to a chain in $Y$ as follows. 

For every point $x\in\supp(c)$, choose a point $x'\in Y$ with $\rho(x,x')<\eps$. Write $c=\sum_j\s_j$, where each $\s_j$ is an $i$--simplex. For each $\s_j=[x_0,\dots,x_i]$, define the new simplex $\s_j'=[x_0',\dots,x_i']$ and set $c':=\sum_j\s_j'$. We will refer to $c'$ as an \emph{$\eps$--projection of $c$ to $Y$}. Note that, by construction, $c$ is $(\eps,Y)$--related to $c'$ and $\fin(c')\leq\fin(c)+2\eps$.
\end{rmk}

\begin{prop}\label{arbitrarily fine subdivisions 2}
There exist functions $f_3(n)\geq n$ and $H(r)$, $H'(r)$ (both going to $0$ for $r\ra 0$) such that the following holds.
\begin{enumerate}
\item For all $\delta>0$ and $n\geq 1$, there is a chain map $\phi^{\delta,n}\colon \mscr{C}_*(X_n)\ra \mscr{C}_*(X_{f_3(n)};\delta)$ defined in degrees $\leq k$. \label{1}
\item If $c\in \mscr{C}_i(X_n;r)$ for some $r>0$ and $0\leq i\leq k$, then $\phi^{\delta,n}_i c$ is contained in the $H(r)$--neighbourhood of $\supp(c)$. 
\item If a cycle $c\in \mscr{C}_i(X_{n+1};r)$ with $0\leq i<k$ is $(r,X_n)$--related to a cycle $c'\in \mscr{C}_i(X_n;r)$, then $\phi^{\delta,n+1}_ic-\phi^{\delta,n}_ic'=\partial B$ for a $\delta$--fine chain $B$ contained in the $H'(r)$--neighbourhood of $\supp(c)$ within $X_{f_3(n)}$.\label{3}
\end{enumerate}
\end{prop}

\begin{proof}
Fix $n$ and $\delta$ for the duration of the proof. We will write $\phi_i$ for $\phi_i^{\delta,n}$, that is, for the restriction of $\phi^{\delta,n}$ to $i$--chains.

Let $f_1,g_1,\mathfrak{h}$ be the functions appearing in Proposition~\ref{diameter bound}. Without loss of generality, we can assume that $g_1$ is weakly decreasing in its second argument and that $f_1$ is weakly increasing.

Define $n_i:=f_1^{(i)}(n)$ and $m_i:=f_1^{(i)}(n+1)$, recalling that $f_1^{(i)}$ denotes the $i$--fold composition of $f_1$. Define $\delta_{k+1}:=\delta$ and inductively $\delta_{i-1}:=g_1(\delta_i,m_{i-1})$; the fact that we are using $m_{i-1}$ rather than $n_{i-1}$ will only matter in part~(3). Define $r_0=0$ and inductively $r_{i+1}:=\mathfrak{h}(r+2r_i)+r_i$; each $r_i$ is a function of $r$. Note that $n_i$, $\delta_i$ and $r_i$ all increase with $i$.

We now proceed by induction on $0\leq i\leq k$ and construct maps
\[ \phi_i\colon \mscr{C}_i(X_n) \longrightarrow \mscr{C}_i(X_{n_i};\delta_i) \]
so that $\partial\phi_i=\phi_{i-1}\partial$. In addition, if $\s\in \mscr{C}_i(X_n;r)$ is a simplex, then $\supp(\phi_i\s)$ will be contained in the $r_i$--neighbourhood of $\supp(\s)$. See Figure~\ref{fig:Prop4.5_1}.

\begin{figure}[ht]
   \centering
   %\hspace{3cm}
    \includegraphics{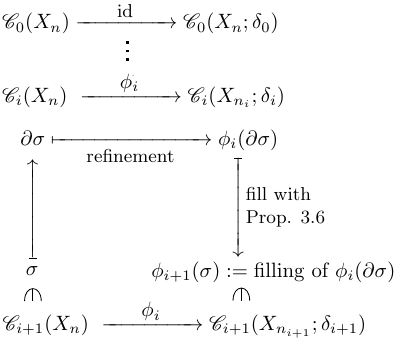}
    \caption{Illustrative diagram of the proof of part (\ref{1}) of Proposition~\ref{arbitrarily fine subdivisions 2}, showing the inductive construction of the maps $\phi_i$.}
    \label{fig:Prop4.5_1}
\end{figure}

In the base step, we simply take $\phi_0$ to be the identity. In the inductive step, suppose that, for some $0\leq i\leq k-1$, we have defined all maps up to $\phi_i$ so that they satisfy the required properties. We want to define $\phi_{i+1}$.

If $\s\in \mscr{C}_{i+1}(X_n)$ is a simplex, then $\phi_i\partial\s$ is a cycle in $\mscr{C}_i(X_{n_i};\delta_i)$, by the inductive hypothesis. Note that $\delta_i\leq g_1(\delta_{i+1},n_i)$, since $m_i\geq n_i$. In addition, if $\s\in \mscr{C}_{i+1}(X_n;r)$, then $\supp(\phi_i\partial\s)$ is contained in the $r_i$--neighbourhood of $\supp(\s)$. In this case, the diameter of $\supp(\phi_i\partial\s)$ is at most $r+2r_i$.

By Proposition~\ref{diameter bound}, the cycle $\phi_i\partial\s$ is the boundary of a chain in $\mscr{C}_{i+1}(X_{n_{i+1}};\delta_{i+1})$ with diameter at most $\mathfrak{h}(r+2r_i)$. We define $\phi_{i+1}\s$ as this chain.

Note that $\supp(\phi_{i+1}\s)\supseteq\supp(\partial\phi_{i+1}\s)=\supp(\phi_i\partial\s)$. We have already observed that the latter is contained in the $r_i$--neighbourhood of $\supp(\s)$. It follows that $\supp(\phi_{i+1}\s)$ is contained in the neighbourhood of $\supp(\s)$ of radius $r_i+\mathfrak{h}(r+2r_i)=r_{i+1}$.

Defining $f_3(\cdot)$ as the function $n\mapsto n_k$ and $H(\cdot)$ as the function $r\mapsto r_k$, this proves parts~(1) and~(2). In fact, in view of the proof of part~(3) below, we should define $f_3(\cdot)$ as the (larger) function $n\mapsto m_k$.

We now prove part~(3). Suppose that we have chain subcomplexes (each generated by a set of simplices)
\begin{align*} 
E_*&\sq \mscr{C}_*(X_{n+1};r), & F_*&\sq \mscr{C}_*(X_n;r),
\end{align*} 
and a chain map $T_*\colon E_*\ra F_*$ such that, for every simplex $\s\in E_*$, the support of $T(\s)$ is contained in the $r$--neighbourhood of $\supp(\s)$.

Define $R_{-1}:=r+H(r)$, then inductively $R_{i+1}:=\mathfrak{h}(r+2R_i)+R_i$. Proceeding by induction on $0\leq i<k$, we will construct a chain homotopy
\[H_i\colon E_i\ra \mscr{C}_{i+1}(X_{m_{i+1}};\delta_{i+1}) \]
so that $\partial H_i+H_{i-1}\partial=\phi_i^{n+1,\delta}-\phi_i^{n,\delta}T_i$. In addition, for every simplex $\s\in E_i$, the support of $H_i\s$ will be contained in the $R_i$--neighbourhood of $\supp(\s)$. See Figure~\ref{fig:Prop4.5_2}.

\begin{figure}
    %\hspace{4cm}
    \includegraphics{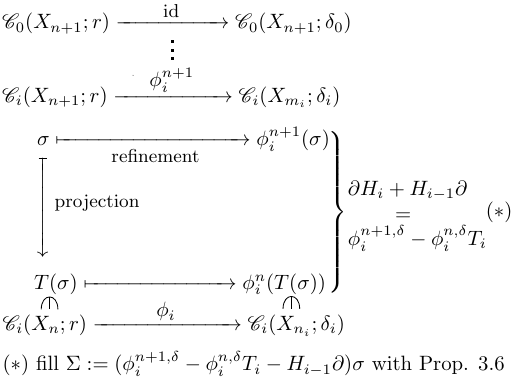}
    \caption{Illustrative diagram of the proof of part (\ref{3}) of Proposition~\ref{arbitrarily fine subdivisions 2}.}
    \label{fig:Prop4.5_2}
\end{figure}

The base step ``$i=-1$'' is clear, taking $H_{-1}=0$. In the inductive step, suppose that, for some $0\leq i<k$, we have defined all maps up to $H_{i-1}$ so that they satisfy the required properties. 

In order to define $H_i$, consider a simplex $\s\in E_i$. Note that
\[\Sigma:=(\phi_i^{n+1,\delta}-\phi_i^{n,\delta}T_i-H_{i-1}\partial)\s\] 
is a cycle in $\mscr{C}_i(X_{m_i};\delta_i)$. (Since the function $g_1(\cdot,\cdot)$ is decreasing in its second argument, the ``$\delta_i$'' appearing in the definition of $\phi^{\delta,n+1}$ is smaller than the one appearing in the definition of $\phi^{\delta,n}$, which is the one used here. Thus, $\Sigma$ is indeed $\delta_i$--fine.) 

In addition, $\supp(\Sigma)$ is contained in the neighbourhood of $\supp(\s)$ of radius $R_{i-1}$, since $R_{i-1}\geq r+H(r)$ by construction. In particular, the diameter of $\supp(\Sigma)$ is at most $r+2R_{i-1}$.

By Proposition~\ref{diameter bound}, 
% here it matters that i<k strictly
we can fill $\Sigma$ by a chain in $\mscr{C}_{i+1}(X_{m_{i+1}};\delta_{i+1})$ with diameter at most $\mathfrak{h}(r+2R_{i-1})$. (Here it finally matters that we used $m_{i-1}$ in the definition of $\delta_i$ at the beginning of the whole proof.) We define $H_i\s$ as this chain which fills $\Sigma$. 

It is clear that $\supp(H_i\s)$ is contained in the neighbourhood of $\supp(\s)$ of radius:
\[\mathfrak{h}(r+2R_{i-1})+R_{i-1}=R_i.\]

This completes the construction of the chain homotopy $H_*$. If $c,c'$ are as in the statement of part~(3), we have $\phi^{\delta,n+1}_ic-\phi^{\delta,n}_ic'=\partial H_ic$. It is clear that $H_ic$ is $\delta$--fine and that its support is contained in the neighbourhood of $\supp(c)$ of radius $R_i$. We can thus define $H'(\cdot)$ as the function $r\mapsto R_k$. 

This concludes the proof of the proposition.
\end{proof}

Proposition~\ref{arbitrarily fine subdivisions 2} allows us to arbitrarily ``refine'' chains, but it completely disregards open covers of $X$. Thus, as a second step towards Proposition~\ref{arbitrarily fine subdivisions}, we will need to be able to fill cycles while remaining within a given open set. This is the content of Proposition~\ref{moreover} below, after which we will finally prove Proposition~\ref{arbitrarily fine subdivisions}.

Before stating Proposition~\ref{moreover}, we record the following observation.

\begin{lem}\label{related homotopy}
Let a cycle $c\in \mscr{C}_*(X_n;\delta)$ be $(\delta,X_n)$--related to a cycle $c'\in \mscr{C}_*(X_n)$, for some $\delta>0$ and $n\geq 1$. Then there exists a chain $d\in \mscr{C}_*(X_n;3\delta)$ such that $\partial d=c-c'$ and $\supp(d)$ is contained in the $\delta$--neighbourhood of $\supp(c)$.
% no restrictions on $i$ needed now
\end{lem}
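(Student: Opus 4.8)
The plan is to produce, by a coning argument, an explicit chain homotopy $P_*$ between the inclusion $\iota\colon\mscr{F}_*(c)\hookrightarrow\mscr{C}_*(X_n)$ and the chain map $T\colon\mscr{F}_*(c)\to\mscr{C}_*(X_n)$ witnessing the $(\delta,X_n)$--relation, and then to take $d:=P_*(c)$.

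First I would localise. Since $c$ is $\delta$--fine, every simplex $\sigma\in\mscr{F}_*(c)$ --- being a face (possibly improper) of some simplex appearing in $c$ --- satisfies $\diam(\supp(\sigma))\le\delta$. For such $\sigma$ write $N_\sigma\sq X_n$ for the (open) $\delta$--neighbourhood of $\supp(\sigma)$ inside $X_n$. Then three facts hold and will be used repeatedly: $\diam(N_\sigma)\le 3\delta$; both $\supp(\sigma)$ and $\supp(T(\sigma))$ lie in $N_\sigma$ (the latter by the definition of $(\delta,X_n)$--related); and $N_\tau\sq N_\sigma$ whenever $\tau$ is a face of $\sigma$.

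Next I would construct $P_j\colon\mscr{F}_j(c)\to\mscr{C}_{j+1}(X_n)$ by induction on $j$, on the generating simplices, requiring $\partial P(\sigma)=\sigma-T(\sigma)-P(\partial\sigma)$ and $\supp(P(\sigma))\sq N_\sigma$. The inductive step is essentially forced: the chain $z_\sigma:=\sigma-T(\sigma)-P(\partial\sigma)$ has $\partial z_\sigma=\partial\sigma-T(\partial\sigma)-\partial P(\partial\sigma)=0$, using $T\partial=\partial T$ and the inductive identity applied to the faces of $\sigma$ (which gives $\partial P(\partial\sigma)=\partial\sigma-T(\partial\sigma)$). Moreover $z_\sigma$ is a reduced cycle --- automatic for $\deg\sigma\ge 1$, and in degree $0$ it is the statement that $T$ preserves coefficient sums, which we may assume, as it holds in every application (e.g.\ for the projections of Remark~\ref{rmk:chain_projection}) --- and it is supported in $N_\sigma$, by the three bullet points above together with the inductive support bound $\supp(P(\partial\sigma))\sq N_\sigma$. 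Choosing any vertex $x_\sigma\in\supp(\sigma)$ I then set $P(\sigma):=\cone(x_\sigma,z_\sigma)$; Lemma~\ref{lem:cone} gives $\partial P(\sigma)=z_\sigma$, coning only adjoins the vertex $x_\sigma\in N_\sigma$ so $\supp(P(\sigma))\sq N_\sigma$, and each simplex of $P(\sigma)$ has support in $N_\sigma$, hence diameter at most $3\delta$.

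Finally, writing $c=\sum_i\sigma_i$ without cancellations and setting $d:=\sum_iP(\sigma_i)$, the chain--homotopy identity together with $\partial c=0$ and $T(c)=c'$ yields $\partial d=c-T(c)-P(\partial c)=c-c'$; the support of $d$ lies in $\bigcup_iN_{\sigma_i}$, which is exactly the $\delta$--neighbourhood of $\supp(c)=\bigcup_i\supp(\sigma_i)$ inside $X_n$; and $d$ is $3\delta$--fine as a sum of $3\delta$--fine chains. I do not anticipate a genuine obstacle here; the only delicate point is the degree--$0$ case, where one must know that $c-c'$ really is a boundary of reduced $1$--chains, which is the reason for recording the (harmless) hypothesis that $T$ preserves augmentation.
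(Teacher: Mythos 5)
Your proposal is correct and follows essentially the same route as the paper: an inductive construction of a chain homotopy between the identity and $T$ by coning each $\sigma-T(\sigma)-P(\partial\sigma)$ over a vertex, with the support localised so that each cone lies in a set of diameter at most $3\delta$ contained in the $\delta$--neighbourhood of $\supp(\sigma)$. The only cosmetic difference is that you use the full $\delta$--neighbourhood $N_\sigma$ of $\supp(\sigma)$ as the localising set, whereas the paper uses the (potentially smaller) set $\supp(\sigma)\cup\mc{T}(\sigma)$; both work. You also rightly flag that in degree $0$ the coning step requires $\sigma-T(\sigma)-P(\partial\sigma)$ to be a \emph{reduced} $0$--cycle, and hence that $T$ should preserve coefficient sums --- a hypothesis not literally contained in the definition of $(\delta,X_n)$--related but satisfied by every $T$ the paper ever invokes (projections and the maps $\phi_0=\mathrm{id}$), so the gap is harmless; the paper's own proof silently relies on the same fact.
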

\begin{proof}
The proof is similar to that of part~(3) of Proposition~\ref{arbitrarily fine subdivisions 2}, but simpler. 

Let $T\colon \mscr{F}_*(c)\ra \mscr{C}_*(X_n)$ be the chain map witnessing the fact that $c$ and $c'$ are $(\delta,X_n)$--related. For each simplex $\s\in \mscr{F}_*(c)$, denote by $\mc{T}(\s)$ the union of the sets $\supp(T\tau)$ where $\tau$ ranges over $\s$ and all its lower-dimensional faces. Thus, $\mc{T}(\s)$ contains $\supp(T\s)$, but it can be larger in general. Nevertheless, $\mc{T}(\s)$ is contained in the $\delta$--neighbourhood of $\supp(\s)$ in $X_n$. In addition, recalling that $c$ is $\delta$--fine, we have $\diam(\supp(\s)\cup\mc{T}(\s))\leq 3\delta$ for every simplex $\s\in \mscr{F}_*(c)$.

Now, for each $i\geq 0$, we inductively construct a chain homotopy 
\[ H_i\colon \mscr{F}_i(c)\longrightarrow \mscr{C}_{i+1}(X_n;3\delta) \]
satisfying $\partial H_i+H_{i-1}\partial={\rm id}-T_i$ and $\supp(H_i\s)\sq\supp(\s)\cup\mc{T}(\s)$ for every simplex $\s\in \mscr{F}_i(c)$.

In the base step ``$i=-1$'', we simply set $H_{-1}=0$. In the inductive step, suppose that, for some $i\geq 0$, we have defined all maps up to $H_{i-1}$ so that they satisfy the required properties.

In order to define $H_i$, consider a simplex $\s\in \mscr{F}_i(c)$ and observe that $\Sigma:=({\rm id}-T_i-H_{i-1}\partial)\s$ is a cycle with support contained in $\supp(\s)\cup\mc{T}(\s)\sq X_n$. Then we simply define $H_i\s$ as the cone over $\Sigma$ from any of its vertices (Definition~\ref{defn:cone}). We obtain $\partial H_i\s=\Sigma$ and $\supp(H_i\s)\sq\supp(\Sigma)\sq\supp(\s)\cup\mc{T}(\s)$. As observed above, this guarantees that $H_i\s$ has diameter at most $3\delta$ and that it is supported in the $\delta$--neighbourhood of $\supp(\s)$. 

Finally, set $d:=Hc$. Since $c$ is a cycle, we have $\partial d=c-Tc=c-c'$. In addition, $d$ is $3\delta$--fine and it is supported in the $\delta$--neighbourhood of $\supp(c)$.
\end{proof}

Recall that $\overline{\mscr{C}}_*(\mc{S})$ denotes the chain complex of reduced discrete chains in $\mc{S}$, where $0$--chains are required to have zero coefficient sum. Also recall the notion of $\mc{O}$--tiny chain from Definition~\ref{defn:tiny/fine}.

\begin{prop}\label{moreover}
For every open cover $\mc{O}$ of $X=\varinjlim X_n$, there exist a refinement $\mc{O}'$ and functions $f_4(n)\geq n$ and $0<g_2(\delta,n)\leq\delta$ such that the following holds.

For $0\leq i<k$, let $c\in\overline{\mscr{C}}_i(X_n)$ be an $\mc{O}'$--tiny, $g_2(\delta,n)$--fine cycle. Then $c=\partial d$ for an $\mc{O}$--tiny, $\delta$--fine chain $d$ supported within $X_{f_4(n)}$.
\end{prop}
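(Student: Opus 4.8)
The strategy mirrors that of Proposition~\ref{arbitrarily fine subdivisions 2}: fill $c$ inside a controlled stratum using Proposition~\ref{diameter bound}, and then verify that, for a carefully chosen refinement $\mc{O}'$, the resulting filling is automatically $\mc{O}$--tiny. The point is that an $\mc{O}'$--tiny cycle sits inside a single member of $\mc{O}'$, which has a \emph{parent} in $\mc{O}$ (in the sense of Definition~\ref{defn:super-refinement}), and one wants the filling not to escape this parent.

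First I would fix the refinement. Take $\mc{O}'$ to be a super-refinement of $\mc{O}$ produced by Construction~\ref{cover construction} exactly as in the proof of Proposition~\ref{prop:limit-refinable}, but with the sequence $\underline{\eps}=(\eps_m)$ chosen to decay extremely fast: fast relative to the auxiliary sequence $\eta_m$ of that proof, relative to the function $\mathfrak{h}$ of Proposition~\ref{diameter bound}, and relative to the radii $\mf{r}_{\min}$ of the balls in $\mf{B}$ and their rescalings $\tfrac1m\mf{B}$ from Subsection~\ref{subsect:ass_3}. Recall that each $O'\in\mc{O}'$ then has the form $V_p=U_p-X_{q-1}$ for a point $p\in X_q\setminus X_{q-1}$, that $\diam_\rho(U_p\cap X_m)\leq 2\sum_{l\geq q}\eps_l$ for every $m$, and that the parent $O_x\in\mc{O}$ of a point $x$ contains every member of $\mc{O}'$ through $x$ and, by property~(B) in the proof of Proposition~\ref{prop:limit-refinable}, contains $\bigcup_{j}\overline{U_j(m)}$ for all levels $m$ above the common level of these members. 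I set $f_4(n):=f_1(n)$ and $g_2(\delta,n):=g_1(\delta,n)$, where $f_1,g_1$ are as in Proposition~\ref{diameter bound} (note $g_1(\delta,n)\leq\delta$).

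Now let $c\in\overline{\mscr{C}}_i(X_n)$ be $\mc{O}'$--tiny and $g_2(\delta,n)$--fine, with $0\leq i<k$; if $c=0$ put $d=0$, so assume $c\neq 0$. Since $\supp(c)$ is finite we may replace $n$ by the least $m$ with $\supp(c)\sq X_m$: proving the statement at level $m$ yields a filling inside $X_{f_1(m)}\sq X_{f_1(n)}$, and replacing $g_2(\delta,n)$ by $\min_{m\leq n}g_2(\delta,m)$ recovers the general case. So assume $\supp(c)\cap(X_n\setminus X_{n-1})\neq\emptyset$ and fix $x_0$ there. As $c$ is $\mc{O}'$--tiny, $\supp(c)$ lies in a single $O'=V_j\ni x_0$, based at some level $q\leq n$. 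The fast decay of $\underline{\eps}$ enters here: using~\eqref{eq:small_min} and the geometry of the rescaled balls $\tfrac1m\mf{B}$ (a point of $X_n\setminus X_{n-1}$ cannot be $\rho$--close to anything living at a much shallower level), one checks that $V_j$ must be based at a level $q\gtrsim n$, so that $\diam_\rho(\supp c)\leq\diam_\rho(V_j\cap X_n)\leq 2\sum_{l\geq q}\eps_l=:s_n$ with $s_n\downarrow 0$. Moreover, applying the double-super-refinement bookkeeping of Remark~\ref{rmk:super_super_refinement} together with property~(B) above, the parent $O:=O_{x_0}\in\mc{O}$ contains $\supp(c)$ and contains a $\rho$--ball $B_\rho^{X_{f_1(n)}}(x_0,\beta_n)$ inside the stratum $X_{f_1(n)}$, for a definite radius $\beta_n>0$ depending only on $\underline{\eps}$ and $\mc{O}$. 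Finally, Proposition~\ref{diameter bound} gives $c=\partial d$ for a $\delta$--fine discrete chain $d\sq X_{f_1(n)}$ with $\diam_\rho(d)\leq\mathfrak{h}(\diam_\rho(c))\leq\mathfrak{h}(s_n)$; since $\supp(c)\sq\supp(d)$ and $x_0\in\supp(c)$, every vertex of $d$ lies within $\mathfrak{h}(s_n)$ of $x_0$ in $X_{f_1(n)}$. Having arranged in the choice of $\underline{\eps}$ that $\mathfrak{h}(s_n)\leq\beta_n$ for all $n$, we get $\supp(d)\sq B_\rho^{X_{f_1(n)}}(x_0,\beta_n)\sq O$, so $d$ is $\mc{O}$--tiny, $\delta$--fine, and supported in $X_{f_4(n)}$, as required.

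The main obstacle is the coordination carried out in the last paragraph: $\mc{O}'$ must be fixed once and for all, yet the ``room'' $\beta_n$ available inside the parent, the diameter bound $s_n$ on $\mc{O}'$--tiny cells sitting at depth $n$, and the isoperimetric defect $\mathfrak{h}$ (which is super-linear near $0$) must be made simultaneously compatible across \emph{all} $n$. Since $f_1(n)>n$, the filling lives in a deeper stratum than $\supp(c)$, so this compatibility is delicate and forces one to use the precise recursive structure of Construction~\ref{cover construction} — in particular properties~(A) and~(B) in the proof of Proposition~\ref{prop:limit-refinable} — together with the geometric normalization of $\mf{B}$ recorded in Subsection~\ref{subsect:ass_3}; everything else is routine.
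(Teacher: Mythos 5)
Your approach attempts to close the argument with a single one-shot application of Proposition~\ref{diameter bound}, using a fixed super-refinement of Construction~\ref{cover construction} with a single fast-decaying sequence $\underline{\eps}$. This does not work, and the gap sits exactly at the step you flag as ``delicate''.

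First, the claim that ``a point of $X_n\setminus X_{n-1}$ cannot be $\rho$--close to anything living at a much shallower level'', and hence that the $\mc{O}'$--cell containing $c$ must be based at a level $q\gtrsim n$, is false. Parabolic points are dense in $\mc{S}$ and the radii $r_p$ can be arbitrarily small, so for \emph{every} $n$ there are points of $X_n\setminus X_{n-1}$ within arbitrarily small $\rho$--distance of any given point of $X_1$ (and similarly of any center $j\in I_q$ with $q$ small). Consequently a cycle $c\sq X_n$ with $c\sq V_j$ can perfectly well have $V_j$ based at a shallow level $q\ll n$, so $\diam_\rho(c)$ is \emph{not} controlled by a tail sum $\sum_{l\geq q}\eps_l$ with $q\to\infty$ as $n\to\infty$. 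Without $s_n\downarrow 0$ your constraint $\mathfrak{h}(s_n)\leq\beta_n$ cannot even be formulated meaningfully. Moreover, even granting $q\gtrsim n$, the inequality $\mathfrak{h}(2\sum_{l\geq q}\eps_l)\leq\beta_n$ cannot hold uniformly for a \emph{fixed} decreasing $\underline{\eps}$: $\mathfrak{h}(r)\geq r$, the left side is $\geq\eps_q$ with $q\leq n<f_1(n)$, and $\beta_n$ is at best comparable to $\eps_{f_1(n)}$ (the only ``room'' property~(B) gives you at stratum $f_1(n)$ comes from a single $\eps_{f_1(n)}$--expansion), which is smaller. Finally, the ``definite radius $\beta_n$ depending only on $\underline{\eps}$ and $\mc{O}$'' does not exist: the parent $O_J$ is a fixed open set, and the distance from $x_0\in U_j\cap X_n$ to $X_{f_1(n)}-O_J$ depends on \emph{which} $U_j(m)$ the point $x_0$ first enters, which can be for $m$ well above $f_1(n)$.

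The structural reason the one-shot argument cannot be repaired is precisely what the paper's Lemma~\ref{lem:moreover} is designed to address: the distances $\rho(U_x(j),X_{f_3(j)}-O_x)$ depend on the point $x$ and on the element $O_x\in\mc{O}$ chosen for it, so the sequence $\underline{\eps}$ must be chosen \emph{per point}, inductively, with each $\eps_{m+1}$ chosen small relative to distances that only become known after $\eps_\ell,\dots,\eps_m$ are fixed. In addition, a single application of Proposition~\ref{diameter bound} to $c$ in $X_n$ only controls the \emph{diameter} of the filling, not which member of $\mc{O}$ it lands in, and the filling is forced to live in the strictly deeper stratum $X_{f_1(n)}$ where $O_x$ has no a priori quantitative room around $\supp(c)$. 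The paper circumvents this by building the filling in stages: projecting $c$ through a telescope of cycles $c_{n-1},\dots,c_\ell$ into successively shallower strata, refining each via the chain maps $\phi^{\delta,j}$ of Proposition~\ref{arbitrarily fine subdivisions 2}, and assembling the filling as a sum of ``bridge'' chains $B_j$, each of whose position relative to $O_x$ is controlled by an explicit inequality baked into the inductive choice of $\underline{\eps}$. Your proposal uses neither the per-point adaptivity nor the telescoping, and without at least one of these the $\mc{O}$--tininess of $d$ cannot be guaranteed.
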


The proposition is proved right after the following lemma.

\begin{lem}\label{lem:moreover}
For every open cover $\mc{O}$ of $X$, there exist functions $f_4(n)\geq n$ and $0<g_3(x,\delta,n)\leq\delta$ such that the following holds for all $x\in X$, $\delta>0$ and $n\geq 1$.

There exists an $\mc{O}$--tiny open set $U_x\ni x$ with the following property. If $c$ is a $g_3(x,\delta,n)$--fine, reduced $i$--cycle with $0\leq i<k$ and $c\sq X_n\cap U_x$, then $c=\partial d$ for an $\mc{O}$--tiny $\delta$--fine chain $d$ with $d\sq X_{f_4(n)}$.
\end{lem}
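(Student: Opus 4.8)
The plan is to obtain the filling $d$ straight from Proposition~\ref{diameter bound} and then run a Lebesgue-number argument on a single compact stratum to upgrade its ``controlled diameter'' conclusion to ``$\mc{O}$-tiny''. Let $f_1,g_1,\mathfrak{h}$ be the functions supplied by Proposition~\ref{diameter bound}, and assume without loss of generality that $\mathfrak{h}$ is weakly increasing (replace it by $r\mapsto\sup_{s\le r}\mathfrak{h}(s)$, which still tends to $0$ as $r\to 0$). I would set $f_4(n):=f_1(n)$ and $g_3(x,\delta,n):=g_1(\delta,n)$, so that $g_3$ does not actually depend on $x$, which the statement permits. For a fixed $n$, the stratum $X_{f_4(n)}$ is a compact metric space and $\{O\cap X_{f_4(n)}\mid O\in\mc{O}\}$ is an open cover of it, hence admits a Lebesgue number $\lambda_n>0$; I would then choose $\eta_n>0$ with $\mathfrak{h}(\eta_n)<\lambda_n$.

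Next I would build $U_x$. Fix $x\in X$ and pick $O_x\in\mc{O}$ with $x\in O_x$. The key observation is that $\rho$ induces the subspace topology on each $X_m$, so $B_\rho(x,\eta_n/2)\cap X_m$ is open in $X_m$ for every $m$, whence $B_\rho(x,\eta_n/2)$ is open in $X=\varinjlim X_m$. I would therefore take $U_x:=B_\rho(x,\eta_n/2)\cap O_x$: it is open in $X$, contains $x$, is contained in $O_x$ (hence $\mc{O}$-tiny), and has $\diam(U_x)\le\eta_n$. (Strictly speaking $U_x$ depends on $n$ too, so the notation ``$U_x$'' is mildly abusive.) Now given a $g_3(x,\delta,n)$-fine reduced $i$-cycle $c$ with $0\le i<k$ and $\supp(c)\sq X_n\cap U_x$: if $c=0$ take $d=0$; otherwise $c$ is $g_1(\delta,n)$-fine and supported in $X_n$, so Proposition~\ref{diameter bound} yields a $\delta$-fine chain $d\sq X_{f_1(n)}=X_{f_4(n)}$ with $\partial d=c$ and $\diam(d)\le\mathfrak{h}(\diam(c))$. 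Since $\supp(c)\sq U_x$ we get $\diam(c)\le\eta_n$, hence $\diam(d)\le\mathfrak{h}(\eta_n)<\lambda_n$; and $\supp(d)$ is nonempty (as $c\ne 0$) and has diameter $<\lambda_n$ inside $X_{f_4(n)}$, so by the Lebesgue property it lies in $O\cap X_{f_4(n)}$ for some $O\in\mc{O}$, i.e.\ $d$ is $\mc{O}$-tiny, as required.

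The single delicate point, which I would flag as the main obstacle, is the mismatch between the direct-limit topology on $X$ and the metric $\rho$: an element $O\in\mc{O}$ containing $x$ need not contain any $\rho$-ball around $x$, so one cannot simply shrink $U_x$ to a metric ball sitting inside $O$. This is circumvented by noting that $U_x$ need not be a metric ball: intersecting a (genuinely $X$-open) $\rho$-ball with $O_x$ keeps $U_x$ both $X$-open and $\mc{O}$-tiny while still controlling its $\rho$-diameter, and all the actual filling happens inside the single compact metric stratum $X_{f_4(n)}$, where $\rho$ does recover the topology and Lebesgue numbers are available. Everything else is routine bookkeeping with $f_1,g_1,\mathfrak{h}$.
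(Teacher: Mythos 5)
Your argument is locally correct, but the set $U_x$ you build is not the one the Lemma actually needs, and the parenthetical ``strictly speaking $U_x$ depends on $n$ too, so the notation `$U_x$' is mildly abusive'' names the crux rather than a cosmetic issue. In the proof of Proposition~\ref{moreover}, the sets $U_x$ from Lemma~\ref{lem:moreover} are assembled into a \emph{single} cover $\mc{O}'=\bigcup_{\ell}\{U_x-X_{\ell-1}\mid x\in A_\ell\}$ of $X$, and then for a cycle $c\sq X_n\cap(U_y-X_{\ell-1})$ with $\ell\leq n$ arbitrary, Lemma~\ref{lem:moreover} is invoked with stratum index $n$ but the \emph{same} set $U_y$. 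So $U_x$ must be chosen once and for all, depending only on $x$ and $\mc{O}$. Your $U_x=B_\rho(x,\eta_n/2)\cap O_x$ cannot be made $n$--independent: you need $\mathfrak{h}(\eta_n)<\lambda_n$, where $\lambda_n$ is the Lebesgue number of $\{O\cap X_{f_4(n)}\mid O\in\mc{O}\}$, and $\lambda_n$ generically tends to $0$ as $n\to\infty$. Indeed, $\mc{O}$ is a cover in the direct-limit topology; near a parabolic point the elements of $\mc{O}$ must become arbitrarily thin in the visual metric as one passes to higher strata (Construction~\ref{cover construction} with rapidly decaying $\underline\eps$ produces exactly such covers), so $\eta_n\to 0$ is forced and $\bigcap_n U_x^{(n)}=\{x\}$, which is not open.

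The reason this is genuinely hard, and why the paper's proof is long, is that one cannot get $\mc{O}$--tininess of the filling from a one-shot application of Proposition~\ref{diameter bound}: the filling lives in $X_{f_1(n)}$ and its required smallness depends on $n$, while $U_x$ must not. The paper circumvents this by taking $U_x=U_x^{\underline\eps}$ from Construction~\ref{cover construction}, a nested union of $\eps_j$--neighbourhoods across all strata, with $\underline\eps$ chosen via conditions (1)--(3) that encode an infinite sequence of ``budget'' constraints. A cycle $c\sq X_n\cap U_x$ is then projected down through the strata to produce cycles $c_{n-1},\dots,c_\ell$ with $c_j\sq U_x(j)$ (Remark~\ref{rmk:chain_projection}), these are refined via $\phi^{\delta',j}$ from Proposition~\ref{arbitrarily fine subdivisions 2}, the refinement discrepancies $c_{j+1}'-c_j'$ are filled using the $(r,X_n)$--relatedness control in Proposition~\ref{arbitrarily fine subdivisions 2}(3), the base cycle $c_\ell'$ is filled by Proposition~\ref{diameter bound} in the fixed compact $X_\ell$, and the original discrepancy $c-c'$ is filled by Lemma~\ref{related homotopy}. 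Each filling's support is then shown to lie in $O_x$ via conditions (1')--(3'). Your Lebesgue-number idea plays a role only at the base level $\ell$, which is fixed once $x$ is; the descent through strata is the missing mechanism, and it is what allows $U_x$ to be independent of $n$.
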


\begin{proof}
We will construct the open sets $U_x$ using the inductive Construction \ref{cover construction} (where the parameters are also determined inductively). Fix the open cover $\mc{O}$ and the point $x\in X$. Let $\ell\geq 1$ be the integer such that $x\in X_\ell-X_{\ell-1}$. Choose an element $O_x\in\mc{O}$ with $x\in O_x$, which will be fixed throughout the proof.

Let $f_1,\mathfrak{h}$ be the functions provided by Proposition~\ref{diameter bound}, and $f_3,H,H'$ those provided by Proposition~\ref{arbitrarily fine subdivisions 2}. Recall that these functions are all bounded below by the identity and, without loss of generality, weakly increasing. Define $f_4:=f_1\o f_3$. Also, recall that we are fixing the visual metric $\rho$ on $X$, although this only induces the topology of compact strata in $X$.

For a sequence of real numbers $\underline\eps=(\eps_j)_{j\geq \ell}$ and an integer $m\geq \ell$, we introduce the notation $\s_m(\underline\eps):=\sum_{j\geq m}\eps_j$. We also define the sets $U_x^{\underline\eps}(m)$ and $U_x^{\underline\eps}$ as in Construction~\ref{cover construction}. Namely, $U_x^{\underline\eps}(\ell)$ is the open $\eps_\ell$--ball around $x$ within $X_\ell$ and, for $m>\ell$, the set $U_x^{\underline\eps}(m)$ is obtained inductively as the open $\eps_m$--neighbourhood of $U_x^{\underline\eps}(m-1)$ within $X_m$. Finally, we have $U_x^{\underline\eps}:=\bigcup_{m\geq \ell}U_x^{\underline\eps}(m)$, which is open in $X$.

In the rest of the proof, we fix a sequence of positive numbers $\underline\eps=(\eps_j)_{j\geq \ell}$ such that the following conditions hold: 
\begin{enumerate}
\setlength\itemsep{.1cm}
\item the closure of $U_x^{\underline\eps}(j)$ is contained in $O_x$, for each $j\geq \ell$;
\item $H(3\s_{\ell+1}(\underline\eps))+\mathfrak{h}(2\eps_\ell+2H(3\s_{\ell+1}(\underline\eps)))<\rho(U_x^{\underline\eps}(\ell),X_{f_4(\ell)}-O_x)$;
\item $\s_{j+1}(\underline\eps)+H'(3\s_j(\underline\eps))<\rho(U_x^{\underline\eps}(j),X_{f_3(j)}-O_x)$, for each $j\geq \ell$.
\end{enumerate}

This is possible because the functions $\mathfrak{h}(r),H(r),H'(r)$ all go to zero for $r\ra 0$. Thus, there exists $\eps_\ell>0$ such that the sequence $(\eps_\ell,0,0,\dots)$ satisfies the above conditions. And, if a sequence $(\eps_\ell,\dots,\eps_m,0,0,\dots)$ satisfies these conditions, then there exists $\eps_{m+1}>0$ such that the sequence $(\eps_\ell,\dots,\eps_m,\eps_{m+1},0,\dots)$ also does.

Now, set $U_x:=U_x^{\underline\eps}$, for our fixed sequence $\underline\eps$. For simplicity, we will simply write $U_x(m)$ and $\s_m$ in the rest of the proof, rather than $U_x^{\underline\eps}(m)$ and $\s_m(\underline\eps)$.

By condition~(1), we have $U_x\sq O_x$. We now show that this open set satisfies the property claimed by the lemma.

Fix $n\geq 1$ and $\delta>0$. Choose $\eta>0$ small enough that:
\begin{enumerate}
\setlength\itemsep{.1cm}
\item[$(1')$] $\eta\vee H(\eta)\leq\delta/3$;
\item[$(2')$] $H(\eta+2\s_{\ell+1})+\mathfrak{h}(2\eps_\ell+2H(\eta+2\s_{\ell+1}))<\rho(U_x(\ell),X_{f_4(\ell)}-O_x)$;
\item[$(3')$] $\s_{j+1}+3H(\eta)+H'(\eta+2\s_j)<\rho(U_x(j),X_{f_3(j)}-O_x)$, for all $\ell\leq j\leq n$.
\end{enumerate}
Conditions~$(2')$ and~$(3')$ can be achieved because $\underline\eps$ was chosen to satisfy conditions~(2) and~(3) above. The resulting value of $\eta$ only depends on $x,n,\delta$, so we can define the function in the statement of the lemma as $g_3(x,\delta,n):=\eta$.

Now, consider an $\eta$--fine reduced $i$--cycle $c$ with $0\leq i<\ell$ and $\supp(c)\sq X_n\cap U_x$. Let us construct a $\delta$--fine chain $d$ with $\partial d=c$ and $\supp(d)\sq O_x\cap X_{f_4(n)}$. It is convenient to set $\delta':=g_1(\delta,f_3(n))\leq\delta$, where $g_1$ is the function provided by Proposition~\ref{diameter bound}.

We begin by constructing a sequence of cycles $(c_j)_{\ell\leq j\leq n-1}$ with $\supp(c_j)\sq U_x(j)$. First, we define $c_{n-1}$ as a $\s_n$--projection of $c$ to $U_x(n-1)$, in the sense of Remark~\ref{rmk:chain_projection}; this exists because $\supp(c)\sq X_n\cap U_x$ is contained in the $\s_{n+1}$--neighbourhood of $U_x(n)$, which is contained in the $\eps_n$--neighbourhood of $U_x(n-1)$. Then, for each $\ell\leq j\leq n-2$, we define inductively $c_j$ as an $\eps_{j+1}$--projection of $c_{j+1}$ to $U_x(j)$.

Note that $\fin(c_j)\leq \eta+2\s_{j+1}$ for each $\ell\leq j\leq n-1$. In addition, $c_j$ is $(\eps_j,X_j)$--related to $c_{j-1}$, and $c$ is $(\s_n,X_{n-1})$--related to $c_{n-1}$.

We now refine these cycles using the chain maps provided by Proposition~\ref{arbitrarily fine subdivisions 2}. Specifically, recall that $\delta'=g_1(\delta,f_3(n))$ and define $c_j':=\phi^{\delta',j}c_j$ for each $\ell\leq j\leq n-1$ and $c':=\phi^{\delta',n}c$. Part~(3) of the proposition guarantees the existence of $\delta'$--fine chains $B_j$ with $\partial B_j=c_{j+1}'-c_j'$ and $\supp(B_j)$ contained in the $H'(\eta+2\s_{j+1})$--neighbourhood of $\supp(c_{j+1})$ within $X_{f_3(j+1)}$, for each $\ell\leq j\leq n-2$. Since $\supp(c_{j+1})\sq U_x(j+1)$, this implies that $\supp(B_j)\sq O_x$, by applying condition~$(3')$ above.

Similarly, we have $\partial B_{n-1}=c'-c_{n-1}'$ and $\supp(B_{n-1})$ is contained in the neighbourhood of $\supp(c)\sq X_n\cap U_x$ of radius  $H'(\eta+2\s_n)$ within $X_{f_3(n)}$. Thus, $\supp(B_{n-1})$ is contained in the $(\s_{n+1}+H'(\eta+2\s_n))$--neighbourhood of $U_x(n)$ within $X_{f_3(n)}$. Hence $\supp(B_{n-1})\sq O_x$, again by condition~$(3')$.

In conclusion, the chain $(B_\ell+\dots+B_{n-1})$ is $\delta'$--fine, supported in $O_x\cap X_{f_3(n)}$ and it satisfies $\partial (B_\ell+\dots+B_{n-1})=c'-c_\ell'$. Thus, we are only left to similarly fill the cycles $c_\ell'$ and $c-c'$.

First, we deal with $c_\ell'$. Observe that $c_\ell'$ is $\delta'$--fine and, by part~(2) of Proposition~\ref{arbitrarily fine subdivisions 2}, it is supported in the $H(\eta+2\s_{\ell+1})$--neighbourhood of $c_\ell$ within $X_{f_3(\ell)}$. Since $\delta'=g_1(\delta,f_3(n))$ and $n\geq \ell$, Proposition~\ref{diameter bound} allows us to fill $c_\ell'$ by a $\delta$--fine chain $B_{\ell-1}$ supported in $X_{f_1(f_3(\ell))}=X_{f_4(\ell)}$ and with $\diam(B_{\ell-1})\leq \mathfrak{h}(\diam(c_\ell'))$. In particular, $\supp(B_{\ell-1})$ is contained in the neighbourhood of $\supp(c_\ell)\sq U_x(\ell)$ in $X_{f_4(\ell)}$ of radius:
\[H(\eta+2\s_{\ell+1})+\mathfrak{h}(2\eps_\ell+2H(\eta+2\s_{\ell+1})),\]
where we have used the fact that $\diam(c_\ell)\leq 2\eps_\ell$, since $c_\ell$ is contained in an $\eps_\ell$--ball. Now, condition~$(2')$ implies that $\supp(B_{\ell-1})$ is contained in $O_x$.

Finally, we fill $c-c'$. Note that $c$ is $\eta$--fine and $(H(\eta),X_{f_3(n)})$--related to $c'$, by part~(2) of Proposition~\ref{arbitrarily fine subdivisions 2}. 

Thus, Lemma~\ref{related homotopy} yields $c-c'=\partial B_n$ for a chain $B_n$ with $\fin(B_n)\leq 3 H(\eta)\leq\delta$ (by condition~$(1')$) 
% here I've used that $\eta\leq H(\eta)$
and $\supp(B_n)$ contained in the $H(\eta)$--neighbourhood of $\supp(c)$ in $X_{f_3(n)}$. Since $\supp(c)$ is contained in the $\s_{n+1}$--neighbourhood of $U_x(n)$, condition~$(3')$ again shows that $\supp(B_n)\sq O_x$.

Summing up, we have $c=\partial (B_{\ell-1}+B_\ell+\dots+B_{n-1}+B_n)$, where each $B_i$ is $\delta$--fine and supported within $X_{f_4(n)}\cap O_x$. This concludes the proof.
\end{proof}

\begin{proof}[Proof of Proposition~\ref{moreover}]
For each $\ell\geq 1$, choose a finite subset $A_\ell\sq X_\ell$ such that $X_\ell\sq\bigcup_{x\in A_\ell}U_x$, where the sets $U_x$ are provided by Lemma~\ref{lem:moreover}. Define $\mc{O}':=\bigcup_{\ell\geq 1}\{ U_x-X_{\ell-1} \mid x\in A_\ell\}$. It is clear that all elements of $\mc{O}'$ are open in $X$, that they cover $X$, and that $\mc{O}'$ refines $\mc{O}$.

Now, define $g_2(\delta,n):=\inf_{x\in A_0\cup\dots\cup A_n} g_3(x,\delta,n)$; since the infimum is taken over a finite set, we have $g_2(\delta,n)>0$. Let $c$ be a reduced $g_2(\delta,n)$--fine $i$--cycle supported within $X_n\cap O'$, for some $O'\in\mc{O}'$. We necessarily have $O'= U_y-X_{\ell-1}$ for some $y\in A_\ell$ with $\ell\leq n$. It follows that $c$ is, in particular, $g_3(y,\delta,n)$--fine, hence Lemma~\ref{lem:moreover} yields an $\mc{O}$--tiny $\delta$--fine chain $d$ with $\supp(d)\sq X_{f_4(n)}$ and $\partial d=c$.
\end{proof}

We are finally ready to prove Proposition~\ref{arbitrarily fine subdivisions}. The argument is almost identical to the one in the proof of Proposition~\ref{arbitrarily fine subdivisions 2}, except that we will use Proposition~\ref{moreover} in place of Proposition~\ref{diameter bound}.

\begin{proof}[Proof of Proposition~\ref{arbitrarily fine subdivisions}]
Fixing $\delta>0$, we will simply write $\phi$ for the chain map $\phi^{\delta}$. We will prove the proposition by induction on the degree $i$ of chains. The key ingredient in the construction of $\phi$ is Proposition~\ref{moreover}, while the chain homotopy will be obtained filling chains trivially via Definition~\ref{defn:cone}.

Set $\overline{\mc{O}_0}:=\mc{O}$. Define inductively refinements $\overline{\mc{O}}_{i+1}<\mc{O}_i'<\mc{O}_i<\overline{\mc{O}}_i$ as follows. First, $\mc{O}_i$ is a super-refinement of $\overline{\mc{O}}_i$, then $\mc{O}_i'$ is the refinement of $\mc{O}_i$ provided by Proposition~\ref{moreover}, and finally $\overline{\mc{O}}_{i+1}$ is a double super-refinement of $\mc{O}_i'$ (as in Remark~\ref{rmk:super_super_refinement}).

Let $f_i$ and $g_i$ be the functions appearing in Proposition~\ref{moreover} applied to the cover $\mc{O}_i$. Define $n_0:=n$ and inductively $n_i=f_{k+1-i}(n_{i-1})$. Define $\delta_k:=\delta$ and inductively $\delta_{i-1}=g_{k+1-i}(\delta_i,n_{i-1})$. Note that both $n_i$ and $\delta_i$ increase with $i$.

We begin by inductively constructing maps
\[ \phi_i\colon \mscr{C}_i(X_n,\mc{O}_{k+1}) \longrightarrow \mscr{C}_i(X_{n_i},\mc{O}_{k+1-i};\delta_i) \]
for $0\leq i\leq k$ so that $\partial\phi_i=\phi_{i-1}\partial$.
For the induction process to work, we additionally require that, for every simplex $\s\in \mscr{C}_i(X_n,\mc{O}_{k+1})$, the set $\supp(\phi_i\s)\cup\supp(\s)$ be $\overline{\mc{O}}_{k+1-i}$--tiny.  See Figure~\ref{fig:Prop4.2_1}
\begin{figure}
    %\hspace{4cm}
    \includegraphics{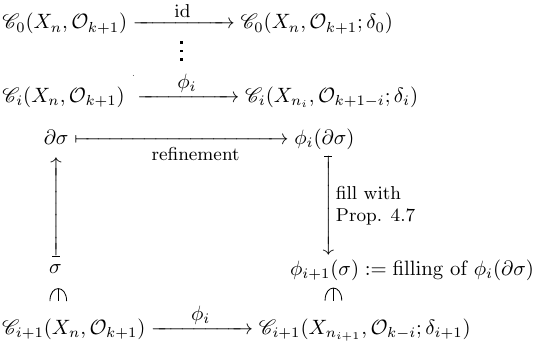}
    \caption{Proof of part~\ref{part1} of Proposition~\ref{arbitrarily fine subdivisions}.}
    \label{fig:Prop4.2_1}
\end{figure}

In the base step ``$i=0$'', we can simply take $\phi_0$ to be the identity. In the inductive step, suppose that, for some $0\leq i\leq k-1$, we are given all maps up to $\phi_i$ so that they satisfy the required properties. In order to define $\phi_{i+1}$, consider a simplex $\s\in \mscr{C}_{i+1}(X_n,\mc{O}_{k+1})$. Note that $\phi_i\partial\s$ is a cycle and lies in $\mscr{C}_i(X_{n_i},\mc{O}_{k+1-i};\delta_i)$.

We claim that $\supp(\phi_i\partial\s)\cup\supp(\s)$ is $\mc{O}_{k-i}'$--tiny. This is clear for $i=0$ since $\phi_0$ is the identity. For $i\geq 1$, let $\tau_0,\dots,\tau_{i+1}$ be the $i$--simplices in $\partial\s$. By our assumptions on $\phi_i$, each set $\supp(\phi_i\tau_j)\cup\supp(\tau_j)$ is contained in some $O_j\in\overline{\mc{O}}_{k+1-i}$. Thus, $O_0,\dots, O_{i+1}$ pairwise intersect, hence they are all contained in some $O'\in\mc{O}_{k-i}'$ by Remark~\ref{rmk:super_super_refinement}, since $\overline{\mc{O}}_{k+1-i}$ was defined as a double super-refinement of $\mc{O}_{k-i}'$. In conclusion, $O'$ contains $\supp(\phi_i\partial\s)\cup\supp(\s)$.

Now, we have shown that $\phi_i\partial\s$ is $\mc{O}_{k-i}'$--tiny, $\delta_i$--fine, and contained in $X_{n_i}$. Recalling that $n_{i+1}=f_{k-i}(n_i)$ and $\delta_i=g_{k-i}(\delta_{i+1},n_i)$, Proposition~\ref{moreover} implies that $\phi_i\partial\s$ (which is reduced as any boundary is reduced) is the boundary of an $\mc{O}_{k-i}$--tiny, $\delta_{i+1}$--fine chain contained in $X_{n_{i+1}}$. We define $\phi_{i+1}\s$ as this chain.

It is clear that we have $\partial\phi_{i+1}=\phi_i\partial$. We are left to show that $\supp(\phi_{i+1}\s)\cup\supp(\s)$ is $\overline{\mc{O}}_{k-i}$--tiny. Recall that $\supp(\phi_i\partial\s)\cup\supp(\s)$ is $\mc{O}_{k-i}'$--tiny, say contained in $O'\in\mc{O}_{k-i}'$. In addition, $\supp(\phi_{i+1}\s)$ is $\mc{O}_{k-i}$--tiny, say contained in $O\in\mc{O}_{k-i}$. Note that $O\cap O'\neq\emptyset$, since 
\[\supp(\phi_{i+1}\s)\supseteq\supp(\partial\phi_{i+1}\s)=\supp(\phi_i\partial\s).\]
Recalling that $\mc{O}_{k-i}'$ is a refinement of $\mc{O}_{k-i}$, which is a super-refinement of $\overline{\mc{O}}_{k-i}$, we conclude that $O\cup O'$ is contained in an element of $\overline{\mc{O}}_{k-i}$. This element contains $\supp(\phi_{i+1}\s)\cup\supp(\s)$, as required. 

Finally, we construct a chain-homotopy $H$. We will inductively define maps
\[ H_i\colon \mscr{C}_i(X_n,\mc{O}_{k+1}) \longrightarrow \mscr{C}_{i+1}(X_{n_i}, \overline{\mc{O}}_{k-i}) \]
for $0\leq i\leq k$ so that $\phi_i-{\rm id}=\partial H_i+H_{i-1}\partial$. Again, for the induction to work, we also require that, for every $\s\in \mscr{C}_i(X_n,\mc{O}_{k+1})$, the set $\supp(H_i\s)\cup\supp(\s)$ be $\overline{\mc{O}}_{k-i}$--tiny. See Figure~\ref{fig:Prop4.2_2}.

In the base step ``$i=0$'', we can just take $H_0$ to be the zero map, since $\phi_0$ is the identity. In the inductive step, suppose that we are given all maps up to $H_i$ so that they satisfy the required properties. In order to define $H_{i+1}$, consider a simplex $\s\in \mscr{C}_{i+1}(X_n,\mc{O}_{k+1})$. Note that:
\[\partial(\phi_{i+1}\s-\s-H_i\partial\s)=\phi_i\partial\s-\partial\s-(\phi_i-{\rm id}-H_{i-1}\partial)\partial\s=0.\]

We claim that the cycle $(\phi_{i+1}\s-\s-H_i\partial\s)$ is $\overline{\mc{O}}_{k-i-1}$--tiny. Recall that $\supp(\phi_{i+1}\s)\cup\supp(\s)$ is contained in some $O\in\overline{\mc{O}}_{k-i}$. In addition, if $\tau_0,\dots,\tau_{i+1}$ are the $i$--simplices in $\partial\s$, there exist $O_j\in\overline{\mc{O}}_{k-i}$ containing $\supp(H_i\tau_j)\cup\supp(\tau_j)$. Since $O,O_0,\dots,O_{i+1}$ are pairwise-intersecting elements of $\overline{\mc{O}}_{k-i}$, their union is contained in some $\overline O\in\overline{\mc{O}}_{k-i-1}$ (for this, note that $\overline{\mc{O}}_{k-i}$ is a double super-refinement of $\overline{\mc{O}}_{k-i-1}$ and apply Remark~\ref{rmk:super_super_refinement}). Now, $\overline O$ contains the support of $(\phi_{i+1}\s-\s-H_i\partial\s)$, as required.

Finally, by Lemma~\ref{lem:cone}, the cycle $(\phi_{i+1}\s-\s-H_i\partial\s)$ is the boundary of the cone over any of its vertices, which is an $(i+2)$--chain contained in $X_{n_{i+1}}\cap\overline O$. We define $H_{i+1}\s$ as this chain. It is clear from the construction that $\partial H_{i+1}+H_i\partial=\phi_{i+1}-{\rm id}$ and that $\supp(H_{i+1}\s)\cup\supp(\s)$ is contained in $\overline O$, hence it is an $\overline{\mc{O}}_{k-i-1}$--tiny set. 

This concludes the whole proof, setting $\mc{O}':=\mc{O}_{k+1}$ and $f_2(n):=n_k$.
\end{proof}

\begin{figure}
    %\hspace{4cm}
    \includegraphics{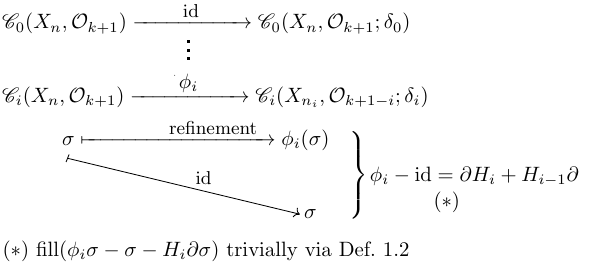}
    \caption{Proof of part~\ref{part2} of Proposition~\ref{arbitrarily fine subdivisions}.}
    \label{fig:Prop4.2_2}
\end{figure}

\subsection{Main result}

We are finally ready to prove Theorem~\ref{thm:vanishing}, which is the main result of this section. It claims that, $\cH^i(X,\mc{R})=\{0\}$ for a principal ideal domain $\mc{R}$ and for $1\leq i<k$, where $X=\partial_*G$.

First, combining Propositions~\ref{diameter bound} and~\ref{arbitrarily fine subdivisions}, we quickly obtain that $X$ satisfies Condition~$(DA_i)$ in low degrees, which was introduced in Subsection~\ref{subsec:super-refinements}.

\begin{prop}\label{prop:cycles_boundaries}
For every open cover $\mc{O}$ of $X$, there exists a refinement $\mc{O}'$ such that, for every $1\leq i<k$, every $\mc{O}'$--fine discrete $i$--cycle is the boundary of an $\mc{O}$--fine discrete chain. 
\end{prop}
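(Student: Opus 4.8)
The plan is to combine the two subdivision results just proved, using Proposition~\ref{arbitrarily fine subdivisions} to ``absorb'' an arbitrary cycle into one that can be filled by Proposition~\ref{diameter bound}, while controlling the open cover throughout. First I would fix an open cover $\mc{O}$ of $X$ and apply Proposition~\ref{arbitrarily fine subdivisions} to obtain a refinement $\mc{O}_1$ and a function $f_2$, so that for every $\delta>0$ there is a chain map $\phi^{\delta}\colon\mscr{C}_*(X_n,\mc{O}_1)\ra\mscr{C}_*(X_{f_2(n)},\mc{O};\delta)$ that is chain-homotopic (after the inclusion $\iota$) to $\iota$ itself, in degrees $\leq k$. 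The point is that, given an $\mc{O}_1$--fine $i$--cycle $c$ with $\supp(c)\sq X_n$ (for some $n$, which exists since $\supp(c)$ is finite, hence contained in some stratum), the cycle $\phi^{\delta}(c)$ is $\mc{O}$--fine, supported in $X_{f_2(n)}$, and $\mc{O}$--homologous to $c$: there is an $\mc{O}$--fine chain $d_0$ with $\partial d_0 = c-\phi^{\delta}(c)$. So it suffices to fill $\phi^{\delta}(c)$ by an $\mc{O}$--fine chain.

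However, $\phi^{\delta}(c)$ is $\delta$--fine but not $\mc{O}'$--fine for a prescribed refinement $\mc{O}'$ — Proposition~\ref{diameter bound} gives fillings that are $\delta$--fine but makes no reference to a cover. The natural fix is to also track fineness: I would additionally require $\mc{O}_1$ to be chosen so that every $\delta_0$--fine chain is $\mc{O}$--fine, where $\delta_0$ is a Lebesgue-type number for $\mc{O}$ restricted to the relevant strata. More precisely, here is the cleaner route. Set $\mc{O}'$ to be the refinement $\mc{O}_1$ produced by Proposition~\ref{arbitrarily fine subdivisions}. Let $c$ be an $\mc{O}'$--fine reduced discrete $i$--cycle with $1\leq i<k$, and pick $n$ with $\supp(c)\sq X_n$. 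Now choose $\delta>0$ small enough that every $\delta$--fine chain supported in $X_{f_1(f_2(n))}$ is $\mc{O}$--fine; such a $\delta$ exists because $X_{f_1(f_2(n))}$ is compact metric and $\{O\cap X_{f_1(f_2(n))}\mid O\in\mc{O}\}$ is an open cover, so it has a positive Lebesgue number. Apply Proposition~\ref{diameter bound} to the $\mc{O}$--fine (hence in particular $g_1(\delta,f_2(n))$--fine, after possibly shrinking $\delta$ further so that $\phi^{\delta}$ outputs $g_1(\delta,f_2(n))$--fine chains — one should set up $\phi^{\delta}$ with target fineness $g_1(\delta, f_2(n))$ from the start) cycle $\phi^{\delta}(c)$, which is supported in $X_{f_2(n)}$: this produces a $\delta$--fine chain $d_1$ with $\partial d_1 = \phi^{\delta}(c)$, supported in $X_{f_1(f_2(n))}$. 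By the choice of $\delta$, $d_1$ is $\mc{O}$--fine.

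Putting the pieces together, $d := d_0 + d_1$ is then an $\mc{O}$--fine discrete chain with $\partial d = (c-\phi^{\delta}(c)) + \phi^{\delta}(c) = c$, as required. The only subtlety is the order of quantifiers: Proposition~\ref{arbitrarily fine subdivisions} gives a \emph{single} refinement $\mc{O}'$ that works for all $\delta$, so we are free to choose $\delta$ after seeing $c$ (equivalently, after seeing $n$); this is exactly what the above argument needs. I expect the main obstacle to be purely bookkeeping: making sure that the fineness parameter fed into $\phi^{\delta}$ is chosen so that its output is fine enough to be a legal input to Proposition~\ref{diameter bound} at stratum $X_{f_2(n)}$, while simultaneously the \emph{output} of Proposition~\ref{diameter bound} is fine enough to be $\mc{O}$--fine in the stratum $X_{f_1(f_2(n))}$. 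Both constraints are of the form ``$\delta$ sufficiently small depending on $n$ and $\mc{O}$'', so they can be satisfied simultaneously, and no circularity arises because $\mc{O}'$ does not depend on $\delta$. One should also note that $c$ being $\mc{O}'$--fine makes it $\mc{O}'$--fine as an element of $\mscr{C}_i(X_n,\mc{O}')$ for the appropriate $n$, which is the hypothesis format Proposition~\ref{arbitrarily fine subdivisions} expects.
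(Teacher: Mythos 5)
Your proposal is correct and follows essentially the same route as the paper: take $\mc{O}'$ to be the refinement from Proposition~\ref{arbitrarily fine subdivisions}, run $\phi^{\delta}$ with $\delta := g_1(\lambda_{f_1(f_2(n))}, f_2(n))$ (where $\lambda_m$ is a Lebesgue number for $\mc{O}$ on $X_m$), fill the resulting $\delta$--fine cycle in $X_{f_2(n)}$ via Proposition~\ref{diameter bound} to get a $\lambda_{f_1(f_2(n))}$--fine (hence $\mc{O}$--fine) chain in $X_{f_1(f_2(n))}$, and add the $\mc{O}$--fine chain-homotopy chain. The paper's write-up is cleaner only in that it chooses the fineness parameter in this compound form from the start rather than discovering it in two stages, but the argument is the same.
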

\begin{proof}
Let $\lambda_n$ be a Lebesgue number for the cover $\{O\cap X_n \mid O\in\mc{O}\}$ of $X_n$. Let $f_1,g_1,f_2$ be the functions provided by Propositions~\ref{diameter bound} and~\ref{arbitrarily fine subdivisions}. Let $\mc{O}'$ be the refinement of $\mc{O}$ provided by Proposition~\ref{arbitrarily fine subdivisions}. Set $\delta:=g_1(\lambda_{f_1(f_2(n))},f_2(n))$.

Let $c$ be an $\mc{O}'$--fine cycle. By Proposition~\ref{arbitrarily fine subdivisions}, $c$ is $\mc{O}$--homologous to a $\delta$--fine cycle $c'$ with $\supp(c')\sq X_{f_2(n)}$. By Proposition~\ref{diameter bound}, we have $c'=\partial d'$ for a $\lambda_{f_1(f_2(n))}$--fine chain $d'$ with $\supp(d')\sq X_{f_1(f_2(n))}$. In particular, $d'$ is $\mc{O}$--fine, showing that $c$ is the boundary of an $\mc{O}$--fine chain.
\end{proof}

\begin{proof}[Proof of Theorem \ref{thm:vanishing}.]
By Proposition~\ref{prop:limit-refinable}, the topological space $X$ is super-refinable. Proposition~\ref{prop:cycles_boundaries} shows that $X$ satisfies Condition~$(DA_i)$ for $1\leq i<k$, so Corollary~\ref{cor:refinable-cohomology} yields $\cH^i(X,\mc{R})=\{0\}$ in the same range.
\end{proof}

\section{Non-vanishing}\label{sect:non-vanishing}

Throughout Section~\ref{sect:non-vanishing}, we again consider a relatively hyperbolic group $G$, its Morse boundary $X:=\partial_*G$ and the filtration $X=\varinjlim X_n$. Notation and assumptions are exactly the same as in the previous sections (see e.g.\ Subsection~\ref{subsect:ass_3}). In particular, the Bowditch boundary $\mc S$ of $(G, \mc P)$ is homeomorphic to a $(k+1)$--dimensional sphere $S^{k+1}$. 

The goal of this section is to prove the second part of Theorem~\ref{thm:general_intro} (which implies the second part of Theorem \ref{thm:main_intro}). To do so, we show that for any finite set of parabolic points $F$, the rank of the singular homology $H_k(\mc{S}-F,\Z)$ (as an abelian group) is a lower bound to the dimension of $\cH^k(\partial_*G,\mathbb F)$, where $\mathbb F$ is a field of characteristic 0. Recall that the rank of an abelian group $A$, which we denote by $\rk A$, is the maximal cardinality of a linearly independent subset.

The following is the crucial lemma in this section. Roughly speaking, it shows that, if there are many discrete cycles that are not boundaries, then the \v{C}ech cohomology is large. Note however that we have to be rather careful with how fine the various chains are.

\begin{lem}\label{lem:nonvanishing_criterion}
Let $\mathbb F$ be a field of characteristic $0$.
Every open cover $\mc{O}$ of $X$ admits a refinement $\mc{O}'$ such that 
\begin{align*}
    \dim\cH^k(X,\mathbb F)\geq \rk\Big(\{\mc{O'}\text{--fine $k$--cycles}\}/\{\text{$\mc{O'}$--fine boundaries of $\mc{O}$--fine $(k+1)$--chains}\}\Big).
\end{align*}

\end{lem}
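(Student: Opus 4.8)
The plan is to turn the ``many non-bounding cycles'' hypothesis into a statement about cohomology of nerves, using the machinery of Lemma~\ref{lem:family} together with the homology-to-cohomology duality over a field. First I would pass to a chain of super-refinements $\mc{O}'' \ll \mc{O}' \ll \mc{O}$, so that the maps of Lemma~\ref{lem:family} are available: a child map $f\colon \mc{O}'' \to X$ induces $f_*\colon C_*(N(\mc{O}'')) \to \mscr{C}_*(X,\mc{O}')$, and an $(\mc{O}',\mc{O})$--parent map $g\colon X \to \mc{O}$ induces $g_*\colon \mscr{C}_*(X,\mc{O}') \to C_*(N(\mc{O}))$, with $g\circ f$ a spouse map $\mc{O}'' \to \mc{O}$. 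Write $Z$ for the quotient $\{\mc{O}'\text{--fine }k\text{--cycles}\}/\{\mc{O}'\text{--fine boundaries of }\mc{O}\text{--fine }(k{+}1)\text{--chains}\}$ and let $m := \rk Z$; the goal is $\dim_{\mathbb F}\cH^k(X,\mathbb F) \geq m$.

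The key point is to produce, inside the directed system defining $\cH^k(X,\mathbb F) = \varinjlim_{\mc{O}} H^k(N(\mc{O}),\mathbb F)$, an $m$--dimensional space of classes that survives to the limit. Concretely, I would argue that the composite $H_k(N(\mc{O}''),\Z) \xrightarrow{f_*} \mscr{H}_k^{\mc{O}'\!/\mc{O}}(X) \to H_k(N(\mc{O}),\Z)$ (where the middle group is $\mscr{C}_*(X,\mc{O}')$--cycles modulo boundaries of $\mc{O}$--fine chains, i.e.\ exactly $Z$ before tensoring) has image of rank at least\ldots{} hmm, this is the wrong direction. The correct move is: choose $\mc{O}''$ so that, applying the hypothesis at scale $\mc{O}''$ rather than $\mc{O}'$, there are at least $m$ linearly independent $\mc{O}''$--fine cycles in $Z$ whose classes can be realised by simplicial cycles in $N(\mc{O}'')$ via a child map composed with a parent map. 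More precisely, using that $\mc{O}'$ refines an even finer cover: apply the hypothesis of the lemma with $\mc{O}'$ playing the role of the ambient cover and $\mc{O}''$ the fine one, obtaining a rank-$m$ quotient $Z' = \{\mc{O}''\text{--fine cycles}\}/\{\mc{O}''\text{--fine bdries of }\mc{O}'\text{--fine chains}\}$. Then $f_*\colon C_k(N(\mc{O}''),\mathbb F) \to \mscr{C}_k(X,\mc{O}';\mathbb F)$ together with the inclusion $\mscr{C}_*(X,\mc{O}'') \hookrightarrow \mscr{C}_*(X,\mc{O}')$ gives maps on the relevant quotients; combining Lemma~\ref{lem:family}(2) (the child map is essentially a two-sided inverse up to chain homotopy of a parent map) shows that the natural map $H_k(N(\mc{O}''),\mathbb F) \to Z'\otimes_{\Z}\mathbb F$ and a companion map $Z \otimes_{\Z}\mathbb F \to H_k(N(\mc{O}),\mathbb F)$ fit into a commuting triangle with the standard structure map $H_k(N(\mc{O}''),\mathbb F)\to H_k(N(\mc{O}),\mathbb F)$. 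Since $\dim_{\mathbb F}(Z'\otimes \mathbb F) = \rk Z' \geq m$ and the relevant composite factors through it, one extracts an $m$--dimensional subspace $V \subseteq H_k(N(\mc{O}''),\mathbb F)$ that injects into $H_k(N(\mc{O}),\mathbb F)$, and crucially maps injectively into $H_k(N(\widetilde{\mc{O}}),\mathbb F)$ for every further refinement $\widetilde{\mc{O}} < \mc{O}''$ by the same argument run with $\widetilde{\mc{O}}$ in place of $\mc{O}''$. Hence $V$ survives in $\varprojlim$ — wait, homology is an inverse limit, cohomology a direct limit, so I would dualise: $H^k(N(\mc{O}),\mathbb F) = \hom_{\mathbb F}(H_k(N(\mc{O}),\mathbb F),\mathbb F)$ by the universal coefficients theorem (no Ext, as $\mathbb F$ is a field), and the structure maps of $\cH^k$ are the $\mathbb F$--duals of the structure maps of $\cH_k$. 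So an $m$--dimensional subspace of $H_k(N(\mc{O}),\mathbb F)$ on which every further restriction map $H_k(N(\widetilde{\mc{O}}),\mathbb F) \to H_k(N(\mc{O}),\mathbb F)$ stays surjective dualises to an $m$--dimensional subspace in the direct limit $\cH^k(X,\mathbb F)$.

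Therefore the heart of the argument is to show: for the refinement $\mc{O}'$ supplied by the lemma, and every pair of super-refinements $\widetilde{\mc{O}} \ll \mc{O}'' \ll \mc{O}'$, the image of the structure map $H_k(N(\widetilde{\mc{O}}),\mathbb F) \to H_k(N(\mc{O}'),\mathbb F)$ — or rather the image in the limit $\cH_k$ — has rank at least $m$, with the rank-$m$ part pinned down by the non-bounding discrete cycles. I expect the main obstacle to be precisely this bookkeeping: one must choose the cover $\mc{O}'$ (and its super-refinements) carefully so that the chosen $m$ independent classes in the discrete quotient $Z$ can simultaneously be lifted to simplicial cycles in arbitrarily fine nerves $N(\widetilde{\mc{O}})$ via child maps, \emph{and} verified to remain independent after pushing forward to $N(\mc{O}')$ — the delicate interplay between ``$\mc{O}'$--fine cycles'' and ``boundaries of $\mc{O}$--fine chains'' in the statement is exactly what forces the two-level super-refinement, and matching these fineness conditions against the chain homotopies of Lemma~\ref{lem:family} is where the care is needed. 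The characteristic-$0$ (indeed field) hypothesis enters twice: once to identify $\cH^k$ with the $\mathbb F$--dual of $\cH_k$ via universal coefficients with vanishing Ext, and once to pass freely between $\rk$ of the integral quotient $Z$ and $\dim_{\mathbb F}$ of $Z\otimes_{\Z}\mathbb F$.
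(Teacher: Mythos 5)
Your high-level strategy matches the paper's: push discrete cycles into nerve homology via a parent map, observe that the kernel consists of boundaries of $\mc{O}$--fine chains (via the chain homotopy in Lemma~\ref{lem:family}(4)), and then dualise over the field $\mathbb{F}$ through universal coefficients to land in $\cH^k$. You also correctly identify the crux of the matter: one must show that the selected homology classes can be \emph{represented by cycles at arbitrarily fine scales}, so that they survive in the direct limit $\cH^k(X,\mathbb F)=\varinjlim_{\mc U}H^k(N(\mc U),\mathbb F)$. But you leave this step unresolved (``I expect the main obstacle to be precisely this bookkeeping''), and it is not mere bookkeeping.

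The gap is that nothing in your plan tells you how to choose the refinement $\mc{O}'$ in the statement, nor how to refine a fixed $\mc{O}'$--fine cycle $c$ down to a $\mc{U}'$--fine cycle (for an arbitrary further refinement $\mc{U}'$) without losing its class. The paper resolves this by taking $\mc{O}'$ to be the cover supplied by Proposition~\ref{arbitrarily fine subdivisions}: given a chain of super-refinements $\mc{O}_2\ll\mc{O}_1\ll\mc{O}$ and the refinement $\mc{O}'<\mc{O}_2$ from that proposition, one has for every $\delta>0$ a chain map $\phi^\delta\colon\mscr{C}_*(X,\mc{O}')\to\mscr{C}_*(X_{f_2(n)},\mc{O}_2;\delta)$ chain-homotopic to the inclusion. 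So for any refinement $\mc U<\mc{O}_1$, taking a super-refinement $\mc U'\ll\mc U$ refining $\mc O'$ and $\delta$ a Lebesgue number for $\mc U'|_{X_{f_2(n)}}$, the cycle $c':=\phi^\delta(c)$ is $\mc U'$--fine \emph{and} $\mc O_2$--homologous to $c$, hence $[g_*(c')]=[g_*(c)]$ in $H_k(N(\mc O_1),\Z)$. Pushing $c'$ into $N(\mc U)$ via a parent map and using the chain-homotopy part of Lemma~\ref{lem:family} then shows $h^*(\alpha)\neq 0$ for the spouse map $h\colon\mc U\to\mc O_1$, which is exactly the survival statement your plan needs. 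Proposition~\ref{arbitrarily fine subdivisions} is the culmination of the entire filling-and-detouring machinery of Sections~\ref{sect:Morse_filling}--\ref{sect:vanishing}, so this is a substantive missing ingredient, not a formal one. (Your description of the linear-algebra/dualisation step is in the right spirit, but it too is vague: the paper extracts an explicit subspace $V\sq H^k(N(\mc O_1),\mathbb F)$ of dimension $\rk A$ that detects $A=\mathrm{Im}(\Phi)$, using a small direct argument rather than your proposed ``surjectivity of all restriction maps on an $m$-dimensional subspace,'' which as stated does not dualise cleanly.)
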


\begin{proof}
Let $\mc{O}_2\ll\mc{O}_1\ll\mc{O}$ be a chain of super-refinements and let $\mc{O}'<\mc{O}_2$ be the refinement provided by Proposition~\ref{arbitrarily fine subdivisions}. The following diagram is an overview of the various covers and chain maps that we will introduce during the proof.
\[
\begin{tikzcd}[remember picture]
\mscr{C}_*(X,\mc{O}') & \mscr{C}_*(X,\mc{O}_2) \arrow[r, "g_*"] & C_*(N(\mc{O}_1)) \arrow[r, "g'_*"] & \mscr{C}_*(X,\mc{O}) \\
\mscr{C}_*(X,\mc{U}') \arrow[rr, "h'_*"]  & & C_*(N(\mc{U})) \arrow[u,"h_*"] &
\end{tikzcd}
\begin{tikzpicture}[overlay,remember picture]
\path (\tikzcdmatrixname-2-1) to node[midway,sloped]{$\sq$}
(\tikzcdmatrixname-1-1);
\path (\tikzcdmatrixname-1-1) to node[midway,sloped]{$\sq$}
(\tikzcdmatrixname-1-2);
\end{tikzpicture}
\]

Let $g \colon X\to \mc O_1$ be an $(\mc{O}_2,\mc{O}_1)$--parent map and $g'\colon\mc O_1\to X$ be a child map.  Recalling the chain map $g_*$ from discrete chains to chains in nerves provided by Lemma \ref{lem:family}, we can consider the map $\Phi \colon \{\text{$\mc O'$--fine $k$--cycles}\}\to H_k(N(\mc O_1), \Z)$ given by $c \mapsto [g_*(c)]$.

 If $[g_*(c)] = [g_*(c')]$ there exists some $d\in C_{k+1}( N(\mc O_1))$ with $\partial d = g_*(c) - g_*(c')$. Lemma \ref{lem:family} implies that $g_*'\circ g_* \colon \mscr{C}_*(X, \mc O_2)\to \mscr{C}_*(X, \mc O)$ is a chain map homotopic to the standard inclusion and that $g_*'(d)$ is $\mc O$--fine. Thus $ c - c' = \partial g'_*(d)  + \partial d'$ for some $\mc O$--fine chain $d'$. This shows that $\rm{Ker} (\Phi)\subseteq \{\text{boundaries of $\mc{O}$--fine $(k+1)$--chains}\}$. Define $A = \rm{Im} (\Phi)$.

From now on we identify $H^k(\cdot,\mathbb F)$ with $\hom(H_k(\cdot,\mathbb Z),\mathbb F)$, and denote it simply by $H^k(\cdot)$. We are only left to show that $\dim\cH^k(X,\mathbb F)\geq \rk(A)$.

\smallskip
{\bf Claim.} There exists a linear subspace $V$ of $H^k(N(\mc O_1))$ of dimension $\rk(A)$ and such that, for all $\psi\in V-\{0\}$, we have $\psi|_A\neq 0$.

\smallskip
\emph{Proof of Claim.}
Let $\{x_i\}$ be a maximal linearly independent subset of $A$, and extend it to a maximal linearly independent subset $\{x_i\}\sqcup\{y_j\}$ of $H_k(N(\mc O_1), \Z)$. We claim that we can take $V$ to be the subspace of $H^k(N(\mc O_1))$ of all elements that vanish on all $y_j$. Indeed, we now argue that any map $f\colon\{x_i\}\sqcup\{y_j\}\to \mathbb F$ extends uniquely to a homomorphism $h_f$ in $H^k(N(\mc O_1))$; this gives both the dimension bound and the non-vanishing property from the claim.

To define $h_f$, we note that all $x\in H_k(N(\mc O_1), \Z)$ have a multiple $n_xx$ that lies in $\bar L=\langle \{x_i\}\sqcup\{y_j\} \rangle$ for some integer $n_x\neq 0$, by maximality. By linear independence, $f$ extends to a homomorphism $\bar f$ defined on $\bar L$, which we can then further extend by setting $h_f(x)=\bar f(n_x x)/n_x$. It is readily checked that $h_f$ is a well-defined homomorphism, and the only one that restricts to $f$ on $\{x_i\}\sqcup\{y_j\}$.
{\hfill $\blacksquare$}

\smallskip
We now show that $V$ as in the claim embeds in $\cH^k(X,\mathbb F)$. Indeed, consider any $\alpha\in V-\{0\}$ and consider any refinement $\mc U < \mc O_1$ and any spouse map $h \colon \mc U\to \mc O_1$. We have to show $h^*(\alpha)  \neq 0$ for the induced map $h^*\colon H^k(N(\mc{O}_1))\ra H^k(N(\mc{U}))$. 

Since $\alpha\in V-\{0\}$ there exists a homology class $\tilde{c}\in A$ such that $\alpha(\tilde{c})\neq 0$. By the definition of $A$, there exists a cycle $c\in \mscr{C}_k(X, \mc O')$ such that $[g_*(c)] = \tilde{c}$.
There exists some $n\in \mathbb N$ such that $c\subseteq X_n$. Let $\mc U'$ be a super-refinement of $\mc U$ that is also a refinement of $\mc O'$ and let $h'\colon X\to \mc U$ be a $(\mc U', \mc U)$--parent map. Let $f$ be the function $f_2$ from Proposition \ref{arbitrarily fine subdivisions} applied to the open cover $\mc O_2$. 

Let $\delta$ be a Lebesgue number for the open cover $\{U'\cap X_{f(n)}\mid U'\in \mc U'\}$ of the compact space $X_{f(n)}$. By Proposition \ref{arbitrarily fine subdivisions} applied to $\mc O_2$, $\delta$ and $c$, there exists a $\mc O_2$--fine, $\delta$--fine cycle $c'\subseteq X_{f(n)}$ which is $\mc O_2$--homologous to $c$. Since $g_* \colon \mscr{C}_*(X, \mc O_2) \to C_*(N(\mc O_1))$ is a chain map, we have that $[g_*(c)] = [g_*(c')]$. Also, by the choice of $\delta$, the cycle $c'$ is $\mc U'$--fine. 

Note that both $g$ and $h\circ h'$ are $(\mc{U}',\mc{O}_1)$--parent maps, so Lemma~\ref{lem:family} shows that their induced maps $\mscr{C}_k(X,\mc{U}')\ra C_k(N(\mc{O}_1))$ are chain-homotopic. Defining $c_{\mc U} = h'_*(c')\in C_*(N(\mc U))$, we get that $[h_*(c_{\mc U})] = [g_*(c)]=\tilde c$ in $H_k(N(\mc{O}_1))$. Finally, observe that $h^*(\alpha)([c_{\mc U}]) = \alpha(\tilde c)\neq 0$, as required.
\end{proof}

We are now ready to prove that $\cH^k(X,\mathbb{F})$ is infinite dimensional (Theorem \ref{thm:general_intro}(2)).

\begin{thm}
\label{thm:non_vanishing}
    Let $G$ be a finitely generated, relatively hyperbolic group with virtually nilpotent peripherals and Bowditch boundary homeomorphic to a sphere $S^{k+1}$, for some $k\geq 1$, and with at least one peripheral subgroup. Let $\mathbb F$ be a field of characteristic $0$. Then $\cH^k(\partial_* G,\mathbb F)$ is infinite-dimensional.
\end{thm}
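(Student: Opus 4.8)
The plan is to prove the equivalent statement that $\dim_{\mathbb F}\cH^k(X,\mathbb F)\geq D$ for every integer $D\geq 1$, where $X=\partial_*G\cong\varinjlim X_n$; letting $D\to\infty$ then gives the theorem. Fix $D$. First I would note that $\mc S\cong S^{k+1}$ contains infinitely many parabolic points: each peripheral subgroup has infinite index in $G$ (otherwise $G$ would be virtually nilpotent, which is incompatible with $\mc S\cong S^{k+1}$ for $k\geq 1$), so $G$ has infinitely many conjugates of peripheral subgroups and hence infinitely many bounded parabolic points in $\mc S$. Choose a set $F\sq\mc S$ of exactly $D+1$ parabolic points. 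By Alexander duality (equivalently, since $S^{k+1}$ minus $D+1$ points is homotopy equivalent to a wedge of $D$ copies of $S^k$), the abelian group $H_k(\mc S-F,\Z)$ is free of rank $D$; fix a basis $\alpha_1,\dots,\alpha_D$.

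Next I would set up the cover. Let $X_N$ be the stratum furnished by Proposition~\ref{prop:representing_homology} for this $F$, and let $\mc U$ be an open cover of $\mc S-F$ consisting of $d_\o$--convex open sets of $d_\o$--diameter $\leq\diam(\mc S)/2$, as provided by Remark~\ref{rem:convex_cover_exists} (here $d_\o$ is the round metric). Since every parabolic point lies outside every stratum $X_n$, we have $X=\bigcup_n X_n\sq\mc S-F$, so $\mc O:=\{U\cap X\mid U\in\mc U\}$ is a well-defined open cover of $X$: each $U\cap X$ meets every $X_n$ in a set open in $X_n$, hence is open in the direct limit topology. Applying Lemma~\ref{lem:nonvanishing_criterion} to $\mc O$ produces a refinement $\mc O'$ with
\[
\dim_{\mathbb F}\cH^k(X,\mathbb F)\ \geq\ \rk\big(\{\mc O'\text{--fine }k\text{--cycles}\}\big/\{\mc O'\text{--fine boundaries of }\mc O\text{--fine }(k+1)\text{--chains}\}\big),
\]
so it suffices to exhibit $D$ discrete $\mc O'$--fine $k$--cycles whose images in this quotient group are $\Z$--linearly independent.

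To produce them, let $\delta_0>0$ be a Lebesgue number (with respect to the visual metric $\rho$) for the open cover $\{O'\cap X_N\mid O'\in\mc O'\}$ of the compact space $X_N$. Applying Proposition~\ref{prop:representing_homology} with $\delta=\delta_0$ — whose proof yields a \emph{straight} representative — each $\alpha_i$ is represented by a straight singular cycle $\wh c_i$ in $\mc S-F$ such that $c_i:=\disc(\wh c_i)$ is $\delta_0$--fine and supported in $X_N$; in particular each $c_i$ is $\mc O'$--fine. Now suppose $\sum_{i=1}^D\lambda_i c_i=\partial d$ for integers $\lambda_i$ not all zero and an $\mc O$--fine $(k+1)$--chain $d$. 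Since $\mc O'$ refines $\mc O$ and every element of $\mc O$ is contained in an element of $\mc U$, the chain $d$ is a $\mc U$--fine discrete chain in $\mc S-F$ and each $c_i$ is $\mc U$--fine; by $d_\o$--convexity of the elements of $\mc U$ together with Remark~\ref{rmk:straight}, each straightening $\wh c_i$ is a $\mc U$--fine straight singular cycle in $\mc S-F$, and $\disc\big(\sum_i\lambda_i\wh c_i\big)=\sum_i\lambda_i c_i=\partial d$. Lemma~\ref{lem:convex_cover} then forces $\big[\sum_i\lambda_i\wh c_i\big]=0$ in $H_k(\mc S-F,\Z)$, i.e.\ $\sum_i\lambda_i\alpha_i=0$, contradicting linear independence of the $\alpha_i$. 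Hence $[c_1],\dots,[c_D]$ are $\Z$--linearly independent in the quotient group, so $\dim_{\mathbb F}\cH^k(X,\mathbb F)\geq D$, which completes the argument.

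The main obstacle here is organizational rather than technical: the genuinely hard work is already packaged inside Proposition~\ref{prop:representing_homology} and Lemma~\ref{lem:nonvanishing_criterion}, so the crux is to choose the cover $\mc O$ of the direct-limit space $X$ compatibly with a convex cover $\mc U$ of $\mc S-F$, in such a way that any $\mc O$--fine filling of a combination of the $c_i$ automatically becomes $\mc U$--fine data in $\mc S-F$ to which Lemma~\ref{lem:convex_cover} applies. The points to be careful about are that the representatives produced by Proposition~\ref{prop:representing_homology} may be taken straight, that there are indeed infinitely many parabolic points, and that fineness with respect to the visual metric, the round metric, and the cover $\mc U$ interact correctly — the last being smoothed over by the fact that ``$\mc U$--tiny'' is a purely topological condition preserved under passing to refinements.
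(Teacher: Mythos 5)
Your proof is correct and follows essentially the same route as the paper's: take a convex cover $\mc U$ of $\mc S-F$, restrict to $\mc O$ on $X$, get $\mc O'$ from Lemma~\ref{lem:nonvanishing_criterion}, use Proposition~\ref{prop:representing_homology} (together with the Lebesgue number on $X_N$) to produce $\mc O'$--fine straight representatives, and invoke Lemma~\ref{lem:convex_cover} to rule out $\mc O$--fine fillings. Your version is a bit more explicit than the paper in two cosmetic ways — you spell out why there are infinitely many parabolic points and you work with a basis and linear independence rather than the paper's somewhat informal ``embeds into'' phrasing — but the key lemmas and their roles are identical.
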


\begin{proof}
It suffices to show that, for every finite set of parabolic points $F\sq\mc{S}$, we have the inequality $\rk H_k(\mc{S}-F,\Z) \leq  \dim \cH^k(X,\mathbb F)$, where $X=\partial_*G$.

Let $\mc U$ be a convex cover of $\mc {S} - F$ as in Lemma \ref{lem:convex_cover}. Set $\mc{O}:=\mc{U}|_X$. To define this cover restriction, recall that there is a natural continuous and injective map $X\ra\mc{S}$ (though it is not an open map).

Let $\mc{O}'<\mc{O}$ be the refinement provided by Lemma~\ref{lem:nonvanishing_criterion}. Finally, let $N$ be the integer associated to $F$ by Proposition~\ref{prop:representing_homology}, and let $\delta$ be a Lebesgue number for the restriction $\mc{O}'|_{X_N}$.

It suffices to show that $H_k(\mc{S}-F,\Z)$ embeds into
\[\{\mc{O}'\text{--fine $k$--cycles}\}/\{\text{$\mc{O'}$--fine boundaries of $\mc{O}$--fine $(k+1)$--chains}\}\] (and hence has smaller rank) 
and apply Lemma~\ref{lem:nonvanishing_criterion}. By Proposition~\ref{prop:representing_homology}, every element of $ H_k(\mc{S}-F,\Z)$ can be represented by a straight singular cycle $c$ such that $\disc(c)$ is $\delta$--fine and contained in $X_N$, and hence $\mc {O}'$--fine. Since $c$ is a straight cycle and $\mc U$ is a convex cover, not only $\disc(c)$ but also $c$ itself is $\mc U$--fine. Lemma \ref{lem:convex_cover} concludes the proof.
\end{proof}

\bibliography{mybib}
\bibliographystyle{alpha}

\end{document}